\numberwithin{equation}{section}
\newtheorem{prop}{Proposition}[section]
\newtheorem{theorem}{Theorem}[section]
\newtheorem{lemma}{Lemma}[section]
\newtheorem{corollary}{Corollary}[section]
\newtheorem*{definition}{Definition}
\newtheorem{remark}{Remark}[section]    
\numberwithin{equation}{section}
\newcommand{\beq}{\begin{equation}}
\newcommand{\eeq}{\end{equation}}
\begin{document}

\title[Guillemin boundary Monge-Amp\`ere equation]
{Monge-Amp\`ere equation with Guillemin boundary condition in high dimension}
\author[G. Huang and W. Shen]
{Genggeng Huang and Weiming Shen}
\address[Genggeng Huang]
{School of Mathematical Sciences, Fudan University, Shanghai 200433, China.}
\email{genggenghuang@fudan.edu.cn}

\address[Weiming Shen]{School of Mathematical Sciences,
Capital Normal University,
Beijing, 100048, China}
\email{wmshen@aliyun.com}

\thanks{The first author is partially supported by NSFC 12141105. The second author is partially supported by NSFC 12371208.}

\subjclass[2020]{35J96,35J75,35J70,58J60}

\keywords{Guillemin boundary,  Monge-Amp\`ere equation, degenerate(singular) elliptic equation, polytope domain.}
\date{}

\begin{abstract}
The Guillemin boundary condition naturally appears in the study of K\"ahler geometry of toric  manifolds.
In the present paper, the following  Guillemin boundary value problem is investigated
\begin{eqnarray}
		&&\det D^2 u=\frac{h(x)}{\prod_{i=1}^N l_i(x)},\quad\text{in}\quad P\subset\mathbb R^n,\label{intro1-0}\\
		&&u(x)-\sum_{i=1}^N l_i(x)\ln  l_i(x)\in C^\infty(\overline{P}).    \label{intro1-1-0}
	\end{eqnarray}
	Here
  \begin{equation*}
  0<h(x)\in C^\infty(\overline{P}),\quad P=\cap_{i=1}^N \{l_i(x)>0\}
  \end{equation*}
   is a simple convex polytope in $\mathbb R^n$.  The solvability of \eqref{intro1-0}-\eqref{intro1-1-0} is given under the necessary and sufficient condition.
 The key issue in the proof is to obtain the boundary regularity of $u(x)-\displaystyle \sum_{i=1}^N l_i(x)\ln  l_i(x)$. Due to the difficulty caused by the structure of the equation itself and the singularity of $\partial P$, special attention is required to understand the influence of different singularity types at various positions on $\partial P$  and how these impact the behavior of $u$ in its vicinity.
 \end{abstract}

\maketitle
\tableofcontents

\section{Introduction}
This paper is devoted to the study of the regularity of the following boundary value problem:
\begin{eqnarray}
		&&\det D^2 u=\frac{h(x)}{\prod_{i=1}^N l_i(x)},\quad\text{in}\quad P\subset\mathbb R^n,\label{intro1}\\
		&&u(x)-\sum_{i=1}^N l_i(x)\ln  l_i(x)\in C^\infty(\overline{P}).    \label{intro1-1}
	\end{eqnarray}
	Here,
  \begin{equation*}
  0<h(x)\in C^\infty(\overline{P}),\quad P=\cap_{i=1}^N \{l_i(x)>0\}
  \end{equation*}
   is a simple convex polytope in $\mathbb R^n$.  The functions $l_i(x)$ are affine functions for $i=1,\cdots,N$.
\begin{definition}\label{def1}
A polytope in $\mathbb R^n$ is called a {\bf simple polytope} if and only if every vertex is contained in the minimal number of only $n$ co-dimension $1$ faces.
\end{definition}
\eqref{intro1-1} is called the Guillemin boundary condition. The Guillemin boundary condition arises from the K\"ahler geometry of toric manifolds.  A Delzant polytope is a special type of simple convex polytope that satisfies additional conditions, see Definition 2.1 in \cite{LiSheng2023}.
Let $X$ be a toric manifold of dimension $n$.  Its image under the moment map is a Delzant polytope(Page 8, \cite{Guillemin2012}) $P$ in $\mathbb R^n$.
Then one can determine a K\"ahler potential $\phi$ and hence a K\"ahler metric $\omega$ on $X$ by some convex function  $u:\overline{P}\rightarrow \mathbb R$. This convex function $u$, i.e., the symplectic potential, should satisfy the following beautiful equation  given by Abreu \cite{Abreu1998}
\begin{equation}\label{Abreu1}
\sum_{i,j=1}^n\frac{\partial^2 u^{ij}}{\partial x_i\partial x_j}=-2R
\end{equation}
where $(u^{ij})$ is the inverse of $(u_{ij})$ and $R$ is the scalar curvature of the corresponding K\"ahler metric $\omega$.
In order to obtain a smooth K\"ahler metric $\omega$, Guillemin \cite{Guillemin1994} proved that the solution of \eqref{Abreu1} should satisfy
\begin{equation}\label{Guillemin-1}
u(x)=\sum_{i=1}^N  l_i(x)\ln l_i(x)+v(x),\quad v\in C^\infty(\overline{P}).
\end{equation}
This is the same boundary condition as \eqref{intro1-1}. Donaldson \cite{Donaldson2008,Donaldson2009} called \eqref{Guillemin-1} the Guillemin boundary condition for \eqref{Abreu1}. We adopt Donaldson's notation for the Monge-Amp\`ere equation \eqref{intro1}. Hence,  \eqref{intro1}-\eqref{intro1-1} is called the Guillemin boundary problem for Monge-Amp\`ere equation.
In a series of papers \cite{Donaldson2008,Donaldson2009}, for $n=2$, Donaldson also proved the existence of solutions of \eqref{Abreu1}-\eqref{Guillemin-1} by the continuity method for $R$ being a constant under some suitable stability condition. This gives the existence of constant scalar curvature K\"ahler metric on toric surfaces.   Chen, Li and Sheng \cite{ChenLiSheng2014} extended this result to the case of extremal metrics for toric surfaces. Recently, Chen and Cheng  \cite{ChenCheng12021, ChenCheng22021} proved the properness conjecture for the existence of constant scalar curvature K\"ahler (cscK) metrics on a compact K\"ahler manifold.
Combining the work of Hisamoto \cite{Hisamoto2016}, Chen-Cheng's result confirmed the existence of cscK metrics under uniform stability conditions for toric manifolds
of any dimension.
There are also some studies on the interior estimate for the Abreu
equation. See the papers Donaldson \cite{Donaldson2005} for $n=2$ and Chen-Han-Li-Sheng \cite{ChenHanLiSheng2014}.
For more studies on K\"ahler geometry of toric manifolds, refer to \cite{LiSheng2023,WangZhu2004,WangZhou2014,ZhouZhu2008} and the references therein.

From the perspective of PDE, the solvability of \eqref{Abreu1} and \eqref{Guillemin-1} remains open for general scalar curvature $R$. The Abreu equation is a fourth-order PDE with singularities on the boundary $\partial P$. Hence it is difficult to solve the Abreu equation.
 Furthermore, due to the singularity of the boundary of $P$,  it is often difficult to derive boundary estimates for the solutions of the relevant equations defined on $P$. Previous results related to \eqref{Abreu1}-\eqref{Guillemin-1} have essentially avoided boundary estimates by utilizing geometric structures of the toric manifolds. Therefore, from the perspective of PDE, both with respect to the analysis on toric manifolds and with respect to the study of the Abreu equations, it is of great significance to establish a purely analytical theory about \eqref{Abreu1}-\eqref{Guillemin-1}.
As a first step, we can view the Abreu equation as a system of two second-order elliptic equations for $(u,\varphi)$,
\begin{eqnarray}
&&\det D^2 u=\varphi^{-1},\label{Abreu-d-1}\\
&& U^{ij}\varphi_{ij}=-2R \label{Abreu-d-2}
\end{eqnarray}
where $U^{ij}$ is the cofactor matrix of $(u_{ij})$. \eqref{Abreu-d-1} is a Monge-Amp\`ere equation and \eqref{Abreu-d-2} is a linearized Monge-Amp\`ere equation. It is natural to investigate \eqref{Abreu-d-1} and \eqref{Abreu-d-2} separately as a first attempt to solving the Abreu equation.
From Guillemin boundary condition \eqref{Guillemin-1}, one knows that $\varphi$ is asymptotic to  $\displaystyle \prod_{i=1}^N l_i(x)$ near $\partial P$. Hence \eqref{Abreu-d-1} coupled with \eqref{Guillemin-1} is exactly \eqref{intro1}-\eqref{intro1-1} as introduced at the beginning of the present paper.
Meanwhile, we will relax the requirement that $P$ is a Delzant polytope and only require $P$ to be a simple convex polytope.
 \par Rubin \cite{Rubin2015} initiated the study of \eqref{intro1}-\eqref{intro1-1} for $n=2$. Rubin first proved that the Dirichlet problem of  \eqref{intro1} admits a unique solution $u\in C^\alpha(\overline{P})$ for a large class of Dirichlet data $u=\phi$ on $\partial P$.  Equations \eqref{intro1} and \eqref{intro1-1} reduce to ODEs on $\partial P$. These ODEs and the values of $u$ on the vertices of $P$  determine the value of $u$ on $\partial P$.  For such boundary values, Rubin showed that the solution
 \begin{equation}\label{Ru-1}
 u-\sum_{i=1}^N l_i(x)\ln l_i(x)\in C^\infty(\overline{P}\backslash \{p_1,\cdots,p_{\hat N}\}).
 \end{equation}
 \eqref{Ru-1} is nearly the Guillemin boundary condition \eqref{intro1-1} except for the vertices of polygon $P$. Therefore, Rubin \cite{Rubin2015} also raised a question: ``whether $ u-\displaystyle \sum_{i=1}^N l_i(x)\ln l_i(x)$ is smooth up to the corners''. This question was recently solved by the first named author \cite{Huang2023}.  It is interesting and important to ask whether \eqref{intro1}-\eqref{intro1-1} is solvable for $n\ge 3$. However,
  due to the complexity of the structure of \eqref{intro1}-\eqref{intro1-1} and the diversity of the singularity types of $\partial P$
 in high dimensions, there has been no progress on the open problem  whether \eqref{intro1}-\eqref{intro1-1} is solvable for $n\ge 3$ until now.  The present paper gives an affirmative answer to this question.
 \par The Guillemin boundary value problem \eqref{intro1}-\eqref{intro1-1} has some interesting features. The first one is that this problem is only well-defined for simple polytopes(see Proposition \ref{prop1}). This phenomenon becomes clear in high dimension $n\ge 3$ since all the convex polygons are simple for $n=2$.
  The second one is that the boundary condition \eqref{intro1-1} is vague without any explicitly prescribed boundary data. We can only prescribe explicit boundary values at the vertices of $P$. However, if one restricts the discussion on $(n-\mathfrak k)-$face of $P$, for $\mathfrak k=1,\cdots,n-1$, then the boundary values should satisfy a Monge-Amp\`ere equation with the Guillemin boundary condition on this $(n-\mathfrak k)-$face(see Lemma \ref{lemcompactible2}).
Therefore, one can build the boundary condition of the problem \eqref{intro1}-\eqref{intro1-1} by solving the Guillemin boundary value problem inductively from edges to $(n-1)-$face of $P$. In other words, the solvability of \eqref{intro1}-\eqref{intro1-1} in dimension $n$ relies on the solvability of \eqref{intro1}-\eqref{intro1-1} in dimensions $1,2,\cdots,n-1$. This seems to be a new phenomenon in the boundary value problem of partial differential equations. The last one is that the singular term for the solution of $u$ is exactly $\displaystyle \sum_{i=1}^N l_i(x)\ln l_i(x)$.   For a singular elliptic equation, it is natural that the solution has singularity. There are also many examples that the singular structures of the solutions are not simple.   In fact, one can find that there are obstructions to the global regularity of these solutions, and the solution may not be smooth up to the boundary even though the boundary of the domain is analytic.  We refer the readers to \cite{HanJiangShen2020,JianLuWang2022,HanJiang2023} and the references therein.

\par The difficulties in solving the Guillemin boundary value problem \eqref{intro1}-\eqref{intro1-1} lie in that the equation is highly singular near the boundary of $P$ and the domain $P$ itself is only Lipschitz. When one considers the problem \eqref{intro1}-\eqref{intro1-1} near some $(n-1)-$face, it is important to investigate the following model equation:
\begin{equation}\label{model-1}
\det D^2 u=\frac 1{x_1},\quad \text{for}\quad x_1>0.
\end{equation}
We can view \eqref{model-1} as a special case of the following type of equation:
\begin{equation}\label{model-2}
\det D^2 u=d_{\partial \Omega}^\alpha,\quad \text{in}\quad \Omega
\end{equation}
where $d_{\partial \Omega}$ is the distance function from $x$ to $\partial \Omega$. In general, the discussion of \eqref{model-2} is restricted to cases where $\Omega$ is uniformly convex. For $\alpha=n$, \eqref{model-2} is related to the global regularity of the Monge-Amp\`ere eigenfunction. We refer readers to  \cite{HongHuangWang2011,Savin2014,LeSavin2017} for more discussion as well as for the cases when $\alpha>0$. For  $\alpha<0$, one cannot expect the global $C^{2}$ estimates. Recently, Savin-Zhang \cite{SavinZhang2020} proved $u\in C^{1,1+\alpha}(\overline{\Omega})$ for $\alpha\in (-1,0)$ and provided some descriptions of the shapes of the level sets of $u$ near the boundary for $\alpha\in (-2,-1]$. $\alpha=-(n+2)$ is another interesting power that corresponds to the hyperbolic affine hypersphere. The global $C^{2,\alpha}$ graph regularity was established by Lin-Wang \cite{LinWang1998} and the optimal graph regularity was obtained by Jian-Wang \cite{JianWang2013} for the hyperbolic affine hypersphere. However, none of the above results can be applied to \eqref{model-1} since $\alpha=-1$ and the boundary is flat. In the work of \cite{Rubin2015}, for $n=2$, this difficulty was overcome by introducing the partial Legendre transformation which transforms \eqref{model-1} into a quasilinear elliptic equation. Then by a suitable perturbation argument, the smoothness of $v=u-x_1\ln x_1$ was derived.  For the high dimensional case, the partial Legendre transformation of  \eqref{model-1} is still a fully non-linear elliptic equation. New ideas are needed for the high dimensional case.

\par The singularity of the domain also makes this problem more difficult. Near the $(n-\mathfrak k)-$face, the model equation for \eqref{intro1} is
\begin{equation}\label{model-3}
\det D^2 u=\left(\prod_{i=1}^{\mathfrak k} x_i\right)^{-1},\quad \text{in}\quad x_1,\cdots,x_{\mathfrak k}>0,\quad 2\le\mathfrak k\le n.
\end{equation}
The singularity of \eqref{model-3} becomes more complex as the boundary of the domain becomes more singular. The literature on the global $C^{2,\alpha}$  regularity for solutions of Monge-Amp\`ere equation in polytopes is rather limited. Recently, Le-Savin \cite{LeSavin2021} studied the global $C^{2,\alpha}$ regularity for the Dirichlet problem of Monge-Amp\`ere equation in a planar polygon. Jhaveri \cite{Jhaveri2019} investigated the the global $C^{2,\alpha}$ regularity for the Monge-Amp\`ere equation with the second boundary condition in the planar polygon domain and the high dimensional case was shown by Chen-Liu-Wang in \cite{ChenLiuWang2023}. Nevertheless, the results of \cite{LeSavin2021,Jhaveri2019,ChenLiuWang2023} are all for uniformly elliptic case and the global smoothness of the solutions is not true in general(see \cite{Jhaveri2019,HuangShen2023}).
The first-named author\cite{Huang2023} gave an affirmative answer to the existence of solutions for \eqref{intro1}-\eqref{intro1-1} under the necessary and sufficient compatibility condition \eqref{comp-c1} when $n=2$. This is a surprising phenomenon. For the uniformly elliptic case, \cite{HuangShen2023} showed that  singular terms will appear repeatedly in the asymptotic expansion of the solutions near the corner in general.  The following theorem is our main result in the present paper.

\begin{theorem}\label{mainthm0}
Let $P$ be a simple convex polytope in $\mathbb R^n$, and let $h(x)\in C^\infty(\overline P)$ be a positive function. Moreover, at any vertex $p_i$ of $P$, the given function $h(x)$ satisfies  the compatibility condition:
\begin{equation}\label{comp-c1}
h(p_i)=\prod_{l_k(p_i)\ne 0}l_k(p_i)\cdot\left[\det\left(Dl_{i_1}\cdots Dl_{i_n}\right) \right]^2(p_i)
\end{equation}
where $l_{i_1}(p_i)=\cdots=l_{i_n}(p_i)=0$.
Then, for any given real numbers $\alpha_1,\cdots,\alpha_{\widehat N}$, \eqref{intro1} and \eqref{intro1-1} admit a unique solution $u$ provided \eqref{comp-c1} is fulfilled and $u(p_i)=\alpha_i$, $i=1,\cdots,\widehat N$.
\end{theorem}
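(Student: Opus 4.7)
The plan is to prove Theorem \ref{mainthm0} by induction on the dimension $n$, since, as the authors emphasize, the solvability in dimension $n$ intrinsically depends on the solvability of the Guillemin problem on each $(n-\mathfrak k)$-face of $P$ for $\mathfrak k=1,\ldots,n-1$. The base cases $n=1$ (an ODE that integrates explicitly) and $n=2$ (the first author's earlier work \cite{Huang2023}) are already available. Given the inductive hypothesis, I would first build the boundary data of the dimension-$n$ problem face by face: starting from the prescribed vertex values $u(p_i)=\alpha_i$, apply the inductive solvability on each edge using the compatibility data induced at its endpoints, then on each $2$-face using the edge values just obtained, and continue up to each $(n-1)$-face. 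Lemma \ref{lemcompactible2} guarantees that the Guillemin condition on $P$ restricts to a Guillemin condition on every face, and the factorization of $\prod_i l_i$ into the linear factors tangent to a given face and those transverse to it shows that the ambient compatibility condition \eqref{comp-c1} reduces to exactly the compatibility condition of the restricted problem at every vertex of that face, so the inductive hypothesis applies.

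With the boundary data prescribed, I would run a continuity method on a family of the form $\det D^2 u_t = h_t(x)/\prod_i l_i(x)$ joining an explicit reference solution (built from a perturbation of $\sum_i l_i\ln l_i$) to the target equation. Openness reduces to the invertibility of the linearized Monge--Amp\`ere operator with boundary conditions imposed in weighted H\"older spaces; closedness requires uniform a priori estimates, up to $\partial P$, on the regular part $v_t := u_t - \sum_i l_i\ln l_i$. The hard step is precisely this boundary regularity: near the relative interior of an $(n-\mathfrak k)$-face the problem localizes to the model equation \eqref{model-3} on $\{x_1,\ldots,x_{\mathfrak k}>0\}\times \mathbb R^{n-\mathfrak k}$. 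The partial Legendre transform used by Rubin \cite{Rubin2015} produces a fully nonlinear (rather than quasilinear) equation in this range, so I would instead introduce weighted H\"older spaces adapted to each singular normal direction, prove a Schauder-type estimate for the linearization of \eqref{model-3} in those weights, and use a perturbation argument around the ansatz $\sum_i x_i\ln x_i + \text{smooth}$ to transfer regularity from the reference problem to the target. The induction is crucial here because the boundary values of $v_t$ on each lower-dimensional face are $C^\infty$ by the lower-dimensional case of the theorem, so the estimate only needs to propagate this known regularity inward.

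The main obstacle is the combined singularity of the Monge--Amp\`ere operator along each of the $\mathfrak k$ transverse directions meeting at a codimension-$\mathfrak k$ face: each factor $1/x_i$ in the right-hand side contributes its own scaling, and one must close a single weighted Schauder theory that handles all of them simultaneously. This requires understanding how the singular structures on faces of different codimensions interact, which is, in the authors' words, the influence of different singularity types at various positions on $\partial P$. Once the boundary estimates are established, the continuity method closes and existence follows. For uniqueness, any two solutions $u_1,u_2$ of \eqref{intro1}--\eqref{intro1-1} with the same vertex values have the same singular part $\sum_i l_i\ln l_i$, so their difference extends continuously to $\overline P$ with boundary values determined by the uniqueness of the face problems (inductive hypothesis); the standard Alexandrov/comparison principle for the Monge--Amp\`ere equation then forces $u_1\equiv u_2$ in $P$.
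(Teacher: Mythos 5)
Your induction-on-dimension skeleton and the face-by-face construction of boundary data from vertex values up through $(n-1)$-faces match the paper exactly, as does the reduction of the compatibility condition \eqref{comp-c1} to each face via Lemma~\ref{lemcompactible2}. The uniqueness argument (agree on $\partial P$ by the inductive face problems, then apply the Alexandrov comparison principle) is also the route the paper takes. However, the heart of your existence argument — a continuity method with a weighted Schauder theory for the linearization around $\sum_i l_i\ln l_i$ — diverges from the paper and, more importantly, contains a genuine gap. You write that you would ``introduce weighted H\"older spaces adapted to each singular normal direction, prove a Schauder-type estimate for the linearization of \eqref{model-3} in those weights, and use a perturbation argument,'' but none of these are standard and none are supplied; this is precisely the hard analytical content of the problem, and you defer it. The paper states up front that ``it is rather difficult to use the continuity method to solve \eqref{intro1}-\eqref{intro1-1}'' and does something else entirely.

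What the paper actually does for existence and regularity: (i) it proves existence of an Alexandrov (weak) solution of the Dirichlet problem (Theorem~\ref{thmdiri1}) by truncating the right-hand side near $\partial P$, solving the approximate problems, and constructing a uniform lower barrier of the form $\varphi - A(\prod_i l_i)^\alpha$; (ii) it proves the weak solution is strictly convex in $P$, hence smooth in the interior (Theorem~\ref{thm2.2}), by showing the graph cannot contain a segment reaching $\partial P$ — the key point is that $|\nabla u|\to\infty$ at $\partial P$ with a $x_1\ln x_1$-type rate, which rules out Pogorelov-type degeneracy; (iii) it raises regularity of the smooth part $v=u-\sum_i l_i\ln l_i$ up to codimension-one faces (Theorem~\ref{thm3-803}) via a Lipschitz barrier estimate, a blow-up plus Pogorelov's interior $C^{1,1}$ estimate, and \emph{then} the partial Legendre transform for the Schauder bootstrap — crucially applied a posteriori to a solution already known to be $C^{1,1}$ in the weighted sense, not as part of a continuity scheme; (iv) it pushes regularity to codimension-$\mathfrak k$ faces (Theorem~\ref{thmlocal-k}) by a sequence of refined asymptotic estimates ($|v-F|\lesssim x_1\cdots x_{\mathfrak k}$), a Liouville theorem (Theorem~\ref{thm-liouk}), a coordinate change that turns the equation for $v_{12\cdots\mathfrak k}$ into a uniformly elliptic equation, and an induction on $\mathfrak k$. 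So the direction of attack is opposite to yours: first produce a rough solution, then improve it; you propose to control the solution a priori in the very spaces whose existence theory is the open difficulty. Unless you can actually produce the weighted Schauder estimate and the invertibility of the linearized operator (openness) near the singular boundary — which would amount to an alternative proof at least as hard as the paper's — the continuity method does not close.
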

\begin{remark}\label{remark0-1}
It is interesting to note that we only impose the values on vertices to ensure the solvability of problem \eqref{intro1} and \eqref{intro1-1}.
\end{remark}

In the following, let us briefly explain our main ideas of the proof for Theorem \ref{mainthm0}.
It is rather difficult to use the continuity method to solve \eqref{intro1}-\eqref{intro1-1}. However, the existence of an Alexandrov solution of \eqref{intro1} with Dirichlet boundary condition is easy to obtain. By induction and previous discussion on the features of the Guillemin boundary problem, we can obtain an Alexandrov solution $u$ of \eqref{intro1}-\eqref{intro1-1} with $u(p_i)=\alpha_i$, $i=1,\cdots,\widehat N$. The main task for us is to raise the regularity of $u-\displaystyle \sum_{i=1}^N l_i(x)\ln l_i(x)$ from $C^\alpha(\overline{P})$ to $C^\infty(\overline{P})$. Unlike the two dimensional case, the interior regularity for $u$ also needs some considerations due to the well-known counterexample of Pogorelov. It is important to note that the gradient of $u$ blows up on $\partial P$. This gives the strict convexity of the Alexandrov solution $u$ as well as the interior regularity.

For the boundary regularity, we first restrict our discussion near the $(n-1)$-face. Since all our estimates are local, by an affine transformation, we only need to consider
\begin{equation}\label{0-s3-01}
\begin{split}
\det D^2 u=\frac{h(x)}{x_1},\quad \text{in}\quad Q_3=(0,3)\times (-3,3)^{n-1},\\
\end{split}
\end{equation}
and $u(0,x'')$ is a smooth, uniformly convex function solving
\begin{equation}\label{0-s3-02}
\det D^2_{x''} u(0,x'')=h(0,x''),\quad \text{in}\quad Q_3''=(-3,3)^{n-1}.
\end{equation}
In this case, we denote $x''=(x_2,\cdots,x_n)$ and $v(x)=u(x)-x_1\ln  x_1.$ It is sufficient to show that
\begin{equation*}
 v(x)\in C^\infty(\overline{Q_2}).
\end{equation*}
By constructing suitable barrier functions, one can show that $v$ is Lipschitz, i.e.,
\begin{equation*}
  |v(x)-v(0,x'')|\leq Cx_1,\quad \forall x\in Q_2.
\end{equation*}
Then, by a blow-up argument and  the Pogorelov estimate, a weighted $C^{1,1}$-estimate is derived for $v$. By this weighted $C^{1,1}$ estimate,  one can perform partial Legendre transformation to obtain the weighted $C^{2,\alpha}$ estimate. Following the ideas in \cite{JianWang2013} with suitable modifications, the $C^\infty$ regularity of $v$ can be achieved by a bootstrap argument.

\par It remains to consider higher co-dimensional faces.  For any fixed $\mathfrak k\ge 2$, consider the following type of equation:
\begin{equation}\label{0-locpro2-1}
\begin{split}
& \det D^2 u=\frac{h(x)}{\prod_{i=1}^{\mathfrak k} x_{i}},\quad \text{in}\quad Q_{3},\\
& \det D^2_{ij} u \left|_{\{x_l=0,l\in \Gamma,i,j\notin \Gamma\}}=\frac{h}{\prod_{i=1,i\notin \Gamma}^{\mathfrak k}x_i}\right|_{\{x_l=0,l\in \Gamma\}}.
\end{split}
\end{equation}
Here $\Gamma$ can be any subset of $\{1,2,\cdots,\mathfrak k\}$ and $Q_{3}=(0,3)^{\mathfrak k}\times (-3,3)^{n-\mathfrak k}$. Also we make the following convention such that for $\mathfrak k=n$, $\Gamma=\{1,\cdots,n\}$, the second equation of \eqref{0-locpro2-1} means
\begin{equation*}
h(0)=1.
\end{equation*}
 Let
\begin{equation}\label{0-locpro2-2}
v(x)=u(x)-\sum_{i=1}^\mathfrak k x_i\ln  x_i,\quad v(x)\in C^\infty(\overline{Q_{3}}\backslash \{x_1=\cdots=x_\mathfrak k=0\}).
\end{equation}
Assume
\begin{equation}\label{0-locpro2-3}
0<h(x)\in C^\infty(\overline{Q_3}),\quad u(x)\in C(\overline{Q_{3}}),\quad v|_{\partial_0 Q_3} \in C^\infty,
\end{equation}
where
\begin{equation*}
\partial_0 Q_3=\{x\in \partial Q_3| x_i=0, \text{ for some } i=1,\cdots,\mathfrak k\}.
\end{equation*}
For this part of proof, we use notations
\begin{equation*}
x'=(x_1,\cdots,x_\mathfrak k),\quad x''=(x_{\mathfrak k+1},\cdots,x_n)
\end{equation*}
and  assume $v(0',x'')$ is uniformly convex in $x''$.
We aim to prove
\begin{equation*}
v(x)\in C^\infty(\overline{Q_2})
\end{equation*}
in the present case. The equation for $v_{1\cdots \mathfrak k}$ plays an important role in the proof. After a suitable coordinate transformation, the boundary regularity of $v(x)$ can be reduced to the interior regularity of $v_{1\cdots \mathfrak k}$. The key asymptotic estimate is as follows:
\begin{equation}\label{0-asy-k}
|v(x',x'')-F(x',x'')|\le Cx_1\cdots x_{\mathfrak k},
\end{equation}
where $F\in C^\infty(\mathbb R^n)$ is a smooth extension of $v|_{\partial_0 Q_3}$. To obtain asymptotic estimate  \eqref{0-asy-k}, we first need to prove the following weak asymptotic estimate
\begin{equation}\label{0-asy-k-1}
|v(x',x'')-F(x',x'')|\le C  (x_1\cdots x_{\mathfrak k})^\frac{1}{\mathfrak k}+o(|x'|).
\end{equation}
Estimate \eqref{0-asy-k-1} implies $v$ is  Lipschitz up to $(n-\mathfrak k)-$faces. Then, a blow-up argument leads to study of the following Liouville-type theorem.
\begin{theorem}\label{0-thm-liouk}
Let $\mathfrak u\in C(\overline{(\mathbb R^+)^{\mathfrak k}\times \mathbb R^{n-\mathfrak k}})$ be an Alexandrov solution of the following equation
\begin{equation}\label{0-liouk-1}
\begin{split}
& \det D^2 \mathfrak u=\frac{1}{\prod\limits_{a=1}^\mathfrak k x_a},\quad \text{in}\quad (\mathbb R^+)^{\mathfrak k}\times \mathbb R^{n-\mathfrak k},\\
& \mathfrak u|_{\partial((\mathbb R^+)^{\mathfrak k}\times \mathbb R^{n-\mathfrak k})}=\sum_{a=1}^\mathfrak k x_a\ln  x_a+\frac 12 |x''|^2,\\
& \left|\mathfrak u(x)-\sum_{a=1}^{\mathfrak k} x_a\ln  x_a-\frac 12 |x''|^2\right|\le C   (x_1\cdots x_{\mathfrak k})^\frac{1}{\mathfrak k}.
\end{split}
\end{equation}
Then
\begin{equation*}
\mathfrak u(x)=\sum_{a=1}^{\mathfrak k} x_a\ln  x_a+\frac 12|x''|^2.
\end{equation*}
\end{theorem}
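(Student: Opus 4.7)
Let $U_0(x):=\sum_{a=1}^{\mathfrak k} x_a\ln x_a + \tfrac12|x''|^2$. A direct computation gives $D^2 U_0=\operatorname{diag}(x_1^{-1},\dots,x_{\mathfrak k}^{-1},1,\dots,1)$, so $U_0$ is a smooth, strictly convex function solving \eqref{0-liouk-1} with exactly the prescribed boundary values, and trivially satisfies the growth bound. Setting $w:=\mathfrak u - U_0$, we have $w=0$ on $\partial((\R^+)^{\mathfrak k}\times\R^{n-\mathfrak k})$ and $|w|\le C(x_1\cdots x_{\mathfrak k})^{1/\mathfrak k}$, so the statement reduces to showing $w\equiv 0$. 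Because $\partial_{x_a}U_0\to -\infty$ as $x_a\to 0^+$ while $w$ grows strictly slower than any positive power of $x_a$, the subdifferential of $\mathfrak u$ also blows up at the singular boundary, which rules out flat segments in the graph reaching $\partial$. Combined with Caffarelli's strict-convexity and $C^{2,\alpha}$ theory, this yields strict convexity of $\mathfrak u$ and $\mathfrak u\in C^\infty$ in the open orthant-slab, where the equation is classical.

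Next I would exploit the anisotropic scaling symmetry
\begin{equation*}
\mathfrak u_\lambda(y',y''):=\frac1\lambda\,\mathfrak u(\lambda y',\sqrt\lambda\, y'') - (\ln\lambda)\sum_{a=1}^{\mathfrak k} y_a,\qquad \lambda>0.
\end{equation*}
A block-matrix computation using $\det D^2\mathfrak u_\lambda=\lambda^{\mathfrak k}\det D^2\mathfrak u(\lambda y',\sqrt\lambda y'')$ shows that $\mathfrak u_\lambda$ solves the same Monge-Amp\`ere equation, satisfies the same prescribed boundary values face-by-face, and obeys the growth bound with the \emph{same} constant $C$; moreover $U_0$ is a fixed point of the family and $\mathfrak u_\lambda-U_0=\lambda^{-1}w(\lambda y',\sqrt\lambda y'')$. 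By the uniform growth bound and convexity, $\{\mathfrak u_\lambda\}_{\lambda\ge 1}$ is locally equibounded and equi-Lipschitz, so a subsequence converges locally uniformly to a convex limit $\mathfrak u_\infty$ as $\lambda\to\infty$; stability of Alexandrov solutions gives $\mathfrak u_\infty$ as an admissible solution which is itself invariant under the scaling group.

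The remaining task is to identify $\mathfrak u_\infty$ with $U_0$ and propagate equality back to $w\equiv 0$. One natural route is to linearize at $U_0$: expanding $\det D^2(U_0+w)=\det D^2 U_0$ with $(D^2U_0)^{-1}=\operatorname{diag}(x_1,\dots,x_{\mathfrak k},1,\dots,1)$ gives to leading order
\begin{equation*}
\mathcal L_0 w:=\sum_{a=1}^{\mathfrak k} x_a\,w_{x_ax_a}+\sum_{j>\mathfrak k} w_{x_jx_j}=O(\|D^2 w\|^2),
\end{equation*}
so a sharp Liouville statement for the degenerate operator $\mathcal L_0$ in the class of functions vanishing on $\partial$ and satisfying $|v|\le C(x_1\cdots x_{\mathfrak k})^{1/\mathfrak k}$—provable by separation of variables, since the one-dimensional Bessel operators $x\partial_x^2$ on $\R_+$ admit only solutions with prescribed growth exponents, all of which are excluded by the sublinear bound—combined with the scaling invariance and a bootstrap in $\lambda$ should close the argument.

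\textbf{Main obstacle.} The crux is this final step: since the growth bound $(x_1\cdots x_{\mathfrak k})^{1/\mathfrak k}$ is preserved exactly under the scaling of the second step, the naive blow-down limit $\mathfrak u_\infty$ is only guaranteed to be another admissible solution, not $U_0$ itself. Genuine rigidity must come from the Monge-Amp\`ere structure—either through the Liouville statement for $\mathcal L_0$ above, or alternatively through a full Legendre transform converting the singular boundary value problem into a J\"orgens--Calabi--Pogorelov-type entire problem $\det D^2\mathfrak U=\prod_{a\le\mathfrak k}\partial_{p_a}\mathfrak U$ on $\R^n$ (with $U_0^*(p)=\sum_a e^{p_a-1}+\tfrac12|p''|^2$ as the explicit solution), where classical rigidity for growth-controlled convex entire solutions applies, supplemented by the inductively established lower-dimensional Liouville theorems on the faces $\{x_a=0\}$.
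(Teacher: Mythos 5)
Your proposal correctly sets up the problem (writing $w=\mathfrak u-U_0$, noting the boundary data vanish, establishing strict convexity and interior smoothness) and correctly identifies the crucial anisotropic scaling $\mathfrak u_\lambda$, including the key observation that the growth bound is scale-invariant so the blow-down limit is just another admissible solution. You also honestly flag this as the main obstacle. However, neither of your two proposed closures works as stated, and both miss the mechanism the paper actually uses.

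The linearization $\mathcal L_0 w=O(\|D^2w\|^2)$ is circular: to treat the error term as small and apply a Liouville theorem for the degenerate operator $\mathcal L_0$, you would already need uniform second-derivative control on $w$ up to the corner stratum $\{x'=0\}$, which is exactly what is not available a priori. The separation-of-variables remark about $x\partial_x^2$ also does not transfer, since the eigenfunctions of $\mathcal L_0$ in the relevant weighted classes are not excluded by the $(x_1\cdots x_{\mathfrak k})^{1/\mathfrak k}$ bound alone (which is itself scale-invariant). The Legendre transform route produces $\det D^2\mathfrak U=\prod_{a\le\mathfrak k}\partial_{p_a}\mathfrak U$ on $\mathbb R^n$, but ``classical rigidity for growth-controlled convex entire solutions'' of this non-constant-right-hand-side equation is not a theorem one can cite; J\"orgens--Calabi--Pogorelov and its standard extensions do not cover it. What the paper does instead is exploit the induction on $\mathfrak k$ \emph{as the main engine}, not a supplement: by the inductive hypothesis (Theorem \ref{thmlocal-k} for $\mathfrak k-1$), the rescaled functions $\mathfrak v_\lambda$ enjoy uniform $C^l$ estimates on every compact subset of $\overline{(\mathbb R^+)^{\mathfrak k}\times\mathbb R^{n-\mathfrak k}}\setminus\{x'=0\}$, independently of $\lambda$. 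Evaluated at $|x'|=1$, this makes $D_{x'}\mathfrak v$ globally bounded. The proof then shows $D_{x'}\mathfrak v\equiv 0$ by contradiction: along a maximizing sequence for, say, $D_{x_1}\mathfrak v$, one passes to a blow-down limit $\mathfrak V$ (using these uniform estimates), and applies a strong maximum principle to the linear PDE satisfied by the \emph{actual} derivative $\mathfrak W=\mathfrak V_{x_1}$ of the nonlinear equation (not a formal linearization around $U_0$), splitting into cases according to whether the sup is attained at an interior point, on a face $\{x_a=0\}$ with $a\ge 2$ (where the induced boundary data force $\mathfrak W=0$), or on $\{x_1=0\}$ (handled via the $x_1=(y_1^2+y_2^2)/4$ substitution that renders the equation locally uniformly elliptic). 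Once $D_{x'}\mathfrak v\equiv0$, the boundary data give $\mathfrak v\equiv\frac12|x''|^2$. This is the step your proposal is missing.
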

The proof of Theorem \ref{0-thm-liouk} relies on the induction of the regularity of $v$ on $\mathfrak k$. For $n=\mathfrak k=2$, Theorem \ref{0-thm-liouk} can be proved by the construction of sub-solutions and super-solutions under a weaker growth condition near infinity (Lemma 2.4 of \cite{Huang2023}).  It would be interesting  whether Theorem \ref{0-thm-liouk} is still true if the last condition of \eqref{0-liouk-1} is replaced by
\begin{equation*}
\left|\mathfrak u(x)-\sum_{a=1}^{\mathfrak k} x_a\ln  x_a\right|=O(|x''|^2)+o(|x'|\ln |x'|),\quad \text{as}\quad |x|\rightarrow +\infty.
\end{equation*}
Theorem \ref{0-thm-liouk} implies the global $C^1$-regularity of $v$. The refinement of the asymptotic estimate for \eqref{0-asy-k-1} essentially relies on  the global $C^1$-regularity of $v$. Notice that
\begin{equation*}
|\nabla(v-F)(x',x'')|=o(1),\quad \text{near}\quad x'=0.
\end{equation*}
One can successfully construct sub-solutions and super-solutions to prove
\begin{equation}\label{0-asy-k-2}
|v(x',x'')-F(x',x'')|\le C\sum_{i\ne j,i,j=1}^{\mathfrak k} x_i x_j.
\end{equation}
 The estimate \eqref{0-asy-k-2} is important for obtaining the following estimates:
\begin{equation}\label{0-refine-growth1-0}|(v-F)(x',x'')|\lesssim C_{\epsilon} \frac{r^{l-1-\epsilon}}{r^{\mathfrak k-1}}x_{1} \cdots x_{\mathfrak k}, \end{equation}
for $\epsilon \in (0,1)$, $2\le l\le \mathfrak k-1$
on $\big(\partial[0,r]^{\mathfrak k}\big)\times[-2,2]^{n-{\mathfrak k}}$. By the estimate \eqref{0-asy-k-2} and the induction method, one can obtain the desired asymptotic estimate \eqref{0-asy-k}.   Then we will combine the  asymptotic estimate \eqref{0-asy-k} with elliptic PDE estimates and use inductive assumptions to improve the regularity of $v$ step by step. The appropriate order of induction and the use of coordinate transformations to find equations with a good structure for the derivative of $v$ play important roles in the proof.

The present paper is organized as follows. Section 2 is devoted to proving the existence of Alexandrov solution and the interior regularity of the Guillemin boundary problem for the Monge-Amp\`ere equation. In Section 3, the $C^\infty$ regularity of $u(x)-\displaystyle \sum_{i=1}^N l_i(x)\ln  l_i(x)$ up to the $(n-1)-$face of $P$ is proved. This estimate also serves as the starting point for our global $C^\infty$ regularity result. In Section 4, by refining asymptotic estimates and using an induction on $\mathfrak k$, we can prove the $C^\infty$ regularity of $u(x)-\displaystyle\sum_{i=1}^N l_i(x)\ln  l_i(x)$ up to the $(n-\mathfrak k)-$face of $P$ for $2\le \mathfrak k\le n$.  Finally, based on the results established in these sections, we prove Theorem \ref{mainthm0}.

\section{Preliminaries}
The first things in this section are to show that the conditions in Theorem \ref{mainthm0} are necessary.
The following proposition reveals that $P$ is a simple convex polytope is necessary.
\begin{prop}\label{prop1}
A necessary condition for the solvability of \eqref{intro1} and \eqref{intro1-1} with $0<h(x)\in C^\infty(\overline{P})$ is that $P$ is a simple convex polytope.
\end{prop}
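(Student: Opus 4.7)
\medskip
\noindent\textbf{Proof plan for Proposition \ref{prop1}.}

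My plan is to argue by contradiction. Suppose $P$ is not simple, so there is a vertex $p\in\overline{P}$ contained in $m>n$ of the codimension-one faces of $P$. After relabeling, $l_{i_1}(p)=\cdots=l_{i_m}(p)=0$, while $l_k(p)>0$ for $k\notin\{i_1,\dots,i_m\}$. Since $p$ is an isolated vertex, the gradients $Dl_{i_1},\dots,Dl_{i_m}$ must span $\mathbb R^n$ (otherwise the intersection of these hyperplanes would contain a line through $p$, contradicting that $p$ is a vertex). Also, because $P$ has non-empty interior locally at $p$, the tangent cone of $P$ at $p$ is open and non-empty, so I can fix a direction $\xi\in\mathbb R^n$ with $c_j:=\langle Dl_{i_j},\xi\rangle>0$ for every $j=1,\dots,m$; then along the ray $x(s)=p+s\xi$, $s\in(0,s_0]$, we have $l_{i_j}(x(s))=c_j s$.

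Next, I would extract information on $\det D^2 u$ from the Guillemin decomposition $u=\sum_{i=1}^N l_i\ln l_i+v$ with $v\in C^\infty(\overline P)$. Differentiating twice gives
\begin{equation*}
D^2u(x)=\sum_{i=1}^N\frac{Dl_i\otimes Dl_i}{l_i(x)}+D^2v(x).
\end{equation*}
In a neighborhood of $p$, the terms with $k\notin\{i_1,\dots,i_m\}$ together with $D^2v$ form a uniformly bounded symmetric matrix $B(x)$, so
\begin{equation*}
D^2u(x)=\sum_{j=1}^m\frac{Dl_{i_j}\otimes Dl_{i_j}}{l_{i_j}(x)}+B(x),\qquad \|B(x)\|\le C_0.
\end{equation*}
Since $u$ solves a Monge-Amp\`ere equation with positive right-hand side, $D^2u$ is positive definite on $P$, so by AM--GM applied to its eigenvalues,
\begin{equation*}
\det D^2 u(x)\le \left(\frac{\operatorname{tr}D^2u(x)}{n}\right)^{\!n}
\le \left(\frac{C_1}{\min_{j}l_{i_j}(x)}+C_0\right)^{\!n}.
\end{equation*}

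The last step is the comparison. Along the chosen ray $x(s)$, the above upper bound is of order $s^{-n}$ as $s\to 0^+$, whereas the equation \eqref{intro1} forces
\begin{equation*}
\det D^2u(x(s))=\frac{h(x(s))}{\prod_{i=1}^N l_i(x(s))}\ge \frac{c}{\prod_{j=1}^m l_{i_j}(x(s))}=\frac{c'}{s^{m}},
\end{equation*}
which grows like $s^{-m}$ with $m>n$. Letting $s\to 0^+$ yields a contradiction, completing the argument.

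\medskip
The steps are all essentially elementary once the decomposition is in place; the only place that requires some care is justifying the existence of a ray along which all the vanishing $l_{i_j}$ decay comparably, which I expect to be the main (but still minor) obstacle and which I would handle via the non-emptiness of the tangent cone together with the spanning property of the gradients $Dl_{i_j}$.
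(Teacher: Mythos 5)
Your argument is correct and takes a genuinely different route from the paper. The paper's proof is algebraic: it writes $D^2 u = D^2 v + \sum_i l_i^{-1}\,Dl_i\otimes Dl_i$ and expands $\det D^2 u$ term by term, then shows that any term whose singular indices are not all distinct cancels pairwise via the antisymmetry of $\delta^{\alpha_1\cdots\alpha_n}_{\beta_1\cdots\beta_n}$. The surviving terms involve at most $n$ distinct singular factors $l_{i_j}^{-1}$; multiplying by $\prod_{i=1}^N l_i$, which carries $k\ge n+1$ vanishing factors at the vertex, therefore leaves at least one vanishing factor, so $\lim_{x\to p}\bigl(\prod_i l_i\bigr)\det D^2 u = 0$, contradicting $h(p)>0$. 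Your proof instead bounds $\det D^2 u$ from above by $(\operatorname{tr}D^2u/n)^n$, uses the same Guillemin decomposition only to control the trace by $O\bigl((\min_j l_{i_j})^{-1}\bigr)$, and compares orders along a ray $p+s\xi$ chosen in the interior of the tangent cone: the upper bound is $O(s^{-n})$ while the equation forces $\det D^2 u\gtrsim s^{-m}$ with $m>n$. Your approach is more elementary and avoids the combinatorial cancellation entirely; it trades this for giving only an order-of-magnitude contradiction rather than the exact limit. The paper's expansion, by contrast, is the same computation reused in Lemma \ref{lemcompactible1} to pin down the precise value $\lim (\prod_i l_i)\det D^2 u = \prod_{l_k(p)\ne 0}l_k(p)\cdot\bigl[\det(Dl_{i_1},\ldots,Dl_{i_n})\bigr]^2(p)$ at a simple vertex, which is where the compatibility condition \eqref{comp-c1} comes from — so the paper's method does double duty. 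One minor point worth stating explicitly in your write-up: the ray $p+s\xi$ exists because $P$ is open and non-empty near $p$, so any $\xi$ pointing from $p$ into $P$ satisfies $\langle Dl_{i_j},\xi\rangle>0$ for all $j$; and the positivity of $D^2u$ needed for the AM--GM step follows from the convexity of $u$ together with $\det D^2u>0$ in $P$.
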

\begin{proof}
Suppose not. Without loss of generality, we may assume that $0$ is a vertex of $P$ and that 
\begin{equation*}
l_1(0)=\cdots=l_k(0)=0,\quad l_{k+1}(0),\cdots,l_N(0)>0,\quad  k\ge n+1.
\end{equation*}
Let
\begin{equation}\label{decom-0}
u(x)=v(x)+\sum_{i=1}^N l_i(x)\ln  l_i(x),\quad v\in C^\infty(\overline P).
\end{equation}
\par Claim: $\lim\limits_{x\rightarrow 0}(\prod_{i=1}^N l_i(x)) \det D^2 u=0$.
\\ Let $l_i(x)={\bf n_i}\cdot x-c_i$ where ${\bf n_i}$ is a non-zero vector in $\mathbb R^n$. Hence it is easy to see that every term in $\det D^2 u$ has the following form
\begin{equation*}
\delta^{\alpha_1\cdots\alpha_n}_{\beta_1\cdots\beta_n}v_{\alpha_1\beta_1}\cdots v_{\alpha_l\beta_l} \frac{n^{\alpha_{l+1}}_{i_{l+1}} n^{\beta_{l+1}}_{i_{l+1}}}{l_{i_{l+1}}(x)}\cdots\frac{n^{\alpha_{n}}_{i_{n}}n^{\beta_{n}}_{i_{n}}}{l_{i_n}(x)}
\end{equation*}
where $0\le l\le n$, $(\alpha_1,\cdots,\alpha_n),(\beta_1,\cdots,\beta_n)$ are permutations of $(1,\cdots,n)$ and
\begin{equation*}
\delta^{\alpha_1\cdots\alpha_n}_{\beta_1\cdots\beta_n}=\begin{cases} 1,\quad (\beta_1,\cdots,\beta_n) \text{ is an even permutation of } (\alpha_1,\cdots,\alpha_n),\\
-1,\quad  (\beta_1,\cdots,\beta_n) \text{ is an odd permutation of } (\alpha_1,\cdots,\alpha_n).
\end{cases}
\end{equation*}
Now we discuss the terms such that  $i_{l+1},\cdots,i_n$  are not mutually distinct.
Suppose $i_{l+1}=i_{l+2}$. Then
\begin{equation*}
\begin{split}
&\delta^{\alpha_1\cdots\alpha_n}_{\beta_1\cdots\beta_n}v_{\alpha_1\beta_1}\cdots v_{\alpha_l\beta_l} \frac{n^{\alpha_{l+1}}_{i_{l+1}} n^{\beta_{l+1}}_{i_{l+1}}}{l_{i_{l+1}}(x)}\cdots\frac{n^{\alpha_{n}}_{i_{n}}n^{\beta_{n}}_{i_{n}}}{l_{i_n}(x)}\\
+&\delta^{\alpha_1\cdots\alpha_n}_{\beta_1\cdots\beta_{l+2}\beta_{l+1}\cdots\beta_n}v_{\alpha_1\beta_1}\cdots v_{\alpha_l\beta_l} \frac{n^{\alpha_{l+1}}_{i_{l+1}} n^{\beta_{l+2}}_{i_{l+1}}}{l_{i_{l+1}}(x)}\cdot \frac{n^{\alpha_{l+2}}_{i_{l+2}} n^{\beta_{l+1}}_{i_{l+2}}}{l_{i_{l+2}}(x)}\cdots\frac{n^{\alpha_{n}}_{i_{n}}n^{\beta_{n}}_{i_{n}}}{l_{i_n}(x)}=0
\end{split}
\end{equation*}
which follows from
 \begin{equation*}
 n^{\alpha_{l+1}}_{i_{l+1}}n^{\beta_{l+1}}_{i_{l+1}}n^{\alpha_{l+2}}_{i_{l+2}}n^{\beta_{l+2}}_{i_{l+2}}=n^{\alpha_{l+1}}_{i_{l+1}}n^{\beta_{l+2}}_{i_{l+1}}n^{\alpha_{l+2}}_{i_{l+2}}n^{\beta_{l+1}}_{i_{l+2}}, \quad
 \delta^{\alpha_1\cdots\alpha_n}_{\beta_1\cdots\beta_l\beta_{l+2}\beta_{l+1}\cdots\beta_n}=-\delta^{\alpha_1\cdots\alpha_n}_{\beta_1\cdots\beta_n}.
 \end{equation*}
The above argument implies $i_{l+1},\cdots,i_n$ are all distinct  if these terms do not cancel each other out. This implies the Claim which contradicts $h(0)>0$.
\end{proof}
Another necessary condition for the solvability is the relation between the values of $h$ and $l_i$, $i=1,\cdots,N$ at vertices $p_k$.
Due to the affine invariance of \eqref{intro1}, for simplicity in the statements of Lemma \ref{lemcompactible1} and Lemma \ref{lemcompactible2},  we may just assume that
$$l_1(x)=x_1,\cdots,l_n(x)=x_n,$$
and that $0$ is a vertex of $P$.
\begin{lemma}\label{lemcompactible1}
Suppose $u$ solves \eqref{intro1} and \eqref{intro1-1}. Then the following must hold
\begin{equation*}
h(0)=l_{n+1}(0)\cdots l_N(0).
\end{equation*}
\end{lemma}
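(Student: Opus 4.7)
The plan is to take the decomposition \eqref{decom-0} given in the proof of Proposition \ref{prop1}, namely $u = v + \sum_{i=1}^N l_i(x)\ln l_i(x)$ with $v \in C^\infty(\overline{P})$, and then extract the exact leading singular behavior of $\det D^2 u$ at the vertex $0$. Rewriting \eqref{intro1} as
\[
h(x) = \Bigl(\prod_{i=1}^N l_i(x)\Bigr)\det D^2 u(x),
\]
and using $l_i(x) = x_i$ for $i=1,\dots,n$ together with $l_i(0) > 0$ for $i>n$, it suffices to show
\[
\lim_{x\to 0}\, x_1\cdots x_n \,\det D^2 u(x) \;=\; 1,
\]
from which the identity $h(0) = l_{n+1}(0)\cdots l_N(0)$ follows immediately by continuity.

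To compute the Hessian, I would differentiate \eqref{decom-0} to obtain
\[
u_{ij}(x) = v_{ij}(x) + \sum_{k=1}^N \frac{(l_k)_i (l_k)_j}{l_k(x)}.
\]
For $k \le n$ we have $l_k = x_k$, so $(l_k)_i = \delta_{ki}$ and the contribution to $u_{ij}$ is $\delta_{ij}\,\delta_{ki}/x_k$, which is singular only on the diagonal. For $k > n$, $l_k(0)\neq 0$, so these terms together with $v_{ij}$ form a matrix $A(x) := \bigl(v_{ij}(x) + \sum_{k>n} (l_k)_i(l_k)_j/l_k(x)\bigr)_{ij}$ which is smooth up to $0$. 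Therefore
\[
D^2 u(x) = A(x) + \operatorname{diag}\!\bigl(1/x_1,\dots,1/x_n\bigr).
\]

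Factoring out the diagonal part $D := \operatorname{diag}(1/x_1,\dots,1/x_n)$, I get
\[
\det D^2 u(x) = \det(D)\,\det\!\bigl(I + D^{-1} A(x)\bigr) = \frac{1}{x_1\cdots x_n}\,\det\!\bigl(I + D^{-1} A(x)\bigr),
\]
where the $(i,j)$-entry of $D^{-1}A$ equals $x_i\, A_{ij}(x)$. Since $A$ is bounded near $0$, every entry of $D^{-1}A(x)$ tends to $0$ as $x\to 0$, hence $\det(I + D^{-1}A(x)) \to 1$. Multiplying by $x_1\cdots x_n$ gives the claimed limit, and the lemma follows.

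The computation is essentially routine; the only point requiring care is the clean separation of the singular piece from the smooth piece. The crucial observation is that the singularity structure is diagonal at the vertex, so no off-diagonal cross terms survive in the leading asymptotics — the main obstacle, if any, is simply recognizing this algebraic factorization. Everything else is continuity of $v$, $h$, and the $l_k$ for $k>n$ at the vertex.
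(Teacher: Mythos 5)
Your argument is correct and is essentially the paper's own proof, just written out in full; the paper dismisses the computation in one line (``It easily follows from \eqref{intro1} and \eqref{decom-0}''), whereas you spell out the diagonal-plus-bounded decomposition of $D^2 u$ at the vertex and the factorization $\det(D+A)=\det D\cdot\det(I+D^{-1}A)$ that makes the limit transparent. There is no substantive difference in approach.
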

\begin{proof}
It easily follows from \eqref{intro1} and \eqref{decom-0} that
\begin{equation*}
\begin{split}
h(0)&=\lim\limits_{x\rightarrow 0}\left(\prod_{i=1}^N l_i(x)\right) \det D^2 u\\
&=l_{n+1}(0)\cdots l_N(0).
\end{split}
\end{equation*}
\end{proof}

\begin{remark}
Lemma \ref{lemcompactible1} is exactly the condition \eqref{comp-c1} in Theorem \ref{mainthm0}.
\end{remark}
As pointed out in Remark \ref{remark0-1}, we can only prescribe boundary data on the vertices of $P$. In fact, we can generate boundary data on $\partial P$ by induction, as described in the following lemma.
\begin{lemma}\label{lemcompactible2} Suppose $u$ solves \eqref{intro1} and \eqref{intro1-1}. Let $x'=(x_1,\cdots,x_{k}),x''=(x_{k+1},\cdots,x_n)$ for some $k\in\{1,\cdots,n-1\}$.
Then $v(x'')=u(0',x'')$ solves
\begin{equation*}
\begin{split}
&\det D^2_{x''} v=\frac{h(0',x'')}{\displaystyle \prod_{i=k+1}^{N}l_i(0',x'')},\quad \text{in}\quad \text{int} (\partial P\cap \{x_{1}=\cdots=x_k=0\}),
\\ & v(x'')-\sum_{i=k+1}^N l_i(0',x'')\ln  l_i(0',x'')\in C^\infty(\partial P\cap \{x_{1}=\cdots=x_k=0\}).
\end{split}
\end{equation*}
\end{lemma}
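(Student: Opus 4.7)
The plan is to invoke the Guillemin boundary condition \eqref{intro1-1} to split off the singular part of $u$, restrict cleanly to the face, and then extract the leading term of $\det D^2 u$ as $x'\to 0$ via a block-matrix (Schur-complement) computation. Specifically, write
\[
u(x)=\sum_{i=1}^{N} l_i(x)\ln l_i(x)+w(x),\qquad w\in C^\infty(\overline P),
\]
and define the auxiliary function
\[
\Phi(x)=\sum_{i=k+1}^{N} l_i(x)\ln l_i(x)+w(x),
\]
which, in a neighborhood of the interior of $F:=\overline P\cap\{x_1=\cdots=x_k=0\}$, is smooth because $l_{k+1}=x_{k+1},\dots,l_n=x_n>0$ and $l_{n+1}(0',x''),\dots,l_N(0',x'')>0$ there. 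Using the standing normalization $l_i=x_i$ for $i\le n$ and the convention $0\ln 0=0$, evaluating at $x'=0$ gives $v(x'')=u(0',x'')=\Phi(0',x'')$, so
\[
v(x'')-\sum_{i=k+1}^{N} l_i(0',x'')\ln l_i(0',x'')=w(0',x''),
\]
which is smooth on $F$. This handles the second conclusion.

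For the Monge--Amp\`ere equation on $F$, I would exploit the block decomposition
\[
D^2 u(x)=\begin{pmatrix}\operatorname{diag}(1/x_1,\dots,1/x_k)+\Phi_{x'x'} & \Phi_{x'x''}\\ \Phi_{x''x'} & \Phi_{x''x''}\end{pmatrix},
\]
valid near $F$ with the blocks of $D^2\Phi$ uniformly bounded. Applying the standard block-determinant/Schur-complement formula and factoring out $\operatorname{diag}(1/x_1,\dots,1/x_k)$, I obtain
\[
\Bigl(\prod_{i=1}^{k}x_i\Bigr)\det D^2 u(x)=\det\bigl(I+\operatorname{diag}(x_i)\,\Phi_{x'x'}\bigr)\cdot\det\!\Bigl(\Phi_{x''x''}-\Phi_{x''x'}\bigl(\operatorname{diag}(1/x_i)+\Phi_{x'x'}\bigr)^{-1}\Phi_{x'x''}\Bigr).
\]
As $x'\to 0$ with $x''$ fixed in the interior of $F$, the first factor tends to $1$ and the inverse in the second factor tends to $0$, so the right-hand side converges to $\det\Phi_{x''x''}(0',x'')=\det D^2_{x''}v(x'')$.

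On the other hand, from \eqref{intro1},
\[
\Bigl(\prod_{i=1}^{k}x_i\Bigr)\det D^2 u(x)=\frac{h(x)}{\prod_{i=k+1}^{N}l_i(x)},
\]
and taking the limit $x'\to 0$ yields
\[
\det D^2_{x''} v(x'')=\frac{h(0',x'')}{\prod_{i=k+1}^{N}l_i(0',x'')},
\]
which is the first claim.

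The only point requiring real care is justifying the passage to the limit in the Schur-complement identity: one needs to know that $D^2\Phi$ is uniformly bounded near the interior of $F$ (this is immediate from $\Phi\in C^\infty$ near those points) and that $\operatorname{diag}(1/x_i)+\Phi_{x'x'}$ is invertible with norm blowing up like $\max_i x_i^{-1}$, so its inverse has norm $O(\max_i x_i)\to 0$. Both are routine, so the identification of the leading singular term is clean and no bootstrapping is needed; this is the only delicate step, and it poses no serious obstacle.
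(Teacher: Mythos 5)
Your proposal is correct and takes essentially the same route as the paper: split off the singular part via the decomposition $u=\sum_{i=1}^N l_i\ln l_i+w$, multiply $\det D^2 u$ by $\prod_{i=1}^k x_i$, and pass to the limit $x'\to 0'$; the smoothness of $v-\sum_{i=k+1}^N l_i(0',\cdot)\ln l_i(0',\cdot)$ then follows immediately because $w\in C^\infty(\overline P)$ restricts smoothly to the face. The only cosmetic difference is that you carry out the limit computation via a Schur-complement block-determinant identity rather than the permutation expansion from Proposition~\ref{prop1}, and both correctly isolate the leading singular factor.
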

\begin{proof}
It also follows from \eqref{intro1} and \eqref{decom-0} easily.
\end{proof}
\begin{remark}
Lemma \ref{lemcompactible2} indicates that the solvability of $n-$dimensional Guillemin boundary value problem depends on the solvability of $(n-1)$-dimensional Guillemin boundary value problem. This is because the restriction of $u$ on each $(n-1)$-face satisfies a Monge-Amp\`ere equation with the Guillemin boundary condition.
\end{remark}
The above discussion suggests that the Guillemin boundary value problem can be regarded as a Dirichlet boundary value problem with specified boundary data.  Consider
\begin{equation}\label{Dirichlet1}
\begin{cases}
\det D^2 \mathfrak u=\displaystyle \frac{\mathfrak h}{\prod_{i=1}^N l_i},&\quad \text{in}\quad P,\\
\mathfrak u=\varphi,&\quad \text{on}\quad \partial P.
\end{cases}
\end{equation}
\begin{theorem}\label{thmdiri1}
Let $\varphi$ be a  function such that $\varphi-\displaystyle \sum_{i=1}^N l_i\ln  l_i\in C^2(\overline P)$.
Suppose $\varphi$ is convex on each $(n-1)$-face of $P$ and $0<\frac 1{\Lambda}\le \mathfrak h\le \Lambda<+\infty$ for some positive constant $\Lambda>1$. Then there is a unique convex function $\mathfrak u\in C(\overline P)$ solves \eqref{Dirichlet1}.
\end{theorem}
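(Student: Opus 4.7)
The plan is to deduce uniqueness from the standard comparison principle for Alexandrov solutions of the Monge-Amp\`ere equation, and to establish existence by a monotone approximation of the singular right-hand side combined with a global sub-solution construction.

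If $\mathfrak u_1,\mathfrak u_2\in C(\overline P)$ are convex Alexandrov solutions of \eqref{Dirichlet1} sharing the same boundary data, then the standard comparison principle for Monge-Amp\`ere on convex domains yields $\mathfrak u_1\equiv\mathfrak u_2$, settling uniqueness.

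For existence, I set $f_\varepsilon(x):=\mathfrak h(x)/\prod_{i=1}^N(l_i(x)+\varepsilon)\in C^\infty(\overline P)\cap L^\infty(\overline P)$, which increases pointwise to $f:=\mathfrak h/\prod l_i$ as $\varepsilon\downarrow 0$. The datum $\varphi$ is continuous on $\overline P$ (since $t\ln t$ extends by $0$ at $t=0$) and admits a convex extension to $\overline P$: its lower convex envelope agrees with $\varphi$ on $\partial P$ because of the face-wise convexity hypothesis. The classical Aleksandrov-Rauch-Taylor existence theory then produces a unique convex $\mathfrak u_\varepsilon\in C(\overline P)$ with $\det D^2\mathfrak u_\varepsilon=f_\varepsilon$ in $P$ and $\mathfrak u_\varepsilon=\varphi$ on $\partial P$. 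The comparison principle gives $\{\mathfrak u_\varepsilon\}$ non-increasing in $\varepsilon$, while convexity yields $\mathfrak u_\varepsilon\le\max_{\partial P}\varphi$. For a uniform lower bound, I construct a global convex sub-solution
\begin{equation*}
\underline{\mathfrak u}(x)=\tau\sum_{i=1}^N l_i(x)\ln l_i(x)+\mu|x-x_0|^2-K,
\end{equation*}
with $x_0\in P$ fixed, $\mu>0$ chosen so that $D^2\underline{\mathfrak u}=\tau\sum_i\nabla l_i\otimes\nabla l_i/l_i+2\mu I$ is positive definite on $\overline P$, and $K$ large enough that $\underline{\mathfrak u}\le\varphi$ on $\overline P$. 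Near any point where exactly $\mathfrak k$ of the $l_i$'s vanish, simplicity of $P$ makes the corresponding gradients linearly independent, and the matrix-determinant lemma gives
\begin{equation*}
\det D^2\underline{\mathfrak u}\gtrsim\tau^{\mathfrak k}(2\mu)^{n-\mathfrak k}\,\frac{c_0(P)}{\prod_{\{l_i\text{ small}\}}l_i(x)},
\end{equation*}
which dominates $f\le\Lambda\bigl/\bigl(c'\prod_{\{l_i\text{ small}\}}l_i\bigr)$ once $\tau$ is large (depending on $\Lambda$, $\mu$ and the geometric constants of $P$). Comparison then yields $\underline{\mathfrak u}\le\mathfrak u_\varepsilon$ uniformly in $\varepsilon$.

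The monotone limit $\mathfrak u:=\lim_{\varepsilon\downarrow 0}\mathfrak u_\varepsilon$ is bounded and convex on $P$; by Dini's theorem the convergence is locally uniform on $P$, and weak continuity of the Monge-Amp\`ere operator gives $\det D^2\mathfrak u=f$ in the Alexandrov sense in $P$. The identity $\mathfrak u=\varphi$ on $\partial P$ is retained by the pointwise limit, and continuity of $\mathfrak u$ up to $\partial P$ at each $x_0\in\partial P$ follows by sandwiching: the bound $\limsup_{x\to x_0}\mathfrak u(x)\le\limsup_{x\to x_0}\mathfrak u_{\varepsilon_0}(x)=\varphi(x_0)$ for any fixed $\varepsilon_0>0$ uses continuity of $\mathfrak u_{\varepsilon_0}$ on $\overline P$, while for the $\liminf$ I use a local variant $\underline{w}_{x_0}$ of $\underline{\mathfrak u}$ on $B(x_0,r)\cap P$, with the additive constant re-chosen so $\underline{w}_{x_0}(x_0)=\varphi(x_0)$ and adjusted by the affine tangent to $\varphi-\sum l_i\ln l_i\in C^2(\overline P)$ at $x_0$, then compared with $\mathfrak u_\varepsilon$ on the local domain using the global lower bound on $\partial B(x_0,r)\cap P$. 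The main technical obstacle is precisely the design of these barriers with correct singular matching across $(n-\mathfrak k)$-faces of arbitrary codimension; simplicity of $P$ ensures that $\sum l_i\ln l_i$ has Hessian determinant of the correct order $\prod l_{i_j}^{-1}$ at each face, matching the singularity of $f$, while the face-wise convexity of $\varphi$ supplies the affine supports needed for the local matching.
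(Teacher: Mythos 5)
Your overall strategy mirrors the paper's: regularize the right-hand side to get a bounded source, solve the regularized Dirichlet problem, establish uniform sub- and super-barriers, and pass to the limit. The concrete choices differ — you replace $\prod_i l_i$ by $\prod_i(l_i+\varepsilon)$ whereas the paper truncates $\mathfrak h$ to $\mathfrak h_k=\mathfrak h\,\chi_{\{\operatorname{dist}(\cdot,\partial P)>1/k\}}$, and your sub-barrier is $\tau\sum_i l_i\ln l_i+\mu|x-x_0|^2-K$ whereas the paper takes $H=\varphi-A(\prod_i l_i)^\alpha$ with $0<\alpha<1/n$. Both regularizations and both barrier singularity types would do the job; the essential point (that the barrier's Hessian determinant scales like $(\prod_{\text{small}}l_i)^{-1}$ on a codimension-$\mathfrak k$ face by simplicity of $P$) is the same.

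There are, however, two places where you are implicitly using facts that are in fact the real technical content here. The first is the sentence ``The classical Aleksandrov--Rauch--Taylor existence theory then produces a unique convex $\mathfrak u_\varepsilon\in C(\overline P)$'', justified by ``its lower convex envelope agrees with $\varphi$ on $\partial P$ because of the face-wise convexity hypothesis.'' The paper explicitly remarks that none of the standard existence theorems apply here, precisely because $P$ is a (non-strictly-convex) polytope and the data is merely $C(\partial P)$; it devotes Appendix~B (Theorem~\ref{appthm} together with Lemma~\ref{lemdb1}) to establishing that face-wise convexity provides, for each $x_0\in\partial P$ and $\varepsilon>0$, an affine function $l\le\varphi$ on $\partial P$ with $l(x_0)\ge\varphi(x_0)-\varepsilon$. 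That is exactly the statement you are asserting about the convex envelope, and it is not automatic: the affine minorant must be arranged to dip below $\varphi$ on all other faces, which is where the polytope structure really enters. You should either cite such a result for polytope domains or supply the construction.

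The second soft spot is boundary continuity of the limit. The decreasing limit $\mathfrak u=\lim_{\varepsilon\downarrow0}\mathfrak u_\varepsilon$ is automatically upper semicontinuous, and your $\limsup$ bound via a fixed $\mathfrak u_{\varepsilon_0}\in C(\overline P)$ is fine; but your global sub-barrier $\underline{\mathfrak u}=\tau\sum_i l_i\ln l_i+\mu|x-x_0|^2-K$ does not equal $\varphi$ on $\partial P$, so the $\liminf$ bound requires the local patch $\underline w_{x_0}$. You only sketch this, and the natural attempt $\tau\sum_i l_i\ln l_i+(\text{affine})-M|x-x_0|^2$ with $M\sim\|D^2 v\|_{\infty}$ need not be convex near $x_0$ (at a point where only one $l_i$ vanishes, $\sum_i\nabla l_i\otimes\nabla l_i/l_i$ blows up in a single direction and the $-2MI$ term can dominate the remaining $n-1$ directions), so the adjustment needs to be done with more care than the proposal suggests. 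The paper sidesteps this issue entirely by choosing the sub-barrier $H=\varphi-A(\prod_i l_i)^\alpha$, which vanishes on $\partial P$ in the correction term and therefore already matches $\varphi$ exactly there; combined with the harmonic extension $\varphi_h$ as a super-barrier (also in $C(\overline P)$ with $\varphi_h=\varphi$ on $\partial P$), boundary continuity of the limit follows by simple sandwiching. If you keep your barrier, you need to actually produce the local barriers $\underline w_{x_0}$; alternatively, switching to a correction that vanishes on $\partial P$ (like $(\prod_i l_i)^\alpha$ or $\prod_i l_i^{1/n}$) makes the boundary-continuity step trivial.
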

\begin{proof}
Replace $\mathfrak h$ by $\mathfrak h_k$ in \eqref{Dirichlet1}, where $k=1,2,\cdots$, and
\begin{equation*}
\mathfrak h_k=\begin{cases} \mathfrak h,\quad \text{in}\quad P_k=\{x\in P| \text{dist}(x,\partial P)>\frac 1k\},\\
0,\quad \text{in}\quad P\backslash P_k.
\end{cases}
\end{equation*}
Then by Theorem \ref{appthm}, one knows that \eqref{Dirichlet1} admits a unique Alexandrov solution $\mathfrak u_k\in C(\overline P)$ for such $\mathfrak h_k$.
Let $\varphi_h$ be the harmonic extension of $\varphi$. Then it is easy to see that $\mathfrak u_k\le \varphi_h$.

Hence, the remaining task is to demonstrate the existence of a uniform lower barrier. We will just use the same barrier as in \cite{Rubin2015}. For the convenience of the reader, we include the proof here.
Let
\begin{equation*}
\mathcal H(x)=\left(\prod_{i=1}^N l_i(x)\right)^\alpha,\quad \alpha\in \left(0,1\right).
\end{equation*}
Then a direct computation yields that
\begin{equation}
\begin{split}
\frac{\mathcal H_k}{\mathcal H}=\sum_{i=1}^N \frac{\alpha n_i^k}{l_i},\quad \frac{\mathcal H_{km}}{\mathcal H}=\frac{\mathcal H_k\mathcal H_m}{\mathcal H^2}-\sum_{i=1}^N \frac{\alpha n_i^k n_i^m}{l_i^2}=-\sum_{i=1}^N\frac{\alpha n_i^k n_i^m}{l_i^2}+\alpha^2 \sum_{i,j=1}^N\frac{n_i^kn_j^m}{l_il_j}.
\end{split}
\end{equation}
Recall the following simple fact
\begin{equation*}
\mathbb A={\bf a}\otimes {\bf a}+{\bf b}\otimes {\bf b}-{\bf a}\otimes {\bf b}-{\bf b}\otimes {\bf a}\ge 0,
\end{equation*}
where ${\bf a}, {\bf b}$ are two column vectors in $\mathbb R^n$.
Hence for $\alpha>0$ small enough, one knows
\begin{equation}
-\frac{\mathcal H_{km}}{\mathcal H}\ge \frac{\alpha}2 \sum_{i=1}^N\frac{n_i^k n_i^m}{l_i^2}.
\end{equation}
Then it follows from above that
\begin{equation*}
\det (-D^2\mathcal H)\ge C\left(\prod_{i=1}^N l_i\right)^{n\alpha-2}\ge C\left(\prod_{i=1}^N l_i\right)^{-1}
\end{equation*}
provided $\alpha<1/n$.

Let $H(x)=\varphi-A\mathcal H$ for some positive $A$ large enough such that
\begin{equation*}
\det D^2  H\ge \det D^2 \mathfrak u\ge\det D^2  \mathfrak u_k.
\end{equation*}
By maximum principle, one obtains $\mathfrak u_k\ge H$. This implies the present theorem.
\end{proof}
For $n=2$, the strict convexity of $\mathfrak u$ directly follows  from the positivity of Monge-Amp\`ere measure in the interior. However, Pogorelov's counterexample shows that this is not true in general dimensions $n\ge 3$. Hence, we need the following interior regularity theorem.
\begin{theorem}\label{thm2.2}
Suppose all the assumptions in Theorem \ref{thmdiri1} are fulfilled.
Moreover, $\varphi$ is strictly convex on each $(n-1)$-face of $P$ and $\mathfrak h\in C^\infty(P)$. Then $\mathfrak u\in C^\infty(P)$.
\end{theorem}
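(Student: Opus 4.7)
My plan is to reduce the problem to interior strict convexity of $\mathfrak u$, after which smoothness will follow from standard interior Monge-Amp\`ere regularity theory. On any compactly contained subdomain $K\subset\subset P$, the right-hand side $\mathfrak h/\prod_{i=1}^N l_i$ is smooth and bounded between two positive constants, so once $\mathfrak u$ is known to be strictly convex on $K$ I would successively invoke Caffarelli's interior $C^{1,\alpha}$ estimate, then Caffarelli's interior $C^{2,\alpha}$ Schauder estimate, and finally differentiate the equation and iterate Schauder to obtain $\mathfrak u\in C^\infty(K)$; since $K$ is arbitrary this would yield $\mathfrak u\in C^\infty(P)$.

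The crux is therefore the interior strict convexity, which I would approach by contradiction. Assume $\mathfrak u$ is not strictly convex at some $x_0\in P$ and let $\ell$ be a supporting hyperplane at $x_0$ with contact set $\Sigma=\{\mathfrak u=\ell\}\supsetneq\{x_0\}$. A local application of Caffarelli's strict convexity theorem on compact sub-domains of $P$ (where the right-hand side is bounded above and below) forces every extremal point of the convex set $\Sigma$ to lie on $\partial P$. In particular $\Sigma$ must contain a non-degenerate segment whose two endpoints $x_1,x_2$ both belong to $\partial P$.

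Ruling out such a contact segment is the main obstacle, and the key input should be the gradient blow-up of $\mathfrak u$ at $\partial P$. Using the lower barrier $\mathfrak u\ge\varphi-A\mathcal H$ constructed in Theorem \ref{thmdiri1}, the upper barrier given by the harmonic extension of $\varphi$, and the singular structure $\varphi-\sum_i l_i\ln l_i\in C^2(\overline P)$, one expects to verify that at any boundary point $x_1\in\partial P$ the sub-differential $\partial\mathfrak u(x_1)$ is unbounded in the outward conormal direction(s) of $\partial P$ at $x_1$. Suppose first that $x_1$ lies in the relative interior of an $(n-1)$-face $\{l_1=0\}$. Since $\mathfrak u|_{\partial P}=\varphi$ and $\ell\le\mathfrak u$ in $P$, the restriction of $\varphi-\ell$ to the face is nonnegative and vanishes at $x_1$, so by the assumed strict convexity of $\varphi$ on the face, $x_1$ is its unique minimum there. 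Combined with the logarithmic normal singularity of $\mathfrak u$ near the face, one then shows that the normal slopes of $\mathfrak u$ on a short segment ending at $x_1$ cannot uniformly equal the fixed affine slope $p_\ell$, contradicting $\mathfrak u=\ell$ on that segment. When $x_1$ lies on a face of codimension $\ge 2$ the same mechanism applies, with several logarithmic singularities reinforcing one another while the strict convexity of $\varphi$ on the containing $(n-1)$-faces still provides a unique vanishing point of $\varphi-\ell$. This rules out the contact segment and gives strict convexity of $\mathfrak u$ in $P$. I expect the most delicate step to be making this sub-differential blow-up analysis at boundary points precise, uniformly across the various face codimensions.
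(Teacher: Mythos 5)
Your high-level strategy matches the paper's exactly: reduce the theorem to interior strict convexity of $\mathfrak u$, invoke Caffarelli's localization theorem to see a putative contact set must contain a segment hitting $\partial P$, and then exclude such a segment using the singular boundary behaviour. However, there is a genuine gap in the central step: you presuppose a ``logarithmic normal singularity of $\mathfrak u$'' near a face and try to derive it from the lower barrier $\mathfrak u\ge\varphi-A(\prod l_i)^\alpha$ of Theorem~\ref{thmdiri1} together with the harmonic-extension upper bound and the $C^2$ structure of $\varphi-\sum l_i\ln l_i$. These tools do not yield it. The lower barrier and the harmonic extension only control $\mathfrak u$ from below and above by quantities that converge to $\varphi$ on $\partial P$ at power-type rates; neither forces $\mathfrak u$ itself to dip below $\varphi(0,x'')$ at a super-linear rate in $x_1$. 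The log-type structure of the \emph{boundary data} $\varphi$ does not automatically transfer to the \emph{solution} along normal directions into $P$ --- that is precisely what needs to be proven, and without it the ``normal slopes cannot equal $p_\ell$'' step is circular.

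What is actually required is a new, localized \emph{upper} barrier (supersolution) for $\mathfrak u$ near the interior of an $(n-1)$-face. The paper builds
\begin{equation*}
\mathcal H(x)=4\varphi(0,x'')-\varepsilon\,x_1(-\ln x_1)^{1/2}+\tfrac{1}{\varepsilon}x_1,
\end{equation*}
checks $\det D^2\mathcal H\asymp\varepsilon\,x_1^{-1}(-\ln x_1)^{-1/2}\cdot\det D^2_{x''}\varphi(0,x'')\le h/x_1\asymp\det D^2\mathfrak u$ for small $\varepsilon$ (using the strict convexity of $\varphi$ on the face), and verifies $\mathcal H\ge\mathfrak u$ on the boundary of a thin slab. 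This gives the quantitative non-Lipschitz decay $\mathfrak u(x_1,p'')\le\mathfrak u(0,p'')-Cx_1(-\ln x_1)^{1/2}$, which is exactly what kills the contact segment in the interior-of-a-face case, and, combined with a convexity-averaging step moving from a corner to neighbouring $(n-1)$-faces, also in the higher-codimension case. Your proposal would go through once this barrier (or an equivalent quantitative statement that $\mathfrak u$ is not Lipschitz up to the interior of any $(n-1)$-face) is supplied, but as written it simply asserts the needed singularity of $\mathfrak u$ rather than establishing it.
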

\begin{proof}
It is enough to show $\mathfrak u$ is strictly convex in $P$.
\par Step 1.   $\mathfrak u$ is not Lipschitz up to the interior of an $(n-1)$-face. Let $p=(0,p'')$ be an interior point on $\Sigma=\{x_1=0\}\cap \partial P$, i.e., $B_r(p)\cap \partial P\subset\subset \Sigma$ for some small $r>0$. Here, $p''=(p_2,\cdots,p_n)$.
Without loss of generality, we may assume $\varphi(p)=0$ and $\varphi\ge 0$ on $\Sigma$. By the strict convexity of $\varphi$ on $\Sigma$ and the continuity of $\mathfrak u$(Theorem \ref{thmdiri1}), we may also assume
\begin{equation*}
2\varphi(0,x'')\ge \mathfrak u(x)\ge \varepsilon_0>0,\quad \text{on}\quad [0,\delta]\times (\partial B_r(p)\cap \Sigma)
\end{equation*}
for some small $\delta>0$.
Let
\begin{equation*}
\mathcal H(x)=4\varphi(0,x'')-\varepsilon x_1(-\ln  x_1)^{1/2}+\frac 1{\varepsilon} x_1.
\end{equation*}
Then it is easy to see
\begin{equation*}
\mathcal H(x)\ge \mathfrak u(x),\quad \text{on}\quad \partial([0,\delta] \times  (B_r(p)\cap \Sigma))
\end{equation*}
provided $\varepsilon>0$ is small enough.
A direct computation also yields
\begin{equation*}
\begin{split}
&\det D^2\mathcal H\\
=&\frac{4^{n-2}\varepsilon \det D^2_{x''}\varphi(0,x'')}{x_1(-\ln  x_1)^{\frac 12}} (2+(-\ln  x_1)^{-1})\\
\le& \det D^2 \mathfrak u,
\end{split}\quad \text{in}\quad (0,\delta)\times (B_r(p)\cap \Sigma)
\end{equation*}
for $\varepsilon>0$ small enough by noticing that
\begin{equation*}
\det D^2 \mathfrak u\approx \frac{1}{x_1},  \text{ near } p.
\end{equation*}
 Hence, by the standard maximum principle, one gets
 \begin{equation*}
 \mathcal H(x)\ge \mathfrak u(x), \text{ in }   [0,\delta]\times (B_r(p)\cap \Sigma).
 \end{equation*}
 This implies
 \begin{equation*}
 \mathfrak u(x_1,p'')\le \mathcal H(x_1,p'')=-\varepsilon x_1(-\ln  x_1)^{\frac 12}+\frac 1\varepsilon x_1,\quad 0<x_1<\delta.
 \end{equation*}
 Since  $(0,p'')$ is arbitrary in the interior of $\Sigma$, for $x_1$ small enough, one also gets
\begin{equation*}
u(x)\le u(0,x'')-Cx_1(-\ln  x_1)^{1/2},\quad \forall (0,x'')\in K\subset\subset \Sigma.
\end{equation*}
Here the constant $C$ depends only on the quantities in Theorem \ref{thmdiri1},  $\text{dist}(K,\partial\Sigma)$ and  the modulus of convexity of $\varphi$ on $\Sigma$.
\par Step 2. $\mathfrak u$ is strictly convex in $P$. Suppose not. By  [Theorem 1, \cite{Caffarelli1990-1}], one knows
$\{u=0\}$ will contain at least a line segment $L$ whose end  points $p,q$ are on $\partial P$. Without loss of generality, we may assume $p\in \Sigma=\{x_1=0\}\cap \partial P$ and the line segment $\overline{pq}=\{t\vec a+p| t\in [0,|\overline{pq}|]\}$ for some unit vector $\vec a=(a_1,\cdots,a_n)$. We have the following two cases.
\begin{itemize}
\item[1.] $p\in \mathring \Sigma.$  Let $p_t=t\vec a+p$.
Then for $t$ small, $(0,p_t'')\in $ $\mathring\Sigma$.
From Step 1, we know
\begin{equation*}
\begin{split}
0=&u(p_t)-u(p)=u(p_t)-u(0,p_t'')+u(0,p_t'')-u(p)\\
&\le -Ct(-\ln  t)^{\frac 12}+C_1 t<0
\end{split}
\end{equation*}
for $t$ small enough. This yields a contradiction.
\item[2.] $p\in \partial\Sigma$. Without loss of generality, we may assume $p=(\underbrace{0,\cdots,0}_k,p'')=(0',p'')$, $2\le k\le n$.
Then one knows
\begin{equation*}
\begin{split}
u(p_t)\le &  \frac{1}{k} \sum_{i=1}^k u[(0',p_t'')+kp_{t,i}{\bf e_i}] \\
\le & u(0',p_t'')+\sum_{i=1}^k\left(Cp_{t,i}+p_{t,i}\ln p_{t,i}\right)\\
\le & u(p)+C |p_t''-p''|+\sum_{i=1}^k\left(Cp_{t,i}+p_{t,i}\ln p_{t,i}\right)\\
\le& u(p)+C_1t\ln  t+Ct<u(p)
\end{split}
\end{equation*}
for $t$ small.
\end{itemize}
The above argument implies $u$ is strictly convex in $P$. It then follows from standard theory of Monge-Amp\`ere equation(Theorem 3.10, \cite{Figalli2017book}), $\mathfrak u\in C^\infty(P)$.
\end{proof}

\section{Regularity up to the $(n-1)$-face of $P$}
\subsection{Lipschitz regularity up to the $(n-1)$-face of $P$}
By  Section 2, we will restrict our discussion to the following type of equation:
\begin{equation}\label{s3-01}
\det D^2 u=\frac{h(x)}{x_1},\quad \text{in}\quad Q_3=(0,3)\times (-3,3)^{n-1}
\end{equation}
and $u(0,x'')$ is a smooth, uniformly convex function that solves
\begin{equation}\label{s3-02}
\det D^2_{x''} u(0,x'')=h(0,x''),\quad \text{in}\quad Q_3''=(-3,3)^{n-1}.
\end{equation}
Throughout this section, we denote $x''=(x_2,\cdots,x_n)$.

The main task in the present section is to show
\begin{equation*}
 v(x)\in C^\infty(\overline{Q_2})
\end{equation*}
where $v(x)$ is given by
\begin{equation}\label{s3-03}
v(x)=u(x)-x_1\ln  x_1.
\end{equation}

\begin{lemma}\label{lemlowb} Suppose $u,v\in C^\infty(Q_3)\cap C(\overline{Q_3})$ are given by \eqref{s3-01}-\eqref{s3-03}. Let $\Lambda$ be the positive constant such that
\begin{equation*}
\frac 1\Lambda \le h(x)\le \Lambda,\quad x\in Q_3,\quad \frac 1\Lambda  \mathbb I_{(n-1)\times(n-1)}\le D_{x''}^2 u(0,x'')\le \Lambda \mathbb I_{(n-1)\times(n-1)},\quad x''\in Q_3''.
\end{equation*}
Then
	\begin{equation*}
  |v(x)-v(0,x'')|\leq Cx_1,\quad \forall x\in Q_2,
\end{equation*}
   for some positive constant C depending only on
   \begin{equation}\label{lipq}
   \begin{split}
     \Lambda, \   M=\max(\|u\|_{C^0(\overline{Q_3})},\  \|h\|_{C^1(\overline{Q_3})}),\  \omega_{u(x)},
   \end{split}
   \end{equation}
   where $\omega_u(x)$ is the modulus of continuity of $u$ in $\overline Q_3$.
\end{lemma}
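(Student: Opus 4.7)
The plan is to bound $v(x) - v(0, x'')$ from above and below by $Cx_1$ by comparing $u$ with explicit barrier functions built from the smooth boundary trace $u(0, x'')$. The natural ansatz for the upper barrier is
\[
w^+(x) = u(0, x'') + x_1 \ln x_1 + Cx_1 - Ax_1^2,
\]
and analogously $w^-(x) = u(0, x'') + x_1 \ln x_1 - Cx_1 + Ax_1^2$ for the lower barrier. Since $u(0,\cdot)$ is independent of $x_1$, the Hessian $D^2 w^+$ is block-diagonal with $(w^+)_{x_1 x_1} = 1/x_1 - 2A$ and $D^2_{x''} w^+ = D^2_{x''} u(0,x'')$, giving $\det D^2 w^+ = (1/x_1 - 2A)\,h(0,x'')$ via \eqref{s3-02}. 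Using the bound $\|h\|_{C^1}\le M$ one has $h(x)\ge h(0,x'') - Mx_1$, so choosing $A$ of order $M\Lambda$ secures $\det D^2 w^+ \le h(x)/x_1 = \det D^2 u$ in a small strip $\{0<x_1<\delta\}$, while $w^+$ remains convex provided $\delta$ is small enough that $1/x_1-2A>0$. The computation for $w^-$ is symmetric, yielding $\det D^2 w^- \ge \det D^2 u$.

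The Monge--Amp\`ere comparison principle would then give $w^-\le u\le w^+$ inside a suitable comparison cylinder $D\subset Q_3$, and evaluating at $(x_1,x_0'')$ yields $|v(x_1,x_0'')-v(0,x_0'')|\le Cx_1 + A x_1^2$, which is the desired estimate after shrinking $\delta$. The boundary check is trivial on the face $\{x_1=0\}$ where $w^\pm=u$, and on the top $\{x_1=\delta\}$ a uniform choice of $C$ depending on $\|u\|_{L^\infty}\le M$, $|\delta\ln\delta|$ and $A$ suffices. Once the estimate is established in the strip $\{x_1<\delta\}$, the extension to all of $Q_2$ is automatic: for $x_1\in[\delta,2]$ the crude bound $|v(x)|\le M+|x_1\ln x_1|$ yields $|v(x)-v(0,x'')|\le (2M+C)\delta^{-1}x_1$.

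I expect the main obstacle to be the lateral boundary comparison on $\{x''\in\partial D'',\,x_1\in(0,\delta)\}$: for small $x_1$ the term $x_1\ln x_1$ drags $w^+$ below $u(0,x'')$, whereas the modulus of continuity $\omega_u$ only gives $|u(x)-u(0,x'')|\le \omega_u(x_1)$, which is in general too weak to absorb $x_1|\ln x_1|$. To get around this, the comparison will be localized: for each fixed $x_0''\in Q_2''$ I work in a small cylinder $D = (0,\delta)\times B_r(x_0'')$ and modify the barrier by adding a quadratic term of the form $\beta|x''-x_0''|^2$, calibrated so that (i) the block-diagonal determinant condition still holds (which restricts $\beta$ from above through $\Lambda$) and (ii) the enhanced $w^+$ dominates $u$ on $\{|x''-x_0''|=r\}$. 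Point (ii) will be forced by subtracting off the supporting affine function $L(x'')=u(0,x_0'')+\nabla_{x''}u(0,x_0'')\cdot(x''-x_0'')$ (which leaves $\det D^2$ unchanged) and exploiting the uniform modulus of strict convexity of $u(0,\cdot)$ guaranteed by the lower eigenvalue bound $\Lambda^{-1}I$ on $D^2_{x''}u(0,x'')$. After these preparations the comparison principle applies on $D$, $C$ can be chosen independent of $x_0''$, and combining with the strip-to-box extension above completes the proof of the Lipschitz estimate on $Q_2$.
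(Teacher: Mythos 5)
Your diagnosis of the difficulty is exactly right: with the naive barrier $u(0,x'')+x_1\ln x_1+Cx_1-Ax_1^2$, the block-diagonal determinant computation is fine, but on the lateral boundary the deficit $x_1\ln x_1$ swamps $\omega_u(x_1)$ and no finite $C$ repairs it. The issue is with the proposed fix. Adding $\beta|x''-x_0''|^2$ to $w^+$ changes the tangential Hessian to $D^2_{x''}u(0,x'')+2\beta\,\mathbb I$, so
\[
\det D^2 w^+ \;=\; \Bigl(\tfrac{1}{x_1}-2A\Bigr)\,\det\bigl(D^2_{x''}u(0,x'')+2\beta\,\mathbb I\bigr)\;\ge\;\Bigl(\tfrac{1}{x_1}-2A\Bigr)\bigl(1+2\beta/\Lambda\bigr)^{\,n-1}h(0,x''),
\]
and dividing by $1/x_1$ and letting $x_1\to 0$ the left side tends to $(1+2\beta/\Lambda)^{n-1}h(0,x'')>h(0,x'')$, while $\det D^2 u = h(x)/x_1$ tends to $h(0,x'')/x_1\cdot x_1 = h(0,x'')$. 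So no positive $\beta$ is compatible with the upper-barrier inequality $\det D^2 w^+\le\det D^2 u$ near $x_1=0$; the claimed calibration ``from above through $\Lambda$'' degenerates to $\beta\le 0$. The symmetric problem kills $w^-$ after subtracting $\beta|x''-x_0''|^2$. Subtracting the supporting affine function of $u(0,\cdot)$ at $x_0''$ does not help: it changes neither Hessian, and $u(0,x'')-u(x)$ is still only $O(\omega_u(x_1))$.

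The paper threads this needle with a two-stage construction that you would need to reproduce. In Step 1 it takes $H=\varepsilon_0\,u(0,x'')+C_0\varepsilon_0^{-(n-1)}x_1\ln x_1$: shrinking $u(0,x'')$ by $\varepsilon_0<1$ creates a uniform cushion $(1-\varepsilon_0)u(0,x'')$ on the lateral boundary (this is where the strict convexity of $u(0,\cdot)$ enters, much as you intended), while the reciprocal blow-up $\varepsilon_0^{-(n-1)}$ of the coefficient on $x_1\ln x_1$ leaves the product determinant $C_0\,h(0,x'')/x_1$ unperturbed. This only gives the weak bound $v(x)\ge v(0,x'')+A\,x_1\ln x_1$. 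Step 2 then feeds that into a second barrier $u(0,x'')+x_1\ln x_1+A|x''|^2x_1\ln x_1-Bx_1(1-x_1^{1/3})$, where the $|x''|^2x_1\ln x_1$ term supplies room on the lateral boundary precisely because Step 1's bound has been established, and it perturbs the mixed/tangential Hessian entries only by $O(\sqrt{x_1}|\ln x_1|)$, so it does not spoil the determinant ordering as $x_1\to 0$. A single-shot barrier with coefficient $1$ on $x_1\ln x_1$ cannot simultaneously satisfy the sub/super-solution inequality and the lateral comparison; the bootstrap is what makes the $O(x_1)$ estimate reachable.
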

\begin{proof}
	The proof of Lemma \ref{lemlowb} will be divided into three steps.
	\par Step 1. The following estimate holds:
  \begin{equation}\label{1146}
  v(x)\ge v(0,x'')+A x_1\ln  x_1, \quad \forall x\in (0,1/2)\times (-5/2,5/2)^{n-1}.
\end{equation}
 Here, the positive constant $A$ depends only on the quantities in \eqref{lipq}.
Without loss of generality, it suffices to prove \eqref{1146} at $0$ with $u(0)=|D_{x'} u(0)|=0$.
Let
$$H(x)=\varepsilon_0 u(0,x'')+C_0\varepsilon_0^{-(n-1)}x_1\ln  x_1.$$
Then, by direct computation, we obtain
\begin{equation*}
\det D^2 H=\frac{C_0 h(0,x'')}{x_1}\ge \frac{h(x)}{x_1}
\end{equation*}
provided $C_0\ge \Lambda^2$.
\par By the strict convexity of $u(0,x'')$ on $x_1=0$, there holds
\begin{equation*}
	u(0,x'')\ge c_0\ge 4\sigma_0,\quad \forall x''\in\partial ([-3,3]^{n-1})
\end{equation*}
for some positive constant $\sigma_0$.
Hence, by the continuity of $u$, we can choose $\delta_0>0$ small enough such that
\begin{equation}\label{mod-con-b}
|u(x)-u(0,x'')|\le \sigma_0,\quad x\in [0,\delta_0]\times \partial ([-3,3]^{n-1}).
\end{equation}
\begin{remark}\label{mod-con-a}
This is the only place where we require the modulus of continuity of $u$ in $Q_3$. In fact, from the proof, the only requirement is the existence of a universal positive constant $\delta_0$ such that \eqref{mod-con-b} holds, which is a weaker condition than $u\in C(\overline{Q_3})$ with a uniform modulus of continuity.
\end{remark}
Fixing such $\delta_0$, we obtain
\begin{equation*}
	u(x)\ge H(x),\quad  x\in [0,\delta_0]\times \partial ([-3,3]^{n-1})
\end{equation*}
for $0<\varepsilon_0<\frac 34$.
Also,
\begin{equation*}
u(x)\ge H(x),\quad \text{on} \quad  \{0\}\times [-3,3]^{n-1}
\end{equation*}
follows from $0<\varepsilon_0<1$.
Moreover, we have
\begin{equation*}
u(x)\ge H(x),\quad \text{on}\quad \{\delta_0\}\times [-3,3]^{n-1}
\end{equation*}
for $0<\varepsilon_0\le \left(\frac{C_0\delta_0\ln (1/\delta_0)}{2M}\right)^{\frac 1{n-1}}$.
Then by the standard maximum principle, we prove Step 1.
\par Step 2. Now we improve the estimate in Step 1 to the desired result. Let
$$\mathcal H(x)=u(0,x'')+x_1\ln x_1+A |x''|^2 x_1\ln  x_1-B x_1(1-x_1^{\frac 13}),\quad\text{in}\quad (0,\delta)\times (-5/2,5/2)^{n-1}.$$
Here $0<\delta\le \delta_0$ is a  small positive constant to be determined later, and $A$ is the positive constant determined in Step 1. Throughout this section, we use the following convention. For any function $\mathcal F$, denote $\mathbb M_{\mathcal F}$ by
\begin{equation*}
\mathbb M_{\mathcal F}=  \begin{pmatrix}
  x_1D_{x_1}^2 \mathcal F   &    \sqrt{x_1} D_{x_1x''}\mathcal F \\[3pt]
  \sqrt{x_1} (D_{x_1x''}\mathcal F)^T &  D_{x''}^2 \mathcal F
 \end{pmatrix}.
\end{equation*}
A direct computation yields that
\begin{equation*}
\mathbb M_{\mathcal H}=\mathbb M_{u(0,x'')+x_1\ln x_1}+2A(x_1\ln  x_1) \mathbb E+\mathbb A+\mathbb B,
\end{equation*}
where
\begin{equation*}
\begin{split}
& \mathbb E= \sum_{i=2}^n {\bf e_i}\otimes {\bf e_i},\\
&\mathbb A=2A\sqrt{x_1}(1+\ln  x_1)\left[(0,x'')\otimes {\bf e_1}+{\bf e_1}\otimes (0,x'')\right],\\
& \mathbb B=\left(A|x''|^2+\frac{4}{9}Bx_1^{\frac 13}\right){\bf e_1}\otimes {\bf e_1}.
\end{split}
\end{equation*}
By the assumptions on $u(0,x'')$, we know
\begin{equation*}
\frac{1}{C_0}\mathbb I_{n\times n}\le \mathbb M_{u(0,x'')+x_1\ln x_1}\le C_0\mathbb I_{n\times n}
\end{equation*}
for some universal positive constant $C_0$. Hence, we can choose $\delta$ small enough such that
\begin{equation*}
\frac{1}{2C_0}\mathbb I_{n\times n}\le \mathbb M_{u(0,x'')+x_1\ln x_1}+2A(x_1\ln  x_1) \mathbb E+\mathbb A \le 2C_0\mathbb I_{n\times n}.
\end{equation*}
Then,
\begin{equation*}
\begin{split}
\det \mathbb M_{\mathcal H} & \ge \det (\mathbb M_{u(0,x'')+x_1\ln x_1}+2A(x_1\ln  x_1) \mathbb E+\mathbb A)+\frac{2\left(|x''|^2+ Bx_1^{\frac 13}\right)}{C_1}\\
&\ge \det \mathbb M_{u(0,x'')+x_1\ln x_1}+\frac{2Bx_1^{\frac 13}}{C_1}-C_2 x_1^{\frac 12}|\ln  x_1|\\
&\ge h(0,x'')+\frac{Bx_1^{\frac 13}}{C_1}
\\ &\ge h(x)+x_{1}^{\frac 13}\left(\frac{B}{C_1}-C_3 x_1^{\frac 23}\right)\ge \det \mathbb M_{u}.
\end{split}
\end{equation*}
In obtaining the above inequality, it suffices to choose
\begin{equation*}
B\ge C_1C_3\delta_0^{\frac 16}|\ln \delta_0|.
\end{equation*}
It remains to compare the boundary values of $\mathcal H$ and $u$.
\begin{itemize}
	\item[I.] On $\{0\}\times [-3,3]^{n-1}$, one has $u(0,x'')=\mathcal H(0,x'')$.
	\item[II.]
	On $\{\delta\}\times [-3,3]^{n-1}$, by \eqref{1146}, one has
	\begin{equation*}
  u(\delta,x'')-\mathcal H(\delta,x'')\ge B\delta(1-\delta^{\frac 13})-A\delta|\ln \delta|>0
		\end{equation*}
	provided
	\begin{equation}\label{B2}
		B\ge \frac{A|\ln \delta|}{1-\delta^{\frac 13}}.
		\end{equation}
	\item[III.] On $(0,\delta)\times \partial ([-3,3]^{n-1})$, it follows from Step 1 that $u\ge \mathcal H$.
\end{itemize}
 This proves
 \begin{equation*}
 v(x_1,0)\ge -Bx_1,\quad x_1\in (0,\delta).
 \end{equation*}
Hence also
\begin{equation*}
v(x_1,x'')\ge v(0,x'')-C x_1,\quad \forall x\in Q_2.
\end{equation*}
Step 3. Replace $H$ in Step 1 by
\begin{equation*}
H=\varepsilon_0^{-1}u(0,x'')+Ax_1+\frac{\varepsilon_0^{n-1}}{C_0}x_1\ln  x_1,\quad \text{in} \quad [0,\delta_0]\times [-3,3]^{n-1}
\end{equation*}
for $\varepsilon_0,\delta>0$ small enough and $A,C_0>0$ large enough.
 Then, replacing $\mathcal H$ in Step 2 by
 \begin{equation*}
 \mathcal H(x)=u(0,x'')+x_1\ln  x_1+\left(\frac{\varepsilon_0^{n-1}}{C_0}-1\right)\frac{|x''|^2}{9}x_1\ln  x_1+Bx_1(1-x_1^{\frac 13})
 \end{equation*}
 for some constant $B$ large enough, one gets
 \begin{equation*}
v(x_1,x'')\le v(0,x'')+C x_1,\quad \forall x\in Q_2.
\end{equation*}
This ends the proof of present lemma.
\end{proof}

\subsection{$C^{2,\alpha}$ regularity up to the $(n-1)$-face of $P$}

\begin{lemma}\label{lemc1-1}
Let $u,v$ be given as in Lemma \ref{lemlowb}, and assume all the conditions in Lemma \ref{lemlowb} are satisfied.
Then there holds
\begin{equation*}
x_1|v_{11}|+\sqrt {x_1}\sum_{i=2}^{n}|v_{1i}|+\sum_{i,j=2}^{n}|v_{ij}|\le C,\text{ in } (0,r_0)\times (-1,1)^{n-1},
\end{equation*}
where the constants $C$ and $r_0$ depend only on the quantities in Lemma \ref{lemlowb}, $\|u(0,x'')\|_{C^2(\overline{Q_3})}$ and $\|h\|_{C^2(\overline{Q_3})}$.
\end{lemma}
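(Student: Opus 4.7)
\emph{Proof plan.} The seminorm
\[
 N(v;x) := x_{1}|v_{11}(x)|+\sqrt{x_{1}}\sum_{i=2}^{n}|v_{1i}(x)|+\sum_{i,j=2}^{n}|v_{ij}(x)|
\]
is invariant under the anisotropic rescaling $y_{1}=x_{1}/r$, $y''=(x''-\xi)/\sqrt{r}$, $u\mapsto r^{-1}u$, which is precisely the symmetry of the model equation $\det D^{2}u = 1/x_{1}$. The plan is to exploit this symmetry in a blow-up contradiction argument, reducing the bound to Pogorelov's interior $C^{1,1}$ estimate for the rescaled problem. Suppose there exist points $x^{(k)}=(r_{k},\xi_{k})\to (0,\xi_{\infty})$ with $\xi_{\infty}\in[-1,1]^{n-1}$ along which $N(v;x^{(k)})\to\infty$.

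Rescale around $x^{(k)}$ via
\[
\tilde u_{k}(y) := \frac{1}{r_{k}}\left[u(r_{k}y_{1},\xi_{k}+\sqrt{r_{k}}\,y'')-L_{k}(y)\right]-y_{1}\ln r_{k},
\]
where $L_{k}$ is an affine function chosen so that $\tilde u_{k}(1,0)=0$ and $\nabla \tilde u_{k}(1,0)=0$. A direct computation yields $\det D^{2}\tilde u_{k}(y) = h_{k}(y)/y_{1}$ with $h_{k}(y):=h(r_{k}y_{1},\xi_{k}+\sqrt{r_{k}}\,y'')\in[\Lambda^{-1},\Lambda]$, and the weighted Hessian $N(v;x^{(k)})$ equals the ordinary Hessian of $\tilde v_{k}:=\tilde u_{k}-y_{1}\ln y_{1}$ at $(1,0)$; the contradiction to be reached is $|D^{2}\tilde v_{k}(1,0)|\to\infty$.

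I would then assemble uniform estimates on $\tilde u_{k}$: the boundary trace $\tilde u_{k}(0,y'')$ inherits uniform convexity (with bounds $\Lambda^{\pm 1}$) from $u(0,x'')$, because $\sqrt{r_{k}}$ is the scaling that preserves the $x''$-Hessian; Lemma \ref{lemlowb} gives $|\tilde v_{k}(y)-\tilde v_{k}(0,y'')|\le C y_{1}$ on compact subsets of $\{y_{1}\ge 0\}$; and the Monge-Amp\`ere density $h_{k}/y_{1}$ is two-sided bounded on $\{1/2\le y_{1}\le 2\}$. These combine to show $\tilde u_{k}$ is $C^{0}$-close to $\tilde u_{k}(0,y'')+y_{1}\ln y_{1}$, which is uniformly strictly convex. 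It follows that the section $S_{c}(\tilde u_{k};(1,0))$ is, for $c$ sufficiently small and $k$ sufficiently large, compactly contained in $\{1/2<y_{1}<2\}$ and comparable to a Euclidean ball via John's lemma. On such a section the equation is uniformly elliptic with smooth data, so Pogorelov's interior $C^{1,1}$ estimate (cf.\ Theorem 3.10 in \cite{Figalli2017book}) yields $|D^{2}\tilde u_{k}(1,0)|\le C$. Subtracting the explicit Hessian of $y_{1}\ln y_{1}$ gives $|D^{2}\tilde v_{k}(1,0)|\le C'$, the desired contradiction.

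\emph{Main obstacle.} The principal difficulty is showing that the sections $S_{c}(\tilde u_{k};(1,0))$ are uniformly comparable to Euclidean balls: the rescaling pushes the boundary singularity arbitrarily close to $(1,0)$, so one must rule out both the collapse of the section into a thin slab and its escape toward $\{y_{1}=0\}$. The key inputs are the linear-in-$y_{1}$ Lipschitz control from Lemma \ref{lemlowb} and the uniform strict convexity of the boundary data inherited from the assumption on $u(0,x'')$, combined, if needed, with a secondary compactness step based on Caffarelli's $C^{2,\alpha}$ theory on $\{y_{1}\ge y_{0}>0\}$ that extracts a limiting convex solution on the half-space whose sections at $(1,0)$ have the required shape.
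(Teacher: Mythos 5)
Your proposal is correct and follows essentially the same route as the paper's proof: anisotropic rescaling $x_1\mapsto\lambda x_1$, $x''\mapsto\sqrt\lambda x''$, compactness of sections around $(1,0'')$ via a contradiction argument using the $y_1\ln y_1$ singularity (the paper invokes Caffarelli's line/ray classification plus the $O(t\ln t)$ drop to rule out degeneration, which is precisely what your "key inputs" amount to), and then Pogorelov's interior estimate on the rescaled solution. The only cosmetic difference is that you phrase the whole lemma as a blow-up contradiction along a sequence $x^{(k)}$, whereas the paper states it as a uniform bound at $(\lambda,0'')$ and runs the compactness/contradiction argument once inside the claim that the modulus of strict convexity of $u_\lambda$ at $(1,0'')$ is $\lambda$-independent.
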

\begin{proof}
Without loss of generality, we may assume  $u(0)=|D_{x''}u(0)|=0$. Consider the point $(\lambda,0'')$, where $\lambda\in (0,1/2)$.
Denote
\begin{equation*}
u_\lambda(x)=\frac{u(\lambda x_1,\sqrt{\lambda} x'')-(\lambda \ln  \lambda)x_1}{\lambda},\quad 0<\lambda<1.
\end{equation*}
Then
\begin{equation*}
v_{\lambda}(x)\triangleq u_\lambda(x)-x_1\ln  x_1=\frac{v(\lambda x_1,\sqrt{\lambda} x'')}{\lambda}.
\end{equation*}
Also, $u_\lambda(x)$ solves
\begin{equation}\label{3-213}
\det D^2 u_{\lambda}=\frac{h(\lambda x_1,\sqrt \lambda x'')}{x_1}, \text{ in } (0,1/\lambda)\times \left(-\frac{2}{\sqrt \lambda},\frac{2}{\sqrt \lambda}\right)^{n-1}.
\end{equation}

By the discussion in Subsection 3.1, one knows $v(x)$ is Lipschitz up to the boundary. Hence,
\begin{equation*}
\begin{split}
|v_{\lambda}(x)|\le &\left|\frac{v(\lambda x_1,\sqrt\lambda x'')-v(0,\sqrt\lambda x'')}{\lambda}\right|+\left|\frac{v(0,\sqrt\lambda x'')}{\lambda}\right|\\
\le & C(x_1+|x'|^2),\quad \text{in}\quad 0<x_1<\frac 1{2\lambda},\quad |x''|\le \frac{1}{2\sqrt \lambda}
\end{split}
\end{equation*}
for some universal constant $C$ depending only on the quantities in Lemma \ref{lemlowb} and $\|u(0,x'')\|_{C^2(\overline{Q_3''})}$.  Let $l_\lambda(x)$ be the supporting plane of $u_\lambda$ at the point $(1,0'')$.
\par Claim: the strict convexity of $u_\lambda$ at $(1,0'')$ is independent of $\lambda$ for $\lambda$ small enough.
\\ Suppose not. Up to a subsequence, assume $l_\lambda\rightarrow l$, $u_\lambda\rightarrow \mathfrak u$ locally uniformly in $\mathbb R^n_+$, and $\Sigma=\{\mathfrak u=l\}$ is not a single point.
By  [Theorem 1, \cite{Caffarelli1990-1}], we distinguish with the following two cases.
\begin{itemize}
\item[(1).] $\Sigma$ contains a line in $\mathbb R^n_+$. This implies $\det D^2 \mathfrak u=0$, which is impossible.
\item[(2).] $\Sigma$ contains a ray $L$ with end point on $\partial\mathbb R^n_+$. Without loss of generality, let
\begin{equation*}
L=\{x|x_1=t,x_2=c_1t,\cdots,x_{n}=c_{n-1}t,\  t\ge 0\}
\end{equation*}
for some vector $\vec c\in \mathbb R^{n-1}$.
Then on $L$, there holds
\begin{equation}
\begin{split}
0=&|\mathfrak u(t,c_1t,\cdots,c_{n-1}t)-\mathfrak u(0)|\\
\ge & |\mathfrak u(t,c_1t,\cdots,c_{n-1}t)-\mathfrak u(0,c_1t,\cdots,c_{n-1}t)|-|\mathfrak u(0,c_1t,\cdots,c_{n-1}t)-u(0)|\\
 \gtrsim & O(t|\ln t|)-O(t^2)>0
\end{split}
\end{equation}
for $t$ small enough. This yields a contradiction.
\end{itemize}
This proves the present claim.
\begin{remark}
The above claim  holds for all $u$ as long as the quantities in Lemma \ref{lemlowb} and $\|u(0,x'')\|_{C^2(\overline{Q_3})}$ are uniformly bounded.
\end{remark}
\par Hence, there exists $\varepsilon_0>0$ such that
\begin{equation*}
S_{\lambda,4\varepsilon_0}=\{u_\lambda<l_\lambda+4\varepsilon_0\}\subset\subset \mathbb R^{n}_+.
\end{equation*}
Applying Pogorelov's estimate([Theorem 17.19,\cite{GilbargTrudinger2001}]) to $u_\lambda$ yields
\begin{equation*}
|D^2 u_\lambda|\le C_1,\quad \text{in}\quad S_{\lambda,2\varepsilon_0}
\end{equation*}
for some positive constant $C_1$ depending only on the quantities in Lemma \ref{lemlowb}, $\|u(0,x'')\|_{C^2(\overline{Q_3})}$, and $\|h\|_{C^2(\overline{Q_3})}$. Scaling back to $u$, one gets
\begin{equation}\label{3-319}
\lambda |u_{11}(\lambda,0'')|+\sqrt {\lambda }\sum_{i=2}^{n}|u_{i1}(\lambda,0'')|+\sum_{i,j=2}^{n}|u_{ij}(\lambda,0'')|\le C.
\end{equation}
This ends the proof of present lemma.
\end{proof}
For a bounded domain $\Omega \subset\mathbb R^n_+$, define the norm $\|\cdot\|_{\widetilde C^{2,\alpha}(\overline{\Omega})}$ as follows:
\begin{equation*}
\|f\|_{\widetilde C^{2,\alpha}(\overline{\Omega})}=\|f\|_{C^{1}(\overline{\Omega})}+  \|x_1f_{11}\|_{C^{\alpha}(\overline{\Omega})}+\sum_{i=2}^{n}\|\sqrt {x_1} f_{i1}\|_{C^{\alpha}(\overline{\Omega})}+\sum_{i,j=2}^{n}\|f_{ij}\|_{C^{\alpha}(\overline{\Omega})}.
\end{equation*}
\begin{theorem}\label{thm3-349}
Let $u,v$ be given as in Lemma \ref{lemlowb}, and assume all the conditions in Lemma \ref{lemlowb} are satisfied. Then, the following estimate holds:
\begin{equation*}
\|v\|_{\widetilde C^{2,\alpha}(\overline{(0,\frac {r_0}2)\times (-1/2,1/2)^{n-1}})}\le C,
\end{equation*}
where $\alpha\in (0,1/2)$ and $C>0$ depend only on the quantities in Lemma \ref{lemc1-1}.
\end{theorem}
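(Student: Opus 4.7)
The plan is to combine the dyadic rescaling used in the proof of Lemma \ref{lemc1-1} with a partial Legendre transformation in the $x_1$-variable, as alluded to in the introduction. For each dyadic $\lambda\in(0,r_0)$, I rescale as in Lemma \ref{lemc1-1}, $u_\lambda(\tilde x)=\lambda^{-1}[u(\lambda \tilde x_1,\sqrt\lambda\tilde x'')-(\lambda\ln\lambda)\tilde x_1]$. Lemma \ref{lemc1-1} gives a uniform $C^{1,1}$ bound on $u_\lambda$ over the reference slab $\{\tilde x:\tilde x_1\in[1/2,2],\ |\tilde x''|\le 1\}$, and the strict-convexity argument embedded in its proof furnishes a uniform interior section around $(1,0'')$. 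On this slab the rescaled equation $\det D^2 u_\lambda=h(\lambda\tilde x_1,\sqrt\lambda\tilde x'')/\tilde x_1$ has right-hand side pinched between positive constants independent of $\lambda$, and the eigenvalues of $D^2 u_\lambda$ are pinched between positive constants as well.

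Next apply the partial Legendre transformation to $u_\lambda$ in the $\tilde x_1$-variable: set $\tilde y_1=u_{\lambda,\tilde x_1}$, $\tilde y''=\tilde x''$, and $u_\lambda^*(\tilde y)=\tilde x_1\tilde y_1-u_\lambda$. Because $u_{\lambda,\tilde 1\tilde 1}$ is pinched between positive constants, this is a bi-Lipschitz diffeomorphism. A block Schur-complement computation shows that $u_\lambda^*$ satisfies
\begin{equation*}
\det\bigl(-D^2_{\tilde y''}u_\lambda^*\bigr)\cdot u^*_{\lambda,\tilde y_1}=h\bigl(\lambda u^*_{\lambda,\tilde y_1},\sqrt\lambda\tilde y''\bigr)\cdot u^*_{\lambda,\tilde y_1\tilde y_1},
\end{equation*}
a fully nonlinear equation that is concave in $-D^2_{\tilde y''}u_\lambda^*$ and quasilinear in $\tilde y_1$. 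Since $u^*_{\lambda,\tilde y_1}=\tilde x_1\in[1/2,2]$ and the $C^{1,1}$ bound pinches the second derivatives, the equation is uniformly elliptic with bounded coefficients. Evans--Krylov on the concave Monge-Amp\`ere block together with Schauder in the $\tilde y_1$-direction yields a uniform interior $C^{2,\alpha}$ estimate for $u_\lambda^*$, and the Legendre inversion formulas
\begin{equation*}
u_{\lambda,\tilde 1\tilde 1}=\frac{1}{u^*_{\lambda,\tilde y_1\tilde y_1}},\quad u_{\lambda,\tilde 1\tilde i}=-\frac{u^*_{\lambda,\tilde y_1\tilde y_i}}{u^*_{\lambda,\tilde y_1\tilde y_1}},\quad u_{\lambda,\tilde i\tilde j}=-u^*_{\lambda,\tilde y_i\tilde y_j}+\frac{u^*_{\lambda,\tilde y_1\tilde y_i}\,u^*_{\lambda,\tilde y_1\tilde y_j}}{u^*_{\lambda,\tilde y_1\tilde y_1}}
\end{equation*}
transfer this to a uniform interior $C^{2,\alpha}$ bound for $u_\lambda$ on a slightly smaller reference slab.

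To conclude, I note that a direct calculation from $v_\lambda(\tilde x)=\lambda^{-1}v(\lambda\tilde x_1,\sqrt\lambda\tilde x'')$ shows that the weighted quantities are exactly scale-invariant as values: $\tilde x_1 v_{\lambda,\tilde 1\tilde 1}(\tilde x)=x_1 v_{11}(x)$, $\sqrt{\tilde x_1}\,v_{\lambda,\tilde 1\tilde i}(\tilde x)=\sqrt{x_1}\,v_{1i}(x)$, and $v_{\lambda,\tilde i\tilde j}(\tilde x)=v_{ij}(x)$. Patching the scale-$\lambda$ estimates over the geometric sequence $\lambda=2^{-k}$ and gluing with standard interior regularity on $\{x:x_1\ge r_0/4\}$ delivers the weighted $\widetilde C^{2,\alpha}$ bound on $\overline{(0,r_0/2)\times(-1/2,1/2)^{n-1}}$. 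The main technical obstacle is this final patching: the rescaling distorts the Euclidean metric anisotropically by factors $\lambda$ and $\sqrt\lambda$ in the $x_1$- and $x''$-directions, so the Hölder estimates at each scale do not automatically collate into a uniform Euclidean Hölder bound; the restriction $\alpha\in(0,1/2)$ arises precisely from this anisotropy when summing the scale-dependent contributions.
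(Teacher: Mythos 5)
Your dyadic rescaling, the uniform interior sections, and the Pogorelov--Evans--Krylov interior $C^{2,\alpha}$ estimate at each scale are all fine, but the argument breaks down at two places, one minor and one fatal.

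\textbf{The concavity claim.} After partial Legendre in $x_1$, the equation $u^*_{\lambda,\tilde y_1}\det(-D^2_{\tilde y''}u^*_\lambda)=h\cdot u^*_{\lambda,\tilde y_1\tilde y_1}$ is \emph{not} concave in $D^2 u^*_\lambda$ once $n\geq 3$: the term is linear in the $(1,1)$ entry, but for a positive-definite $(n-1)\times(n-1)$ block, $-\det$ is not concave. You cannot invoke Evans--Krylov for that transformed equation as written. This is repairable (apply Evans--Krylov directly to the original rescaled equation $\det D^2 u_\lambda=h_\lambda/\tilde x_1$, or take the $\frac1n$-th root), and the partial Legendre in $x_1$ is then superfluous for the interior $C^{2,\alpha}$ step. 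But the error signals a conceptual confusion about which direction the transformation should go.

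\textbf{The patching gap.} This is the real issue and you correctly identify it, but you do not resolve it, and it is not mere anisotropic bookkeeping. The dyadic estimate gives, for $x,\tilde x$ in the same slab $S_\lambda$ with $x_1\sim\tilde x_1\sim\lambda$, a bound of the form $|\lambda\, u_{11}(x)-\lambda\, u_{11}(\tilde x)|\lesssim\left(\frac{|x_1-\tilde x_1|}{\lambda}\right)^{\alpha}+\left(\frac{|x''-\tilde x''|}{\sqrt\lambda}\right)^{\alpha}$. This is \emph{weaker} than $C^\alpha$ in the Euclidean metric for small $\lambda$: for example with $|x_1-\tilde x_1|\sim\lambda^{1+\epsilon}$ the right-hand side is $\lambda^{\epsilon\alpha}$ while $|x-\tilde x|^\alpha\sim\lambda^{(1+\epsilon)\alpha}$, so the dyadic bound does not imply $[x_1 u_{11}]_{C^\alpha}<\infty$. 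In fact pure interior estimates at each dyadic scale cannot rule out an accumulating oscillation of $v_{x_1}$ as $x_1\to0$. The paper's proof supplies precisely the missing ingredient: a \emph{boundary} H\"older estimate for $v_{x_1}$. This is obtained by applying the partial Legendre transform \emph{in $x''$} (not in $x_1$), which turns the equation into $y_1 u^*_{11}+h\det D^2_{y''}u^*=0$ whose $y_1$-derivative is a degenerate divergence-form equation $\partial_1(y_1 w^*_1)+\sum\partial_i(a_{ij}\partial_j w^*)=f$; the boundary H\"older estimate of Hong--Huang (cited as Theorem~1.1 of \cite{HongHuang2022}) then gives $v_{x_1}\in C^\alpha$ up to $\{x_1=0\}$ in the $\sqrt{x_1}$-variable. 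Only after this does the substitution $x_1=z_1^2/4$ and even reflection across $z_1=0$ make the equation for $\mathfrak v(z)=v(x)$ classically interiorly regular, so that Caffarelli's interior $C^{2,\alpha}$ estimate can be applied across the boundary. Without some analogue of the Hong--Huang step your proposal delivers only interior regularity at each scale, not the boundary H\"older regularity the statement asserts.
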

\begin{proof}
By Lemma \ref{lemc1-1}, one knows
\begin{equation}\label{3-648}
D^2_{x''} u\ge c_0 \mathbb I_{(n-1)\times (n-1)},\quad \text{in}\quad (0,r_0)\times (-1,1)^{n-1}
\end{equation}
for some positive constant $c_0$. Consider the partial Legendre transformation of $u$:
\begin{equation*}
\begin{split}
& y_1=x_1,\quad y''=D_{x''} u,\\
&u^*(y)=x''\cdot D_{x''} u(x)-u(x).
\end{split}
\end{equation*}
Define
\begin{equation*}
\begin{split}
T:\ &(0,r_0)\times (-1,1)^{n-1} \rightarrow T((0,r_0)\times (-1,1)^{n-1})=\Omega,\\
&\quad\quad \quad\quad \quad \quad\quad \ x\mapsto y=T(x)=(x_1,D_{x''}u).
\end{split}
\end{equation*}
Then, $T,T^{-1}$ are both Lipschitz homeomorphisms between $(0,r_0)\times (-1,1)^{n-1}$ and $\Omega$. By direct computation, $u^*$ solves
\begin{equation}\label{3-752}
\begin{split}
& y_1 u^*_{11}+h(y_1,D_{y''}u^*)\det D^2_{y''} u^*=0,\quad \text{in}\quad \Omega\\
& \det D_{y''}^2 u^*(0,y'')=\frac{1}{h(0,D_{y''}u^*)},\quad \text{on}\quad \partial\Omega\cap \{y_1=0\}.
\end{split}
\end{equation}
Let $v^*(y)=u^*+y_1\ln  y_1$. Since $u^*_{y_1}=-u_{x_1}$, it follows that
\begin{equation*}
v^{*}_{y_1}=u^*_{y_1}+1+\ln  y_1=-u_{x_1}+1+\ln  x_1=-v_{x_1}\in L^\infty(\Omega).
\end{equation*}
Let $w^*=v_1^*$.
Differentiating \eqref{3-752} with respect to $y_1$ yields
\begin{equation*}
 \partial_1(y_1 w^*_{1})+\sum_{i,j=2}^{n}\partial_i(a_{ij}\partial_j w^*)=f,\quad \text{in}\quad \Omega,
\end{equation*}
where
\begin{equation*}
\begin{split}
a_{ij}=(U^*)^{ij} h,\quad f=-h_{1}\det D^2_{y'' } u^*\in L^\infty(\Omega),
\end{split}
\end{equation*}
and $(U^*)^{ij}$ is the co-factor matrix of $D_{y''}^2 u^*$.
By [Theorem 1.1,\cite{HongHuang2022}],
\begin{equation*}
v_1^*=w^*\in C^{\alpha}_{loc}(\Omega\cup\{y_1=0\})
\end{equation*}
 for some $\alpha\in(0,1)$. Also let
 $$x_1=t^2/4,\quad \mathfrak u(t,x'')=u(x).$$
 Then,
\begin{equation*}
\begin{split}
& |D_{x''}\mathfrak u(t,x'')-D_{x''}\mathfrak u(\tilde t,\tilde x'')|\\
\le &\int_0^1 |D_{t,x''}D_{x''} \mathfrak u(s(\tilde t,\tilde x'')+(1-s)(t,x'') )\cdot (t-\tilde t,x''-\tilde x'')| ds\\
\le & C(|x''-\tilde x''|+|t-\tilde t|).
\end{split}
\end{equation*}
In deriving the last inequality of the above, we used  Lemma \ref{lemc1-1} to obtain
\begin{equation*}
\mathfrak u_{ij}=u_{ij},\quad \mathfrak u_{1i}=\sqrt{x_1} u_{1i}\in L^\infty((0,r_0)\times (-1,1)^{n-1}),\quad i,j=2,\cdots,n.
\end{equation*}
Hence,
\begin{equation*}
\begin{split}
& |v_{x_1}(x)-v_{x_1}(\tilde x)|\\
= &|v^*_{y_1}(y)-v^*_{y_1}(\tilde y)| \\
\le & C\left(|x_1-\tilde x_1|^\alpha+|D_{x''}u(x)-D_{x''}u(\tilde x)|^\alpha\right)\\
\le & C\left(|x_1-\tilde x_1|^\alpha+|x''-\tilde x''|^\alpha+|\sqrt x_1-\sqrt{\tilde x_1}|^{\alpha}\right).
\end{split}
\end{equation*}
Let $\mathfrak v(z)=v(x)$, $x_1=\frac{z_1^2}{4},x''=z''$. Then $\mathfrak v$ solves

\begin{equation}\label{MAs2}
\mathcal F\left(D^2 \mathfrak v,\frac{\mathfrak v_{z_1}}{z_1},z\right)=\det \begin{pmatrix}
    \mathfrak v_{z_1z_1}-\frac{\mathfrak v_{z_1}}{z_1}+1&    \mathfrak v_{z_2z_1} & \cdots &  \mathfrak v_{z_nz_{1}}\\[3pt]
   \mathfrak v_{z_2z_1}& \mathfrak v_{z_2z_2} & \cdots&\mathfrak v_{z_2z_{n}}\\[3pt]
  \cdots &\cdots &\cdots &\cdots  \\[3pt]
  \mathfrak v_{z_nz_{1}} &\mathfrak v_{z_2z_{n}} & \cdots&\mathfrak v_{z_{n}z_{n}}
 \end{pmatrix}
    -h(z_1^2/4,z'')=0.
\end{equation}
Note that $\mathfrak v_{z_1}(z_1,z'')/z_1=\frac 12 v_{x_1}.$ Hence, we can make even extension of $\mathfrak v(z)$, i.e., $\mathfrak v(z)=\mathfrak v(-z_1,z'')$ such that
\begin{equation*}
\mathfrak v_{z_1}(z_1,z'')/z_1\in C^{\alpha}([-\sqrt{3r_0},\sqrt{3r_0}]\times [-3/4,3/4]^{n-1}).
\end{equation*}
Since $\mathcal F^{\frac 1n}$ is concave in $D^2 \mathfrak v$, by interior  $C^{2,\alpha}$ regularity \cite{Caffarelli1989-1}, one knows \begin{equation*}
\mathfrak v\in C^{2,\alpha}([-\sqrt{2r_0},\sqrt{2r_0}]\times [-1/2,1/2]^{n-1}).
\end{equation*}
This ends the proof of present theorem.
\end{proof}
\subsection{$C^\infty$ regularity up to the $(n-1)$-face of $P$}
Let $\mathfrak v(z)$ be as in the previous sub-section.
Following the ideas in \cite{JianWang2013} with suitable modifications, we can prove the following theorem.
\begin{theorem}\label{thm3-803}
Let $u,v$ be given as in Lemma \ref{lemlowb}, and assume that all the conditions in Lemma \ref{lemlowb} are satisfied.
If $h\in C^\infty(\overline{Q_3})$, then $v\in C^\infty(\overline{(0,\frac {r_0}4)\times (-1/4,1/4)^{n-1}}))$. Moreover, the following estimate holds
\begin{equation*}
\|v\|_{C^{k+2}(\overline{(0,\frac {r_0}4)\times (-1/4,1/4)^{n-1}}))}\le C_k
\end{equation*}
for some positive constant $C_k$ depending only on the quantities in Lemma \ref{lemlowb}, $\|u(0,x'')\|_{C^{k+4}(\overline{Q_3''})}$ and $\|h\|_{C^{k+2}(\overline{Q_3})}$.
\end{theorem}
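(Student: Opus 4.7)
The plan is to bootstrap the $C^{2,\alpha}$ regularity obtained in Theorem 3.2 up to $C^\infty$, working in the straightened coordinates $z$ with $x_1 = z_1^2/4$ and $x'' = z''$. Set $\mathfrak v(z) = v(x)$ and extend evenly in $z_1$, so that Theorem 3.2 gives $\mathfrak v \in C^{2,\alpha}$ on a two-sided neighborhood of $\{z_1=0\}$ and $\mathfrak v$ satisfies (MAs2), which I rewrite as
\begin{equation*}
\det M = h(z_1^2/4,z''),\qquad M = D^2\mathfrak v + \bigl(1 - \mathfrak v_{z_1}/z_1\bigr)\,e_1\otimes e_1.
\end{equation*}
Once $\mathfrak v \in C^\infty$ is established, its evenness in $z_1$ lets me rewrite it as a smooth function of $(z_1^2,z'') = (4x_1,x'')$, yielding $v \in C^\infty$ up to $\{x_1=0\}$.

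The crucial preliminary observation is that the nonstandard coefficient $\mathfrak v_{z_1}/z_1$ is one derivative less regular than $\mathfrak v$. Since $\mathfrak v$ is even in $z_1$, the derivative $\mathfrak v_{z_1}$ is odd and vanishes on $\{z_1=0\}$, so
\begin{equation*}
\frac{\mathfrak v_{z_1}(z_1,z'')}{z_1} = \int_0^1 \mathfrak v_{z_1 z_1}(tz_1,z'')\,dt,
\end{equation*}
which shows $\mathfrak v_{z_1}/z_1 \in C^{k-1,\alpha}$ whenever $\mathfrak v \in C^{k,\alpha}$, with quantitative control. In particular, in the base case $\mathfrak v \in C^{2,\alpha}$ the matrix $M$ is $C^\alpha$, its determinant equals $h > 0$, and the $C^{2,\alpha}$ bound on $\mathfrak v$ forces the eigenvalues of $M$ to lie in a uniform positive interval. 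Hence the equation is uniformly elliptic and concave in $D^2\mathfrak v$.

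The bootstrap is then an induction on $k \geq 2$. Suppose $\mathfrak v \in C^{k,\alpha}$. Then the correction $E(z) := (1 - \mathfrak v_{z_1}/z_1)\,e_1\otimes e_1$ lies in $C^{k-1,\alpha}$, and the equation $\det(D^2\mathfrak v + E(z)) = h(z_1^2/4,z'')$ is a uniformly elliptic concave fully nonlinear equation in $D^2\mathfrak v$ with $C^{k-1,\alpha}$ coefficients and smooth right-hand side. Interior Schauder theory for such equations (following Evans--Krylov, see \cite{Caffarelli1989-1}) then produces $\mathfrak v \in C^{k+1,\alpha}$ on slightly shrunken neighborhoods of $\{z_1=0\}$, with a quantitative estimate. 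Iterating yields $\mathfrak v \in C^\infty$; the asserted bound $\|v\|_{C^{k+2}} \leq C_k$ follows by tracking, at each Schauder step, the dependence on $\|h\|_{C^{k+2}(\overline{Q_3})}$ and on $\|u(0,x'')\|_{C^{k+4}(\overline{Q_3''})}$, the latter entering through Theorem 3.2 and the $C^{2,\alpha}$ bound of Lemma 3.2.

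The main obstacle I anticipate is exactly the nonlocal appearance of $\mathfrak v_{z_1}/z_1$ in the equation: a naive $z_1$-differentiation produces $\mathfrak v_{z_1 z_1}/z_1 - \mathfrak v_{z_1}/z_1^2$, which looks as though it loses extra regularity at the boundary. The even-extension identity above is precisely what rescues the bootstrap, recasting $\mathfrak v_{z_1}/z_1$ as a $z_1$-average of $\mathfrak v_{z_1 z_1}$ so that this coefficient improves in lockstep with $D^2\mathfrak v$ at each Schauder step. Once this reduction is in hand, the proof reduces to standard interior Schauder for concave fully nonlinear elliptic equations, with no further equation-specific subtlety.
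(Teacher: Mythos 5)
Your plan rests on the claim that the even-extension identity
\begin{equation*}
\frac{\mathfrak v_{z_1}(z_1,z'')}{z_1}=\int_0^1\mathfrak v_{z_1z_1}(tz_1,z'')\,dt
\end{equation*}
upgrades $\mathfrak v_{z_1}/z_1$ by one derivative relative to $\mathfrak v_{z_1z_1}$, so that $\mathfrak v\in C^{k,\alpha}$ would imply $E\in C^{k-1,\alpha}$. That claim is off by one, and the error is exactly what breaks the bootstrap. Averaging a $C^{m,\alpha}$ function along rays from the boundary is still only $C^{m,\alpha}$: the one-dimensional example $f(z_1)=|z_1|^{2+\alpha}$ is even and $C^{2,\alpha}$, yet $f'(z_1)/z_1=(2+\alpha)|z_1|^\alpha$ is only $C^\alpha$, not $C^{1,\alpha}$. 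In general $\mathfrak v\in C^{k,\alpha}$ gives $\mathfrak v_{z_1z_1}\in C^{k-2,\alpha}$, hence $E\in C^{k-2,\alpha}$. Feeding that into Schauder for $\det(D^2\mathfrak v+E(z))=h$ returns $\mathfrak v\in C^{k,\alpha}$ and no more: the iteration is circular, so you never get past the $C^{2,\alpha}$ of Theorem~\ref{thm3-349}. At the very first step the obstruction already appears --- differentiating the equation in a tangential direction $z_j$ produces a linear equation for $\mathfrak w=\mathfrak v_{z_j}$ whose first-order coefficient contains the singular term $\mathfrak w_{z_1}/z_1$, and no averaging identity removes it. The equation for the derivative is a degenerate elliptic equation, not a uniformly elliptic one with Hölder coefficients.

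This is precisely why the paper's proof of Theorem~\ref{thm3-803} is not a Schauder bootstrap. It establishes sharp quadratic growth of $\mathfrak v_{z_2}-\mathfrak v_{z_2}(0,z'')$ by two layers of barrier constructions (Steps 1--2), converts this into weighted $C^{1,1}$ bounds via blow-up (Step 3), iterates on tangential derivatives (Steps 4--5), and then, for normal derivatives, passes to cylindrical coordinates $x_1=(y_1^2+y_2^2)/4$ so the singular operator $W_{z_1z_1}+W_{z_1}/z_1$ becomes a genuine two-dimensional Laplacian, where $W^{2,p}$ and Schauder theory become available after comparing with a Dirichlet-problem solution via a $\log$-barrier (Step 6). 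None of these ingredients is reproduced or replaced by your argument. To repair the proposal you would need to either supply a mechanism that actually gains a derivative for $\mathfrak v_{z_1}/z_1$ (there is none, as the example above shows) or confront the singular first-order term head on, as the paper does.
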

\begin{proof}
Let $\mathfrak v$ be  given in Theorem \ref{thm3-349}, which solves \eqref{MAs2}.
Differentiating \eqref{MAs2} with respect to $z_2$ yields that $\mathfrak w=\mathfrak v_{z_2}$ solves
\begin{equation*}
a_{11}\left(\mathfrak w_{11}-\frac{\mathfrak w_1}{z_1}\right)+\sum_{i+j\ge 3}a_{ij}\mathfrak w_{ij}=h_{2},
\end{equation*}
where $a_{ij}$ is the cofactor matrix of the matrix in \eqref{MAs2}.
Define
\begin{equation*}
\begin{split}
&\mathcal L(\varphi)=a_{11}\left(\varphi_{11}-\frac{\varphi_1}{z_1}\right)+\sum_{i+j\ge 3}a_{ij}\varphi_{ij},\\
&\mathcal L_0(\varphi)(z)=a_{11}(0,z'')\left(\varphi_{11}-\frac{\varphi_1}{z_1}\right)+\sum_{i+j\ge 3}a_{ij}(0,z'')\varphi_{ij}.
\end{split}
\end{equation*}
\par {\bf Step 1.} The following estimate holds
\begin{equation*}
\left|\mathfrak w(z)-\mathfrak w(0,z'')\right|\le Az_1^2|\ln  z_1|,\quad \text{in}\quad (0,r_0/2)\times [-3/8,3/8]^{n-1}.
\end{equation*}
Denote
\begin{equation*}
H(z)=\mathfrak w(0,z'')-A z_1^2\ln  z_1+B|z''|^2.
\end{equation*}
By Theorem \ref{thm3-349}, $\mathfrak v\in C^{2,\alpha}([0,r_0/2]\times [-1/2,1/2]^{n-1})$ and $\mathfrak v_{z_1}(0,z'')=0$, one knows
$a_{i1}(0,z'')=0$, $i=2,\cdots,n$. It follows from \eqref{s3-02} that $\mathfrak w(0,z'')$ satisfies
\begin{equation*}
\mathcal L_0(\mathfrak w(0,z''))=h_2(0,z'').
\end{equation*}
Then
\begin{equation*}
\begin{split}
&\mathcal L(H)-h_{2}\\
= & (\mathcal L-\mathcal L_0) w(0,z'')-2Aa_{11}+2B\sum_{i=2}^{n}a_{ii}+h_{2}(0,z'')-h_2(z)\\
\le & -\frac{A}{C_0}+C_0B+C_0 z_1^\alpha\\
\le & 0.
\end{split}
\end{equation*}
In obtaining the above inequality, we used the fact that $a_{ij}(z)\in C^\alpha([0,\sqrt{2r_0}]\times [-1/2,1/2]^{n-1})$ and choose $A\ge C_0^2(B+1)$.
It can be seen directly that $H(0,z'')\ge \mathfrak w(0,z'')$.
We can then choose $B$ large enough such that
\begin{equation*}
H(z)\ge \mathfrak w(z),\quad z\in (0,\sqrt{2r_0})\times \partial([-1/2,1/2]^{n-1}).
\end{equation*}
Next, we choose $A$ large enough such that
\begin{equation*}
H(z)\ge \mathfrak w(z),\quad z\in \{\sqrt{2r_0}\}\times [-1/2,1/2]^{n-1}.
\end{equation*}
Thus, applying the standard maximum principle, we obtain
\begin{equation*}
\mathfrak w(z_1,0'')-\mathfrak w(0,0'')\le Az_1^2|\ln  z_1|.
\end{equation*}
By choosing a similar lower barrier and translating $0''$ to an arbitrary point in $[-3/8,3/8]^{n-1}$, we prove Step 1. The constant $A$ depends only on the quantities in Theorem \ref{thm3-349} and $\|u(0,x'')\|_{C^3(\overline{Q_3''})}$.

\par {\bf Step 2.} Let
\begin{equation*}
\mathcal H(z)=\mathfrak w(0,z'')+Mz_1^2(1-z_1^{\frac \alpha 2})-16 A|z''|^2 z_1^2 \ln  z_1,
\end{equation*}
where $A$ is the positive constant given by Step 1, and  $\alpha$ is the H\"older exponent of the coefficients $a_{ij}$.
Then, from Step 1, one obtains
\begin{equation*}
\mathcal H(z)\ge \mathfrak w(z),\quad  \text{on} \quad \partial([0,\sqrt{2r_0}]\times [-3/8,3/8]^{n-1})
\end{equation*}
 by choosing $M$ large enough. Also,
\begin{equation*}
\begin{split}
&\mathcal L(\mathcal H)-h_{2}\\
\le & Cz_1^\alpha-\frac{M\alpha}{C}(2+\alpha/2)z_1^{\frac \alpha 2}-\frac{A}{C}|z''|^2+CA z_1|\ln  z_1|<0
\end{split}
\end{equation*}
by choosing $M$ large enough.
Then, similar to Step 1, we have
\begin{equation*}
|\mathfrak w(z)-\mathfrak w(0,z'')|\le M z_1^2,\quad \text{in}\quad [0,\sqrt{2r_0}]\times [-5/16,5/16]^{n-1}.
\end{equation*}
The constant $M$ depends on the same quantities as $A$ in Step 1.
\par {\bf Step 3.}  For any point $(\lambda,p'')\in (0,3/8)\times [-9/32,9/32]^{n-1}$, let
\begin{equation*}
\mathfrak w_\lambda(z)=\frac{\mathfrak w(\lambda  r_0 z_1,\lambda z''+p'')}{\lambda^2}.
\end{equation*}
Here, we assume $\mathfrak w(0,p'')=|D_{z''}\mathfrak w(0,p'')|=0$. Also, $D_{z_1}\mathfrak w(0,p'')=0$ follows from Step 1.
Then, from Step 2, we have
\begin{equation*}
\begin{split}
|\mathfrak w_\lambda(z)|\le& \frac{|\mathfrak w(\lambda r_0 z_1,\lambda z''+p'')-\mathfrak w(0,\lambda z''+p'')|}{\lambda^2}+\frac{|\mathfrak w(0,\lambda z''+p'')|}{\lambda^2}\\
\le &C|z|^2.
\end{split}
\end{equation*}
Hence, standard $C^{2,\alpha}$-estimates yield that
\begin{equation*}
\|\mathfrak w_\lambda\|_{C^{2,\alpha}([5/6,7/6]\times [-3/8,3/8]^{n-1})}\le C'
\end{equation*}
for some positive constant $C'$ independent of $\lambda$.
Returning to $\mathfrak w$, this implies
\begin{equation*}
\|D_{z''}\mathfrak v\|_{C^{1,1}([0,3r_0/8]\times [-9/32,9/32]^{n-1})}\le C
\end{equation*}
for some positive constant $C$ depending on the same quantities as $A$ in Step 1.
\par {\bf Step 4.} Let $\mathfrak w^{(1)}=\partial_{z_2}^2 \mathfrak v$ and define the region $\Sigma_k$ as follows:
$$\Sigma_k=\left(0,\left(\frac 12-\sum\limits_{l=0}^k\frac{1}{2^{l+3}}\right)r_0\right)\times \left(-\left(\frac 12-\sum\limits_{l=0}^{k+3}\frac{1}{2^{l+3}}\right),\frac 12-\sum\limits_{l=0}^{k+3}\frac{1}{2^{l+3}}\right)^{n-1}.$$
Then, from Step 3, we have 
\begin{equation*}
\begin{split}
&a_{11}\left(\mathfrak w^{(1)}_{11}-\frac{\mathfrak w^{(1)}_1}{z_1}\right)+\sum_{i+j\ge 3}a_{ij}\mathfrak w^{(1)}_{ij}\\
=&h_{22}-\sum_{i+j\ge 3}^n \partial_{z_2}a_{ij}\mathfrak w_{ij}-\partial_{z_2}a_{11}\left(\mathfrak w_{11}-\frac{\mathfrak w_1}{z_1}\right) \in L^\infty(\Sigma_0).
\end{split}
\end{equation*}
Following a similar argument as Step 1, we obtain
\begin{equation*}
|\mathfrak w^{(1)}(z)-\mathfrak w^{(1)}(0,z'')|\le C_1 z_1^2|\ln  z_1|,\quad \text{in}\quad \Sigma_1.
\end{equation*}
As in Step 3, for any point $(\lambda r_0,p'')\in \Sigma_2$, define
\begin{equation*}
\mathfrak w^{(1)}_\lambda(z)=\frac{\mathfrak w^{(1)}(\lambda r_0 z_1,\lambda z''+p'')}{\lambda^{2-\varepsilon}}
\end{equation*}
for any small $\varepsilon\in (0,1)$.
Assuming $\mathfrak w^{(1)}(0,p'')=|D_{z''}\mathfrak w^{(1)}(0,p'')|=0$, we then have 
\begin{equation*}
|\mathfrak w^{(1)}_\lambda(z)|\le C_\varepsilon \lambda^{\frac{\varepsilon}{2}}.
\end{equation*}
Hence, we can apply $W^{2,p}$ estimate to $\mathfrak w^{(1)}$ to obtain
\begin{equation*}
\|\mathfrak w_\lambda^{(1)}\|_{W^{2,p}([1-\sigma_0,1+\sigma_0]\times [-1+\sigma_0,1+\sigma_0]^{n-1})}\le C_\varepsilon \lambda^{\frac{\varepsilon}{2}}
\end{equation*}
for some small $\sigma_0>0$.
Using Sobolev embedding theorem, and choosing $p=\frac{n}{\varepsilon}$, we have
\begin{equation*}
\|\mathfrak w_\lambda ^{(1)}\|_{C^{1,1-\varepsilon}([1-\sigma_0,1+\sigma_0]\times [-1+\sigma_0,1+\sigma_0]^{n-1})}\le C_\varepsilon \lambda^{\frac{\varepsilon}{2}}.
\end{equation*}
Scaling back to $\mathfrak w^{(1)}$, this implies
\begin{equation*}
\|\mathfrak w^{(1)}\|_{C^{1,1-\varepsilon}(\overline{\Sigma_3})}\le C_\varepsilon.
\end{equation*}
The positive constant $C_\varepsilon$ depends only on $\varepsilon$,  the quantities in Theorem \ref{thm3-803}, and $\|u(0,x'')\|_{C^4(\overline{Q_3''})}$.
\par {\bf Step 5.}  From Step 4, it is known that $D_{z''}^2D_z\mathfrak v\in C^\alpha(\overline{\Sigma_3})$, $\forall \alpha\in (0,1)$. From \eqref{MAs2}, one gets
\begin{equation*}
\mathfrak v_{z_1z_1}-\frac{\mathfrak v_{z_1}}{z_1}=\mathfrak F(z,D^2_z\mathfrak v),
\end{equation*}
where $\mathfrak F(z,D^2_z\mathfrak v)$ is a smooth function in $z, \mathfrak v_{z_iz_j}$, $i+j\ge 3$.
Hence, the coefficients $a_{ij}$ are smooth functions of $z$ and $\mathfrak v_{z_iz_j}$ for $i+j\ge  3$, and it follows that $\mathfrak w_{11}-\frac{\mathfrak w_1}{z_1}\in C^{\alpha}(\overline{\Sigma_3})$.
This further implies
\begin{equation*}
h_{22}-\sum_{i+j\ge 3}^n \partial_{z_2}a_{ij}\mathfrak w_{ij}-\partial_{z_2}a_{11}\left(\mathfrak w_{11}-\frac{\mathfrak w_1}{z_1}\right)\in C^\alpha(\overline{\Sigma_3}).
\end{equation*}
Then we can repeat the argument in Step 2 to get
\begin{equation*}
|\mathfrak w^{(1)}(z)-\mathfrak w^{(1)}(0,z'')|\le C_1 z_1^2.
\end{equation*}
Then, by Step 3, one gets
\begin{equation*}
\|D_{z''}^2\mathfrak w^{(1)}\|_{C^{1,1}(\overline{\Sigma_4})}\le C_4.
\end{equation*}

Repeating the arguments  step by step, one gets
\begin{equation}\label{bdy-z1-0}
D_{z_1}D_{z''}^k\mathfrak v(0,z'')=0
\end{equation}
and
$D_{z''}^k \mathfrak v\in C^{1,1}(\overline{\Sigma_{k+2}})$, $k=0,1,2,\cdots.$ Moreover, the following estimate holds:
\begin{equation}\label{eati-tangential-1}
\|D_{z''}^k \mathfrak v\|_{C^{1,1}(\overline{\Sigma_{k+2}})}\le C_k,
\end{equation}
where the positive constant $C_k$ depends on the quantities in Lemma \ref{lemc1-1}, $\|u(0,x'')\|_{C^{k+3}(\overline{Q_3''})}$ and $\|h\|_{C^{k+1}(\overline{Q_3})}$.
\par {\bf Step 6.}
Given the results from Steps 1–5, we have:
$$D_{x''}^k v,\sqrt{x_1}D_{x''}^kD_{x_1}v,\frac{1}{2}D_{x''}^kD_{x_1}v+x_1D_{x''}^kD^2_{x_1}v\in L^\infty(\widetilde \Sigma_{k+2}),\quad \forall k\ge 0$$
where
\begin{equation*}
\widetilde \Sigma_k=\{x|(2\sqrt{x_1},x'')\in \Sigma_k\}.
\end{equation*}
 Note that $D_{x''}^kD_{x_1}v(x)=2\frac{D_{z_1}D_{z''}^k\mathfrak v(z_1,z'')}{z_1}$. By
\eqref{bdy-z1-0} and \eqref{eati-tangential-1}, we have $D_{x''}^kD_{x_1}v\in L^\infty(\widetilde \Sigma_{k+2})$.
 Hence, by Lemma \ref{lemma-Growth-Regularity} and Lemma \ref{lemma-Growth-Regularity2}, one has
$$D_{x''}^{k} v, D_{x''}^{k+1} v, \sqrt{x_1}D_{x''}^kD_{x_1}v\in C^{\frac{1}{2}}(\widetilde \Sigma_{k+2}),\quad D_{x''}^kD_{x_1}v, x_1D_{x''}^kD^2_{x_1}v\in L^\infty(\widetilde \Sigma_{k+2}).$$

 Let $w=v_{x_1}$,   by Theorem \ref{thm3-349}, $w \in C (\overline{\widetilde \Sigma_1})$.  Differentiating \eqref{s3-01} with respect to $x_1$ yields that
\begin{equation}\label{20240402}
x_1w_{11}+w_1+\sum_{i=2}^{n}\tilde a_{i1}w_{i1}+\sum_{i,j\ge 2}^{n}\tilde a_{ij}w_{ij}=f,
\end{equation}
where $\tilde a_{i1}=x_1 \sum\limits_{j=2}^n  v_{1j}\tilde b_{ij}$
and $\tilde b_{ij},\tilde a_{ij}$ are all smooth functions in  $$x,v_{lm},x_1v_{l1}v_{1m},x_1v_{11},\quad  i,j,l,m\ge 2.$$
 Hence,
\begin{equation*}
\tilde a_{i1}=\sqrt{x_1}\hat a_{i1},\quad i=2,\cdots,n,
\end{equation*}
where $\hat a_{i1}$ is H\"older continuous and $\hat a_{i1}(0,x'')=0$.
 Hence,
 $$f\in L^p(\widetilde \Sigma_3),\quad \forall p>1,\quad \tilde a_{ij},\tilde a_{i1}\in C^\alpha\left(\overline{\widetilde \Sigma_3}\right),\quad i,j\ge 2.$$
  Denote $W(z)=w(x)$, $z''=x'', z_1=2\sqrt{x_1}$. Then $W$ solves
\begin{equation}\label{eq-W-1}
W_{z_1z_1}+\frac {W_{z_1}}{z_1}+2\sum_{i=2}^{n}\hat a_{i1}W_{z_iz_1}+\sum_{i,j=2}^{n}\tilde a_{ij}W_{z_iz_j}=f.
\end{equation}
Let $\widetilde W(y,z'')=W(z)$, $z_1=\sqrt{y_{1}^2+y_{2}^2}$.  Then, $\widetilde W$ solves
\begin{equation}\label{230711}
\begin{split}
\mathcal R(\widetilde W)=&\widetilde W_{y_1y_1}+\widetilde W_{y_2y_2}+2\sum_{i=2}^{n}\hat a_{i1}\left(\frac{y_1}{z_1}\widetilde W_{z_iy_1}+\frac{y_2} {z_1}\widetilde W_{z_i y_2}\right)\\
+&\sum_{i,j=2}^{n}a_{ij}\widetilde W_{z_i z_j}=f,\quad \text{in}\quad \widehat \Sigma_3\backslash\{y_1=y_2=0\},
\end{split}
\end{equation}
where  $\widehat \Sigma_k=\{(y,z')|(\sqrt{y_1^2+y_2^2},z'')\in \Sigma_k'\}$, and
$$\Sigma_k'=\left[0,\left(\frac 12-\sum\limits_{l=0}^k\frac{1}{2^{l+3}}\right)r_0\right)\times \left(-\left(\frac 12-\sum\limits_{l=0}^{k+3}\frac{1}{2^{l+3}}\right),\frac 12-\sum\limits_{l=0}^{k+3}\frac{1}{2^{l+3}}\right)^{n-1}.$$

Notice that the coefficients in \eqref{230711} are continuous in $\overline{\widehat \Sigma_3}$ and $f\in L^p(\overline{\widehat \Sigma_3})$. Consider the following Dirichlet problem
\begin{equation}\label{230711-1}
\begin{split}
&\mathcal R(\mathcal W)=f,\quad \text{in}\quad \widehat \Sigma_3,\\
& \mathcal W=\widetilde W,\quad \text{on}\quad \partial \widehat \Sigma_3.
\end{split}
\end{equation}
 By the standard $W^{2,p}$-estimates, \eqref{230711-1} admits a unique solution  $\mathcal W\in W_{loc}^{2,p}(\widehat \Sigma_3)\cap C(\overline{\widehat \Sigma_3})$ for any $p>n+1$. Since
 \begin{equation*}
 \mathcal R(\mathcal W-\widetilde W\pm \sigma \ln(y_1^2+y_2^2))=0,\quad \text{in}\quad \widehat \Sigma_3\backslash\{y_1=y_2=0\},
 \end{equation*}
 holds for any $\sigma>0$, one can apply the maximum principle to conclude that $\mathcal W=\widetilde W$.
 Hence, 
 \begin{equation*}
 \widetilde W\in W_{loc}^{2,p}(\widehat \Sigma_3)\Rightarrow D\widetilde W\in C^\alpha_{loc}(\widehat \Sigma_3),\quad \alpha=1-\frac{n+1}{p}.
 \end{equation*}
 This implies  $v_{x''x_1}\in C_{loc}^{\frac{\alpha}{2}}(\widetilde \Sigma_3')$, where
 $$\widetilde \Sigma_k'=\{x|(2\sqrt{x_1},x'')\in \Sigma_k'\},\quad k\geq 1.$$
From this, one knows
 \begin{equation*}
\frac{y_l \hat a_{il}}{z_1}=\frac 12y_l\sum_{j=2}^n v_{lj}\tilde b_{ij}\in C^{\frac{\alpha}{2}}_{loc}(\widehat \Sigma_3), \quad f\in C^{\frac{\alpha}{2}}_{loc}(\widehat \Sigma_3),\quad l=1,2.
 \end{equation*}
 Hence, all the coefficients in \eqref{230711} are in $C^{\frac{\alpha}{2}}_{loc}(\widehat \Sigma_3)$.
 Using Schauder estimates, we can then conclude that $W\in C^{2,\frac{\alpha}{2}}_{loc}(\widehat \Sigma_3)$, i.e.,
\begin{equation*}
D^N_x v,\quad D_{x''}^2D_x v,\quad \sqrt{x_1} D_{x''}D_{x_1}^2 v,\quad  x_1D_{x_1}^3 v    \in C^{\frac{\alpha}{4}}(\overline{\widetilde \Sigma_4}),\quad |N|\le 2.
\end{equation*}
Then, we can differentiate \eqref{20240402} with respect to $x''$  and repeat the above argument to get
\begin{equation*}
D_{x''}D_x^{N}v,\quad D_{x''}^3D_x v,\quad \sqrt{x_1}D_{x''}^2D_{x_1}^2 v,\quad x_1D_{x''}D_{x_1}^3v\in C^{\frac{\alpha}{4}}(\overline{\widetilde {\Sigma_5}}).
\end{equation*}
Differentiating \eqref{20240402} repeatedly with respect to $x''$ yields that
\begin{equation*}
D_{x''}^k D_{x_1}^2 v,\quad x_1D_{x''}^kD_{x_1}^3 v\in C^{\frac \alpha 4}(\overline{\widetilde {\Sigma_{k+4}  }}),\quad \forall k\ge 1.
\end{equation*}

\par
Differentiating \eqref{s3-01} with respect to $x_1$ twice yields that $\mathfrak w^{(2)}=v_{x_1x_1}$ solves
\begin{equation*}
\begin{split}
&x_1w^{(2)}_{11}+2w^{(2)}_1+\sum_{i=2}^{n}\tilde a_{i1}w^{(2)}_{i1}+\sum_{i,j\ge 2}^{n}\tilde a_{ij}w^{(2)}_{ij}+\sum_{i\ge 2}\left(\frac{\partial \tilde a_{i1}}{\partial x_1}-\frac{\partial f}{\partial v_{i1}}\right)w^{(2)}_{i}\\
=& \frac{\partial f}{\partial x_1}+\sum_{i,j=2}^{n}\left(\frac{\partial f}{\partial v_{ij}}-\frac{\partial a_{ij}}{\partial x_1}\right)v_{ij1}.
\end{split}
\end{equation*}
Repeating the argument and by induction, we can prove Theorem \ref{thm3-803}.
\end{proof}

\section{Smoothness up to $(n-\mathfrak k)$-face, $2\le \mathfrak k\le n$}
For any fixed $\mathfrak k$, consider the following type of equation:
\begin{equation}\label{locpro2-1}
\begin{split}
& \det D^2 u=\frac{h(x)}{\prod_{i=1}^{\mathfrak k} x_{i}},\quad \text{in}\quad Q_{3},\\
& \det D^2_{ij} u \left|_{\{x_l=0,l\in \Gamma,i,j\notin \Gamma\}}=\frac{h}{\prod_{i=1,i\notin \Gamma}^{\mathfrak k}x_i}\right|_{\{x_l=0,l\in \Gamma\}}.
\end{split}
\end{equation}
Here, $\Gamma$ can be any subset of $\{1,2,\cdots,\mathfrak k\}$, and $Q_{3}=(0,3)^{\mathfrak k}\times (-3,3)^{n-\mathfrak k}$. We make the following convention: for $\mathfrak k=n$, $\Gamma=\{1,\cdots,n\}$, the second equation in \eqref{locpro2-1} means
\begin{equation*}
h(0)=1.
\end{equation*}
 Let
\begin{equation}\label{locpro2-2}
v(x)=u(x)-\sum_{i=1}^\mathfrak k x_i\ln  x_i,\quad v(x)\in C^\infty(\overline{Q_{3}}\backslash \{x_1=\cdots=x_\mathfrak k=0\}).
\end{equation}
Additionally, assume
\begin{equation}\label{locpro2-3}
0<h(x)\in C^\infty(\overline{Q_3}),\quad u(x)\in C(\overline{Q_{3}}),\quad v|_{\partial_0 Q_3} \in C^\infty
\end{equation}
where
\begin{equation*}
\partial_0 Q_3=\{x\in \partial Q_3| x_i=0, \text{ for some } i=1,\cdots,\mathfrak k\}.
\end{equation*}
In the following discussion, we always denote
\begin{equation*}
x'=(x_1,\cdots,x_\mathfrak k),\quad x''=(x_{\mathfrak k+1},\cdots,x_n)
\end{equation*}
and
\begin{equation}\label{modulus of continuity}
\omega_{\mathfrak k, u}(t)=\sup_{\substack{x\in Q_3\\|x'|\le t}}|u(0',x'')-u(x)|
\end{equation}
 Moreover, on $x''=0$, one has
\begin{equation}\label{locpro2-4}
\frac 1\Lambda \mathbb I_{(n-\mathfrak k)\times (n-\mathfrak k)}\le D_{x''}^2 v(0,x'')\le \Lambda \mathbb I_{(n-\mathfrak k)\times (n-\mathfrak k)}
\end{equation}
for some positive constant $\Lambda$.

\par In the following, we will always use the following notations if no confusion occurs.
 We say two quantities $A,B$ satisfying $A\lesssim_l B$ if and only if there exists a positive constant $C$ such that $A\le CB$ where the positive constant $C$ depends only on $n$, $\mathfrak k$, $\Lambda$, $\|u\|_{L^\infty(Q_3)}$, $\|1/h\|_{L^\infty(Q_3)}$, $\|v\|_{C^{\sigma(l)}(\partial_0 Q_3)}$, $\|h\|_{C^{\widetilde \sigma (l)}(\overline{Q_3})}$ and $\omega_{\mathfrak k,u}$. Here, both $\{\sigma(l)\}_{l=0}^{+\infty}$ and $\{\widetilde\sigma(l)\}_{l=0}^{+\infty}$ are sequences of increasing positive integers that depend only on $\mathfrak k$ and $n$,
and as $l$ approaches infinity, both $\sigma(l)$ and $\widetilde\sigma(l)$ tend to infinity.
If $l=0$, we just use $A\lesssim B$. Also, $A\approx B$ means $A\lesssim B$ and $B\lesssim A$.

Then we can formulate our main result in the following theorem.
\begin{theorem}\label{thmlocal-k}
Let $u,v$ be given by \eqref{locpro2-1}-\eqref{locpro2-4}.
Then there exists a positive constant $r_{\mathfrak k}\approx 1$ small enough such that $v(x)\in C^\infty([0,r_\mathfrak k]^{\mathfrak k}\times [-1/2,1/2]^{n-\mathfrak k})$. Moreover, there  holds
\begin{equation*}
\|v\|_{C^{l+2}([0,r_\mathfrak k]^\mathfrak k\times [-1/2,1/2]^{n-\mathfrak k})}\lesssim_l 1,\quad l\ge 0.
\end{equation*}
\end{theorem}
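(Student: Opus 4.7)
The proof proceeds by induction on $\mathfrak k$, with the base case $\mathfrak k=1$ supplied by Theorem \ref{thm3-803}. Fix $\mathfrak k\ge 2$ and assume the conclusion holds for all smaller values. By the inductive hypothesis applied to each face of $\{x'=0\}\cap \overline{Q_3}$ of codimension $\mathfrak j<\mathfrak k$, together with \eqref{locpro2-3}, the restriction $v|_{\partial_0 Q_3}$ is smooth on each closed sub-face, so there is a smooth extension $F\in C^\infty(\overline{Q_3})$ of $v|_{\partial_0 Q_3}$. The heart of the argument is the sharp asymptotic estimate
\begin{equation}\label{plan-ast}
|v(x',x'')-F(x',x'')|\,\le\, C\,x_1\cdots x_{\mathfrak k},\qquad x\in \overline{Q_{3/2}},
\end{equation}
from which all higher regularity will be bootstrapped.

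\textbf{Step 1 (weak asymptotics and $C^1$-regularity through a Liouville theorem).} Barrier functions in the spirit of Lemma \ref{lemlowb}, adapted to the product singularity $\prod_{i=1}^{\mathfrak k}x_i$, first yield
\[
|v-F|\,\le\, C(x_1\cdots x_{\mathfrak k})^{1/\mathfrak k}+o(|x'|),
\]
which, combined with the inductive regularity on the sub-faces, shows that $v$ is Lipschitz up to $\{x'=0\}$. A blow-up at any point of $\{x'=0\}\cap \overline{Q_{3/2}}$ produces a subsequential limit $\mathfrak u$ satisfying the hypotheses of Theorem \ref{0-thm-liouk}. That Liouville theorem itself is proved using the induction on $\mathfrak k$: the inductive hypothesis identifies $\mathfrak u$ on every $(n-\mathfrak j)$-face of $(\mathbb R^+)^{\mathfrak k}\times \mathbb R^{n-\mathfrak k}$ for $\mathfrak j<\mathfrak k$, and barrier comparison then forces $\mathfrak u(x)=\sum_a x_a\ln x_a+\tfrac{1}{2}|x''|^2$. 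Consequently $v\in C^1$ up to $\{x'=0\}$ and $\nabla(v-F)(0',x'')=0$.

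\textbf{Step 2 (quadratic refinement and the sharp bound \eqref{plan-ast}).} The vanishing gradient $|\nabla(v-F)|=o(1)$ near $\{x'=0\}$ provides the smallness needed to refine the barriers of Step 1 and obtain
\[
|v-F|\,\le\, C\!\!\sum_{\substack{i\neq j\\ i,j\le \mathfrak k}} x_ix_j.
\]
Combined with the inductive hypothesis applied on every face $\{x_i=0\}$, where $v-F$ is already $C^\infty$, an iteration on the family of rectangles $[0,r]^{\mathfrak k}\times[-2,2]^{n-\mathfrak k}$ produces the boundary decay
\[
|(v-F)(x',x'')|\,\lesssim_\epsilon\, \frac{r^{l-1-\epsilon}}{r^{\mathfrak k-1}}\,x_1\cdots x_{\mathfrak k},\qquad 2\le l\le \mathfrak k-1,
\]
on $\partial[0,r]^{\mathfrak k}\times[-2,2]^{n-\mathfrak k}$. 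A further induction on $l$, via the maximum principle applied to $(v-F)$ against comparison functions of the form $(\prod_{i} x_i)\cdot(\text{polynomial corrector})$, yields \eqref{plan-ast}.

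\textbf{Step 3 (bootstrap to $C^\infty$).} After a coordinate change adapted to the product singularity, boundary regularity for $v$ reduces to interior regularity of the mixed derivative $w:=\partial^{\mathfrak k}_{x_1\cdots x_{\mathfrak k}}v$. Differentiating \eqref{locpro2-1} once in each of $x_1,\dots,x_{\mathfrak k}$ produces a linearised Monge-Amp\`ere equation for $w$ which, after the rescaling dictated by the singularity, becomes uniformly elliptic, and \eqref{plan-ast} supplies the $L^\infty$-bound on $w$ near the boundary. Caffarelli's interior $C^{2,\alpha}$ theory, $W^{2,p}$-estimates and Schauder theory, applied exactly as in Theorem \ref{thm3-803}, promote this to $C^{2,\alpha}$; tangential differentiation in $x''$ and iteration on the resulting linear equations then produce the $C^{l+2}$ estimates for every $l\ge 0$.

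The principal obstacle is Step 2, together with the interlocked Liouville theorem that Step 1 invokes. Closing the induction requires simultaneously controlling the several distinct singularity types carried by the various lower-dimensional faces of $(\mathbb R^+)^{\mathfrak k}\times \mathbb R^{n-\mathfrak k}$, and the sub-/super-solution construction furnishing the quadratic refinement must be tuned very carefully: the Monge-Amp\`ere operator degenerates anisotropically near edges and vertices, so the corrector terms must respect both the product structure $\prod x_i$ and the mixed-partial boundary behaviour, while still producing a usable ordering with the Alexandrov solution.
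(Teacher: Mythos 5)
Your outline matches the paper's three-stage proof essentially step for step: the weak $(x_1\cdots x_{\mathfrak k})^{1/\mathfrak k}+o(|x'|)$ estimate and Liouville theorem giving $C^1$-regularity (Lemmas~\ref{lem-lip-k}--\ref{lem-C1-conti}, Theorem~\ref{thm-liouk}), the barrier refinement up to $|v-F|\lesssim x_1\cdots x_{\mathfrak k}$ (Lemmas~\ref{lem-k-quar-low}--\ref{lemma-growth2}), and the bootstrap through interior regularity of $v_{1\cdots\mathfrak k}$ in the squared coordinates $x_a=(y_{2a-1}^2+y_{2a}^2)/4$ (Lemmas~\ref{lemma-I12k-1}--\ref{lemma-I12k}, Theorems~\ref{thm-I-Ck}, \ref{theorem-I-k-smooth}). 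One small imprecision: for $\mathfrak k\ge 2$ the final bootstrap runs linear $W^{2,p}$/Schauder estimates on the uniformly elliptic \emph{linear} equation satisfied by $v_{1\cdots\mathfrak k}$, rather than invoking Caffarelli's fully nonlinear interior $C^{2,\alpha}$ theory, which the paper uses only in the $\mathfrak k=1$ case (Theorem~\ref{thm3-349}).
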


\begin{remark}\label{ind-asum-k}
The proof for Theorem \ref{thmlocal-k} is based on the method of induction. Theorem \ref{thmlocal-k} is true for $\mathfrak k=1$, i.e., Theorem \ref{thm3-803}. In the following proof, we assume Theorem \ref{thmlocal-k} holds true for $1,\cdots,\mathfrak k-1$.
\end{remark}
\subsection{Quadratic polynomial growth up to $(n-\mathfrak k)$-face, $2\le \mathfrak k\le n$}

\begin{lemma}\label{lem-lip-k} Let $u,v$ be given by \eqref{locpro2-1}-\eqref{locpro2-4}.
Then there holds
\begin{equation*}
|v(x',x'')-v(0',x'')|\lesssim |x'|,\quad \text{in}\quad  Q_1.
\end{equation*}

\end{lemma}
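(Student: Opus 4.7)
My plan is to adapt the three-step barrier argument of Lemma \ref{lemlowb} to the higher-codimensional setting, making essential use of the boundary compatibility $\det D^2_{x''}u(0',x'') = h(0',x'')$ (the second equation of \eqref{locpro2-1} with $\Gamma = \{1,\dots,\mathfrak k\}$), together with the assumed smoothness of $v|_{\partial_0 Q_3}$ and the inductive assumption of Remark \ref{ind-asum-k} for lower values of $\mathfrak k$.

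The first step produces a weak two-sided logarithmic bound. I introduce the barrier
\[
H(x) = \varepsilon\, u(0',x'') + C\varepsilon^{-(n-\mathfrak k)}\sum_{i=1}^{\mathfrak k} x_i\ln x_i,
\]
whose Hessian is block-diagonal with singular entries $C\varepsilon^{-(n-\mathfrak k)}/x_i$ in the $x'$-block and $\varepsilon D^2_{x''}u(0',x'')$ in the $x''$-block. The compatibility condition gives $\det D^2 H = C^{\mathfrak k}h(0',x'')/\prod_{i=1}^{\mathfrak k} x_i$, which exceeds $\det D^2 u$ once $C$ is large. Boundary comparison on $\partial Q_\delta$ for a small universal $\delta$ is arranged using the continuity of $u$ and the boundedness of $v|_{\partial_0 Q_3}$, so that $H \le u$ on $\partial Q_\delta$. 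The maximum principle then yields
\[
v(x) \ge v(0',x'') + A\sum_{i=1}^{\mathfrak k} x_i\ln x_i
\]
on a smaller cube, and a parallel construction with $\varepsilon^{-1}$ in place of $\varepsilon$, as in Step 3 of Lemma \ref{lemlowb}, produces the matching upper logarithmic bound.

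With the log bound in hand, I sharpen to the Lipschitz estimate by considering
\[
\mathcal H^\pm(x) = u(0',x'') + \sum_{i=1}^{\mathfrak k} x_i\ln x_i + \eta\sum_{i=1}^{\mathfrak k}|x''|^2 x_i\ln x_i \mp B\sum_{i=1}^{\mathfrak k} x_i(1 - x_i^{1/3}),
\]
modeled on Step 2 of Lemma \ref{lemlowb}. The $\eta|x''|^2 x_i\ln x_i$ correction restores positivity of the Schur complement in the $x''$-block after the diagonal perturbation, while $\mp B x_i(1-x_i^{1/3})$ injects a subdominant singular $\pm\tfrac{4B}{9}x_i^{-2/3}$ on the $x'$-diagonal, dominating the smooth discrepancy $h(x) - h(0',x'') = O(|x'|)$. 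A block expansion of $\det D^2\mathcal H^\pm$, grouped by which subset $S\subset\{1,\dots,\mathfrak k\}$ of singular rows and columns is activated in the cofactor sum, shows the top-order term ($S=\{1,\dots,\mathfrak k\}$) equals $h(0',x'')/\prod x_i$ and the $Bx_i^{-2/3}$ correction forces $\det D^2\mathcal H^- \ge \det D^2 u \ge \det D^2\mathcal H^+$ in $Q_\delta$. Boundary comparison on the cap $\{x_i = \delta\}$ uses Step 1 (via $B \gtrsim A|\ln\delta|/(1-\delta^{1/3})$), and on $\partial_0 Q_\delta$ we invoke the smoothness of $v|_{\partial_0 Q_3}$ plus the inductive Lipschitz property on faces of positive codimension. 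The comparison principle then delivers $|v(x) - v(0',x'')| \lesssim |x'|$.

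The main obstacle is the determinant comparison for $\mathcal H^\pm$ with $\mathfrak k \ge 2$ simultaneous singular directions. The Hessian $D^2\mathcal H^\pm$ mixes dominant $1/x_i$ singularities on the $x'$-diagonal with off-diagonal entries of logarithmic size coming from cross partials of $|x''|^2 x_i\ln x_i$ and from the interaction between the singular $\sum x_i\ln x_i$ gradient and the smooth $D^2_{x''}u(0',x'')$. I plan to isolate the dominant part $\mathrm{diag}(1/x_1,\dots,1/x_{\mathfrak k})\oplus D^2_{x''}u(0',x'')$ via a Schur-complement factorization, showing that the residual perturbation normalized by this dominant part vanishes as $x'\to 0$ except along the direction of the $Bx_i^{-2/3}$ correction, whose sign is controlled by $B$. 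The calibration of $\eta$ is designed precisely to absorb the off-diagonal logarithmic entries without perturbing the leading-order determinant, and this is where the detailed computations will concentrate.
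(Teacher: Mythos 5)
Your barrier constructions for the \emph{lower} Lipschitz bound essentially replicate what the paper does: the log bound of your Step~1 corresponds to the paper's Step~2.1 (with a slightly different $\varepsilon$-scaling, which still works since $\varepsilon^{(1-\mathfrak k)(n-\mathfrak k)}>1$ helps rather than hurts the determinant inequality), and your $\mathcal H^+$ corresponds to the paper's Step~2.2 barrier $H$. That half of the argument is sound.

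The genuine gap is in the \emph{upper} Lipschitz bound. The paper handles it with a one-line Jensen/convexity argument in its Step~1,
\[
u(x)\;\le\;\frac{1}{\mathfrak k}\sum_{a=1}^{\mathfrak k}u\bigl((0',x'')+\mathfrak k x_a\,{\bf e_a}\bigr)
\;=\;v(0',x'')+\sum_{a=1}^{\mathfrak k}x_a\ln x_a+O(|x'|),
\]
exploiting that every point $(0',x'')+\mathfrak k x_a{\bf e_a}$ lies on $\partial_0 Q_3$, where $v$ is smooth by hypothesis. No barrier is needed and no determinant comparison occurs. Your $\mathcal H^-$, by contrast, reuses the same sign on the correction term $\eta\sum_i|x''|^2x_i\ln x_i$ as $\mathcal H^+$, and that sign is wrong for a supersolution: on the face $\{x_a=0\}$ you need $\mathcal H^-\ge u$, which after dividing by $x_i$ reduces to $B(1-x_i^{1/3})+\eta|x''|^2\ln x_i-C\ge 0$, and this fails as $x_i\to 0^+$ because $\eta|x''|^2\ln x_i\to-\infty$ while $B(1-x_i^{1/3})$ stays bounded. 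The correction for the upper barrier must have the \emph{opposite} sign, as the paper itself does in Step~3 of Lemma~\ref{lemlowb} where the coefficient $(\varepsilon_0^{n-1}/C_0-1)/9$ is negative. Once you flip that sign you then have to re-verify the determinant inequality $\det D^2\mathcal H^-\le\det D^2 u$ all over again, because $-\eta|x''|^2\sum x_i\ln x_i$ enlarges the $x''$-block of the Hessian near the corner; balancing that against the $-\tfrac{4B}{9}x_i^{-2/3}$ correction (which threatens convexity of $\mathcal H^-$ for $B$ large) is exactly the delicate trade-off the paper sidesteps by not building an upper barrier at all. Also note the stated chain $\det D^2\mathcal H^-\ge\det D^2 u\ge\det D^2\mathcal H^+$ is reversed given your choice of $\mp B$. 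The lower-barrier half of your proposal is fine; the upper-barrier half, as written, does not go through and should be replaced by the pure-convexity argument.
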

\begin{proof}
We will restrict our discussion on $(0,\delta_0)^{\mathfrak k}\times (-3,3)^{n-\mathfrak k}$ for some $\delta_0>0$ small.
\par
Step 1. There exists a positive constant $C$ such that
\begin{equation*}
v(x',x'')-v(0',x'')\le C|x'|.
\end{equation*}
By the convexity of $u$, one has
\begin{equation*}
\begin{split}
u(x)\le & \frac 1{\mathfrak k}\sum_{a=1}^{\mathfrak k} u((0',x'')+\mathfrak k x_a {\bf e_a})\\
=&\frac 1{\mathfrak k}\sum_{a=1}^{\mathfrak k} v((0',x'')+\mathfrak k x_a {\bf e_a})+\sum_{a=1}^{\mathfrak k} (x_a\ln  x_a+(\ln  \mathfrak k)x_a)\\
\le & v((0',x''))+\sum_{a=1}^{\mathfrak k} x_a\ln  x_a+C_1|x'|.
\end{split}
\end{equation*}
The positive constant $C_1$ depends on $\|v\|_{C^1(\partial_0 Q_3)}$.
\par Step 2. There exists a positive constant $C$ such that
\begin{equation*}
v(x',x'')-v(0',x'')\ge -C|x'|.
\end{equation*}
\par Step 2.1. There exists a positive constant $A$ such that
\begin{equation*}
v(x',x'')-v(0',x'')\ge A\sum_{a=1}^\mathfrak k x_a\ln x_a.
\end{equation*}
The constant $A$ depends only on $\Lambda, \|u\|_{L^\infty(Q_3)}, \|v\|_{C^1(\partial_0 Q_3)}$, $\|h\|_{C(\overline{Q_3})}$ and $\omega_{\mathfrak k,u}$.
Without loss of generality, we may assume
\begin{equation*}
u(0)=v(0)=|D_{x''}v(0)|=|D_{x''}u(0)|=0.
\end{equation*}
Let
\begin{equation*}
\mathcal H(x)=\frac 12 v(0',x'')+A \sum_{a=1}^{\mathfrak k} x_a\ln  x_a.
\end{equation*}
A direct computation yields that
\begin{equation*}
\det D^2 \mathcal H=(1/2)^{n-\mathfrak k}A^{\mathfrak k}\frac{\det D_{x''}^2v(0',x'')}{\prod_{a=1}^\mathfrak k x_a}\ge \det D^2 u,\quad \text{in}\quad (0,\delta_0)^{\mathfrak k}\times (-3,3)^{n-\mathfrak k}
\end{equation*}
for $A$ large enough.
Now we compare the boundary values of $\mathcal H$ and $u$.
\\ Case I: On $\{x_a=0\}$, for some $a=1,\cdots,\mathfrak k$.  We just discuss $a=1$.  It follows that
\begin{equation*}
\begin{split}
\mathcal H(0,x_2,\cdots,x_n)\le & v(0',x'')+A\sum_{a=2}^{\mathfrak k} x_a\ln x_a \\
\le &  v(0,x_2,\cdots,x_n)+A \sum_{a=2}^{\mathfrak k} x_a\ln  x_a+C_1 \sum_{a=2}^{\mathfrak k} x_a\\
\le & u(0,x_2,\cdots,x_n)
\end{split}
\end{equation*}
for $A$ large enough depending on $\|v\|_{C^1(\partial_0 Q_{3})}$.
\\ Case II: On $\{x_a=\delta_0\}$, for some $a=1,\cdots,\mathfrak k$. By choosing $A$ large enough, we know $u\ge\mathcal H$.
\\ Case III: In the remaining case, one has $|x''|\ge 3$. By the uniform convexity of $v(0',x'')$, there exists $\varepsilon_0>0$ such that
\begin{equation*}
\begin{split}
v(0',x'') \ge 4\varepsilon_0,\quad \forall x''\in \partial ([-3,3]^{n-\mathfrak k}).
\end{split}
\end{equation*}
Let $\delta_0>0$ be small enough such that  $\omega_{\mathfrak k,u}(\delta_0)\le \varepsilon_0$,   where $\omega_{\mathfrak k,u}$ is defined in \eqref{modulus of continuity}.
Then, the following holds:
\begin{equation*}
\begin{split}
u(x)\ge u(0',x'')-\varepsilon_0\ge \frac 12 v(0',x'')+\sum_{a=1}^{\mathfrak k} x_a\ln  x_a\ge \mathcal H(x)
\end{split}
\end{equation*}
for $A>1$. The standard maximum principle then yields Step 2.1.
\\ Step 2.2.  Let
\begin{equation*}
H(x)=v(0',x'')+\sum_{a=1}^{\mathfrak k} \left (x_a\ln  x_a(1+A|x''|^2 )-Bx_a(1-x_a^{1/2})\right)
\end{equation*}
where $A$ is given by Step 2.1. Denote $\mathbb M_{\mathcal F}$ as follows:
\begin{equation}\label{hessian-def-k}
\mathbb M_{\mathcal F}= \left. \begin{pmatrix}
   \sqrt{x_ax_b} D_{x_ax_b}\mathcal F  &    \sqrt{x_a} D_{x''x_a}\mathcal F \\[3pt]
 \sqrt{x_a}(D_{x''x_a}\mathcal F)^T &  D_{x''}^2 \mathcal F
 \end{pmatrix}\right|_{a,b=1,\cdots,\mathfrak k}.
\end{equation}
A direct computation yields:
\begin{equation*}
\mathbb  M_{H}=\mathbb M_{v(0',x'')+\sum_{a=1}^{\mathfrak k} x_a\ln  x_a  }+\mathbb A+\mathbb B+\mathbb C
\end{equation*}
where
\begin{equation*}
\begin{split}
&\mathbb A=2A\left(\sum_{a=1}^\mathfrak k x_a\ln  x_a\right)\sum_{i=\mathfrak k+1}^{n}{\bf e_i}\otimes {\bf e_i},\\
&\mathbb B=2A \sum_{i=\mathfrak k+1}^{n}\sum_{a=1}^\mathfrak k \sqrt{x_a}(1+\ln  x_a)x_i\left(
{\bf e_i}\otimes {\bf e_a}+{\bf e_a}\otimes {\bf e_i}\right),\\
&\mathbb C=\sum_{a=1}^\mathfrak k \left(A|x''|^2+\frac{3B}{4}x_a^{1/2}\right){\bf e_a}\otimes {\bf e_a}.
\end{split}
\end{equation*}
In $[0,\delta_0]^{\mathfrak k}\times [-2,2]^{n-\mathfrak k}$,  one knows
\begin{equation*}
\begin{split}
\det \mathbb  M_{H}\ge & \det (\mathbb M_{v(0',x'')+\sum_{a=1}^{\mathfrak k} x_a\ln  x_a}+\mathbb A+\mathbb B)+\frac{1}{C_1}\left(|x''|^2+B|x'|^{1/2}\right),\quad \text{by } \eqref{locpro2-4}\\
\ge &\det \mathbb M_{v(0',x'')+\sum_{a=1}^{\mathfrak k} x_a\ln  x_a}+\frac{1}{C_1}\left(|x''|^2+B|x'|^{1/2}\right)\\
&+C_1\left(\sum_{a=1}^\mathfrak k x_a\ln  x_a\right)-C_1\sum_{a=1}^\mathfrak k |x''||x_a^{\frac 12}\ln  x_a|\\
\ge & h(0',x'')+\frac{B|x'|^{1/2}}{2C_1}
\ge  h(x)= \det \mathbb M_u,\quad \text{by } \eqref{locpro2-1} \text{ and } h\in C^1(\overline{Q_3})
\end{split}
\end{equation*}
for $B$ large enough.  Now we compare the boundary value of $H$ and $u$.
 \par Case I. On $\partial([0,\delta_0]^{\mathfrak k}\times [-2,2]^{n-\mathfrak k})\cap \{x_a=0\}$, for some $a=1,\cdots,\mathfrak k$,
without loss of generality, we assume $x_1=0$.
Then,
\begin{equation*}
\begin{split}
u(0,x_2,\cdots,x_n)=&v(0,x_2,\cdots,x_n)+\sum_{a=2}^{\mathfrak k}x_a\ln  x_a\\
\ge &v(0',x'')-C_3\sum_{a=2}^\mathfrak k x_a+\sum_{a=2}^{\mathfrak k}x_a\ln  x_a\\
\ge & H(0,x_2,\cdots,x_n)
\end{split}
\end{equation*}
provided $B\ge C_3/(1-\delta_0^{\frac 12})$.
\par Case II. On $\partial([0,\delta_0]^{\mathfrak k}\times [-2,2]^{n-\mathfrak k})\cap \{x_a=\delta_0\}$, for some $a=1,\cdots,\mathfrak k$,  it is sufficient to choose $B\ge C_4 \delta_0^{-1}$.
\par Case III. On  $[0,\delta_0]^{\mathfrak k}\times \partial( [-2,2]^{n-\mathfrak k})$. This follows from Step 2.1.
\\ From the standard maximum principle, one proves Step 2. This ends the proof of present lemma.
\end{proof}

Lemma \ref{lem-lip-k} is not enough for our blow up arguments. We also need the following type of estimates(Lemma \ref{lem-lip-k-refine}).
Let $F$ be a smooth extension of $v|_{\partial_0 Q_3}$, i.e.,
\begin{equation}\label{extension}
\begin{split}
&F(x)=v(x),\quad \text{on}\quad \partial_0 Q_3, \quad F\in C^\infty(\overline{Q_3}),\\
& \|F\|_{C^l(\overline{Q_3})}\le C(n)\|v\|_{C^l(\partial_0 Q_3)}
\end{split}
\end{equation}
for some constant $C(n)$ depending only on the dimension $n$. For example, for $n=3,\mathfrak k=2$, we can simply take
\begin{equation*}
F(x)=v(x_1,0,x_3)+v(0,x_2,x_3)-v(0,0,x_3).
\end{equation*}

\begin{lemma}\label{lem-lip-k-refine}
Let $u,v$ be given by \eqref{locpro2-1}-\eqref{locpro2-4}.
Then there holds
\begin{equation*}
|v(x)-F(x)|\lesssim (x_1\cdots x_{\mathfrak k})^\frac{1}{\mathfrak k}+o(|x'|),\quad \text{in}\quad  Q_1.
\end{equation*}
\end{lemma}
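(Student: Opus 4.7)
The plan is to construct explicit upper and lower barriers in the spirit of Step~2 of Lemma~\ref{lem-lip-k} and invoke Alexandrov's comparison principle. Setting $g(x)=(x_1\cdots x_{\mathfrak k})^{1/\mathfrak k}$, I take
\[
\mathcal U(x) = F(x) + \sum_{a=1}^{\mathfrak k} x_a \ln x_a + A\, g(x) + B|x'|^{1+\gamma},
\qquad
\mathcal L(x) = F(x) + \sum_{a=1}^{\mathfrak k} x_a \ln x_a - A\, g(x) - B|x'|^{1+\gamma},
\]
with $0 < A < \mathfrak k$, $B > 0$, and some small $\gamma\in(0,1)$. Since $B|x'|^{1+\gamma} = o(|x'|)$, establishing $\mathcal L \le u \le \mathcal U$ on $[0,\delta]^{\mathfrak k}\times[-2,2]^{n-\mathfrak k}$ would give the claimed bound.

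The boundary comparisons split into two types. On the faces of $\partial_0 Q_3$ where some $x_a = 0$, the inductive hypothesis (Theorem~\ref{thmlocal-k} for $1,\ldots,\mathfrak k - 1$, applied to the $(n-\mathfrak k+1)$-faces) gives $v = F$ smoothly; since $g\equiv 0$ and $B|x'|^{1+\gamma}$ has the correct sign, $\mathcal L \le u \le \mathcal U$ holds automatically there. On the outer boundary $\{x_a = \delta\}\cup\{|x_i| = 2\}$, Lemma~\ref{lem-lip-k} combined with the smoothness of $F$ yields $|v - F|\lesssim |x'|$, which is absorbed by taking $A$ and $B$ large (and $\delta$ small).

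The main obstacle is verifying the interior determinant inequalities $\det D^2\mathcal U \le h/\prod x_a \le \det D^2 \mathcal L$. I work with the rescaled Hessian $\mathbb M$ defined in~\eqref{hessian-def-k}, which satisfies $\det \mathbb M_{\mathcal F} = \prod_a x_a \cdot \det D^2\mathcal F$. A direct computation gives the rescaled contribution of $g$:
\[
x_a g_{aa} = -\frac{\mathfrak k-1}{\mathfrak k^2}\cdot\frac{g}{x_a},
\qquad
\sqrt{x_a x_b}\, g_{ab} = \frac{1}{\mathfrak k^2}\cdot\frac{g}{\sqrt{x_a x_b}}\quad (a\ne b),
\]
with $g_{ai} = g_{ij} = 0$ for $i,j > \mathfrak k$. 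Because $\sum_a 1/\mathfrak k = 1$, $g$ is concave on the positive orthant, so $-g$ is convex and \emph{increases} the rescaled determinant, while $+g$ decreases it. Along the diagonal ray $x_1=\cdots=x_{\mathfrak k}=t$, the upper $\mathfrak k\times\mathfrak k$ block of $\mathbb M_{\sum x_a\ln x_a\pm A g}$ is, in the limit $t\to 0$, a circulant matrix with diagonal $1\mp A(\mathfrak k-1)/\mathfrak k^2$ and off-diagonal $\pm A/\mathfrak k^2$; its eigenvalues are $1$ on $(1,\ldots,1)$ and $1\mp A/\mathfrak k$ with multiplicity $\mathfrak k - 1$. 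Together with the compatibility condition $\det D^2_{x''}F(0,x'') = h(0,x'')$ (from~\eqref{locpro2-1} with $\Gamma = \{1,\ldots,\mathfrak k\}$, using $F|_{x'=0}=v|_{x'=0}$), this yields
\[
\det\mathbb M_{\mathcal L}\bigl|_{x'=0} = (1+A/\mathfrak k)^{\mathfrak k-1} h(0,x'') > h(0,x''),
\quad
\det\mathbb M_{\mathcal U}\bigl|_{x'=0} = (1-A/\mathfrak k)^{\mathfrak k-1} h(0,x'') < h(0,x''),
\]
strict inequalities which persist in a small neighborhood by continuity. Off the diagonal ray, the diagonal entries $1 \pm A g(\mathfrak k-1)/(\mathfrak k^2 x_a)$ blow up as $x_a/g\to 0$, only reinforcing the inequalities; the sub-linear correction $B|x'|^{1+\gamma}$ supplies a uniform additional margin to absorb the $O(|x'|)$ mismatch between $F$ and the true second-order jet of $u$. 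The most delicate point will be balancing the choice of $A$, $B$, $\gamma$ and $\delta$ so that these interior inequalities and the outer boundary comparisons hold simultaneously across all degeneracy regimes of $x'$.
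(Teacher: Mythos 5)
Your plan recognizes the correct main term $(x_1\cdots x_{\mathfrak k})^{1/\mathfrak k}$ and the correct role of the rescaled Hessian $\mathbb M$; the eigenvalue computation for $\mathbb M_g$ along the diagonal ray ($0$ on $(1,\dots,1)$, $\mp A/\mathfrak k$ with multiplicity $\mathfrak k-1$) is accurate. However there is a genuine gap in the auxiliary term, plus some internal inconsistencies, and the paper's proof in fact runs on a structurally different determinant argument.

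The auxiliary term is the problem. To close the comparison on the outer face $\{x_a=\delta\}$ against the $O(|x'|)$ mismatch furnished by Lemma \ref{lem-lip-k}, you are forced to take $B\gtrsim \delta^{-\gamma}$. But then the rescaled Hessian of $B|x'|^{1+\gamma}$, whose tangential entries scale like $B|x'|^{\gamma}$, is of order $O(1)$ when $|x'|\sim\delta$ and does not vanish as $\delta\to 0$. This order-one contribution sits on exactly the tangential directions where $A\mathbb M_g$ supplies $\mp A/\mathfrak k$; with $A<\mathfrak k$ fixed, the balanced inequality $\det\mathbb M_{\mathcal U}<h<\det\mathbb M_{\mathcal L}$ is lost, and $\mathcal L$ may even fail to be convex once $C_0 > A/\mathfrak k$. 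That is why you cannot simultaneously impose $A<\mathfrak k$ (needed for your $(1\mp A/\mathfrak k)^{\mathfrak k-1}$ formula) and "take $A$ large to absorb the outer boundary error" as you propose later. Separately, without a $\pm|x''|^2$ term in the barrier there is no positive margin on the $\{|x_i|=2\}$ face, which the paper secures by including $-\frac{1}{8\Lambda}|x''|^2$ (resp.\ $+|x''|^2$) and exploiting the uniform convexity of $F(0',\cdot)$.

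The paper's construction differs precisely where yours breaks. The correction is $\varepsilon\big(\sum_a x_a\big)\ln\big(\sum_a x_a/\delta+2\big)$ with $\varepsilon = C/\ln(\delta^{-1/2}+2)$. This matches $C|x'|$ on $\{x_a=\sqrt\delta\}$, is bounded by $(n+2)\varepsilon\sum_a x_a = o(|x'|)$ once $|x'|\le\delta$, and — crucially — has rescaled Hessian of size $O(\varepsilon)\to 0$. No power $|x'|^{1+\gamma}$ can match a linear boundary mismatch while having uniformly vanishing rescaled Hessian; the logarithmic coefficient is essential. Once the correction is negligible in $\mathbb M$, the paper takes the coefficient of $G=(x_1\cdots x_{\mathfrak k})^{1/\mathfrak k}$ \emph{large} rather than small: for the subsolution, $-BG$ is convex and AM--GM gives $\det\mathbb M_{h_\delta}\gtrsim B$ outright; for the non-convex supersolution, at an interior contact point one has $\mathbb M_{H_\delta'}\ge\mathbb M_{-BG}\ge 0$ with $\lambda_{\max}(\mathbb M_{-BG})\gtrsim B$, contradicting the boundedness of $\mathbb M_{H_\delta'}$. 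This bypasses any fine determinant computation. Your $(1\mp A/\mathfrak k)^{\mathfrak k-1}$ picture is appealing but requires a truly small perturbation where you have an $O(1)$ one; as stated the proof does not close.
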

\begin{proof}
Step 1. We first show \begin{equation*}F(x)-v(x)\lesssim (x_1\cdots x_{\mathfrak k})^\frac{1}{\mathfrak k}+o(|x'|),\quad \text{in}\quad  Q_1.\end{equation*}

Since $F(0',x'')$ is uniformly convex(\eqref{locpro2-4}), one knows $F(x)-\frac{1}{8\Lambda}|x''|^2$ remains uniformly convex in $x''$ for $|x'|$ small enough. For simplicity, we assume
$$D^2_{x''}(F(x)-\frac{1}{8\Lambda}|x''|^2)\ge \frac{1}{8\Lambda}  \mathbb I_{(n-\mathfrak k)\times (n-\mathfrak k)},\quad \text{in}\quad Q_1. $$
Define
\begin{equation*}
\begin{split}
h_\delta(x)&=\sum_{a=1}^{\mathfrak k}x_a\ln x_a+F(x)-\frac{1}{8\Lambda}|x''|^2-\varepsilon \left(\sum_{a=1}^{\mathfrak k}x_a\right)\ln\left( \sum_{a=1}^{\mathfrak k}\frac{x_a}{\delta}+2\right)-B(x_1\cdots x_{\mathfrak k})^{\frac 1{\mathfrak k}}\\
&=H_\delta-B(x_1\cdots x_{\mathfrak k})^{\frac 1{\mathfrak k}}
\end{split}
\end{equation*}
in $[0,\sqrt\delta]^{\mathfrak k}\times [-1,1]^{n-\mathfrak k}$, where $\varepsilon=\frac{C}{\ln\left(\frac{1}{\sqrt \delta}+2\right)}$, and $C$ is the Lipschitz constant given by Lemma \ref{lem-lip-k}.
A simple calculation yields that
\begin{equation*}
\mathbb M_{H_\delta}\approx \mathbb I_{n\times n}.
\end{equation*}
Denote $G(x)=(x_1\cdots x_{\mathfrak k})^{\frac 1{\mathfrak k}}$. Since $G(x)$ is concave, we have
\begin{equation*}
\begin{split}
\det (\mathbb M_{H_\delta-BG})\ge & \det \mathbb M_{H_\delta}-\frac{B}{C_0} \text{Trace}(\mathbb M_{G})
\\ \ge & \frac{B}{C_1}\sum_{a=1}^{\mathfrak k}\frac{(x_1\cdots x_\mathfrak k)^{\frac{1}{\mathfrak k}}}{x_a}\\
\ge &\frac{B}{C_1}\ge \det \mathbb M_u
\end{split}
\end{equation*}
for $B$ large enough. We now compare the boundary value.
\begin{itemize}
	\item[(1).] On  $[0,\sqrt\delta]^{\mathfrak k}\times \partial([-1,1]^{n-\mathfrak k})$, we have
	\begin{equation*}
	\begin{split}
	u(x)-h_\delta\ge \frac{1}{8\Lambda}+v(x)-F(x)\ge \frac{1}{8\Lambda}-C\sum_{a=1}^{\mathfrak k} x_a\ge \frac{1}{16\Lambda}
	\end{split}
	\end{equation*}
	for $\delta\le \delta_0$ and $\delta_0$ small enough.
	\item[(2).] On $x_a=0$ for some $a\in \{1,\cdots,\mathfrak k\}$.  Since $v=F$ on $x_a=0$,  then  $u\ge h_\delta$ on $x_a=0$.
	\item[(3).] On $x_a=\sqrt{\delta}$ for some  $a\in \{1,\cdots,\mathfrak k\}$. Then, the following holds:
	\begin{equation*}
	\begin{split}
	u(x)-h_\delta\ge v(x)-F(x)+\varepsilon \left(\sum_{a=1}^{\mathfrak k}x_a\right)\ln\left( \sum_{a=1}^{\mathfrak k}\frac{x_a}{\delta}+2\right)\ge 0
	\end{split}
	\end{equation*}
by $\varepsilon$'s definition.
 \end{itemize}
 Overall, by maximum principle, we obtain $u\ge h_\delta$ in $[0,\sqrt\delta]^{\mathfrak k}\times [-1,1]^{n-\mathfrak k}$ for $\delta\in (0,\delta_0)$, i.e.,
 \begin{equation*}
 \begin{split}
 v(x',0'')-F(x',0'')&\ge -\varepsilon \left(\sum_{a=1}^{\mathfrak k}x_a\right)\ln\left( \sum_{a=1}^{\mathfrak k}\frac{x_a}{\delta}+2\right)-B(x_1\cdots x_{\mathfrak k})^{\frac 1{\mathfrak k}}\\
 &\ge -(n+2)\varepsilon \left(\sum_{a=1}^{\mathfrak k}x_a\right)-B(x_1\cdots x_{\mathfrak k})^{\frac 1{\mathfrak k}}
 \end{split}
 \end{equation*}
 for $|x'|\le \delta$.
 Also, the constant $B$ is independent of $\delta$, and $\varepsilon\rightarrow 0$ as $\delta\rightarrow 0$.

Step 2. Now we show \begin{equation*}v(x)-F(x)\lesssim (x_1\cdots x_k)^\frac{1}{k}+o(|x'|),\quad \text{in}\quad  Q_1.\end{equation*}

Define
\begin{equation*}
\begin{split}
h_\delta'(x)&=\sum_{a=1}^{\mathfrak k}x_a\ln x_a+F(x)+|x''|^2+\varepsilon \left(\sum_{a=1}^{\mathfrak k}x_a\right)\ln\left( \sum_{a=1}^{\mathfrak k}\frac{x_a}{\delta}+2\right)+B(x_1\cdots x_{\mathfrak k})^{\frac 1{\mathfrak k}}\\
&=H_\delta'+B(x_1\cdots x_{\mathfrak k})^{\frac 1{\mathfrak k}}
\end{split}
\end{equation*}
in $[0,\sqrt\delta]^{\mathfrak k}\times [-1,1]^{n-\mathfrak k}$, where $\varepsilon=\frac{C}{\ln\left(\frac{1}{\sqrt \delta}+2\right)}$,  with $C$ being the Lipschitz constant given by Lemma \ref{lem-lip-k}. For any $B>0$, arguing as in Step 1, we have
\begin{equation*}u(x)\leq h_\delta'(x)
 \quad \quad \text{ on   } \partial\big([0,\sqrt\delta]^{\mathfrak k}\times [-1,1]^{n-\mathfrak k} \big)\end{equation*}
for $\delta\le \delta_0$ and $\delta_0$ small enough.

We claim when $B\geq \|D^2 F\|_{L^{\infty}}+10C+10$, there holds
 \begin{equation}\label{v-geq-F-refine}u(x)\leq h_\delta'(x)
 \quad \quad \text{ in   }  [0,\sqrt\delta]^{\mathfrak k}\times [-1,1]^{n-\mathfrak k}. \end{equation}
Suppose \eqref{v-geq-F-refine} is not true. Then, we can assume $h_\delta'(x)-u(x)$ attains its negative minimum at $\bar x$ in the interior
of $[0,\sqrt\delta]^{\mathfrak k}\times [-1,1]^{n-\mathfrak k}$. At $\bar x$, we have 
\begin{equation*}
\mathbb M_{H_\delta'}(\bar x)- \mathbb M_{-BG}(\bar x) =\mathbb M_{h_\delta'(x)}(\bar x)\ge \mathbb M_{u}(\bar x).
\end{equation*}
However, the maximum eigenvalue of $\mathbb M_{-BG}(\bar x)$ is greater than or equal to $\frac{\text{Trace}(\mathbb M_{-BG})}{\mathfrak k} \geq B $, which leads to a contradiction.
Thus, we conclude that
\begin{equation*}
 v(x',0)-F(x',0)\le  (n+2)\varepsilon \left(\sum_{a=1}^{\mathfrak k}x_a\right)+B(x_1\cdots x_{\mathfrak k})^{\frac 1{\mathfrak k}}
 \end{equation*}
 for $|x'|\le \delta$.
\end{proof}

In the next result, we prove a Liouville-type theorem. This theorem will be used to establish the global $C^1$-regularity of $v$ through the blow-up argument.

\begin{theorem}\label{thm-liouk}
Let $\mathfrak u\in C(\overline{(\mathbb R^+)^{\mathfrak k}\times \mathbb R^{n-\mathfrak k}})$ be an Alexandrov solution of the following equation
\begin{equation}\label{liouk-1}
\begin{split}
& \det D^2 \mathfrak u=\frac{1}{\prod\limits_{a=1}^\mathfrak k x_a},\quad \text{in}\quad (\mathbb R^+)^{\mathfrak k}\times \mathbb R^{n-\mathfrak k},\\
& \mathfrak u|_{\partial((\mathbb R^+)^{\mathfrak k}\times \mathbb R^{n-\mathfrak k})}=\sum_{a=1}^\mathfrak k x_a\ln  x_a+\frac 12 |x''|^2,\\
& \left|\mathfrak u(x)-\sum_{a=1}^{\mathfrak k} x_a\ln  x_a-\frac 12 |x''|^2\right|\le C   (x_1\cdots x_{\mathfrak k})^\frac{1}{\mathfrak k}.
\end{split}
\end{equation}
Then,
\begin{equation*}
\mathfrak u(x)=\sum_{a=1}^{\mathfrak k} x_a\ln  x_a+\frac 12|x''|^2.
\end{equation*}
\end{theorem}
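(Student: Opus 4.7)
The plan is to argue by induction on $\mathfrak k$. By Remark \ref{ind-asum-k}, I may assume Theorem \ref{thmlocal-k} is already known for all $\mathfrak k'<\mathfrak k$. The base case $\mathfrak k=n=2$ is Lemma 2.4 of \cite{Huang2023}, where explicit sub- and super-solutions are constructed under a weaker growth assumption; the cases $\mathfrak k=1$ or small $\mathfrak k,n$ are handled by the same barrier strategy in the spirit of Lemma \ref{lemlowb}. Set $w:=\mathfrak u-\bar u$ with $\bar u(x)=\sum_{a=1}^{\mathfrak k}x_a\log x_a+\tfrac12|x''|^2$ and $G:=(x_1\cdots x_{\mathfrak k})^{1/\mathfrak k}$. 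The task reduces to showing $w\equiv 0$, knowing that $w=0$ on $\partial\bigl((\mathbb R^+)^{\mathfrak k}\times\mathbb R^{n-\mathfrak k}\bigr)$ and $|w|\le CG$.

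First I would leverage the inductive hypothesis for regularity: applied to $\mathfrak u$ in any neighborhood of a boundary point lying on an $(n-\mathfrak k')$-face with $\mathfrak k'<\mathfrak k$, Theorem \ref{thmlocal-k} gives $w\in C^\infty$ up to every such face. Together with interior smoothness, argued exactly as in Theorem \ref{thm2.2} (the blow-up of $|\nabla\mathfrak u|$ on the boundary forces strict convexity and hence $C^\infty$-regularity in the interior), this yields $w\in C^\infty$ away from the lowest-dimensional stratum $\{x'=0\}$. I would then introduce the sub-solution barrier $\phi_\epsilon^-:=\bar u-\epsilon G$. Since $G$ is concave on $(\mathbb R^+)^{\mathfrak k}$, the function $\phi_\epsilon^-$ is convex and $D^2\phi_\epsilon^-\ge D^2\bar u$ as symmetric matrices, hence $\det D^2\phi_\epsilon^-\ge\det D^2\mathfrak u$; on the singular boundary $\phi_\epsilon^-=\bar u=\mathfrak u$, and on the artificial boundary of a truncation $\Omega_R=[0,R]^{\mathfrak k}\times[-R,R]^{n-\mathfrak k}$ the growth bound $|w|\le CG$ guarantees $\phi_\epsilon^-\le\mathfrak u$ as soon as $\epsilon\ge C$. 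Comparison followed by $R\to\infty$ then yields $w\ge -\epsilon G$ for every such $\epsilon$.

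The key iteration uses the scaling symmetry: define $\mathfrak u_\lambda(x):=\lambda^{-1}\mathfrak u(\lambda x',\sqrt\lambda x'')-(\log\lambda)\sum_{a\le\mathfrak k}x_a$, and check directly that $\mathfrak u_\lambda$ satisfies the same equation, the same boundary condition, and the same growth bound with the \emph{same} constant $C$ (because $G$ is $1$-homogeneous under the scaling $x_a\mapsto\lambda x_a,\ x_i\mapsto\sqrt\lambda x_i$). By the compactness of convex functions with uniform local bounds, a subsequential blow-down limit $\mathfrak u^*=\lim_{\lambda_j\to\infty}\mathfrak u_{\lambda_j}$ exists, is convex, inherits all the data of the theorem, and is invariant under the scaling, so $w^*:=\mathfrak u^*-\bar u$ is $1$-homogeneous. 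Restricting $\mathfrak u^*$ to any $(n-1)$-face $\{x_a=0\}$ reduces to the Liouville statement in one lower dimension and with $\mathfrak k-1$ singular directions, which is known by the induction; therefore $w^*\equiv 0$ on every such face, and the $1$-homogeneity under the scaling, together with convexity and the fact that every ray in $(\mathbb R^+)^{\mathfrak k}\times\mathbb R^{n-\mathfrak k}$ starting from the origin is the limit of rays through boundary points, propagates $w^*\equiv 0$ into the interior. This forces the constant in the sub-solution step to be zero for $\mathfrak u$ itself, giving $w\ge 0$.

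For the opposite inequality I would linearize. Concavity of $\log\det$ applied to $\det D^2\mathfrak u=\det D^2\bar u$ gives
\[
0\le\mathrm{tr}\bigl(\bar u^{ij}D^2 w\bigr)=\sum_{a=1}^{\mathfrak k}x_a w_{aa}+\sum_{i>\mathfrak k}w_{ii},\qquad 0\ge\mathrm{tr}\bigl(\mathfrak u^{ij}D^2 w\bigr).
\]
Once $w\ge 0$ has been established, the second inequality together with $w=0$ on $\partial$, $C^\infty$-regularity away from $\{x'=0\}$, and $|w|\le CG$, is precisely the setup of a Phragm\'en--Lindel\"of-type maximum principle for the degenerate elliptic operator $\mathfrak u^{ij}\partial_i\partial_j$, whose coefficients are asymptotic to the explicit degenerate operator $\mathrm{diag}(x_1,\dots,x_{\mathfrak k},1,\dots,1)\,\partial^2$ by the first step. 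This forces $w\le 0$, and hence $w\equiv 0$. The main obstacle will be the scaling step: one has to verify carefully that the blow-down limit $\mathfrak u^*$ truly inherits enough regularity on its lower-codimension faces to trigger the inductive Liouville theorem, and that the $1$-homogeneity of $w^*$ really does transfer the boundary identification $w^*=0$ on the faces to an interior identification; this is the delicate interplay between the scaling symmetry and the inductive regularity that drives the whole argument.
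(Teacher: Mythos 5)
Your overall framework --- scaling combined with the inductive smoothness of $v$ up to lower-codimension faces --- agrees with the paper's strategy, but the decisive step in your argument has a genuine gap, and the actual proof proceeds quite differently at exactly that point.

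The crux of your argument is the claim that the blow-down limit $w^*=\mathfrak u^*-\bar u$ must vanish identically because it is $1$-homogeneous under the anisotropic scaling, vanishes on $\partial\bigl((\mathbb R^+)^{\mathfrak k}\times\mathbb R^{n-\mathfrak k}\bigr)$, and because ``every ray in the interior starting from the origin is the limit of rays through boundary points.'' This does not follow. The function $G(x)=(x_1\cdots x_{\mathfrak k})^{1/\mathfrak k}$ itself is exactly $1$-homogeneous under that scaling, vanishes on the singular boundary, and is nonzero in the interior; so neither homogeneity, nor vanishing on the boundary, nor the growth bound $|w^*|\le CG$ can force $w^*\equiv 0$ by themselves. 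The ``limit of rays'' reasoning also fails: along a sequence of boundary rays $\{t\vec v_j\}$ with $\vec v_j\to\vec v$, the value $w^*(t\vec v_j)=0$, but at a fixed $t$ the distance $t|\vec v-\vec v_j|$ is bounded away from $0$ uniformly in $j$ only when $t$ is bounded, so mere continuity of $w^*$ does not let you pass to $w^*(t\vec v)=0$ for all $t$. What is needed, and what is missing, is an argument that actually uses the Monge--Amp\`ere equation to identify the scale-invariant solution. Relatedly, the barrier step $\phi_\epsilon^-=\bar u-\epsilon G$ with $\epsilon\ge C$ reproduces only the hypothesis $|w|\le CG$ and does not improve the constant, and the Phragm\'en--Lindel\"of step relies on $w\ge 0$, which has not been established.

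For contrast, the paper's proof does not blow down to a scale-invariant solution and try to identify it. Instead it works with the \emph{derivative} $D_{x'}\mathfrak v$: the scaling $\mathfrak v_\lambda(x)=\lambda^{-1}\mathfrak v(\lambda x',\sqrt\lambda x'')$ together with the growth hypothesis and the inductive $C^l$-bounds (Remark \ref{ind-asum-k}) gives $|D_{x'}\mathfrak v|\le C$ uniformly, since $D_{x'}\mathfrak v(p',0'')=D_{x'}\mathfrak v_\lambda(p'/|p'|,0'')$ for $\lambda=|p'|$. It then takes a maximizing sequence for $\sup D_{x_1}\mathfrak v$, extracts a smooth blow-up limit $\mathfrak V$ (again using the inductive regularity away from the lowest stratum), and notes that $\mathfrak V_{x_1}$ satisfies a degenerate linear elliptic equation. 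The strong maximum principle is then applied in three cases --- interior extremum (contradicts $\mathfrak k\ge 2$ via the boundary condition), extremum on a face $\{x_a=0\}$ with $a\ge 2$ (boundary condition gives $\mathfrak V_{x_1}=0$ there), and extremum on $\{x_1=0\}$ with $x_2,\dots,x_{\mathfrak k}>0$ (change of variables $x_1=\frac14(y_1^2+y_2^2)$ turns the degenerate equation into a uniformly elliptic one). That case analysis, rather than any homogeneity or barrier argument, is what actually kills $D_{x'}\mathfrak v$. Your proposal would need a substitute for this step before it could close.
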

\begin{proof}
Following the  proof of Lemma \ref{lemc1-1}, one can deduce that $\mathfrak u$ is strictly convex in $(\mathbb R^+)^{\mathfrak k}\times \mathbb R^{n-\mathfrak k}$. Hence, $u\in C^\infty((\mathbb R^+)^{\mathfrak k}\times \mathbb R^{n-\mathfrak k})$.
Let
\begin{equation*}
\mathfrak v(x)=\mathfrak u(x)-\sum_{a=1}^{\mathfrak k} x_a\ln  x_a.
\end{equation*}
 For any point $p\in (\mathbb R^+)^{\mathfrak k}\times \mathbb R^{n-\mathfrak k}$ with $\lambda=|p'|$, let
\begin{equation*}
\mathfrak v_\lambda(x)=\frac{\mathfrak v(\lambda x',\sqrt \lambda x'')}{\lambda},\quad \mathfrak u_\lambda(x)=\mathfrak v_\lambda(x)+\sum_{a=1}^{\mathfrak k} x_a\ln  x_a.
\end{equation*}
Without loss of generality, we assume $p''=0$. From the last condition in \eqref{liouk-1}, one knows
\begin{equation*}
\left|\mathfrak v_\lambda(x)-\frac 12 |x''|^2\right|\le C\prod_{a=1}^\mathfrak k x_a^{\frac 1{\mathfrak k}}
\end{equation*}
and $\mathfrak u_\lambda$ also solves \eqref{liouk-1}. For any compact set $K\subset (\mathbb R^+)^{\mathfrak k}\times \mathbb R^{n-\mathfrak k}$, a similar argument as in Lemma \ref{lemc1-1} implies that  the strict convexity of $\mathfrak u_\lambda$ in $K$
is independent of $\lambda$.
By induction assumptions on $\mathfrak k$, i.e., Remark \ref{ind-asum-k}, we know that for any compact set $K\subset \overline{(\mathbb R^+)^{\mathfrak k}\times \mathbb R^{n-\mathfrak k}}\backslash\{x_1=\cdots=x_\mathfrak k=0\}$, the following holds:
\begin{equation*}
\|v_\lambda\|_{C^l(K)}\le C(l,K),\quad \forall l\ge 0
\end{equation*}
for some positive constant $C(l,K)$ independent of $\lambda$. Notice that
\begin{equation*}
D_{x'} \mathfrak v(p',0'')=D_{x'}\mathfrak v_\lambda\left(\frac{p'}{|p'|},0''\right).
\end{equation*}
We know $|D_{x'}\mathfrak v|$ is bounded in $(\mathbb R^+)^{\mathfrak k}\times \mathbb R^{n-\mathfrak k}$.
\par Claim: $D_{x'}\mathfrak v\equiv 0$.
\\ Suppose not. Without loss of generality, we may assume
\begin{equation*}
\sup_{(\mathbb R^+)^{\mathfrak k}\times \mathbb R^{n-\mathfrak k}} D_{x_1}\mathfrak v>0.
\end{equation*}
Consider a sequence of points $p_k$ such that
\begin{equation*}
D_{x_1}\mathfrak v(p_k)\rightarrow \sup_{(\mathbb R^+)^{\mathfrak k}\times \mathbb R^{n-\mathfrak k}} D_{x_1}\mathfrak v>0.
\end{equation*}
Again, we may assume $p_k''=0$. By the previous argument, up to a subsequence, we know
\begin{equation*}
\mathfrak v_{\lambda_k}\rightarrow \mathfrak V,\quad \text{locally uniformly in} \quad C^l(\overline{(\mathbb R^+)^{\mathfrak k}\times \mathbb R^{n-\mathfrak k}}\backslash\{x_1=\cdots=x_\mathfrak k=0\}),\quad \forall l\ge 1.
\end{equation*}
Hence, we have
\begin{equation*}
\mathfrak V_{x_1}(\bar x',0'')=\sup_{(\mathbb R^+)^\mathfrak k\times \mathbb R^{n-\mathfrak k}} \mathfrak V_{x_{1}}
\end{equation*}
for $\bar x'=\lim\limits_{k\rightarrow +\infty}\frac{p_k'}{|p_k'|}$. Also we know
\begin{equation*}
\mathfrak U=\mathfrak V+\sum_{a=1}^\mathfrak k x_a\ln  x_a
\end{equation*}
solves \eqref{liouk-1}.  Therefore, $\mathfrak W=\mathfrak V_{x_1}$
solves
\begin{equation*}
\begin{split}
&\sum_{b=1}^\mathfrak k x_b \tilde a_{bb}\mathfrak W_{bb}+\sum_{b\ne c}^\mathfrak k x_b x_c\tilde a_{bc}\mathfrak W_{bc}+\sum_{j=\mathfrak k+1}^n\sum_{b=1}^\mathfrak k x_b \tilde a_{bj}\mathfrak W_{bj}\\
+& \sum_{i,j=\mathfrak k+1}^n \tilde a_{ij}\mathfrak W_{ij}+\tilde a_{11}\mathfrak W_1+\sum_{b=2}^{\mathfrak k} x_b \tilde a_{b1} \mathfrak W_b=0,
\end{split},\quad \text{in}\quad (\mathbb R^+)^\mathfrak k\times \mathbb R^{n-\mathfrak k}.
\end{equation*}
 Here, $\tilde a_{ij}$, $i,j=1,\cdots,n$ are smooth functions in $x$ and $\mathfrak V_{pq}$ for $p,q=1,\cdots,n$. We will now discuss the following three cases.
\par Case 1. $(\bar x',0'')\in (\mathbb R^+)^\mathfrak k\times \mathbb R^{n-\mathfrak k}$. By the strong maximum principle, we know
\begin{equation*}
\mathfrak V_{x_{1}}\equiv \mathfrak V_{x_{1}}(\bar x',0)\ne 0
\end{equation*}
 which contradicts the boundary condition since $\mathfrak k\ge 2$.
\par Case 2. $(\bar x',0'')\in \{x_a=0\}$ for some $2\le a\le \mathfrak k$. From the boundary condition, it follows that $\mathfrak V_{x_{1}}(\bar x',0'')=0$, leading to a contradiction.
\par Case 3. $(\bar x',0'')\in \{x_{1}=0\}$ and $\bar x_a>0$, for  $a=2,\cdots,\mathfrak k$. By Remark \ref{ind-asum-k}, we know that
$\mathfrak V\in C^\infty(\overline{(\mathbb R^+)^{\mathfrak k}\times \mathbb R^{n-\mathfrak k}}\backslash\{x_1=\cdots=x_\mathfrak k=0\})$.  Let
\begin{equation*}
\widetilde {\mathfrak W}(y_1,y_2,x_2,\cdots,x_n)=\mathfrak W\left(\frac{y_1^2+y_2^2}{4},x_2,\cdots,x_n\right).
\end{equation*}
Then, $\widetilde {\mathfrak W}\in C^\infty(\overline{\mathbb R^2\times (\mathbb R^+)^{\mathfrak k-1}\times \mathbb R^{n-\mathfrak k}}\backslash \{x_2=\cdots=x_\mathfrak k=0\})$ and solves an elliptic PDE that is locally uniformly elliptic in $\mathbb R^2\times (\mathbb R^+)^{\mathfrak k-1}\times \mathbb R^{n-\mathfrak k}$. Hence, if $\widetilde {\mathfrak W}$ obtains its maximum in the interior, it implies $\widetilde {\mathfrak W}$ is a constant. This leads to a contradiction.
\par This proves $D_{x'}\mathfrak v=0$ and ends the proof of present theorem.
\end{proof}

\begin{lemma}\label{lem-C1-conti}
Let $u,v$ be given by Theorem \ref{thmlocal-k}.
Then $$D_{x'} v,D_{x''}^2 v,\sqrt{x_a}v_{ia},\sqrt{x_ax_b}v_{ab}\in C(\overline{Q_2}),\quad a,b=1,\cdots,\mathfrak k, i=\mathfrak k+1,\cdots, n,$$
and the modulus of continuity depends only on the quantities in Lemma \ref{lem-lip-k}.
\end{lemma}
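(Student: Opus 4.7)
The plan is a blow-up argument combined with the Liouville-type Theorem \ref{thm-liouk} and the inductive hypothesis (Remark \ref{ind-asum-k}). I proceed in two steps: first establish the uniform $L^\infty$-bound for the listed weighted derivatives, then upgrade boundedness to continuity up to $\overline{Q_2}$.

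For the boundedness, fix a point $p\in Q_2$ with $p_a>0$ for all $a=1,\dots,\mathfrak k$, and set $\lambda=\min_a p_a$. I would perform the anisotropic rescaling
$$
v_\lambda(x)=\tfrac{1}{\lambda}\bigl[v(\lambda x'+p_0',\sqrt{\lambda}\,x''+p'')\bigr],
$$
with $p_0'=0'$ when centering at the singular face. Then $u_\lambda=v_\lambda+\sum_{a=1}^{\mathfrak k}x_a\ln x_a$ (modulo an affine correction coming from $\sum_a x_a\ln\lambda$) solves a Monge-Amp\`ere equation of the same type \eqref{locpro2-1} on a domain that expands as $\lambda\to 0$, because the anisotropic scaling preserves the prefactor $\prod_a x_a$ and the uniform convexity in $x''$. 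By Lemma \ref{lem-lip-k-refine}, $|v_\lambda(x)|\lesssim (x_1\cdots x_{\mathfrak k})^{1/\mathfrak k}+o(|x'|)+|x''|^2$ uniformly in $\lambda$. A Pogorelov-type argument as in Lemma \ref{lemc1-1}, together with the inductive hypothesis Remark \ref{ind-asum-k} applied on the intermediate faces of codimension $\mathfrak k'<\mathfrak k$, yields uniform $C^2$-bounds for $v_\lambda$ on a fixed unit section around the image of $p$. Unwinding the scaling recovers
$$
\sum_{a,b=1}^{\mathfrak k}\sqrt{p_a p_b}\,|v_{ab}(p)|+\sum_{a=1}^{\mathfrak k}\sum_{i=\mathfrak k+1}^{n}\sqrt{p_a}\,|v_{ai}(p)|+\sum_{i,j=\mathfrak k+1}^{n}|v_{ij}(p)|+|D_{x'}v(p)|\lesssim 1.
$$

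For continuity, I argue by contradiction. Suppose one of the listed quantities, say $D_{x'}v$, fails to be continuous at some $p_0\in\overline{Q_2}\cap\{x'=0\}$; then there exist sequences $p_k,q_k\to p_0$ with $D_{x'}v(p_k)$ and $D_{x'}v(q_k)$ converging to distinct limits. Set $\lambda_k=|p_k'|\to 0$ and form $v_{\lambda_k}$ centered at $p_0$. The uniform bounds above, the $C^\infty_{\rm loc}$-compactness on compact subsets of $\overline{(\mathbb R^+)^{\mathfrak k}\times\mathbb R^{n-\mathfrak k}}\setminus\{x'=0\}$ afforded by Remark \ref{ind-asum-k}, and the estimate $|v_{\lambda_k}(x)-\tfrac12|x''|^2|\lesssim (x_1\cdots x_{\mathfrak k})^{1/\mathfrak k}$ inherited from Lemma \ref{lem-lip-k-refine} (the anisotropic rescaling is precisely the one under which this bound is invariant, since $\prod_a(\lambda x_a)^{1/\mathfrak k}=\lambda\prod_a x_a^{1/\mathfrak k}$ matches the prefactor $1/\lambda$) allow me to extract a subsequential limit $\mathfrak u_\infty=v_\infty+\sum_a x_a\ln x_a$ solving \eqref{liouk-1}. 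Theorem \ref{thm-liouk} then forces $v_\infty(x)=\tfrac12|x''|^2$, so in particular $D_{x'}v_\infty\equiv 0$. Translating back forces $D_{x'}v(p_k)$ and $D_{x'}v(q_k)$ to both converge to the value $D_{x'}F(p_0)$ prescribed by the smooth boundary extension $F$ from \eqref{extension}, a contradiction. The same reasoning applied to $D_{x''}^2 v$, $\sqrt{x_a}v_{ia}$ and $\sqrt{x_a x_b}v_{ab}$ identifies their boundary traces (matching those of $F$ and of $\sum_a x_a\ln x_a$) and yields a uniform modulus of continuity depending only on the quantities in Lemma \ref{lem-lip-k}. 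For boundary points on an intermediate face where only some $p_a=0$, the blow-up reduces to a lower-dimensional Guillemin problem whose $C^\infty$ regularity is given by Remark \ref{ind-asum-k}, yielding continuity by the same contradiction argument.

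The principal obstacle will be the blow-up at the deepest stratum $\{x'=0\}$, where the limiting equation is still singular and the classical elliptic regularity theory does not apply near the corner. Two delicate points need careful verification: first, that the anisotropic rescaling preserves simultaneously the structure of equation \eqref{locpro2-1}, the growth bound from Lemma \ref{lem-lip-k-refine}, and the uniform convexity in $x''$, so that Theorem \ref{thm-liouk} applies to the limit; second, that the inductive hypothesis Remark \ref{ind-asum-k} provides enough $C^\infty$-control on the $(n-\mathfrak k')$-faces with $\mathfrak k'<\mathfrak k$ to guarantee the convergence of the blow-up sequence in a topology strong enough to identify the limit as the unique Liouville solution, thereby determining the boundary values of all the listed weighted derivatives.
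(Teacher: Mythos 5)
Your Step~2 is essentially the paper's argument: assume a point of discontinuity, rescale anisotropically by $\lambda_k=|(x^k)'|$ (in $x'$) and $\sqrt{\lambda_k}$ (in $x''$) about the boundary point, use Lemma~\ref{lem-lip-k-refine} for the growth bound and the inductive hypothesis Remark~\ref{ind-asum-k} for $C^\infty_{\rm loc}$-compactness away from $\{x'=0\}$, pass to a Liouville configuration, and invoke Theorem~\ref{thm-liouk} to force $D_{x'}\mathfrak v\equiv 0$, contradicting the assumed jump. The preparatory boundedness Step~1 is not in the paper and is not needed (the contradiction argument already gives continuity, hence boundedness); also note that your normalization $\lambda=\min_a p_a$ there is not the right scale, since the rescaled point $p'/\lambda$ escapes to infinity along sequences where the coordinates $p_a$ are of very different size, whereas the paper's implicit choice $\lambda=|p'|$ keeps the rescaled point on a compact shell $\mathbb S^{\mathfrak k-1}\cap\{x'\ge 0\}$, which is what makes the compactness and the identification of the limit go through uniformly.
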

\begin{proof}
It is enough to show $v_{x_1}\in C(\overline{Q_2})$. Suppose not. Without loss of generality, we may assume there exist  two sequences  $v^k,x^k$ such that
\begin{equation*}
x^k\rightarrow 0,\quad |D_{x_1}v^k(x^k)|\ge \varepsilon_0>0,\quad D v^k(0)=0,\quad D^2_{x''}v^k(0)=\mathbb I_{(n-\mathfrak k)\times (n-\mathfrak k)}.
\end{equation*}
Let $\lambda_k=|(x^k)'|$ and
\begin{equation*}
\tilde v^k(x)=\frac{v^k(\lambda_k x',\sqrt{\lambda_k}x''+(x^k)'')-v^k(0,(x^k)'')-Dv^k(0,(x^k)'')\cdot(\lambda_k x',\sqrt{\lambda_k}x'')}{\lambda_k}.
\end{equation*}
Then, by Remark \ref{ind-asum-k} and Lemma \ref{lem-lip-k-refine}, up to a subsequence, we have
\begin{equation*}
\tilde v^k\rightarrow \mathfrak v,\quad \text{in}\quad  C_{loc}^{\infty}(\overline{(\mathbb R^+)^{\mathfrak k}\times \mathbb R^{n-\mathfrak k}}\backslash\{x_1=\cdots=x_\mathfrak k=0\}),
\end{equation*}
 where $\mathfrak u=\mathfrak v+\sum\limits_{a=1}^\mathfrak k x_a\ln  x_a$ solves \eqref{liouk-1}. Also, we may assume $\left(\frac{(x^k)'}{\lambda_k},0''\right)\rightarrow (\bar x',0)$ for some $\bar x'\in  \mathbb S^{\mathfrak k-1}$.
Then $|D_{x'}\mathfrak v(\bar x',0'')|\ne 0$ which yields a contradiction.
\end{proof}
The following lemma shows the quadratic polynomial growth of $F(x)-v(x)$ up to the $(n-\mathfrak k)$-face when $\mathfrak k\geq2$, which is the basis for driving the sharp asymptotic estimate for  $F(x)-v(x)$ later.
\begin{lemma}\label{lem-k-quar-low}
Let $u,v$ be given by Theorem \ref{thmlocal-k} and $F$ be given by \eqref{extension}.  Then there holds
\begin{equation*}
|F(x)-v(x)|\lesssim \sum_{1\le a<b\le \mathfrak k}x_ax_b.
\end{equation*}

\end{lemma}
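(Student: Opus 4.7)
The plan is to establish the quadratic bound by combining the fine control on $\nabla(v-F)$ coming from Lemma \ref{lem-C1-conti} with a barrier argument via Alexandrov comparison, in the spirit of Lemma \ref{lem-lip-k-refine}. First I would upgrade Lemma \ref{lem-C1-conti} to the pointwise statement $\nabla(v-F)(0',x'')=0$. Since $v=F$ on every face $\{x_a=0\}$ with $a\le\mathfrak k$ and $\mathfrak k\ge 2$, for each index $b\in\{1,\ldots,n\}$ one can pick $a\le\mathfrak k$ with $a\neq b$, so that $\partial_{x_b}(v-F)$ is a tangential derivative on $\{x_a=0\}$ and vanishes identically there. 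Letting $x'\to 0$ along that face and using the continuity of $D_{x'}v$ and $D^2_{x''}v$ on $\overline{Q_2}$ provided by Lemma \ref{lem-C1-conti}, I obtain $\nabla(v-F)(0',x'')=0$ for $x''$ in a compact interior subset of the $(n-\mathfrak k)$-face. By uniform continuity on compacts, there exists a modulus $\tau$ with $\tau(t)\to 0$ as $t\to 0^+$ such that
\begin{equation*}
|\nabla(v-F)(x)|\le\tau(|x'|),\qquad x\in Q_1.
\end{equation*}

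Next I would construct upper and lower barriers on the box $\Omega_r:=[0,r]^{\mathfrak k}\times[-1/2,1/2]^{n-\mathfrak k}$ of the form
\begin{equation*}
\mathcal H_\pm(x)=F(x)+\sum_{a=1}^{\mathfrak k}x_a\ln x_a\pm B\!\!\sum_{1\le a<b\le\mathfrak k}\!\!x_a x_b\mp\eta(x),
\end{equation*}
with an auxiliary slack $\eta(x)=\varepsilon (x_1\cdots x_\mathfrak k)^{1/\mathfrak k}$ (or a logarithmic variant as in Lemma \ref{lem-lip-k-refine}), where $\varepsilon=\varepsilon(r)$ is chosen to vanish with $r$. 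On the inner faces $\{x_a=0\}$ the function $\eta$ vanishes and $v=F$, so $\mathcal H_+\ge u$ and $\mathcal H_-\le u$ are automatic. On the outer faces $\{x_a=r\}$, the vanishing of $v-F$ on the inner faces together with the gradient bound $|\nabla(v-F)|\le\tau(r)$ gives by integration in $x_a$ the estimate $|v-F|\le r\tau(r)$, which the barrier's quadratic term $B\sum x_a x_b$ must dominate after appropriate tuning of $B=B(r)$; a standard rescaling / iteration in $r$ then yields a uniform constant. Applying the Alexandrov maximum principle on $\Omega_r$ to $u$ against $\mathcal H_\pm$ gives $|v-F|\lesssim \sum_{a<b}x_a x_b+\eta(x)$, and letting $\varepsilon\to 0$ absorbs $\eta$ into the main term.

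The main obstacle is verifying the Monge-Amp\`ere determinant comparisons $\det D^2\mathcal H_+\le \det D^2u$ and $\det D^2\mathcal H_-\ge \det D^2u$ in $\Omega_r$. Writing everything through the weighted matrix $\mathbb M$ from \eqref{hessian-def-k}, one has $\det\mathbb M_u=h(x)$ and $\mathbb M_{F+\sum x_a\ln x_a}\to\mathrm{diag}(\mathbb I_{\mathfrak k\times\mathfrak k},D^2_{x''}F)$ as $x'\to 0$, so $\det\mathbb M_{F+\sum x_a\ln x_a}\to h(0',x'')$. The crucial difficulty is that the perturbation $\pm B\sum_{a<b}x_a x_b$ contributes only off-diagonal entries $\pm B\sqrt{x_a x_b}(1-\delta_{ab})$ to the upper-left $\mathfrak k\times\mathfrak k$ block of $\mathbb M$, and this correction is \emph{traceless}. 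Consequently, its first-order effect on $\det\mathbb M$ vanishes and only a quadratic correction of size $O(B^2\sum_{a<b}x_a x_b)$ of indefinite sign in $\mathfrak k$ survives, so neither choice of sign for $B$ alone produces the strict determinant inequality required for the comparison principle. The slack term $\eta$, whose determinant shift has a definite sign at leading order in $|x'|$, must therefore be inserted precisely to absorb this sign ambiguity and supply the necessary margin in both inequalities---in the same spirit as the corrections $-Bx_a(1-x_a^{1/2})$ in Step 2.2 of Lemma \ref{lem-lip-k} and $\pm B(x_1\cdots x_{\mathfrak k})^{1/\mathfrak k}$ in Lemma \ref{lem-lip-k-refine}.
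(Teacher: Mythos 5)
Your proposal correctly identifies the key structural obstacle: in the weighted matrix $\mathbb M$ of \eqref{hessian-def-k}, the perturbation $\pm B\sum_{a<b}x_ax_b$ contributes only traceless off-diagonal entries $\pm B\sqrt{x_ax_b}$, so it produces no first-order shift in $\det\mathbb M$ and a second-order shift of indefinite sign. Recognizing that a slack term is needed to create the margin is also right. However, the specific remedy you propose does not close the argument, and two further issues are glossed over.

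First, the slack $\eta=\varepsilon(x_1\cdots x_{\mathfrak k})^{1/\mathfrak k}$ is of the \emph{wrong order} to be absorbed. Along the diagonal $x_1=\cdots=x_{\mathfrak k}=t$ one has $\eta\sim\varepsilon t$ while $\sum_{a<b}x_ax_b\sim t^2$, so the comparison on $\Omega_r$ yields $|v-F|\lesssim\sum x_ax_b+\varepsilon(r)\,(x_1\cdots x_{\mathfrak k})^{1/\mathfrak k}$ with a second term that dominates the first near the $(n-\mathfrak k)$-face unless $\varepsilon(r)\lesssim r$; but the determinant comparison forces $\varepsilon(r)$ to be large enough to dominate the indefinite error of size $O(B^2\sum x_ax_b)$, and the boundary comparison on $\{x_a=r\}$ forces $B\gtrsim\tau(r)/r$. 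These competing constraints are not reconciled, and "a standard rescaling/iteration" is too vague to carry the burden. Moreover, $\mathbb M_{-\varepsilon G}$ has a null eigendirection where $x_1=\cdots=x_{\mathfrak k}$, so the slack is not uniformly effective, and the signs in your $\mathcal H_\pm=\cdots\pm B\sum x_a x_b\mp\eta$ are in fact opposite to what the determinant inequalities require. The paper resolves all of this by a two-stage process: Step 1 uses the intermediate barrier $-\frac{\varepsilon}{\delta^{1/2}}\sum(x_ax_b)^{3/4}$, whose $\mathbb M$-normalized Hessian has \emph{nontrivial positive diagonal entries} $\sim\varepsilon\,x_b^{3/4}x_a^{-1/4}$ (not traceless!), giving the weaker bound $|v-F|\lesssim\sum(x_ax_b)^{3/4}$; Step 2 then inserts this as the known envelope and constructs a more delicate barrier $-\sum x_ax_b(\sigma/r-\sigma^{3/2}(x_ax_b)^{1/4}/r^{3/2})$ whose slack is of the correct quadratic order. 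This intermediate sub-linear power $3/4$, which turns the traceless perturbation into one with a definite diagonal part, is the missing ingredient.

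Second, for the direction $v-F\lesssim\sum x_ax_b$ the relevant barrier is \emph{not convex} (it has Hessian with indefinite sign), so the ordinary Monge--Amp\`ere comparison principle you invoke does not apply directly; in the paper's Step 3 the inequality is obtained by a contradiction argument at an interior extremum, where the pointwise Hessian ordering restores convexity locally. Your proposal treats $\mathcal H_+$ and $\mathcal H_-$ symmetrically via "the Alexandrov maximum principle," which overlooks this wrinkle.
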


\begin{proof}
We first show $F(x)-v(x)\lesssim \sum_{1\le a<b\le \mathfrak k}x_ax_b.$
\par Step 1.  There holds the following estimate
\begin{equation*}
v(x)-F(x)\ge -C_0\sum_{1\le a<b\le \mathfrak k}(x_ax_b)^{3/4},\quad \text{in}\quad Q_2.
\end{equation*}
Let
\begin{equation*}
H(x)=F(x)+\sum_{a=1}^{\mathfrak k} x_a\ln  x_a-\varepsilon |x''|^2\sum_{a=1}^{\mathfrak k} x_a-\frac{\varepsilon}{\delta^{1/2}}\sum_{1\le a<b\le \mathfrak k}(x_ax_b)^{3/4}
\end{equation*}
in $[0,\delta]^{\mathfrak k}\times [-3,3]^{n-\mathfrak k}$ for some small positive constants $\varepsilon,\delta>0$ to be determined later. It follows from the definition of $F$ that
\begin{equation*}
D_{x_i}F(x)=D_{x_i}v(x),\quad \text{on}\quad x_a=0,\quad i=1,\cdots,n,\quad i\ne a.
\end{equation*}
Hence, by Lemma \ref{lem-C1-conti}, one knows
\begin{equation*}
F(x)-v(x)=o (x_a),\quad \forall a\in \{1,\cdots,\mathfrak k\}.
\end{equation*}
We first compare the boundary values of $H(x)$ and $u(x)$.
\begin{itemize}
\item[(1).] On $[0,\delta]^{\mathfrak k}\times \partial([-3,3]^{n-\mathfrak k})$, for any fixed $\varepsilon>0$, one knows
\begin{equation*}
\begin{split}
 H(x)-u(x)=&F(x)-v(x)-\varepsilon |x''|^2\sum_{a=1}^\mathfrak k x_a-\frac{\varepsilon}{\delta^{\frac 12}}\sum_{1\le a<b\le \mathfrak k}(x_a x_b)^{\frac 34}\\
 \le & o\left(\sum_{a=1}^{\mathfrak k} x_a\right)-\varepsilon \sum_{a=1}^\mathfrak k x_a<0
\end{split}
\end{equation*}
by choosing $\delta$ small enough.
\item[(2).] On $x_a=0$ for some $a\in \{1,\cdots,\mathfrak k\}$, it follows from the definition of $F(x)$ that $H(x)\le u(x)$.
\item[(3).]  On $x_a=\delta$ for some $a\in \{1,\cdots,\mathfrak k\}$. Without loss of generality, we assume $x_1=\delta$. Then there holds
\begin{equation*}
\begin{split}
H(x)-u(x)\le& F(\delta,x_2,\cdots,x_\mathfrak k,x'')-v(\delta,x_2,\cdots,x_\mathfrak k,x'')-\varepsilon\delta^{1/4}\sum_{a=2}^\mathfrak kx_a^{\frac 34}\\
\le & o\left(\sum_{a=2}^\mathfrak k x_a\right)-\varepsilon  \sum_{a=2}^\mathfrak k x_a<0
\end{split}
\end{equation*}
for $\delta>0$ small enough.
\end{itemize}
\par Now we need to calculate the Hessian of $H$ and its determinant.
A direct computation yields that
\begin{equation*}
\mathbb M_H=\mathbb M_{F+\sum_{a=1}^{\mathfrak k} x_a\ln  x_a}+\mathbb A+\mathbb B+\mathbb C+\mathbb D
\end{equation*}
where
\begin{equation*}
\begin{split}
&\mathbb A=-2\varepsilon\left(\sum_{a=1}^\mathfrak k x_a\right)\sum_{i=\mathfrak k+1}^{n}{\bf e_i}\otimes {\bf e_i},\\
&\mathbb B=-2\varepsilon\sum_{a=1}^\mathfrak k\sum_{i=\mathfrak k+1}^{n} x_i\sqrt{x_a}\left({\bf e_i}\otimes {\bf e_a}+{\bf e_a}\otimes {\bf e_i}\right)\\
& \mathbb C=-\frac{9\varepsilon}{16\delta^{\frac 12}}\sum_{a,b=1,a\ne b}^{\mathfrak k}(x_ax_b)^{\frac{1}{4}}{\bf e_a}\otimes {\bf e_b}\\
& \mathbb D=\frac{3\varepsilon}{16\delta^{\frac 12}}\sum_{a=1}^\mathfrak k\left(\sum_{b=1, b\ne a}^{\mathfrak k}x_b^{\frac 34}\right)x_a^{-\frac 14}{\bf e_a}\otimes {\bf e_a}
\end{split}
\end{equation*}
and $\mathbb M_H,\mathbb M_F$ are defined by \eqref{hessian-def-k}.
It follows directly that
\begin{equation}\label{0724-0}
 x_b^{\frac 34}x_a^{-\frac 14}+x_a^{\frac 34}x_b^{-\frac 14}\ge \sqrt{x_a}+\sqrt{x_b}\ge 2x_a^{\frac 14}x_b^{\frac 14}.
\end{equation}
By the definition of $F$, we know
\begin{equation*}
\frac 1C \mathbb I_{n\times n}\le \mathbb M_{_{F+\sum_{a=1}^{\mathfrak k} x_a\ln  x_a}}\le C \mathbb I_{n\times n},\quad \text{in}\quad [0,\delta]^\mathfrak k\times [-3,3]^{n-\mathfrak k}
\end{equation*}
for some positive constant $C>0$ provided that $\delta>0$ is small.
Let $\widetilde {\mathbb M}=\mathbb M_F+\mathbb C$. Then $\widetilde {\mathbb M}$ satisfies
\begin{equation*}
|(\text{cof }\widetilde {\mathbb M})_{ab}|\le \kappa(\varepsilon,\delta)\rightarrow 0,\quad \text{as}\quad \varepsilon,\delta\rightarrow 0
\end{equation*}
for $a\ne b$, $a,b=1,\cdots,\mathfrak k$.
This implies
\begin{equation*}
\begin{split}
\det \mathbb M_H\ge & \det (\mathbb M_{_{F+\sum_{a=1}^{\mathfrak k} x_a\ln  x_a}}+\mathbb A+\mathbb B+\mathbb C)+\frac{8\varepsilon }{C_1\delta^{\frac 12}}\sum_{a,b=1,a\ne b}^\mathfrak k x_b^{\frac 34}x_a^{-\frac 14}\\
\ge & \det (\mathbb M_{F+\sum_{a=1}^{\mathfrak k} x_a\ln  x_a}+\mathbb C)+\frac{8\varepsilon }{C_1\delta^{\frac 12}}\sum_{a,b=1,a\ne b}^\mathfrak k x_b^{\frac 34}x_a^{-\frac 14}-C_1\sum_{a=1}^\mathfrak k \sqrt{x_a}\\
\ge &\det (\mathbb M_{F+\sum_{a=1}^{\mathfrak k} x_a\ln  x_a})+\sum_{a,b=1,a\ne b}^\mathfrak k \left(\frac{4\varepsilon }{C_1\delta^{\frac 12}} x_b^{\frac 34}x_a^{-\frac 14}-\frac{C_1\varepsilon \kappa(\varepsilon,\delta)}{\delta^{\frac 12}} x_b^{\frac 14}x_a^{\frac 14}\right)\\
\ge &\det (\mathbb M_{F+\sum_{a=1}^{\mathfrak k} x_a\ln  x_a})+\frac{2\varepsilon }{C_1\delta^{\frac 12}}\sum_{a,b=1,a\ne b}^\mathfrak k  x_b^{\frac 34}x_a^{-\frac 14}
\end{split}
\end{equation*}
for $0<\delta<<\varepsilon<<1$ small enough.
By the definition of $F$ and Lemma \ref{lem-pre}, one knows
\begin{equation*}
|\det \mathbb M_{v+\sum_{a=1}^{\mathfrak k} x_a\ln  x_a}-\det \mathbb M_{F+\sum_{a=1}^{\mathfrak k} x_a\ln  x_a}|\le C\prod_{a=1}^\mathfrak k x_a.
\end{equation*}
Hence,
\begin{equation*}
\det \mathbb M_H\ge \det \mathbb M_{v+\sum_{a=1}^{\mathfrak k} x_a\ln  x_a}+\frac{2\varepsilon }{C_1\delta^{\frac 12}}\sum_{a,b=1,a\ne b}^\mathfrak k  x_b^{\frac 34}x_a^{-\frac 14}- C\prod_{a=1}^\mathfrak k x_a\ge \det \mathbb M_{u}
\end{equation*}
for $\delta$ small enough. It follows from standard maximum principle that $H\le u$, which completes the proof of Step 1. 
\par Step 2.  Let
\begin{equation*}
\begin{split}
\mathcal H(x)=F(x)+\sum_{a=1}^{\mathfrak k} x_a\ln  x_a-&C_0 |x''|^2\sum_{a,b=1,a\ne b}^\mathfrak k (x_ax_b)^{\frac 34}\\
-&\sum_{1\le a<b\le \mathfrak k}x_ax_b\left(\frac{\sigma}{r}-\frac{\sigma^{\frac 32}}{r^{\frac 32}}(x_ax_b)^{\frac 14}\right)
\end{split}
\end{equation*}
in $[0,r]^{\mathfrak k}\times [-3,3]^{n-\mathfrak k}$.
Here, $0<r<<\sigma<<1$   will be determined later, and $C_0$ is given by Step 1.
A direct computation yields
\begin{equation*}
\mathbb M_{\mathcal H}=\mathbb M_{F+\sum_{a=1}^{\mathfrak k} x_a\ln  x_a}+\mathbb A'+\mathbb B'+\mathbb C'+\mathbb D'+\mathbb E'
\end{equation*}
where
\begin{equation*}
\begin{split}
&\mathbb A'=-2C_0\left(\sum_{a,b=1,a\ne b}^\mathfrak k (x_ax_b)^{\frac 34}\right)\sum_{i=\mathfrak k+1}^{n}{\bf e_i}\otimes {\bf e_i},\\
&\mathbb B'=\sum_{a=1}^\mathfrak k\left(\frac{3C_0}{16}|x''|^2\left(\sum_{\substack{b=1\\b\ne a}}^\mathfrak k x_b^{\frac 34}\right)x_a^{-\frac 14}+\frac{5\sigma^{\frac 32}}{16r^{\frac 32}}\left(\sum_{\substack{b=1\\b\ne a}}^\mathfrak k x_b^{\frac 54}\right)x_a^{\frac 14}\right){\bf e_a}\otimes {\bf e_a},\\
&\mathbb C'=-  \frac{3}{2} C_0\sum_{i=\mathfrak k+1}^{n}\sum_{b=1}^\mathfrak k \left(x_ix_b^{\frac 14}\left(\sum_{\substack{a=1\\a\ne b}}^\mathfrak k x_a^{\frac 34}\right)\right)\left({\bf e_i}\otimes {\bf e_b}+{\bf e_b}\otimes {\bf e_i}\right)\\
&\mathbb D'=-\frac{9C_0}{16}|x''|^2\sum_{\substack{a,b=1\\b\ne a}}^\mathfrak k (x_ax_b)^{\frac 14}({\bf e_a}\otimes {\bf e_b}+{\bf e_b}\otimes {\bf e_a})\\
&\mathbb E'=-\sum_{\substack{a,b=1\\b\ne a}}^\mathfrak k \left(\frac{\sigma}{r}-\frac{25\sigma^{\frac 32}}{16r^{\frac 32}}(x_ax_b)^{\frac 14}\right)(x_ax_b)^{\frac 12}({\bf e_a}\otimes {\bf e_b}+{\bf e_b}\otimes {\bf e_a}).
\end{split}
\end{equation*}
Hence, one gets
\begin{equation}\label{0724-1}
\begin{split}
\det \mathbb M_{\mathcal H}\ge& \frac 1{C_1}\left(\sum_{\substack{a,b=1\\b\ne a}}^\mathfrak k   \left(|x''|^2x_b^{\frac 34}x_a^{-\frac 14}+\frac{\sigma^{\frac 32}}{r^{\frac 32}}x_b^{\frac 54}x_a^{\frac 14} \right) \right)\\ &-C_1\sum_{\substack{a,b=1\\b\ne a}}^\mathfrak k (x_ax_b)^{\frac 34}+\det (\mathbb M_{F+\sum_{a=1}^{\mathfrak k} x_a\ln  x_a}+\mathbb C'+\mathbb D'+\mathbb E').
\end{split}
\end{equation}
It is also easy to see that
\begin{equation*}
\begin{split}
&|\mathbb M_{F+\sum_{a=1}^{\mathfrak k} x_a\ln  x_a,ia}|\lesssim \sqrt{x_a},\quad |\mathbb C'_{ia}|\lesssim x_a^{\frac 14}\sum_{\substack{b=1\\b\ne a}}^\mathfrak k x_b^{\frac 34},\\
&|\mathbb M_{F+\sum_{a=1}^{\mathfrak k} x_a\ln  x_a,ab}|+|\mathbb D'_{ab}|+|\mathbb E'_{ab}|\lesssim |x''|^2(x_ax_b)^{\frac 14}+\frac{\sigma}{r}(x_ax_b)^{\frac 12}
\end{split}
\end{equation*}
for $i=\mathfrak k+1,\cdots,n$, $a,b=1,\cdots,\mathfrak k$ and $a\ne b$.
Hence
\begin{equation}\label{0724-2}
\begin{split}
&\det (\mathbb M_{F+\sum_{a=1}^{\mathfrak k} x_a\ln  x_a}+\mathbb C'+\mathbb D'+\mathbb E')\\
\ge& \det \mathbb M_{F+\sum_{a=1}^{\mathfrak k} x_a\ln  x_a}-C_1\sum_{a=1}^\mathfrak k\sqrt{x_a}\left(\sum_{\substack{b=1\\b\ne a}}^\mathfrak k x_a^{\frac 14} x_b^{\frac 34}\right) \\
-&C_1 \sum_{a=1}^{\mathfrak k}\big(x_a^{\frac 14}\sum_{\substack{b=1\\b\ne a}}^\mathfrak k x_b^{\frac 34}\big)^{2}- C_1\sum_{\substack{a,b=1\\a\ne b}}^\mathfrak k \left(|x''|^2(x_ax_b)^{\frac 14}+\frac{\sigma}{r}(x_ax_b)^{\frac 12}\right)^2\\
\ge & \det \mathbb M_{F+\sum_{a=1}^{\mathfrak k} x_a\ln  x_a}-C_1'\sum_{\substack{a,b=1\\a\ne b}}^\mathfrak k \left(x_b^{\frac 14} x_a^{\frac 54}+|x''|^2(x_ax_b)^{\frac 12}+\frac{\sigma^2}{r^2}x_ax_b\right).
\end{split}
\end{equation}

Notice that
\begin{equation*}
|x''|^2 (x_a x_b)^{\frac 12}\le r^{\frac 12}|x''|^2 (x_a x_b)^{\frac 14},\   \frac{\sigma^2}{r^2}x_ax_b\le \frac{\sigma^2}{r^{\frac 32}}(x_ax_b)^{\frac 34}.
\end{equation*}
Combining \eqref{0724-0}, \eqref{0724-1}, \eqref{0724-2} and choosing $0<r<< \sigma<<1$, by Lemma \ref{lem-pre}, one gets
\begin{equation*}
\det \mathbb M_{\mathcal H}\ge \det \mathbb M_u,\quad \text{in}\quad (0,r)^\mathfrak k\times (-3,3)^{n-\mathfrak k}.
\end{equation*}
\par Now we compare the boundary values of $\mathcal H$ and $u$.
\begin{itemize}
  \item[(1).] On $[0,r]^{\mathfrak k}\times \partial ([-3,3]^{n-\mathfrak k})$, it follows from the choice of $C_0$ and Step 1 such that $\mathcal H\le u$.
  \item[(2).] $x_a=0$ for some $a\in  \{1,\cdots,\mathfrak k\}$. It follows from the definition of $F(x)$ such that $\mathcal H\le u$.
  \item[(3).] $x_a=r$ for some $a\in  \{1,\cdots,\mathfrak k\}$. Similar argument as   Case (3) of Step 1, one gets $\mathcal H\le u$.
\end{itemize}
Hence by standard maximum principle, we get
$$F(x)-v(x)\lesssim \sum_{1\le a<b\le \mathfrak k}x_ax_b.$$
\par Step 3. It remains to show $v(x)-F(x)\lesssim \sum_{1\le a<b\le \mathfrak k}x_ax_b.$ Introduce the following barrier function
\begin{equation*}
H(x)=F(x)+\sum_{a=1}^{\mathfrak k} x_a\ln  x_a+\varepsilon |x''|^2\sum_{a=1}^{\mathfrak k} x_a+\frac{\varepsilon}{\delta^{1/2}}\sum_{1\le a<b\le \mathfrak k}(x_ax_b)^{3/4}
\end{equation*}
where $\varepsilon,\delta>0$ are small positive constants to be chosen. This function is similar to the one used in Step 1 and is defined in $[0,\delta]^\mathfrak k\times [-3,3]^{n-\mathfrak k}$.
The main issue here is that $H$ is not convex. However, we can still prove
\begin{equation}\label{4-01-24}
v(x)-F(x)\le C_0\sum_{1\le a<b\le \mathfrak k}(x_ax_b)^{\frac 34}
\end{equation}
by contradiction argument. Suppose \eqref{4-01-24} is not true. By appropriately choosing $\varepsilon$ and $\delta$,  we can assume $u(x)-H(x)$ attains its positive maximum at $\bar x$ in the interior. At $\bar x$, we have 
\begin{equation*}
D^2 H(\bar x)\ge D^2 u(\bar x),
\end{equation*}
i.e., $H(x)$ is convex near $\bar x$. This allows us to repeat the calculations from Step 1 and obtain:
\begin{equation*}
\det D^2 H(\bar x)< \det D^2 u(\bar x),
\end{equation*}
leading a contradiction. Once \eqref{4-01-24} is proved, similar arguments to those in Step 2 can be used to complete the proof of the lemma.
\end{proof}

\subsection{Smoothness up to $(n-2)$-face}
The computation for proving  \ref{thmlocal-k} for general $\mathfrak k$ is rather complex. In this sub-section, we will first illustrate our main strategies by proving Theorem \ref{thmlocal-k} for $\mathfrak k=2$.
Let $v,F,x',x''$  denote the same quantities as in the previous section for  $\mathfrak k=2$.
Set $I=v-F$. Our goal is to show that:
\begin{equation*}
I\in C^\infty([0,1]^2\times[-1,1]^{n-2}).
\end{equation*}
\smallskip
\smallskip
In the following, we give a lemma concerning the asymptotic estimate of the derivatives of $I$ with respect to $x'$.
\begin{lemma}\label{lemma-cd2-growth1}
There exists a constant $\tilde r_1\approx 1$  such that for any $\delta\in(0,1)$, the following estimate holds:
\begin{equation}\label{eq-cd2-growth1}
|D^\beta_{x'}D_{x''}^\gamma I(x)| \lesssim_{L(|\beta|+|\gamma|,\delta,n)}|x'|^{1+\delta-|\beta|-\frac{|\gamma|}{2}}
\end{equation}
in $\big([0,\tilde r_1]^2\setminus \{0\}^2\big)\times[-2,2]^{n-2}$, where $L(|\beta|+|\gamma|,\delta,n)$ is a positive integer depending only on $|\beta|+|\gamma|,\delta$ and $n$.
\end{lemma}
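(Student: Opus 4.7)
The proof will go by induction on $k := |\beta|+|\gamma|$. The base case $k=0$ is immediate from Lemma \ref{lem-k-quar-low}: for $|x'|\le \tilde r_1 \le 1$, one has $|I(x)|\lesssim x_1 x_2 \le \tfrac{1}{2}|x'|^2 \le \tfrac{1}{2}|x'|^{1+\delta}$, so every choice of $\delta\in(0,1)$ works.

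For the inductive step I would perform the anisotropic blow-up already used for Theorem \ref{thm-liouk}. Fix $\bar x$ with $\lambda := |\bar x'|>0$ and set
\[
\hat u(y) := \lambda^{-1}\bigl[u(\lambda y_1,\lambda y_2,\sqrt{\lambda}\, y''+\bar x'')-\lambda\ln\lambda\,(y_1+y_2)\bigr], \qquad \hat v:=\hat u - y_1\ln y_1 - y_2\ln y_2,
\]
with analogous $\hat F$ and $\hat I := \hat v - \hat F$. A direct computation shows that $\hat u$ solves a copy of \eqref{locpro2-1} (with $\mathfrak k=2$) whose right-hand side $\hat h/(y_1 y_2)$ has $\lambda$-uniformly smooth numerator, derivatives transform by
\[
D^\beta_{x'} D^\gamma_{x''} I(\bar x) = \lambda^{1-|\beta|-|\gamma|/2}\,D^\beta_{y'} D^\gamma_{y''} \hat I(y_0), \qquad y_0 := (\bar x_1/\lambda, \bar x_2/\lambda, 0''),
\]
and the target estimate reduces to the uniform-in-$\lambda$ bound $|D^\beta_{y'} D^\gamma_{y''} \hat I(y_0)|\lesssim \lambda^\delta$. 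Note that $y_0\in\S^1\times\{0''\}$ and Lemma \ref{lem-k-quar-low} gives $|\hat I(y)|\lesssim \lambda\, y_1 y_2$ on bounded sets, while the inductive hypothesis supplies the corresponding bound on $D^{\beta'}_{y'}D^{\gamma'}_{y''}\hat I$ for every $|\beta'|+|\gamma'|<k$.

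Two regimes then arise. In the \emph{interior regime} $y_{0,1},y_{0,2}\ge c>0$, the rescaled Monge-Amp\`ere equation is uniformly elliptic with smooth coefficients on the unit ball around $y_0$; Evans-Krylov and Schauder theory produce $\|\hat v\|_{C^{k+2,\alpha}}\lesssim 1$, and linearising the equation for $\hat I=\hat v-\hat F$ around the smooth background $\hat F$, together with $\|\hat I\|_{L^\infty}\lesssim \lambda$, yields $\|\hat I\|_{C^{k+2,\alpha}}\lesssim \lambda$, which scales back to something strictly stronger than \eqref{eq-cd2-growth1}. In the \emph{boundary regime} $y_{0,2}\le c$ (symmetrically for $y_{0,1}$), the point $y_0$ lies near the face $\{y_2=0\}$ of the rescaled half-space and the local problem is modelled on the $\mathfrak k=1$ system of Section 3. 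Applying Theorem \ref{thm3-803} yields weighted $C^{k+2,\alpha}$ bounds on $\hat v$ up to this face; interpolating this weighted top-order control against the inductive H\"older bound on lower derivatives of $\hat I$, at a scale tuned to the small loss $1-\delta$, absorbs the $\sqrt{y_{0,2}}$-powers inherent in the weighted norm and delivers the desired factor $\lambda^\delta$.

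The principal obstacle is the boundary regime: both axes $\{y_1=0\}$ and $\{y_2=0\}$ influence the blow-up limit in asymmetric ways, Theorem \ref{thm3-803} is adapted to only one of them at a time, and the top-order weighted bounds it supplies degenerate as $y_{0,2}\to 0$. Managing this asymmetry while keeping the induction quantitative — so that an arbitrary $\delta\in(0,1)$ is attainable by paying a finite (though $\delta$-dependent) number of derivatives in the constants — is the main technical content of the proof, and it is exactly what prevents the naive exponent $2$ in place of $1+\delta$.
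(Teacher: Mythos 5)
Your blow-up setup is the right one and matches the paper's; but from there the arguments diverge, and the crucial step is left unresolved. The paper's proof is \emph{not} an induction on $k=|\beta|+|\gamma|$: after the anisotropic rescaling it establishes, simultaneously for \emph{all} multi-indices $\alpha$, the uniform bound $|D^\alpha_y I_\lambda|\lesssim_{|\alpha|}1$ on the annulus $\left([0,\tfrac32]^2\setminus[0,\tfrac34]^2\right)\times[-\tfrac32,\tfrac32]^{n-2}$. This is obtained by applying Theorem~\ref{thm3-803} in the slabs $[\tfrac34,\tfrac32]\times[0,r_0]\times\cdots$ and (symmetrically) $[0,r_0]\times[\tfrac34,\tfrac32]\times\cdots$, and interior Pogorelov/Evans--Krylov estimates in the region away from both faces (where uniform strict convexity of $u_\lambda$ is checked directly). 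The lemma then follows by the elementary interpolation inequality of Lemma~\ref{lemma-interpolation} applied to $I_\lambda$: since $\|I_\lambda\|_{L^\infty}\lesssim\lambda$ and $\|D^mI_\lambda\|_{L^\infty}\lesssim_m1$, one has $\|D^\alpha I_\lambda\|_{L^\infty}\lesssim\lambda^{1-|\alpha|/m}$, and choosing $m>|\alpha|/(1-\delta)$ produces $\lambda^\delta$. This is precisely where the parameter $L(|\beta|+|\gamma|,\delta,n)$ comes from, and why the endpoint $\delta=1$ is unreachable; it has nothing to do with asymmetry between the two faces.

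Your proposal misses this closing step and replaces it with two conjectural ones. In the interior regime, the claim that linearising around $\hat F$ and using $\|\hat I\|_{L^\infty}\lesssim\lambda$ directly yields $\|\hat I\|_{C^{k+2,\alpha}}\lesssim\lambda$ is circular as stated: $\hat F+\sum y_a\ln y_a$ is not a solution of the rescaled equation, so the linearised PDE for $\hat I$ carries a quadratic remainder, and controlling it at order $\lambda$ already presupposes smallness of $\hat I$ in $C^2$---exactly what one is trying to prove. A bootstrap could be arranged, but you have not specified it, and the paper simply avoids the issue by interpolating. In the boundary regime, the statement that Theorem~\ref{thm3-803} supplies \emph{weighted} $C^{k+2,\alpha}$ bounds that ``degenerate as $y_{0,2}\to0$'' is incorrect: after rescaling, the restriction of $u_\lambda$ to $\{y_2=0\}\cap\{y_1\ge\tfrac12\}$ is uniformly convex in a $\lambda$-independent way, and Theorem~\ref{thm3-803} then produces \emph{unweighted} $C^{k+2}$ bounds on $v_\lambda+y_1\ln y_1$ (hence on $I_\lambda$) up to the face. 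There are no $\sqrt{y_{0,2}}$-factors to absorb. The asymmetry you flag as the ``main technical obstacle'' is not an obstacle at all: one proves the identical estimate near each face and away from both, takes the union, and interpolates. So the proposal identifies the right tools but the mechanism by which they combine---high-order uniform bounds plus a global interpolation across all of $I_\lambda$---is absent, and the two regime-specific arguments substituted for it do not hold as written.
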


\begin{proof}
Fix any $\bar x=(\bar x',\bar x'')$ with $\bar x''\in[-2, 2]^{n-2}$.
Take $\lambda\in (0,\frac{1}{16}]$. For any $y\in  [0,2]^2\times[-2,2]^{n-2}$,
define
\begin{equation*}
u_\lambda(y)=\frac{u(\lambda y',\bar x''+\sqrt{\lambda} y'')-(\lambda \ln  \lambda)y_1 -(\lambda \ln  \lambda)y_2}{\lambda}
\end{equation*}
and
\begin{equation*}
v_{\lambda}(y)\triangleq u_\lambda(y)-y_1\ln  y_1-y_2\ln  y_2=\frac{v(\lambda y', \bar x''+\sqrt{\lambda} y'')}{\lambda},
\end{equation*}

\begin{equation*}
F_{\lambda}(y)\triangleq \frac{F(\lambda y',\bar x''+ \sqrt{\lambda} y'')}{\lambda},
\end{equation*}
and
\begin{equation*}
I_{\lambda}(y)\triangleq \frac{I(\lambda y',\bar x''+\sqrt{\lambda} y'')}{\lambda}.
\end{equation*}
Then, $u_\lambda(y)$ solves
\begin{equation}\label{3-cd2}
\det D_{y}^2 u_{\lambda}=\frac{h(\lambda y', \bar x''+\sqrt{\lambda}y'')}{y_1y_2}, \text{ in } [0,2]^2\times[-2,2]^{n-2}.
\end{equation}
By subtracting an affine function, we may assume
\begin{equation*}
F(0',\bar x'')=|D F(0',\bar x'')|=0.
\end{equation*}
Then, for any multi-index $\alpha$,
\begin{equation}\label{esti-1-alpha-F}
|D_{y}^{\alpha} F_{\lambda}|\leq C_{\alpha}',\quad  \text{in} \quad [0,2]^2\times [-2,2]^{n-2}.
\end{equation}
By Lemma \ref{lem-k-quar-low},
we have
$$|I(x)|\lesssim x_1x_2,\quad \text{in}\quad [0,\frac{1}{2}]^2\times[-\frac{5}{2},\frac{5}{2}]^{n-2}.$$
Hence, \begin{equation}\label{cd2-growth-lambda}
|I_{\lambda}(y)|\lesssim \lambda y_1y_2\quad \text{in}\quad [0,2]^2\times[-2,2]^{n-2}.
\end{equation}
By \eqref{3-cd2}, $u_\lambda(y) =[v_{\lambda}(y)+y_1\ln  y_1]+y_2\ln  y_2$ satisfies
\begin{equation*}
\det D^2 u_{\lambda}=\frac{h(\lambda y', \bar x''+\sqrt{\lambda} y'')/y_1}{y_2}=\frac{\widetilde{h}(y)}{y_2}, \text{ in } [1/2,2]\times [0,2] \times [-2,2]^{n-2},
\end{equation*}
with $u_{\lambda}(y_1,0,y'')=F_{\lambda}(y_1,0,y'')+y_1\ln  y_1\in C^\infty\big([1/2,2]\times [-2,2]^{n-2}\big)$.

On $[1/2,2]\times \{0\} \times [-2,2]^{n-2}$, $\big[\nabla^2_{y_1,y''}\big(F_{\lambda}(y_1,0,y'')+y_1\ln  y_1\big)\big]_{(n-1)\times(n-1)}$ equals to
\begin{equation*}
\begin{pmatrix}
     \frac{1}{y_1}+\lambda F_{11}(\lambda y_1,0,\bar x''+\sqrt{\lambda}y'')&    \sqrt{\lambda} F_{13}(\lambda y_1,0,\bar x''+\sqrt{\lambda}y'') & \cdots &  \sqrt{\lambda} F_{1n}(\lambda y_1,0,\bar x''+\sqrt{\lambda}y'') \\[3pt]
   \sqrt{\lambda} F_{31}(\lambda y_1,0,\bar x''+\sqrt{\lambda}y'')& F_{33}(\lambda y_1,0,\bar x''+\sqrt{\lambda}y'') & \cdots&   F_{3n}(\lambda y_1,0,\bar x''+\sqrt{\lambda}y'')\\[3pt]
  \cdots &\cdots &\cdots &\cdots  \\[3pt]
  \sqrt{\lambda}F_{n1}(\lambda y_1,0,\bar x''+\sqrt{\lambda}y'') &  F_{n3}(\lambda y_1,0,\bar x''+\sqrt{\lambda}y'') & \cdots& F_{nn}(\lambda y_1,0,\bar x''+\sqrt{\lambda}y'')
 \end{pmatrix}.
\end{equation*}
Then, from \eqref{locpro2-4}, the uniform convexity of $u_{\lambda}$ restricted on $[1/2,2]\times \{0\} \times [-2,2]^{n-2}$ is independent of $\lambda$ and $\bar x''\in[-2, 2]^{n-2}$ when $\lambda \leq \hat r_1$, where $\hat r_1$ is some positive constant depending only
on $F$.

Therefore, by Theorem \ref{thm3-803}, there exists $r_0>0$ such that
$$I_{\lambda}(y)=v_{\lambda}(y)-F_{\lambda}(y)\in C^\infty\big([\frac{3}{4},\frac{3}{2}]\times [0,r_0] \times [-\frac{3}{2},\frac{3}{2}]^{n-2}\big),$$
and for any multi-index $\alpha=\beta+\gamma$,
\begin{equation}\label{esti-1-alpha-I}
|D_{y}^{\alpha} I_{\lambda}|\lesssim_{|\alpha|} 1,\quad \text{in}\quad  [\frac{3}{4},\frac{3}{2}]\times [0,r_0] \times [-\frac{3}{2},\frac{3}{2}]^{n-2}
\end{equation}
 if $\lambda\leq \min\{\hat r_1,1/16\}$. A similar argument as in Lemma \ref{lemc1-1} implies that there exists a positive constant $\hat r_2>0$, such that the strict convexity of $u_{\lambda}(y)$ in $[\frac{3}{4},\frac{3}{2}]\times [r_0,\frac 32] \times [-\frac{3}{2},\frac{3}{2}]^{n-2}$
is independent of $\lambda$ and $\bar x''$ when $\lambda\in (0,\hat r_2]$.
Set $\tilde r_3=\min\{\hat r_1,\hat r_2,\frac{1}{16}\}$. In the following discussion, we further restrict $\lambda\in (0,\hat r_3]$. Hence, by interior estimate for $u_\lambda$, $\lambda\in (0,\tilde r_3]$, one knows
\begin{equation}\label{esti-1-alpha-I-b}
|D_{y}^{\alpha} I_{\lambda}|\lesssim_{|\alpha|} 1,\quad \text{in}\quad [\frac{3}{4},\frac{3}{2}]\times [r_0,3/2] \times [-\frac{3}{2},\frac{3}{2}]^{n-2}.
\end{equation}
Combining with \eqref{cd2-growth-lambda}, \eqref{esti-1-alpha-I} and \eqref{esti-1-alpha-I-b}, for any $\delta\in(0,1)$, one gets, by Lemma \ref{lemma-interpolation},
\begin{equation*}
|D_{y} I_{\lambda}|\lesssim_{L(|\alpha|,\delta,n)} \lambda^{\delta}\quad \text{in}\quad [\frac{3}{4},\frac{3}{2}]\times [0,\frac{3}{2}] \times [-\frac{3}{2},\frac{3}{2}]^{n-2}.
\end{equation*}
where $L(|\alpha|,\delta,n)$ is some positive integer depending only on $|\alpha|,\delta$ and $n$.
A similar argument also yields for any multi-index $\alpha$,
\begin{equation*}
|D_{y}^{\alpha} I_{\lambda}|\lesssim_{L(|\alpha|,\delta,n)} \lambda^{\delta}\quad\text{in}\quad [\frac{3}{4},\frac{3}{2}]\times [0,\frac{3}{2}] \times [-\frac{3}{2},\frac{3}{2}]^{n-2}.
\end{equation*}
Similarly, we have
\begin{equation*}\label{growth-interpo-ineq}
|D_{y}^{\alpha} I_{\lambda}|\lesssim_{L(|\alpha|,\delta,n)}  \lambda^{\delta},\quad \text{in}\quad [0,\frac{3}{2}] \times  [\frac{3}{4},\frac{3}{2}]\times \times [-\frac{3}{2},\frac{3}{2}]^{n-2}.
\end{equation*}
Therefore,
\begin{equation}\label{I-deriv-growth}
\lambda^{|\beta|+\frac{|\gamma|}{2}}|D_{y'}^{\beta}D_{y''}^{\gamma}I |(\lambda y', x''+\sqrt{\lambda}y'')\lesssim_{L(|\beta|+|\gamma| ,\delta,n)} \lambda^{1+\delta}
\end{equation}
in $\big([0,\frac{3}{2}]^2 \setminus [0,\frac{3}{4}]^2\big)\times [-\frac{3}{2},\frac{3}{2}]^{n-2}$.

If $x'\in[0,\hat r_3]^2\setminus\{0\}^2$, taking $\lambda=\max\{x_1,x_2\}$, $y'=\frac{x'}{\lambda}$ and $y''=0$ in \eqref{I-deriv-growth},
 we get the asymptotic estimate \eqref{eq-cd2-growth1}. 
\end{proof}

\begin{remark}
Note that in order to derive the estimate \eqref{growth-interpo-ineq}, we used the derivative estimate of $I_{\lambda}$, whose order is higher than $\alpha$ when using the interpolation inequality. This implies that our estimates is not optimal for the regularity of the dependent quantities. Alternative methods,  such as applying elliptic PDE estimates for the equation satisfied by $D_{y}^{\alpha} I_{\lambda}$, could provide better estimates.
Since our goal is to prove the smoothness of $I$ rather than obtaining the optimal estimate, we adopt this method mainly for convenience.
\end{remark}

With the help of Lemma \ref{lemma-Growth-Regularity} and Lemma \ref{lemma-Growth-Regularity2}, we have the following regularity result for $I$.
\begin{corollary}\label{cor4.1}
There  holds that
\begin{equation*}
\sum_{b=1}^2\sum_{a=1}^2\|\sqrt{x_a}I_{ab}\|_{C^{\frac 14}([0,\tilde r_1]^2\times [-2,2]^{n-2})}+\sum_{p=3}^n\sum_{i=1}^n \|I_{pi}\|_{C^{\frac 14}([0,\tilde r_1]^2\times [-2,2]^{n-2})}\lesssim 1.
\end{equation*}

\end{corollary}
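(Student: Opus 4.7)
The plan is to convert the pointwise growth bounds from Lemma \ref{lemma-cd2-growth1} into H\"older estimates by means of the growth-regularity lemmas \ref{lemma-Growth-Regularity} and \ref{lemma-Growth-Regularity2}. First I would specialize Lemma \ref{lemma-cd2-growth1} to the multi-indices that appear in the target norm. For the weighted second derivatives $\sqrt{x_a}\,I_{ab}$ with $a,b\in\{1,2\}$, taking $\beta=e_a+e_b$ and $\gamma=0$ gives $|I_{ab}(x)|\lesssim_{L}|x'|^{\delta-1}$ for any $\delta\in(0,1)$, so $\sqrt{x_a}\,|I_{ab}|\lesssim \sqrt{x_a}\,|x'|^{\delta-1}\le |x'|^{\delta-1/2}$; choosing $\delta$ close to $1$ shows $\sqrt{x_a}I_{ab}$ is bounded, and applying the lemma with one additional derivative (in either $x'$ or $x''$) controls $|D(\sqrt{x_a}I_{ab})|$ by a suitable negative power of $|x'|$ after absorbing the weight against $\sqrt{x_a}\le |x'|^{1/2}$. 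For the mixed entries $I_{pi}$ with $p\ge 3$ and $i\in\{1,\ldots,n\}$, the relevant choices are $|\beta|\le 1$ together with $|\gamma|=2-|\beta|$, and Lemma \ref{lemma-cd2-growth1} then yields at worst $|I_{pi}(x)|\lesssim |x'|^{\delta-1/2}$ on the function and a matching gradient growth.

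Once these pointwise growth estimates are collected, the second step is to feed them into Lemma \ref{lemma-Growth-Regularity} and Lemma \ref{lemma-Growth-Regularity2}, which convert a function satisfying a growth-in-distance bound together with a growth-in-derivative bound into a H\"older-continuous function whose exponent equals the surplus power of $|x'|$ between the two estimates. Choosing the surplus exponent to equal $1/4$ yields $C^{1/4}$ regularity for each of $\sqrt{x_a}I_{ab}$ and $I_{pi}$ on $[0,\tilde r_1]^2\times[-2,2]^{n-2}$, with constants depending only on the finitely many derivative estimates used from Lemma \ref{lemma-cd2-growth1}. Summing the resulting bounds over $a,b\in\{1,2\}$, $p\in\{3,\ldots,n\}$ and $i\in\{1,\ldots,n\}$ delivers the corollary.

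The main obstacle will be the weighted derivatives $\sqrt{x_a}I_{ab}$, because differentiating the weight produces a factor $1/\sqrt{x_a}$ that interacts badly with the singular pointwise bound on $I_{ab}$. I expect to handle this by exploiting $\sqrt{x_a}\le |x'|^{1/2}$ so as to trade a half-power of $|x'|$ between the weight and the growth exponent; equivalently, the anisotropic rescaling $(x',x'')\mapsto(\lambda y',\bar x''+\sqrt{\lambda}\,y'')$ already used inside the proof of Lemma \ref{lemma-cd2-growth1} reduces the H\"older estimate at a point with $|x'|=\lambda$ to a uniform estimate on the model region $[0,2]^2\times[-2,2]^{n-2}$, which is exactly where the finiteness of the constants $L(|\beta|+|\gamma|,\delta,n)$ is used. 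Once the weighted case is dispatched in this way, the $I_{pi}$ contributions are a routine application of the growth-regularity lemmas, since no singular weight enters and the only content is to upgrade pointwise decay into H\"older continuity across the locus $\{x'=0\}$.
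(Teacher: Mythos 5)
Your proposal identifies exactly the ingredients the paper intends: specialize the growth bounds of Lemma~\ref{lemma-cd2-growth1} to second derivatives and one more order, then feed them into Lemma~\ref{lemma-Growth-Regularity} (for the $I_{pi}$, $p\ge 3$) and Lemma~\ref{lemma-Growth-Regularity2} (for the weighted $\sqrt{x_a}I_{ab}$). Concretely, with $\delta_0=3/4$ in Lemma~\ref{lemma-cd2-growth1} one gets $|I_{ab}|\lesssim |x'|^{-1/4}$ and $|\nabla I_{ab}|\lesssim |x'|^{-5/4}$, which are precisely the hypotheses of Lemma~\ref{lemma-Growth-Regularity2} with $\delta=1/4$; similarly $|I_{pi}|\lesssim |x'|^{1/4}$ and $|\nabla I_{pi}|\lesssim |x'|^{-3/4}$ plug into Lemma~\ref{lemma-Growth-Regularity} with $\delta=1/4$. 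That is the entire argument, and it matches the paper.

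The one place your phrasing goes slightly astray is the ``main obstacle'' you anticipate: you propose to control $|D(\sqrt{x_a}I_{ab})|$ and worry about the $\tfrac{1}{2\sqrt{x_a}}I_{ab}$ term it produces. That term is indeed unbounded near $\{x_a=0,\,x_b>0\}$, so a direct gradient bound on $\sqrt{x_a}I_{ab}$ cannot work. But this is precisely the situation Lemma~\ref{lemma-Growth-Regularity2} is built for: it asks only for bounds on $f$ and $\nabla f$ (with exponents $\delta-\tfrac12$ and $\delta-\tfrac32$) and, \emph{without ever differentiating the weight}, concludes $\sqrt{x_a}f\in C^\delta$. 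Its proof absorbs the weight through the elementary inequality $|\sqrt{x_a}-\sqrt{y_a}|\le |x_a-y_a|^{1/2}$ together with a case split on whether $|x'-y'|$ is small relative to $|x'|$ — in essence the anisotropic-rescaling idea you mention at the end. So no extra work is needed: apply Lemma~\ref{lemma-Growth-Regularity2} to $f=I_{ab}$ directly rather than attempting to bound $D(\sqrt{x_a}I_{ab})$.
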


The following lemma plays an important role in establishing the higher regularity result of $I$.

\begin{lemma}\label{lemma-I12}
There exists a positive constant $ r_{2*} \in (0,\tilde r_1/2)$ such that
\begin{equation}\label{eq-I12}
\|I_{12}\|_{C^{1/2}([0,2 r_{2*}]^2\times[-\frac{3}{2},\frac{3}{2}]^{n-2})}\lesssim 1.
\end{equation}
\end{lemma}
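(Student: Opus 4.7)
The plan is to combine the quadratic smallness $|I(x)|\lesssim x_1 x_2$ supplied by Lemma \ref{lem-k-quar-low} with a rescaling argument in the spirit of Lemma \ref{lemma-cd2-growth1}. The key structural point is that $I$ vanishes on both faces $\{x_1=0\}$ and $\{x_2=0\}$, so under the scaling
\[
I_\lambda(y)=\frac{I(\lambda y',\bar x''+\sqrt{\lambda}\,y'')}{\lambda},\qquad \lambda:=\max(\bar x_1,\bar x_2),
\]
one has both $|I_\lambda(y)|\lesssim \lambda\,y_1 y_2$ on $[0,2]^2\times[-2,2]^{n-2}$ and the chain-rule identity $\partial_{y_1 y_2}I_\lambda(y)=\lambda\,I_{12}(\lambda y',\bar x''+\sqrt{\lambda}\,y'')$. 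Thus a uniform-in-$\lambda$ bound $|\partial_{y_1 y_2}I_\lambda|\lesssim \lambda$ immediately translates into $|I_{12}|\lesssim 1$ in the original coordinates, and a rescaled H\"older estimate on $\partial_{y_1 y_2}I_\lambda$ translates into a H\"older estimate on $I_{12}$ (with anisotropic pullback).

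For the regularity input I would apply Theorem \ref{thm3-803} to $u_\lambda$ near each face $\{y_a=0\}$, $a\in\{1,2\}$: on such a face the rescaled problem is effectively of type $\mathfrak k=1$, with boundary data smooth and uniformly convex in the surviving singular variables, uniformly in $\lambda$ by the hypotheses \eqref{locpro2-3} and \eqref{locpro2-4} together with the construction of $F$ in \eqref{extension}. Combined with standard interior Monge-Amp\`ere $C^k$ estimates away from $\{y_1 y_2=0\}$, this yields $\|D_y^\alpha I_\lambda\|_{L^\infty(K)}\lesssim_{|\alpha|}1$ on any compact subset $K$ of $\overline{[0,2]^2\setminus\{0\}^2}\times[-2,2]^{n-2}$. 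Interpolating these high-order bounds against the $L^\infty$ smallness $|I_\lambda|\lesssim\lambda$ (as in the proof of Lemma \ref{lemma-cd2-growth1}) gives, for every $\epsilon\in(0,1)$, the estimate $\|\partial_{y_1 y_2}I_\lambda\|_{C^{1/2}(K')}\lesssim \lambda^{1-\epsilon}$ on any interior compact $K'$, and hence $\|I_{12}\|_{L^\infty}\lesssim 1$.

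To upgrade this $L^\infty$ bound to a $C^{1/2}$ bound up to the corner, for two points $x_A,x_B$ in $[0,2r_{2*}]^2\times[-3/2,3/2]^{n-2}$ I would distinguish cases by comparing $|x_A-x_B|$ to $\lambda_*:=\max(|x_A'|,|x_B'|)$. When $|x_A-x_B|\ge \lambda_*/4$, the estimate follows from the $L^\infty$ bound on $I_{12}$ together with the already-known regularity of $I$ along each face $\{x_a=0\}$ (delivered by Theorem \ref{thm3-803}), by comparing $x_A$ and $x_B$ to a common point whose $x'$-coordinate has vanishing entries and then using the face-regularity and the smallness on the segment between. When $|x_A-x_B|<\lambda_*/4$, both points sit inside one rescaled neighborhood of scale $\lambda_*$; pulling back the interior H\"older estimate on $\partial_{y_1 y_2}I_\lambda$ from the previous paragraph via $(x',x'')\mapsto(\lambda y',\sqrt\lambda\,y'')$ and using $\partial_{y_1 y_2}I_\lambda=\lambda\,I_{12}$ then gives the desired $|I_{12}(x_A)-I_{12}(x_B)|\lesssim |x_A-x_B|^{1/2}$.

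The main obstacle lies in this last step: because $x''$ is rescaled by $\sqrt{\lambda}$, a direct $C^{1/2}$ bound in $y''$ yields only $C^{1/4}$ in $x''$ naively, and the target exponent $1/2$ is recovered only because the rescaled H\"older seminorm of $\partial_{y_1 y_2}I_\lambda$ already carries a factor $\lambda^{1-\epsilon}$. After dividing by $\lambda$ coming from $\partial_{y_1 y_2}I_\lambda=\lambda\,I_{12}$, exactly the right compensating power of $\lambda$ is left to absorb the $\lambda^{-1/2}$ produced by $y''=(x''-\bar x'')/\sqrt\lambda$ and the $\lambda^{-1}$ produced by $y'=x'/\lambda$. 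The delicate bookkeeping consists in making the correct choice of base point $\bar x$ (so that both $x_A,x_B$ lie in the rescaled region), handling the borderline regime $|x_A-x_B|\sim\lambda_*$, and verifying that the $\epsilon$-loss in the interpolation is harmless in each sub-case.
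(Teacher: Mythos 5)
There is a genuine gap. The interpolation step cannot give what you claim. From the bounds $\|I_\lambda\|_{L^\infty}\lesssim\lambda$ and $\|D_y^\alpha I_\lambda\|_{L^\infty}\lesssim_{|\alpha|}1$ on interior compacts, Lemma \ref{lemma-interpolation} yields only $|D_y^\alpha I_\lambda|\lesssim C_\epsilon\,\lambda^{1-\epsilon}$, with $C_\epsilon\to\infty$ as $\epsilon\to0$. Since $\partial_{y_1y_2}I_\lambda=\lambda\,I_{12}$, this gives $|I_{12}|\lesssim C_\epsilon\,\lambda^{-\epsilon}$, which is \emph{not} bounded as $\lambda\to0$; it is exactly the conclusion of Lemma \ref{lemma-cd2-growth1} with $\beta=(1,1)$, namely $|I_{12}|\lesssim|x'|^{\delta-1}$. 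Removing this $\epsilon$-loss is precisely the content of Lemma \ref{lemma-I12}, so the argument is circular: you would need a bound on $I_{12}$ at the corner (e.g.\ to subtract a polynomial correction $I_{12}(0)y_1y_2$ as in Theorem \ref{thm-I-C2}) before the interpolation can do better than $\lambda^{1-\epsilon}$. Your case analysis for the H\"older estimate is therefore built on an input that is not available.

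The paper closes this gap by an essentially different, PDE-based route. Differentiating $\det U_{ij}=h$ twice produces a linear, degenerate elliptic equation \eqref{eqV12-1} for $V=v_{12}$, whose coefficients are H\"older continuous (after dividing out the $\sqrt{x_a}$-weights) and whose right-hand side $g$ is only mildly singular, $|g|\lesssim|x'|^{-\frac{1}{16(n+2)}}$. The crucial idea is then to desingularize each degenerate direction by substituting $x_a=(y_{2a-1}^2+y_{2a}^2)/4$, $a=1,2$, promoting the $x_1$- and $x_2$-axes to two-dimensional factors; in the resulting $(n+2)$-dimensional coordinates the operator becomes uniformly elliptic with $C^{1/4}$ coefficients, the forcing lies in $L^{4(n+2)}$, and standard $W^{2,p}$-estimates together with a comparison via $\ln(y_1^2+y_2^2)$-type barriers identify $V$ with the unique Dirichlet solution, yielding $V\in C^{1,3/4}$ in $y$ and hence $C^{1/2}$ in $x$. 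This structural use of the linearized equation and the ``fold-up'' change of variables cannot be replaced by rescaling plus interpolation against the quadratic smallness of $I$.
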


\begin{proof}
Note that $I=v-F$. We will prove $v_{12}\in C^{1/2}([0,2 r_{2*}]^2\times[-3/2,3/2]^{n-2})$ for some constant $r_{2*}\in(0,\tilde r_1/2)$.

For any $x\in([0,\tilde r_1]^2\setminus{0}^2)\times[-2,2]^{n-2}$,
denote the matrix
\begin{equation}\label{matrix-I12}
 \begin{pmatrix}
     x_1v_{11}+1&    \sqrt{x_1x_2}v_{12} & \cdots &  \sqrt{x_1}v_{1n}\\[3pt]
   \sqrt{x_1x_2}v_{21}& x_2 v_{22}+1 & \cdots&  \sqrt{x_2} v_{2n}\\[3pt]
  \cdots &\cdots &\cdots &\cdots  \\[3pt]
 \sqrt{x_1}v_{n1} & \sqrt{x_2} v_{n2} & \cdots& v_{nn}
 \end{pmatrix}
\end{equation}
by $(U_{ij})_{n\times n}$.
Then by Corollary \ref{cor4.1}, we have $U_{ij}\in C^{\frac 14}$, $i,j=1,\cdots,n$. Also, we have
\begin{equation}\label{MAI12}
\det U_{ij}=h(x).
\end{equation}
Differentiating \eqref{MAI12} with respect to $x_1,x_2$, we consider the terms $\delta^{1\cdots n}_{i_1\cdots\ i_n}U_{1i_1} \cdots U_{ni_n}$ in the expansion of $\det U_{ij}$ under three cases, where $(i_1,\cdots,i_n)$ are permutations of $(1,\cdots,n)$.

{\it Case 1.} $i_1=1$ and $i_2=2$.
Then, by Lemma \ref{lemma-cd2-growth1}, the terms containing $U_{ji_j,1}$ for $j\in\{1, \cdots ,n\}$ among $(U_{1i_1} \cdots U_{ni_n})_{12}$ are $\lesssim C_{\delta} |x'|^{2\delta-2}$, $\forall \delta\in (0,1)$.

{\it Case 2.} $i_1=1, i_2\neq2$ or $i_1\neq1, i_2=2$.

Without loss of generality,  assume $i_1=1$ and $i_2\neq2$. Then
$$U_{1i_1} \cdots U_{ni_n}=\delta^{1\cdots n}_{i_1\cdots\ i_n}x_2(x_1v_{11}+1)v_{2i_2} \cdots v_{ni_n}, \quad i_2\neq 2.$$
In this case, except for $\delta^{1\cdots n}_{i_1\cdots\ i_n}(x_1v_{11}+1)v_{2i_21} \cdots v_{ni_n}$, $\delta^{1\cdots n}_{i_1\cdots\ i_n}(x_1v_{11}+1)v_{2i_2} \cdots v_{j21} \cdots v_{ni_n}$ in $(U_{1i_1} \cdots U_{ni_n})_{12}$ for some $j\in\{3, \cdots ,n\}$, and the terms containing $v_{ji_j,12}$ for $j\in\{2, \cdots ,n\}$, all other terms are $\lesssim C_{\delta} |x'|^{2\delta-2}$, $\forall \delta\in (0,1)$. This again follows from Lemma \ref{lemma-cd2-growth1}.

{\it Case 3.} $i_1\neq1, i_2\neq2$. Then
$$U_{1i_1} \cdots U_{ni_n}=\delta^{1\cdots n}_{i_1\cdots\ i_n}x_1x_2v_{1i_1}v_{2i_2} \cdots v_{ni_n}, \quad i_1\neq 1,\quad i_2\neq 2.$$
In this case,  by Lemma \ref{lemma-cd2-growth1}, all terms among $(U_{1i_1} \cdots U_{ni_n})_{12}$ are $\lesssim C_{\delta}|x'|^{2\delta-2}$ since in this case, we have
$$|x_1x_2v_{ji_j12}|\lesssim  C_{\delta}|x'|^{\delta-1},\quad \forall j\in\{1,..,n\}.$$
 However, concerning the structure of the equation, we will still place the terms containing $v_{ji_j,12}$ for $j\in\{1, \cdots ,n\}$ on the left hand side of the following equation \eqref{eqV12-1}.

Therefore, $V=v_{12}$ solves an equation of the form:
\begin{equation}\label{eqV12-1}
A\big(x_1V_{11}+V_{1}+x_2 V_{22}+V_{2}\big)+\sum_{a=1}^{2}\sum_{p=3}^{n}x_a\frac{a_{ap}}{\sqrt{x_a}}V_{ap}+\sum_{p,q=3}^{n}a_{pq}V_{pq}+\sum_{p=3}^{n}B_pV_{p}=h_{12}+g:=G
\end{equation}
where
\begin{equation}\label{MI12}
A=\det \begin{pmatrix}
v_{33} & \cdots & v_{3n}\\[3pt]
  \cdots &\cdots &\cdots  \\[3pt]
 v_{n3} & \cdots& v_{nn}
 \end{pmatrix}
 =\det \begin{pmatrix}
F_{33}+I_{33} & \cdots & F_{3n}+I_{3n}\\[3pt]
  \cdots &\cdots &\cdots  \\[3pt]
 F_{n3}+I_{n3} & \cdots& F_{nn}+I_{nn}
 \end{pmatrix}
\end{equation}
$a_{ap}=(U^*)^{ap}$, $a_{pq}=(U^*)^{pq}$, and for fixed $x$, $B_p$ is some combination of polynomials in $v_{pq}$, $x_bv_{bb}+1$ and $v_{ap}$. The term $g$ is a combination of polynomials in $x_1x_2v_{ab12}$, $v_{ij}$ and $x_a v_{bij}$, where $a,b,c\in\{1,2\}$, $p,q\in\{3,..,n\}$, $i,j\in \{1,\cdots,n\}$.
 Hence,
 $$a_{pq}, \frac{a_{ap}}{\sqrt{x_a}}, A, B_p\in C^{\frac 14},\quad |g|\lesssim C_\delta |x'|^{2\delta-2},\quad \forall \delta\in(0,1).$$

Taking $\delta=1-\frac{1}{32(n+2)}$, therefore $ |g|\lesssim |x'|^{-\frac{1}{16(n+2)}}$.

Let $x_1=\frac{r_1^2}{4}$ and $x_2=\frac{r_{2}^2}{4}$. Then, \eqref{eqV12-1} becomes
\begin{equation}
\big(V_{r_1r_1}+\frac{V_{r_1}}{r_1}+ V_{r_2r_2}+\frac{V_{r_2}}{r_2}\big)+\sum_{a=1}^{2}\sum_{p=3}^{n}\frac{r_a}{2} b_{ap} V_{ap}+\sum_{p,q=3}^{n}\frac{a_{pq}}{A}V_{pq}+\sum_{p=3}^{n}\frac{B_p}{A}V_{p}=\widetilde{G}
\end{equation}
 where $b_{ap}=\frac{a_{ap}}{A\sqrt{x_a}}\in C^{\frac 14}$.
Let $$r_1=\sqrt{y_1^2+y_2^2},\quad r_2=\sqrt{y_3^2+y_4^2},\quad  y_{p}=x_{p-2},\quad p=5, \cdots ,n+2.$$
Let 
$$y=(y',y''),\quad y'=(y_1,y_2,y_3,y_4),\quad y''=(y_5, \cdots ,y_{n+2}).$$
 Then,
in  the domain, $$\bigg(\big(B_{2\sqrt{\tilde r_1}}^{2}(0)\times B_{2\sqrt{\tilde r_1}}^{2}(0)\big)\setminus\{0\}^4\bigg)\times [-2,2]^{n-2},$$
$\overline{V}(y)=v_{12}(x)$ solves
\begin{align}\label{eq-v12-y}\begin{split}
L\overline{V}:=&\sum_{i=1}^{4}\partial_{y_iy_i}\overline{V}+\sum_{a=1}^{2}\sum_{p=5}^{n+2}\frac{y_a}{2}\overline{b}_{1p}\overline{V}_{ap}+\sum_{a=3}^{4}\sum_{p=5}^{n+2}\frac{y_a}{2}\overline{b}_{2p}\overline{V}_{ap}\\
&+\sum_{p,q=5}^{n+2}\frac{\overline{a}_{pq}}{\overline{A}}\overline{V}_{pq}+\sum_{p=5}^{n+2}\frac{\overline{B}_p}{\overline{A}}\overline{V}_{p}\\
&=\overline{G},
\end{split}\end{align}
with
 $$| \overline{G}| \lesssim |y'|^{-\frac{1}{8(n+2)}},$$  
and $\overline{b}_{1p}(y)=b_{1x_{p-2}}(x)$, $\overline{b}_{2p}(y)$,
$\frac{\overline{a}_{pq}}{\overline{A}}$, $\frac{\overline{B}_p}{\overline{A}}$ and $\overline{G}$ are defined in the same way.
Let  $B_{\bar{r}}^{m}(\bar{x})$  be
 $$B_{\bar{r}}^{m}(\bar{x})=\{x\in \mathbb R^m| |x-\bar x|<\bar r\}.$$
Then,  there exists some constant $\tilde r'_1 \leq2\sqrt{\tilde r_1}$such that the above equation is uniformly elliptic in the domain
$$(B_{\tilde r'_1}^{4}(0)\setminus\{0\}^4)\times [-2,2]^{n-2}\subseteq\bigg(\big(B_{2\sqrt{\tilde r_1}}^{2}(0)\times B_{2\sqrt{\tilde r_1}}^{2}(0)\big)\setminus\{0\}^4\bigg)\times [-2,2]^{n-2},$$
and the coefficients
$\frac{\overline{a}_{pq}}{\overline{A}}, \frac{y_a}{2}\overline{b}_{\left[\frac {a+1}{2}\right]p} \in C^{\frac 14}(B_{\widetilde r_1'}^4(0)\times [-2,2]^{n-2})$.

Let $\eta(y)=\eta(|y''|)$ be the cut-off function satisfying $\eta=1$ when $|y''|\leq \frac{3\tilde r'_1}{4}$ and $\eta=0$ when $|y''|\geq \tilde r'_1$.
Then, $w=\eta\overline{V}$ solves
\begin{align}\label{eq-v12eta}\begin{split}
Lw=&\eta\overline{G}+\frac{\overline{V}}{\overline{A}}(\sum_{p,q=5}^{n+2}\overline{a}_{pq}\eta_{pq}+\sum_{p=5}^{n+2}\overline{B}_p\eta_{p})\\
&+\sum_{a=1}^{2}\sum_{p=5}^{n+2}\frac{y_a}{2}\overline{b}_{1p}\overline{V}_{a}\eta_p+\sum_{a=3}^{4}\sum_{p=5}^{n+2}\frac{y_a}{2}\overline{b}_{2p}\overline{V}_{a}\eta_p+2\sum_{p,q=5}^{n+2}\frac{\overline{a}_{pq}}{\overline A}\overline{V}_{p}\eta_q\\
&:=\widehat{G}
\end{split}\end{align}
in $B_{\tilde r'_1}^{n+2}(0)\setminus \left( \{0\}^4\times \overline{B_{\tilde r'_1}^{n-2}(0)}\right)$.
Then, $\widehat{G}\in L^{2}(B_{\tilde r'_1}^{n+2}(0))$ due to the fact
$$|D_y\overline{V}|\lesssim  |y'|^{-\frac{3}{2}}.$$
Moreover, $\widehat G\in L^{4(n+2)}(B_{\frac 34\widetilde r_1'}^{n+2}(0))$, which follows from $D\eta\equiv 0$ and $\widehat G=\overline{G}$ in $B_{\frac 34\widetilde r_1'}^{n+2}(0)$.

Let $W$ be the solution of the following Dirichlet problem:
 \begin{align}
\label{eq-W2}LW &= \widehat{G} \quad\text{in }B_{\tilde r'_1}^{n+2}(0),\\
\label{eqbdy-W2}W&=w \quad\text{on }\partial B_{\tilde r'_1}^{n+2}(0).
\end{align}
\eqref{eq-W2}-\eqref{eqbdy-W2} admits a unique solution
 $W\in W^{2,2}(B_{\tilde r'_1}^{n+2}(0))\bigcap W^{2,4(n+2)}_{loc}(B_{\frac 34\widetilde r_1'}^{n+2}(0))$. By Sobolev embedding theorem, we have $W\in C^{1,\frac{3}{4}}\left(\overline{B_{\frac{\tilde r'_1}{2}}^{n+2}(0)}\right)$. 

 For any $\epsilon>0$, by the maximum principle, we have
\begin{equation}
w+\epsilon|y'|^{-2}\geq W \geq w-\epsilon|y'|^{-2}\quad\text{in } B_{\tilde r'_1}^{n+2}(0)\setminus \left(\{0\}^4\times \overline{B_{\tilde r'_1}^{n-2}(0)}\right).
\end{equation}
Then, by taking $\varepsilon\rightarrow 0^+$,  we conclude that $w=W$. 
In particular, \begin{equation}\label{0726-01}
v_{12}=W\in C^{1,\frac{3}{4}}(\overline{B_{\frac{ \tilde r'_{1}}{2}}^{n+2}(0))}.
\end{equation}
 Back to the original coordinates, $$v_{12}\in C^{\frac{1}{2}}\left([0,\frac{\tilde r'^2_1}{8n}]^2\times [-\frac{\tilde r'_1}{\sqrt{2n}}, \frac{\tilde r'_1}{\sqrt{2n}}]^{n-2}\right).$$
A Similar argument shows that $$v_{12}\in C^{\frac{1}{2}}([0,\frac{\tilde r'^2_1}{8n}]^2\times \big(x''+[-\frac{\tilde r'_1}{\sqrt{2n}}, \frac{\tilde r'_1}{\sqrt{2n}}]^{n-2}\big),\quad x''\in [-\frac{3}{2},\frac{3}{2}]^{n-2}.$$

Hence, we have $v_{12}\in C^{\frac{1}{2}}([0,2r_{1*}]^2\times [-\frac{3}{2},\frac{3}{2}]^{n-2})$ for $r_{2*}=\frac{\tilde r'^2_1}{16n}$.
\end{proof}
\begin{remark}\label{I12-regul}
\eqref{0726-01} in Lemma \ref{lemma-I12} also tells us that
$$\sqrt{x_a}D_{x_a}I_{12},\	D_{x_p}I_{12}\in C^{\frac{1}{4}}([0,2r_{1*}]^2\times[-2,2]^{n-2}),\	a=1,2,\	 p=3, \cdots ,n.$$
\end{remark}

\begin{theorem}\label{thm-I-C2}
There holds that
\begin{equation}\label{IC2}
I\in C^{2,\frac{1}{8}}([0,\frac{3}{2}r_{2*}]^2\times[-\frac{5}{4},\frac{5}{4}]^{n-2}).
\end{equation}
Moreover, for any multi-index $\beta,\gamma$,
\begin{equation}\label{4.35}
D^{\beta}_{x'}D^{\gamma}_{x''}I, \sqrt{ x_a}D_{x_{p}}D^{\beta}_{x'}D^{\gamma}_{x''}I\in C^{\frac{1}{8}}([0,\frac{3}{2}r_{2*}]^2\times[-\frac{5}{4},\frac{5}{4}]^{n-2}), \quad\text{when } |\beta|+\frac{|\gamma|}{2}= 2,
\end{equation}
\begin{equation}\label{4.36}
 x_aD^{\beta}_{x'}D^{\gamma}_{x''}I,  \sqrt{ x_a}x_bD_{x_{p}}D^{\beta}_{x'}D^{\gamma}_{x''}I\in C^{\frac{1}{8}}([0,\frac{3}{2}r_{2*}]^2\times[-\frac{5}{4},\frac{5}{4}]^{n-2}), \quad\text{when }
2<|\beta|+\frac{|\gamma|}{2}\leq 3,
\end{equation}
 and
\begin{equation}\label{4.37}
 x_ax_{b}D^{\beta}_{x'}D^{\gamma}_{x''}I\in C^{\frac{1}{8}}([0,\frac{3}{2}r_{2*}]^2\times[-\frac{5}{4},\frac{5}{4}]^{n-2}) \quad\text{when }  3< |\beta|+\frac{|\gamma|}{2}\leq 4,
 \end{equation}
where $a,b=1,2$, $p=3, \cdots ,n$.
\end{theorem}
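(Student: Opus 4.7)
The plan is to carry out an argument parallel to Lemma \ref{lemma-I12}: derive linear elliptic equations satisfied by the various second (and higher) derivatives of $v$, pass to the coordinates $y_a = 2\sqrt{x_a}$ ($a=1,2$) and $y_p = x_p$ ($p \geq 3$), perform an even extension across $\{y_1 = 0\}$ and $\{y_2 = 0\}$, and then apply standard elliptic estimates. The work will proceed in two phases: first establishing $I \in C^{2,1/8}$, i.e.\ \eqref{IC2}, and then bootstrapping to obtain the weighted estimates \eqref{4.35}--\eqref{4.37}.

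For the first phase, the only components not yet controlled are the pure second derivatives $I_{11}$ and $I_{22}$, since Lemma \ref{lemma-I12} controls $I_{12}$, Corollary \ref{cor4.1} gives $I_{pq}$ and $\sqrt{x_a}\,I_{ap}$, and Lemma \ref{lem-C1-conti} yields $I \in C^1$. I would treat $v_{11}$ and $v_{22}$ symmetrically. Starting from $\det U = h$ with $U$ as in \eqref{matrix-I12}, I differentiate twice in $x_1$ and cofactor-expand along the first row. All terms other than the one containing $\partial_1^2 U_{11}$ can be grouped either as coefficients with known Hölder regularity (using Corollary \ref{cor4.1} and Remark \ref{I12-regul}) or as a right-hand side controlled in $L^p$ for every $p$ via the growth estimate \eqref{eq-cd2-growth1}. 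After the change of variables $y_a = 2\sqrt{x_a}$, the principal part becomes $\partial_{y_1}^2 + \partial_{y_2}^2 + \sum_{p,q\geq 3}\overline{a}_{pq}\,\partial_{y_py_q}$, which is uniformly elliptic near $\{y_1 = y_2 = 0\}$ and has $C^{1/4}$ coefficients. An even extension across $\{y_1 = 0\}$ and $\{y_2 = 0\}$, followed by the cutoff/Dirichlet problem construction used in Lemma \ref{lemma-I12}, together with the uniqueness argument based on the barrier $\pm\epsilon|y'|^{-2}$, then gives $C^{1,\alpha}$ regularity in the $y$-coordinates, which translates to $C^{1/8}$ control of $I_{11}$ and $I_{22}$ in the original coordinates.

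For the second phase, once $I \in C^{2,1/8}$ is in hand, I differentiate $\det U = h$ further (tangentially in $x''$ and in $x_a$) to produce equations for $D^\beta_{x'}D^\gamma_{x''} v$. The weights in \eqref{4.35}--\eqref{4.37} are dictated by scaling: each extra $x_a$-derivative of $v$ beyond the second introduces a factor of $1/x_a$, while each $x''$-derivative combined with an $x_a$-derivative costs a factor of $1/\sqrt{x_a}$; balancing these against Lemma \ref{lemma-cd2-growth1} yields precisely the weighted spaces in the statement. At each step the same change of variables and even-extension scheme produces a uniformly elliptic problem with either Hölder or $L^p$ right-hand side, and $C^{1/8}$ regularity follows by the same $W^{2,p}$ and uniqueness arguments. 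The induction is organized in increasing order of $|\beta|+|\gamma|/2$, so that all lower-order terms entering the right-hand side at each stage have already been controlled.

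The main obstacle will be the cofactor bookkeeping needed to isolate $v_{11}$ (respectively $v_{22}$) from the equation $\det U = h$: the other entry $U_{22}$ (respectively $U_{11}$) appears in the same expansion and must be absorbed either into the elliptic operator or into the right-hand side using the already-established regularity and the a priori growth from Lemma \ref{lemma-cd2-growth1}, all while keeping careful track of how the various singular factors $\sqrt{x_a}$ and $x_a$ combine. A secondary difficulty is verifying, after the change of variables $y_a = 2\sqrt{x_a}$, that the coefficients of the linearized operator extend Hölder-continuously up to $\{y_a = 0\}$; this relies crucially on the weighted-derivative refinement $\sqrt{x_a}\,D_{x_a}(\cdot) \in C^{1/4}$ provided by Corollary \ref{cor4.1} and Remark \ref{I12-regul}, rather than mere boundedness.
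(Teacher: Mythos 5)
Your plan for the first phase does not go through as stated. The obstruction is structural. Differentiating $\det U=h$ twice in $x_1$ to isolate $V=v_{11}$ produces, after cofactor expansion, a linear equation whose second-order part in the degenerate variables has the form
\begin{equation*}
A\big(x_1V_{11}+2V_1+x_2V_{22}\big)+\cdots,
\end{equation*}
with no first-order term $V_2$ in the second direction. This is the crucial difference from the equation \eqref{eqV12-1} for $v_{12}$, where the mixed derivative $\partial_1\partial_2$ guarantees both $\partial_1 U_{11}$ and $\partial_2 U_{22}$ appear and hence both $V_1$ and $V_2$ are present. After the substitution $x_2=y_2^2/4$ one has $x_2\partial_{x_2}^2=\partial_{y_2}^2-y_2^{-1}\partial_{y_2}$ and $\partial_{x_2}=2y_2^{-1}\partial_{y_2}$; so $x_2V_{22}+V_2$ becomes the $2$-dimensional radial Laplacian $\partial_{y_2}^2+y_2^{-1}\partial_{y_2}$, whereas $x_2V_{22}$ alone becomes $\partial_{y_2}^2-y_2^{-1}\partial_{y_2}$, which is not the restriction to radial functions of any Laplacian and cannot be regularized by doubling $y_2$ to $(z_3,z_4)$ and extending evenly. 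Keeping $x_2$ un-transformed is no better: the coefficient $x_2$ of $V_{22}$ then degenerates at $\{x_2=0\}$ and uniform ellipticity is lost. So no uniformly elliptic problem for $v_{11}$ (or $v_{22}$) of the type you construct for $v_{12}$ is available, and the Dirichlet-problem-plus-removable-singularity device you copy from Lemma \ref{lemma-I12} has nothing to apply to.

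The paper sidesteps this entirely and never writes a PDE for $v_{11}$ or $v_{22}$. Since $I=v-F$ vanishes on $\partial_0 Q$, one has the exact representation $I(y',y'')=y_1y_2\int_0^1\!\int_0^1 I_{12}(s_1y_1,s_2y_2,y'')\,ds_1ds_2$, so that Hölder continuity of $I_{12}$ from Lemma \ref{lemma-I12} already encodes the behavior of \emph{every} derivative of $I$ once you subtract the monomial $I_{12}(0)\,y_1y_2$. The actual argument: set $I_\lambda(y)=I(\lambda y',\sqrt{\lambda}y'')/\lambda$ and $\widetilde I=I_\lambda-\lambda I_{12}(0)y_1y_2$, observe $|\widetilde I|\lesssim\lambda^{1+1/4}$ from the integral representation, have $|D^\alpha_y\widetilde I|\lesssim_{|\alpha|}1$ away from the corner (from \eqref{esti-1-alpha-I} and \eqref{esti-1-alpha-I-b}), and then invoke the interpolation inequality (Lemma \ref{lemma-interpolation}) to upgrade the $L^\infty$ smallness to derivative smallness $|D^\alpha_y\widetilde I|\lesssim_{|\alpha|}\lambda^{1+1/8}$. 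Undoing the scaling gives the pointwise growth estimate \eqref{I-growth1} for all $D^\beta_{x'}D^\gamma_{x''}I$ with $|\beta|+|\gamma|/2\geq 2$, $\beta+\gamma\neq(1,1,0,\dots,0)$, and Lemmas \ref{lemma-Growth-Regularity} and \ref{lemma-Growth-Regularity2} convert growth to the weighted Hölder conclusions \eqref{4.35}--\eqref{4.37}. Your second phase is likewise unnecessary once this is in place; in the genuine bootstrap (Theorem \ref{thm-I-2-Smooth}), the paper always arranges the multi-index so that $\beta_1,\beta_2\geq 1$ precisely to keep the PDE's structure good, and handles the remaining multi-indices by the same Taylor-expansion/interpolation mechanism, never by a direct PDE for a pure derivative.
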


\begin{proof}
For $\beta+\gamma=(1,1,0, \cdots ,0)$, it is exactly Lemma \ref{lemma-I12}.
In order to show the remaining cases of \eqref{4.35}-\eqref{4.37}, it is enough to prove
\begin{equation}\label{I-growth1}
| D^{\beta}_{x'}D^{\gamma}_{x''}I(x)| \lesssim_{|\beta|+|\gamma|}  |x'|^{1/8+2-|\beta|-\frac{|\gamma|}{2}},\quad \text{in}\quad  \big([0,\frac{3}{2}r_{1*}]^2\setminus\{0\}^2\big)\times[-\frac{5}{4}, \frac{5}{4}]^{n-2}
 \end{equation}
for  $ |\beta|+\frac{|\gamma|}{2}\geq 2$ and $\beta+\gamma\neq (1,1,0, \cdots ,0)$.

Without loss of generality, we assume $x''=0$.
 Near the origin, Lemma \ref{lemma-I12} implies
\begin{align*}
I(y',y'')=& y_1y_2\int_0^1\int_0^1 I_{12}(s_1y_1,s_2y_2,y'')ds_1ds_2\\
=& y_1y_2\int_0^1\int_0^1\big[ I_{12}(0)+O( |y'|^{\frac{1}{2}}) +O(|y''|^{\frac{1}{2}})\big]ds_1ds_2\\
=& y_1y_2I_{12}(0)+O( |y'|^{2+\frac{1}{2}}) +O(|y'|^2|y''|^{\frac{1}{2}}).
\end{align*}
 Take $\lambda\in (0,\frac{3}{2}r_{2*}]$, and
set $I_{\lambda}(y)=\frac{I(\lambda y',\sqrt{\lambda} y'')}{\lambda}$, for any $y \in [0,1]^2\times[-1,1]^{n-2}$.
Then,
\begin{equation*}
 |I_{\lambda}(y)-\lambda I_{12}(0)y_1y_2|\lesssim \lambda^{1+\frac{1}{4}}.
\end{equation*}
Let $\widetilde{I}(y)=I_{\lambda}-\lambda I_{12}(0)y_1y_2$. Then by \eqref{esti-1-alpha-I}, for any multi-index $\alpha$,
there holds
\begin{equation*}
|D_{y}^{\alpha} \widetilde{I}|\lesssim_{|\alpha|} 1,\quad \text{in}\quad [1/2,1]\times [0,1] \times [-1,1]^{n-2}.
\end{equation*}
By interpolation inequality, we have
\begin{equation*}
|D_{y}^{\alpha} \widetilde{I}|\lesssim_{|\alpha|}  \lambda^{1+\frac{1}{4}-\frac{1}{8}},\quad \text{in}\quad  [1/2,1]\times [0,1] \times [-1,1]^{n-2}.
\end{equation*}
Similarly, we have
\begin{equation*}
|D^{\alpha}_y \widetilde{I}|\lesssim_{|\alpha|}   \lambda^{1+\frac{1}{4}-\frac{1}{8}},\quad \text{in}\quad [0,1]\times [1/2,1] \times [-1,1]^{n-2}.
\end{equation*}
Then,
\begin{equation*}
|D^{\alpha}_{y} \widetilde{I}|\lesssim_{|\alpha|}  \lambda^{1+\frac{1}{8}},\quad \text{in}\quad \big([0,1]^2\setminus[0,\frac{1}{2})^2 \big)\times [-1,1]^{n-2}.
\end{equation*}

Therefore,
\begin{equation*}
\lambda^{|\beta|+\frac{|\gamma|}{2}}|D^{\beta}_{y'}D^{\gamma}_{y''}\big[I(y)-I_{12}(0)y_1y_2\big]|\lesssim_{|\beta|+|\gamma|} \lambda^{2+\frac{1}{8}}
\end{equation*}
in $\big([0,\lambda]^2\setminus[0,\frac{1}{2}\lambda)^2 \big)\times [-\sqrt{\lambda}, \sqrt{\lambda}]^{n-2}$.

Taking $\lambda=\max\{x_1,x_2\}$, $y'=\frac{x'}{\lambda}$ and $y''=0$, this implies \eqref{I-growth1} for $ |\beta|+\frac{|\gamma|}{2}\geq 2$ and $\beta+\gamma\neq (1,1,0, \cdots ,0)$.
\end{proof}

We now prove Theorem \ref{thmlocal-k} for $\mathfrak k=2$.
\begin{theorem}\label{thm-I-2-Smooth}
Let $u,v$ be given by \eqref{locpro2-1}-\eqref{locpro2-4}, and let $I$ be given at the beginning of this sub-section. There holds
\begin{equation}\label{reg-I}
I\in C^{\infty}([0, r_{2*}]^2\times[-1,1]^{n-2}).
\end{equation}
\end{theorem}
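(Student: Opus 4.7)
The plan is to bootstrap the $C^{2,1/8}$ regularity from Theorem \ref{thm-I-C2} up to $C^{\infty}$ by iterating, inductively on the order of differentiation, the strategy already used for $v_{12}$ in Lemma \ref{lemma-I12}. The induction hypothesis at level $k\ge 2$ will assert that, for every pair of multi-indices $(\beta,\gamma)$ with $|\beta|+\tfrac{|\gamma|}{2}\le k$, the weighted derivatives
\[
 x_1^{a_1}x_2^{a_2}\,D^{\beta}_{x'}D^{\gamma}_{x''} I\in C^{\alpha_k}\big([0,r_k]^2\times[-\rho_k,\rho_k]^{n-2}\big)
\]
for appropriate weights $a_1,a_2\ge 0$ dictated by the excess $|\beta|+\tfrac{|\gamma|}{2}-2$ (mimicking the pattern in \eqref{4.35}--\eqref{4.37}), together with corresponding decay estimates $|D^{\beta}_{x'}D^{\gamma}_{x''} I(x)|\lesssim |x'|^{\,2+\alpha_k-|\beta|-|\gamma|/2}$ near $\{x'=0\}$. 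The base case $k=2$ is Theorem \ref{thm-I-C2}, and Lemma \ref{lemma-cd2-growth1} provides the decay bounds via interpolation.

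For the inductive step, I would fix $(\beta,\gamma)$ with $|\beta|+\tfrac{|\gamma|}{2}=k+1$ and seek an equation satisfied by $V:=D^{\beta}_{x'}D^{\gamma}_{x''} v_{12}$ (or a suitable shifted weighted variant). Differentiating $\det(U_{ij})=h$ one extra time in the $(\beta,\gamma)$-direction and carefully repeating the three-case bookkeeping in Lemma \ref{lemma-I12} -- namely, separating terms according to whether the leading ``diagonal'' cofactors $U_{11},U_{22}$ have been differentiated -- yields an equation of the same form as \eqref{eqV12-1}:
\[
A\bigl(x_1 V_{11}+V_1+x_2 V_{22}+V_2\bigr)+\sum_{a=1}^{2}\sum_{p=3}^{n}\sqrt{x_a}\,\hat a_{ap} V_{ap}+\sum_{p,q=3}^{n}a_{pq}V_{pq}+\sum_{p=3}^{n}B_p V_p=G,
\]
where $A,\hat a_{ap},a_{pq},B_p\in C^{\alpha_k}$ by the inductive hypothesis (the matrix \eqref{MI12} has entries in $C^{\alpha_k}$), and the forcing $G$ consists of products involving only derivatives of $I$ of order $\le k$ with admissible weights, so that $G\in C^{\alpha_k/2}$ (or at least in $L^p$ with appropriate $p$) by induction.

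Next, I would apply the same change of variables as in Lemma \ref{lemma-I12}: substitute $x_1=r_1^2/4$, $x_2=r_2^2/4$, then set $r_1=\sqrt{y_1^2+y_2^2}$, $r_2=\sqrt{y_3^2+y_4^2}$, $y''=x''$. In the $(y',y'')$ coordinates the operator becomes uniformly elliptic on $B_{r}^{4}(0)\times [-\rho,\rho]^{n-2}$ away from the codimension-$4$ singular set $\{y'=0\}$, and its coefficients inherit the $C^{\alpha_k}$ (resp. $C^{\alpha_k/2}$) regularity of the original coefficients. Solving the corresponding Dirichlet problem via $W^{2,p}$ theory and then matching with $V$ using the barrier $\pm\varepsilon|y'|^{-2}$ and the maximum principle (exactly as in \eqref{eq-W2}--\eqref{eqbdy-W2}), I remove the singularity and conclude $\overline V\in C^{1,\alpha'}$ in the new coordinates. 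Translating back and using Lemma \ref{lemma-Growth-Regularity}/\ref{lemma-Growth-Regularity2} together with the sharpened growth bound obtained from a rescaling argument in the spirit of Lemma \ref{lemma-cd2-growth1} promotes this to the desired Hölder regularity with weights at level $k+1$. Iterating $k\to\infty$, together with bootstrapping the Hölder exponent via Schauder estimates once the coefficients are smoother, yields $I\in C^{\infty}([0,r_{2*}]^2\times[-1,1]^{n-2})$.

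The main obstacle will be the bookkeeping: when $D^{\beta}_{x'}D^{\gamma}_{x''}$ hits the $n\times n$ determinant, each derivative can fall on any of the factors $U_{ij}$, and only the distribution that preserves the ``good'' leading part $A(x_1 V_{11}+V_1+x_2 V_{22}+V_2)$ must be isolated. One must check that every other term is either of lower order (hence controlled by induction) or carries enough powers of $x_1,x_2$ to be treated as a perturbation with the weights prescribed by the inductive hypothesis; the three-case split of Lemma \ref{lemma-I12} is the prototype, and extending it to arbitrary $(\beta,\gamma)$ requires keeping track of how many derivatives fall on each diagonal factor $x_a v_{aa}+1$ versus the off-diagonal $\sqrt{x_1 x_2}\,v_{12}$ or $\sqrt{x_a}v_{ap}$ entries. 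Once this combinatorial structure is set up cleanly, the PDE and regularity arguments are routine applications of what has already been established.
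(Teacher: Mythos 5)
Your inductive plan — bootstrap $C^{2,1/8}$ regularity by deriving and solving an elliptic equation for successive weighted derivatives of $v$, coupled with growth/decay estimates in the spirit of Lemma \ref{lemma-cd2-growth1} and promoted via Lemmas \ref{lemma-Growth-Regularity} and \ref{lemma-Growth-Regularity2} — matches the paper's strategy in outline. But there is a real gap in the middle of the argument that prevents the inductive step from going beyond $k=2$.

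When you differentiate $\det(U_{ij})=h$ in the direction $D^{\beta}_{x'}D^{\gamma}_{x''}$, the leading part with respect to $\widetilde V=D^{\beta}_{x'}D^{\gamma}_{x''}v$ is \emph{not}
\begin{equation*}
A\bigl(x_1 \widetilde V_{11}+\widetilde V_1+x_2 \widetilde V_{22}+\widetilde V_2\bigr),
\end{equation*}
as you wrote, but rather
\begin{equation*}
A\bigl(x_1 \widetilde V_{11}+\beta_1 \widetilde V_1+x_2 \widetilde V_{22}+\beta_2 \widetilde V_2\bigr),
\end{equation*}
because each new $\partial_{x_1}$ hitting the factor $x_1\partial_{x_1}^2(\cdot)$ produces an additional first-order term, exactly as in the $\mathfrak k=1$ computation near \eqref{20240402}, where two $x_1$-derivatives of \eqref{s3-01} yield $x_1 w^{(2)}_{11}+2w^{(2)}_1+\cdots$. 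This is what the paper's \eqref{eq-widetilde-I} records. The consequence is that your change of variables — always $r_1=\sqrt{y_1^2+y_2^2}$, $r_2=\sqrt{y_3^2+y_4^2}$ — only regularizes the operator when $\beta_1=\beta_2=1$. Indeed, under $x_1=r_1^2/4$ one has $x_1\partial_{x_1}^2+\beta_1\partial_{x_1}=\partial_{r_1}^2+\frac{2\beta_1-1}{r_1}\partial_{r_1}$, which is the radial Laplacian in precisely $2\beta_1$ dimensions. If you insist on representing $r_1$ by only two Cartesian coordinates $(y_1,y_2)$, a residual drift $\frac{2\beta_1-2}{r_1}\partial_{r_1}$ survives that blows up on the codimension-two set $\{y_1=y_2=0\}$; the $W^{2,p}$ comparison argument with the barrier $\pm\varepsilon|y'|^{-2}$ then fails, because the coefficients of the Dirichlet problem \eqref{eq-W2}--\eqref{eqbdy-W2} are no longer bounded. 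The paper circumvents this in Step 1 of the proof of Theorem \ref{thm-I-2-Smooth} (and in \eqref{eq-widetilde-Ik-y} for general $\mathfrak k$) by letting the dimension of the change of variables itself depend on $(\beta_1,\beta_2)$: take $x_1=(y_1^2+\cdots+y_{2\beta_1}^2)/4$ and $x_2=(y_{2\beta_1+1}^2+\cdots+y_{2\beta_1+2\beta_2}^2)/4$, so the Fuchsian operator becomes a genuine Laplacian in $2\beta_1+2\beta_2$ variables and the Schauder/$W^{2,p}$ machinery applies without singular drift. Absent this adaptive substitution, the inductive step as you have stated it does not close.

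A second, smaller point: the paper works with $\widetilde V=D^{\beta}_{x'}D^{\gamma}_{x''}v$ rather than $D^{\beta}_{x'}D^{\gamma}_{x''}v_{12}$, but since $v_{12}=D_{x_1}D_{x_2}v$ this only shifts $\beta$ by $(1,1)$; either choice would be fine once the correct coefficients $\beta_1,\beta_2$ and the dimension-adapted change of variables are in place. Your Step-3 style growth refinement (rescaling plus something like a Taylor subtraction) is essentially what the paper does with the auxiliary polynomial $P_{l+1}$, so that part of the outline is sound.
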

\begin{proof}
Set
$$\delta_l=2^{-3l+3},\	 a_{l}=1+2^{2-2l},\	 a_{l}'=1+2^{1-2l}.$$
For any positive integer $l\geq 2$, we will prove
\begin{equation}\label{I12-l-1}
D^{\beta}_{x'}D^{\gamma}_{x''}I, \sqrt{x_a}D_{x_p}D^{\beta}_{x'}D^{\gamma}_{x''}I\in C^{\delta_l}([0,a_lr_{2*}]^2\times[-a_l,a_l]^{n-2})\quad\text{when } |\beta|+\frac{|\gamma|}{2}\leq l,
\end{equation}
\begin{equation}\label{I12-l-2}
x_a D^{\beta}_{x'}D^{\gamma}_{x''}I, \sqrt{x_a}x_bD_{x_p}D^{\beta}_{x'}D^{\gamma}_{x''}I\in C^{\delta_l}([0,a_lr_{2*}]^2\times[-a_l,a_l]^{n-2}), \quad\text{when } |\beta|+\frac{|\gamma|}{2} \leq l+1
\end{equation}
and
\begin{equation}\label{I12-l-3}
x_a x_bD^{\beta}_{x'}D^{\gamma}_{x''}I\in C^{\delta_l}([0,a_lr_{2*}]^2\times[-a_l,a_l]^{n-2}), \quad\text{when } |\beta|+\frac{|\gamma|}{2}\leq l+2,\end{equation}
where $a,b=1,2$, $p=3, \cdots ,n$.

We prove \eqref{I12-l-1}-\eqref{I12-l-3}  by induction on $l$. For $l=2$, the result is established by Theorem \ref{thm-I-C2}. 
Assume the conclusion holds for some integer $l\geq 2$. We will now show that the conclusion also holds for $l+1$.

{\it Step 1.}
For any two positive integers $\beta_1,\beta_2\geq1$ and $\beta_1+\beta_2=|\beta|$, $|\beta|+\frac{|\gamma|}{2}=l-\frac{1}{2}$, $l>2$, we set $\widetilde{V}=D^{\beta}_{x'}D^{\gamma}_{x''}v$.
Similar to the proof of Lemma  \ref{lemma-I12}, $\widetilde{V}$ solves an equation in the form:
\begin{equation}\label{eq-widetilde-I}
A\big(x_1\widetilde{V}_{11}+\beta_1\widetilde{V}_{1}+x_2 \widetilde{V}_{22}+\beta_2\widetilde{V}_{2}\big)+\sum_{a=1}^{2}\sum_{p=3}^{n}x_a\frac{a_{ap}}{\sqrt{x_a}}\widetilde{V}_{ap}+\sum_{p,q=3}^{n}a_{pq}\widetilde{V}_{pq}+\sum_{p=3}^{n}B_p\widetilde{V}_{p}=\widetilde{G}\end{equation}
where $A$, $a_{ap}$, $a_{pq}$ and $B_{p}$ are the same functions in the proof of Lemma \ref{lemma-I12}, and  $\widetilde{G}$ is smooth with respect to  $x$ and the terms in \eqref{I12-l-1}-\eqref{I12-l-3}.  Hence,
$\widetilde{G}\in C^{\delta_l} ([0,a_lr_{2*}]^2\times[-a_l,a_l]^{n-2})$. Similar to the proof of Lemma \ref{lemma-I12}, one can take the following coordinates transformations:
\begin{equation}
x_{1}=\frac{y_1^2+ \cdots +y_{2\beta_1}^2}{4},\quad\quad x_2=\frac{y_{2\beta_1+1}^2+ \cdots +y_{2\beta_1+2\beta_2}^2}{4},
\end{equation}and
\begin{equation}
x_{p}=y_{2\beta_1+2\beta_2+p-2}, \quad\quad p=3,\cdots,n.
\end{equation}
Then, $\overline{V}(y)=\widetilde{V}(x)$ solves
\begin{align}\label{eq-widetilde-I-y}\begin{split}
L\overline{V}:=&\sum_{i=1}^{2\beta_1+2\beta_2}\partial_{y_iy_i}\overline{V}+\sum_{a=1}^{2\beta_1}\sum_{p=2\beta_1+2\beta_2+1}^{2\beta_1+2\beta_2+n-2}\frac{y_a}{2}\frac{\overline{b}_{1p}}{\overline{A}}\overline{V}_{ap}+\sum_{a=2\beta_1+1}^{2\beta_1+2\beta_2}\sum_{p=2\beta_1+2\beta_2+1}^{2\beta_1+2\beta_2+n-2}\frac{y_a}{2}\frac{\overline{b}_{2p}}{\overline{A}}\overline{V}_{ap}\\
&+\sum_{p,q=2\beta_1+2\beta_2+1}^{2\beta_1+2\beta_2+n-2}\frac{\overline{a}_{pq}}{\overline{A}}\overline{V}_{pq}+\sum_{p=2\beta_1+2\beta_2+1}^{2\beta_1+2\beta_2+n-2}\frac{\overline{B}_p}{\overline{A}}\overline{V}_{p}\\
&=\overline{G},
\end{split}\end{align}
in   $\bigg(\big(B^{2\beta_1}_{2\sqrt{a_{l}r_{2*}}}(0)\times B^{2\beta_2}_{2\sqrt{a_{l}r_{2*}}}(0)\big)\setminus \{0\}^{2\beta_1+2\beta_2} \bigg)\times[-a_{l},a_{l}]^{n-2}$,
where $$b_{ap}(y)=\frac{a_{ap}}{\sqrt{x_a}}(x),\quad \overline{b}_{1p}(y)=b_{1x_{p-2\beta_1-2\beta_2}}(x),$$
and $\overline{b}_{2p}(y)$,
$\frac{\overline{a}_{pq}}{\overline{A}}$, $\frac{\overline{B}_p}{\overline{A}}$, $\overline{G}$ are defined in the same way.
This is a uniformly elliptic equation with $ C^{\delta_l}(B^{2\beta_1}_{2\sqrt{a_{l}r_{2*}}}(0)\times B^{2\beta_2}_{2\sqrt{a_{l}r_{2*}}}(0)\times[-a_{l},a_{l}]^{n-2})$ coefficients.
Since $$\overline{G}\in C^{\delta_l}(B^{2\beta_1}_{2\sqrt{a_{l}r_{2*}}}(0)\times B^{2\beta_2}_{2\sqrt{a_{l}r_{2*}}}(0)\times[-a_{l},a_{l}]^{n-2}),$$
by standard elliptic regularity results, we get
$$\overline{V}\in C^{2,\delta_l}(B^{2\beta_1}_{2\sqrt{a_{l}'r_{2*}}}(0)\times B^{2\beta_2}_{2\sqrt{a_{l}'r_{2*}}}(0)\times[-a_{l}',a_{l}']^{n-2}).$$
 Changing
back to $x$, one gets
$$D\widetilde{V}, \sqrt{x_ax_b}D_{x_ax_b}\widetilde{V},\sqrt{x_a}D_{x_ax_p}\widetilde{V},D_{p}^2\widetilde{V}\in C^{\frac{\delta_l}{2}}([0,a_l'r_{2*}]^2\times[-a_l',a_l']^{n-2}),$$ $a,b=1,2$, $p=3,\cdots,n$.

{\it Step 2.}
For any two positive integers $\beta_1,\beta_2\geq1$ satisfying $\beta_1+\beta_2=|\beta|$, $|\beta|+\frac{|\gamma|}{2}=l$,
set $\widehat{V}=D^{\beta}_{x'}D^{\gamma}_{x''}v$. Arguing  as in Step 1, one gets

$$D\widehat{V}, \sqrt{x_ax_b}D_{x_ax_b}\widehat{V},\sqrt{x_a}D_{x_ax_p}\widehat{V},D_{p}^2\widehat{V}\in C^{\frac{\delta_l}{2}}([0,a_{l+1}r_{2*}]^2\times[-a_{l+1},a_{l+1}]^{n-2}),$$  $a=1,2$, $p=3, \cdots ,n$.

Noting that $I=v-F$, we have
$$D^{\beta}_{x'}D^{\gamma}_{x''}I\in C^{\frac{\delta_l}{2}}([0,a_{l+1}r_{2*}]^2\times[-a_{l+1},a_{l+1}]^{n-2})$$
 for any multi-index $\beta,\gamma$ with $ |\beta|+\frac{|\gamma|}{2}\leq l+1$ and $(1,1,0, \cdots ,0)\leq \beta$.
In particular, there holds
$$D^{\beta}_{x'}D^{\gamma}_{x''}I_{12}\in C^{\frac{\delta_l}{2}}([0,a_{l+1}r_{2*}]^2\times[-a_{l+1},a_{l+1}]^{n-2}),\quad \forall |\beta|+\frac{|\gamma|}{2}\leq l-1.$$

\smallskip

{\it Step 3.}
We now prove the remaining cases. As in the proof of Theorem \ref{thm-I-C2}, we will estimate the growth of $D^{\beta}_{x'}D^{\gamma}_{x''}I(x)$ with respect to $x'$. We then use Lemma \ref{lemma-Growth-Regularity} and Lemma \ref{lemma-Growth-Regularity2} to obtain the desired conclusions.
Without loss of generality, we assume $x''=0$.

Let $P_{l+1}$ be a polynomial satisfying
\begin{equation}
D^{\beta}_{x'}D^{\gamma}_{x''}P_{l+1}(0)=0,\quad\text{when } (1,1,0, \cdots ,0)\nleq \beta,
\end{equation}
and
\begin{equation}
D_{x_1}D_{x_2}D^{\beta}_{x'}D^{\gamma}_{x''}P_{l+1}(0)=D^{\beta}_{x'}D^{\gamma}_{x''}I_{12}(0),\quad\text{for any } \beta, \gamma \text{ with } |\beta|+\frac{|\gamma|}{2}\leq l-1,
\end{equation}
\begin{equation}
D^{\beta}_{x'}D^{\gamma}_{x''}P_{l+1}(0)=0,\quad\text{when } |\beta|+\frac{|\gamma|}{2}>l+1.
\end{equation}
Let $\widetilde{I}=I-P_{l+1}$.
Near the origin, by Taylor's expansion, we have
\begin{align*}
&I(y',y'')\\
=& y_1y_2\int_0^1\int_0^1 I_{12}(s_1y_1,s_2y_2,y'')ds_1ds_2\\
=&y_1y_2\bigg(\sum_{|\tilde \beta|\leq l-1}\frac{D^{\tilde \beta}_{y'}I_{12}(0,0,y'')}{\tilde \beta!(\tilde \beta_1+1)(\tilde \beta_2+1)}y'^{\tilde \beta}+O( |y'|^{l-1+\frac{\delta_l}{2}})\bigg)\\
=&y_1y_2\bigg(\sum_{|\tilde \beta|\leq l-1}\sum_{\frac{|\tilde \gamma|}{2}\leq l-1-|\tilde \beta|}\frac{D^{\tilde \gamma}_{y''}D^{\tilde \beta}_{y'}I_{12}(0)}{\beta!(\beta_1+1)(\beta_2+1)\gamma!}y'^{\beta}y''^{\gamma}+
O(  |y'|^{ |\tilde \beta|}  |y''|^{2l-2-2 |\tilde \beta|+\delta_l} )+O( |y'|^{l-1+\frac{\delta_l}{2}})\bigg)\\
=&y_1y_2\bigg(\sum_{|\tilde \beta|\leq l-1}\sum_{\frac{|\tilde \gamma|}{2}\leq l-1-|\tilde \beta|}\frac{D^{\tilde \gamma}_{y''}D^{\tilde \beta}_{y'}P_{l+1,12}(0)}{\beta!(\beta_1+1)(\beta_2+1)\gamma!}y'^{\beta}y''^{\gamma}+
O( |y'|^{ |\tilde \beta|}  |y''|^{2l-2-2 |\tilde \beta|+\frac{\delta_l}{2}} )+O(  |y'|^{l-1+\frac{\tilde \delta_l}{2}})\bigg)\\
=&P_{l+1}(y)+\sum_{|\widetilde \beta|\le l-1}
O(  |y'|^{ |\tilde \beta|+2}  |y''|^{2l-2-2 |\tilde \beta|+\frac{\delta_l}{2}} )+O(  |y'|^{l+1+\frac{\delta_l}{2}}).
\end{align*}
By Lemma \ref{lemma-Growth-Regularity} and Lemma \ref{lemma-Growth-Regularity2}, it is enough to show that  
\begin{equation}\label{I-growth-l-1}
| D^{\beta}_{x'}D^{\gamma}_{x''}I| =| D^{\beta}_{x'}D^{\gamma}_{x''}[I-P_{l+1}(x)]| \lesssim_{L(|\beta|+|\gamma|,l,n)} |x'|^{\frac{\delta_l}{8}+l+1-|\beta|-\frac{|\gamma|}{2}}
 \end{equation}
 for $|\beta|+\frac{|\gamma|}{2}\geq l+\frac 32$ or
 $ |\beta|+\frac{|\gamma|}{2}=l+1$ and $\beta\ngeq (1,1,0, \cdots ,0)$,
 where $L(|\beta|+|\gamma|,l,n)$ is some positive integer depending only on $|\beta|+|\gamma|, l$ and $n$.

 Take $\lambda\in (0,a_{l+1}r_{2*}]$.
Set $I_{\lambda}(y',y'')=\frac{I(\lambda y',\sqrt{\lambda} y'')}{\lambda}$.
Then, for any $ y\in [0,1]^2\times[-1,1]^{n-2}$,
\begin{equation}
|I_{\lambda}(y',y'')- \frac{P_{l+1}(\lambda y',\sqrt{\lambda} y'')}{\lambda}|\lesssim_{l+1} \lambda^{l+\frac{\delta_l}{4}}.
\end{equation}

Let $$\widetilde{I_{\lambda}}(y)=I_{\lambda}(y',y'')-\frac{ P_{l+1}(\lambda y',\sqrt{\lambda} y'')}{\lambda}.$$

As in the proof in Theorem \ref{thm-I-C2}, we can obtain  
\begin{equation*}
|D^{\alpha}_{y} \widetilde{I}|\lesssim_{L(|\alpha|,l,n)} \lambda^{l+\frac{\delta_l}{4}-\frac{\delta_l}{8}}
\end{equation*}
in $\big([0,1]^2\setminus[0,\frac{1}{2})^2 \big)\times [-1,1]^{n-2}$.

Therefore,
\begin{equation*}
\lambda^{|\beta|+\frac{|\gamma|}{2}}|D^{\beta+\gamma}\big[I(y)-P_{l+1}(y)\big]|\lesssim_{L(|\beta|+|\gamma|,l,n)} \lambda^{l+1+\frac{\delta_l}{8}}
\end{equation*}
in $\big([0,\lambda]^2\setminus[0,\frac{1}{2}\lambda)^2 \big)\times [-\sqrt{\lambda},\sqrt{\lambda}]^{n-2}$.
Back to the original coordinates, this implies \eqref{I-growth-l-1}.

This shows that the inductive hypothesis holds for $l+1$ and hence we complete the proof.
\end{proof}

\begin{remark}
This sub-section is devoted to the proof that $v$ is smooth up to the co-dimension 2 boundary. The key issue is to derive the
corresponding estimates for $I$.
In fact, we prove if $u$ is the solution of \eqref{locpro2-1}-\eqref{locpro2-4}, then there holds,
\begin{equation}\label{4-01-24-1}
|D_{x'}^\beta D_{x''}^\gamma I|_{L^{\infty}([0, r_{2*}]^2\times[-1,1]^{n-2})}\lesssim_{|\beta|+|\gamma|} 1
\end{equation}
 for some $r_{2*}\approx 1$. Estimate \eqref{4-01-24-1} highly relies on Theorem \ref{thm3-803}. By rescaling, estimate \eqref{4-01-24-1} near the co-dimension 2 boundary can be reduced to the estimate near the co-dimension 1 boundary. Moreover, in this reduction, all the quantities involved in Theorem \ref{thm3-803} remain uniformly bounded. This implies the estimate \eqref{4-01-24-1}.
\end{remark}

\begin{remark}
The term $\displaystyle\sum_{p=3}^{n}b_p\widetilde{V}_{p}$ on the left-hand side of \eqref{eq-widetilde-I} and $\displaystyle \sum_{p=3}^{n}\overline{b}_p\overline{V}_{p}$ on the left-hand of \eqref{eq-widetilde-I-y} are only needed when $l=2$.  For $l>2$, these terms can be absorbed  into the right-hand side of the corresponding equation, as they satisfy the required estimates in this case.
\end{remark}

\subsection{Smoothness up to $(n-\mathfrak k)$-face}

Let $\mathfrak k$ be an integer with $3\leq \mathfrak k\leq n$.
 In this section, we use induction to prove the global regularity of $v$ up to codimension-$\mathfrak k$ boundary.
 Throughout the section, we will use superscripts $\alpha$, $\beta$, $\gamma$  to denote the multi-index.
 Specially, 
 \begin{itemize}
 	 	\item[$\bullet$] $\alpha$ will generally represent multi-indices without restriction to specific subsets of variables. 
 	\item[$\bullet$] $\beta$  will denote multi-indices with respect to the first $\mathfrak k$ variables $x_1, x_2, \cdots ,x_{\mathfrak k}$.
 	\item[$\bullet$] $\gamma$   will denote multi-indices with respect to the last $n-\mathfrak k$ variables $x_{\mathfrak k+1}, \cdots ,x_n$.
\end{itemize}
Furthermore, we will denote $x=(x',x'')$, where 
\begin{itemize}
	\item[$\bullet$] $x'=(x_1,\cdots,x_\mathfrak k)$ represents the first $\mathfrak k$ coordinate components of $x$,
\item[$\bullet$] $x''=(x_{\mathfrak k+1}, \cdots ,x_{n})$ represents the last $n-\mathfrak k$ coordinate components of $x$.
\end{itemize}

Set $I=v-F$.
The main goal of this section is to demonstrate that
\begin{equation*}
I\in C^\infty([0,1]^{\mathfrak k}\times[-1,1]^{n-\mathfrak k}).
\end{equation*}
We will accomplish this by induction on $k$.
To achieve this, we first need to improve Lemma \ref{lem-k-quar-low} to get a more refined growth estimate for $I$ near the codimension-$\mathfrak k$ boundary.
As a preliminary step, we will prove the following lemma.
\begin{lemma}\label{lemma-growth1}
Let $n\geq 3$ and $n\geq \mathfrak k\geq 3$.
Suppose there exist $C_{l}>0$ and $2\leq l< \mathfrak k$ such that
$$|I|\leq C_{l}\sum_{1\leq i_1< \cdots <i_l\leq k}x_{i_1} \cdots x_{i_l},$$
in $[0,r_{l}]^{ \mathfrak k}\times[-5/2,5/2]^{n- \mathfrak k}$ for some $r_l>0$.
 Then, there exists $r_*\approx 1$, such that for $r\in(0, r_*]$ and 
for any $\epsilon\in(0,1)$, the following estimate holds:
 \begin{equation}\label{refine-growth1-0}|I|\lesssim C_{\epsilon} \frac{r^{l-1-\epsilon}}{r^{ \mathfrak k-1}}x_{1} \cdots x_{ \mathfrak k}, \end{equation}
on $\big(\partial[0,r]^{ \mathfrak k}\big)\times[-9/4,9/4]^{n- \mathfrak k}$.
\end{lemma}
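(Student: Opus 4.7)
The strategy mirrors the blow-up and interpolation argument of Lemma \ref{lemma-cd2-growth1}, with the role of Theorem \ref{thm3-803} now played by the inductive hypothesis (Theorem \ref{thmlocal-k} in codimension $\mathfrak{k}-1$).

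For a point on $\partial[0,r]^{\mathfrak{k}}\times[-9/4,9/4]^{n-\mathfrak{k}}$, the case $x_i=0$ for some $i\le\mathfrak{k}$ is trivial since $I\equiv 0$ on $\{x_i=0\}\cap\partial_0 Q_3$ and the right-hand side of \eqref{refine-growth1-0} also vanishes. After reindexing we may therefore assume $x_{\mathfrak{k}}=r$ and $x_i\in[0,r]$ for $i<\mathfrak{k}$. Fix $\bar x''\in[-9/4,9/4]^{n-\mathfrak{k}}$ and introduce the rescaled functions
$$u_r(y)=\frac{u(ry',\bar x''+\sqrt{r}\,y'')-r(\ln r)(y_1+\cdots+y_{\mathfrak{k}})}{r},\quad v_r=u_r-\sum_{i=1}^{\mathfrak{k}}y_i\ln y_i,\quad I_r=v_r-F_r,$$
where $F_r$ is defined analogously from $F$. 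A direct computation shows that $\det D^2 u_r=h_r(y)/(y_1\cdots y_{\mathfrak{k}})$ with $h_r(y)=h(ry',\bar x''+\sqrt{r}\,y'')$, so $u_r$ solves the same type of Guillemin problem on an enlarged domain; the hypothesis $|I(x)|\le C_l\sum x_{i_1}\cdots x_{i_l}$ rescales to $\|I_r\|_{L^\infty}\lesssim r^{l-1}$ on any fixed bounded region.

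Next restrict to $y_{\mathfrak{k}}\in[1/2,3/2]$. Here $y_{\mathfrak{k}}^{-1}$ and $y_{\mathfrak{k}}\ln y_{\mathfrak{k}}$ are smooth, so after absorbing them into the data the function $\tilde v_r:=v_r+y_{\mathfrak{k}}\ln y_{\mathfrak{k}}$ solves a codimension-$(\mathfrak{k}-1)$ Guillemin problem in the variables $(y_1,\dots,y_{\mathfrak{k}-1})$, with $y_{\mathfrak{k}}$ and $y''$ playing the role of auxiliary smooth directions. After a standard affine renormalization of $u_r$ which kills the $r^{-1}$-blow-up of $v_r(0)$ and $F_r(0)$ (and which leaves $I_r$ unchanged since the two contributions cancel), the boundary data, the inhomogeneity and the modulus of continuity that control $\lesssim_l$ in Theorem \ref{thmlocal-k} are all bounded independently of $r$. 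The inductive hypothesis therefore yields, on $[0,c]^{\mathfrak{k}-1}\times[1/2,3/2]\times[-1,1]^{n-\mathfrak{k}}$ for some $c\approx 1$,
$$\|I_r\|_{C^N}\lesssim_N 1\qquad\text{for every }N\ge 0.$$

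Combining $\|I_r\|_{L^\infty}\lesssim r^{l-1}$ with these uniform $C^N$ bounds, the Gagliardo--Nirenberg interpolation gives, for any $\epsilon>0$ and $N\ge (l-1)(\mathfrak{k}-1)/\epsilon$,
$$\|D^{\alpha}I_r\|_{L^\infty}\lesssim\|I_r\|_{L^\infty}^{1-|\alpha|/N}\|I_r\|_{C^N}^{|\alpha|/N}\lesssim r^{(l-1)(1-|\alpha|/N)}\lesssim r^{l-1-\epsilon},\qquad|\alpha|=\mathfrak{k}-1.$$
Since $I$, and hence $I_r$, vanishes on $\{y_i=0\}$ for every $i\le\mathfrak{k}-1$, Taylor's theorem in the variables $y_1,\dots,y_{\mathfrak{k}-1}$ yields
$$|I_r(y',1,y'')|\le\|\partial_{y_1}\cdots\partial_{y_{\mathfrak{k}-1}}I_r\|_{L^\infty}\,y_1\cdots y_{\mathfrak{k}-1}\lesssim_\epsilon r^{l-1-\epsilon}\,y_1\cdots y_{\mathfrak{k}-1}.$$
Scaling back via $x'=ry'$, $x_{\mathfrak{k}}=r$ gives $|I(x)|=r|I_r(y)|\lesssim_\epsilon r^{l-\mathfrak{k}+1-\epsilon}x_1\cdots x_{\mathfrak{k}-1}=r^{l-\mathfrak{k}-\epsilon}\,x_1\cdots x_{\mathfrak{k}}$, which is \eqref{refine-growth1-0}. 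The main obstacle is the uniformity claim in the third paragraph: one must choose the affine normalization of $u_r$ so that every quantity entering the $\lesssim_l$-constants of Theorem \ref{thmlocal-k} at level $\mathfrak{k}-1$ (the $C^{\sigma(l)}$-norm of the rescaled boundary data, the $C^{\widetilde\sigma(l)}$-norm of $h_r$ and the modulus of continuity of $u_r$) is bounded independently of $r$, and verify that the structural convexity hypothesis analogous to \eqref{locpro2-4} persists through the rescaling.
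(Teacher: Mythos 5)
Your proposal follows essentially the same route as the paper: rescale $u,v,F,I$ by the scale $r$ (the paper writes $\lambda$, then takes $\lambda=x_{\mathfrak k}=r$), observe that the slab $\{y_{\mathfrak k}\in[1/2,2]\}$ reduces to a codimension-$(\mathfrak k-1)$ Guillemin problem to which the inductive version of Theorem \ref{thmlocal-k} applies, obtain uniform $C^N$ bounds on $I_\lambda$, interpolate against the $L^\infty$-bound $\lesssim r^{l-1}$ to get $|D^\alpha I_\lambda|\lesssim r^{l-1-\epsilon}$, then use the vanishing of $I_\lambda$ on each $\{y_i=0\}$, $i<\mathfrak k$, together with the fundamental theorem of calculus/Taylor in those variables to extract the factor $y_1\cdots y_{\mathfrak k-1}$, and finally undo the rescaling. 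That is exactly the structure the paper uses, including Lemma \ref{lemma-interpolation} for the interpolation and the $\mathfrak k-1$-fold integral representation of $I_\lambda$.

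One genuine gap: you claim the $C^N$ bound only on $[0,c]^{\mathfrak k-1}\times[1/2,3/2]\times[-1,1]^{n-\mathfrak k}$ for some small $c\approx 1$, but the final Taylor step evaluates at $y'=(x_1/r,\dots,x_{\mathfrak k-1}/r,1)$ with each $y_i$ ranging over all of $[0,1]$, and the integral in the $s_i$-variables sweeps out the full box $[0,1]^{\mathfrak k-1}$. Thus you need the $C^{\mathfrak k-1}$-bound on the whole slab $[0,3/2]^{\mathfrak k-1}\times[1/2,3/2]\times\dots$, not just a small corner. The paper's proof fills this in explicitly: it applies the codimension-$(\mathfrak k-1)$ regularity near every codimension-$1$ face $\{y_a=0\}$ (and, implicitly, lower facets) to get estimates in a neighbourhood of the union $\Gamma_0$ of those faces, and then in the complement $([0,2]^{\mathfrak k-1}\times[1/2,2]\times\dots)\setminus N(\Gamma_0)$ it invokes the uniform (in $\lambda$ and $\bar x''$) strict convexity of $u_\lambda$, as in Lemma \ref{lemc1-1}, to obtain interior Pogorelov-type estimates. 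You correctly flag the uniformity of the constants entering $\lesssim_l$ as needing verification, but the patching between boundary-layer and interior estimates is a second, independent point that must be addressed.
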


\begin{proof}
Note that \eqref{refine-growth1-0} obviously holds if $x_i=0$ for some $i\in\{1, \cdots  \mathfrak k\}$.
We consider the case $|\bar x'|>0$. Without loss of generality, we assume $0<\bar x_1\leq \bar x_2\leq \cdots \leq \bar x_{ \mathfrak k}=r$.

Set $r_1=\min\{ \frac{1}{64}, \frac{r_{l}}{2}\}$.
Take $\lambda\in (0, r_1]$. For any $\bar x''\in [-9/4, 9/4]^{n-k}$,
define
\begin{equation*}
u_\lambda(y)\triangleq\frac{u(\lambda y' ,\bar x''+\sqrt{\lambda}y'' )-\sum\limits_{i=1}^{ \mathfrak k}(\lambda \ln  \lambda)y_i}{\lambda},
\end{equation*}
\begin{equation*}
F_{\lambda}(y)\triangleq \frac{F(\lambda y' ,\bar x''+\sqrt{\lambda}y'' )}{\lambda},
\end{equation*} and
\begin{equation*}
I_{\lambda}(y)\triangleq \frac{I(\lambda y' ,\bar x''+\sqrt{\lambda}y'' )}{\lambda}.
\end{equation*}
By subtracting an affine function, we may assume
\begin{equation*}
F\big(0,\bar x''\big)=|D F\big(0,\bar x''\big)|=0.
\end{equation*}
Then,
$$|I_{\lambda}|\lesssim  \lambda^{l-1},\quad \text{in}\quad [0,2]^{k-1}\times[\frac{1}{2},2]\times[-2,2]^{n-k}.$$
By a similar argument as in Lemma \ref{lemma-cd2-growth1},  one knows  the uniform convexity of $u_{\lambda}$ on $\{0\}^{ \mathfrak k-1}\times[\frac{1}{2},2] \times [-2,2]^{n- \mathfrak k}$ is independent of $\lambda$ and $\bar x''\in[-2.5, 2.5]^{n-2}$ when $\lambda \leq r_2$,
where $r_2$ is some positive constant depending only on $F$. It can be checked directly that the assumptions of  Theorem \ref{thmlocal-k}
are fulfilled for $ \mathfrak k-1$. Hence by induction assumptions, there exists  $r_3\approx 1$ such that
\begin{equation}\label{840}
|D_y^\alpha I_\lambda|\lesssim_{|\alpha|} 1,\quad \text{in}\quad [0,r_3]^{ \mathfrak k-1}\times [\frac{1}{2},2] \times [-2,2]^{n- \mathfrak k}.
\end{equation}
Repeating the above argument yields that there exists a neighbourhood $N(\Gamma_0)$ of $\Gamma_0$,
$$\Gamma_0=\bigcup_{a=1}^{ \mathfrak k-1}([0,2]^{ \mathfrak k-1}\times[\frac{1}{2},2] \times [-2,2]^{n- \mathfrak k}\bigcap\{x|x_a=0\}\big)$$
 such that estimate \eqref{840} holds in  $N(\Gamma_0)$.

A similar argument as in Lemma \ref{lemc1-1} implies that there exists a positive constant $r_4\approx 1$ such that  the strict convexity of $u_{\lambda}(y)$ in $([0,2]^{ \mathfrak k-1}\times[\frac{1}{2},2]\times[-2,2]^{n- \mathfrak k})\backslash N(\Gamma_0)$
is independent of $\lambda$ and $x''$ for $\lambda\in (0,r_4]$. Hence
$$|D^{\alpha} I_{\lambda}|\lesssim_{|\alpha|}1 ,\quad \text{in}\quad  [0,\frac{3}{2}]^{ \mathfrak k-1}\times[\frac{3}{4},\frac{3}{2}]\times[-1,1]^{n- \mathfrak k}.$$

Then for any $\epsilon \in (0, 1)$, by interpolation inequality Lemma \ref{lemma-interpolation}, there holds   
$$|D^{\alpha} I_{\lambda}|\lesssim_{L(|\alpha|,l,n,\mathfrak k,\epsilon)}  \lambda^{l-1-\epsilon},$$
 where $L(|\alpha|,l,n, \mathfrak k,\epsilon)$ is some positive integer depending only on $|\alpha|, l, n, \mathfrak k$ and $\epsilon$. 

Note that \begin{equation}
 I_{\lambda}(y)=y_1 \cdots y_{ \mathfrak k-1}\int_0^1 \cdots \int_0^1I_{\lambda,1, \cdots  \mathfrak k-1}(s_1y_1, \cdots ,s_{ \mathfrak k-1}y_{ \mathfrak k-1},y_{ \mathfrak k}, \cdots ,y_n)ds_1 \cdots ds_{ \mathfrak k-1},
\end{equation}
in $[0,\frac{3}{2}]^{ \mathfrak k-1}\times[\frac{3}{4},\frac{3}{2}]\times[-\frac{3}{2},\frac{3}{2}]^{n- \mathfrak k}$.
Hence,
 \begin{equation}
 |I_{\lambda}(y)|\lesssim  \lambda^{l-1-\epsilon}y_1 \cdots y_{ \mathfrak k-1},\quad \text{in}\quad [0,\frac{3}{2}]^{ \mathfrak k-1}\times[\frac{3}{4},\frac{3}{2}]\times[-\frac{3}{2},\frac{3}{2}]^{n- \mathfrak k}.
\end{equation}

Take $\lambda=x_{ \mathfrak k}=r$, $y_i=\frac{x_i}{x_{ \mathfrak k}}=\frac{x_i}{r}$, $i=1, \cdots , \mathfrak k$.
Then,
\begin{align*}
|I(x)|=& \lambda |I_{\lambda}(y_1, \cdots ,y_{\mathfrak k-1},1,0,..,0)|\\
\lesssim &  \lambda^{l-1-\epsilon} \frac{x_1}{r} \cdots \frac{x_{ \mathfrak k-1}}{r} x_{ \mathfrak k} \\
\lesssim &  \frac{r^{l-1-\epsilon}}{r^{ \mathfrak k-1}}x_{1} \cdots x_{ \mathfrak k}.
\end{align*}

Let $r_*=\min(r_1,r_2,r_3,r_4)$, we have \eqref{refine-growth1-0} when $r\leq r_*$.
\end{proof}

We point out that the constant $C_{\epsilon}$ in the growth estimate \eqref{refine-growth1-0} is independent of $r$.
In the subsequent applications, we will choose $\epsilon=\frac{1}{100}$.

The following growth estimate plays an important role and is essentially needed in the proof of Lemma \ref{lemma-I12k} later on. Its proof is based on the maximum principle and it is tedious.

\begin{lemma}\label{lemma-growth2}
Let $n\geq 3$, $3\leq \mathfrak  k\leq n$. Suppose there exists a positive constant $C_2>0$  such that
$$|I|\leq C_{2}\sum_{1\leq i<j\leq  \mathfrak k}x_{i}x_{j},$$ in $[0,1/2]^{ \mathfrak k}\times[-5/2,5/2]^{n- \mathfrak k}$. Then, there exists $r_{ \mathfrak k}^*\approx 1$ such that
 \begin{equation}\label{refine-growth1}|I|\lesssim  x_{1} \cdots x_{ \mathfrak k}, \end{equation}
in $[0,r_{ \mathfrak k}^*]^{ \mathfrak k}\times[-2, 2]^{n- \mathfrak k}$.
\end{lemma}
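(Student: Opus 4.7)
The plan is to construct upper and lower barriers of the form
\[
H_\pm(x)=F(x)+\sum_{i=1}^{\mathfrak k}x_i\ln x_i\pm A\,x_1\cdots x_{\mathfrak k}+(\text{auxiliary terms})
\]
on the cube $[0,r_{\mathfrak k}^*]^{\mathfrak k}\times[-2,2]^{n-\mathfrak k}$, with $r_{\mathfrak k}^*\approx 1$ fixed small and $A$ large, and then apply comparison. The role of Lemma \ref{lemma-growth1} (applied with $l=2$ to the standing hypothesis) is to upgrade the assumed bound on the rescaled boundary to
\[
|I(x)|\lesssim_\epsilon (r_{\mathfrak k}^*)^{2-\mathfrak k-\epsilon}\,x_1\cdots x_{\mathfrak k}\quad\text{on}\quad \partial[0,r_{\mathfrak k}^*]^{\mathfrak k}\times[-9/4,9/4]^{n-\mathfrak k}.
\]
The degenerate constant $(r_{\mathfrak k}^*)^{2-\mathfrak k-\epsilon}$ is acceptable because $r_{\mathfrak k}^*$ is fixed once and for all, and is absorbed into the choice of $A$. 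Together with $I\equiv 0$ on each face $\{x_a=0\}$, and the strict convexity of $F$ in $x''$ on the lateral faces (handled exactly as in Case III of Lemma \ref{lem-k-quar-low}), this secures the boundary comparison $H_-\le u\le H_+$.

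The technical core is to verify $\det\mathbb M_{H_+}\ge\det\mathbb M_u\ge\det\mathbb M_{H_-}$ in the interior, via the same determinant expansion used in Lemma \ref{lem-k-quar-low}. Since the Hessian of $x_1\cdots x_{\mathfrak k}$ has vanishing diagonal entries in the first $\mathfrak k$ variables, its only contribution to the $\mathbb M$-matrix comes from off-diagonal entries $A(x_1\cdots x_{\mathfrak k})/\sqrt{x_ax_b}$, which are small on $[0,r_{\mathfrak k}^*]^{\mathfrak k}$. To generate a dominant diagonal correction of the correct sign, the auxiliary terms are taken as a $\sigma/r$-weighted combination of concave products $(\prod_{i\in S}x_i)^{\theta}$ with $\theta<1$, coupled to $|x''|^2$, in direct analogy with the matrices $\mathbb B,\mathbb C,\mathbb D,\mathbb E$ of Step 2 of Lemma \ref{lem-k-quar-low}. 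When $\sigma\ll r_{\mathfrak k}^*\ll 1$ are tuned carefully, the diagonal contribution of the auxiliary terms dominates both the off-diagonal losses coming from $x_1\cdots x_{\mathfrak k}$ and the residual error $|\det\mathbb M_u-\det\mathbb M_{F+\sum x_i\ln x_i}|\lesssim\prod_{i=1}^{\mathfrak k}x_i$ supplied by Lemma \ref{lem-pre}. The standard maximum principle then yields the bound; if $H_\pm$ fails to be convex (as in Step 3 of Lemma \ref{lem-k-quar-low}), I run the interior-maximum contradiction argument instead, which works because any interior extremum forces $D^2 H_\pm\ge D^2 u$ (or the reverse), rendering the auxiliary term convex near that point and reducing to the determinant inequality just proved.

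The principal difficulty is the interior determinant estimate: with $\mathfrak k\ge 3$ indices the combinatorial bookkeeping of off-diagonal perturbations is substantially heavier than in the two-index setting of Lemma \ref{lem-k-quar-low}, and every cross term in the expansion of $\det\mathbb M_{H_\pm}$ must be absorbed by a diagonal term of matching order through the judicious choice of the exponents $\theta$ and the scale relations $\sigma\ll r_{\mathfrak k}^*\ll 1$. An alternative route, should the one-shot barrier prove intractable, is to iterate: inductively improve the bound $|I|\lesssim\sum_{|S|=l}\prod_{i\in S}x_i$ from $l=2$ to $l=\mathfrak k$, applying Lemma \ref{lemma-growth1} and a simpler barrier at each of the $\mathfrak k-2$ steps, shrinking the cube by a fixed fraction each time and exploiting that on each face $\{x_a=r\}$ the estimate $r^{l+1-\mathfrak k-\epsilon}\prod_{b\ne a}x_b$ is already an $(l+1)$-indexed product up to a favorable power of $r$.
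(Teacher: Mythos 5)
Your framework --- compare with barriers $H_\pm = F + \sum x_i\ln x_i \pm(\cdots)$, use Lemma \ref{lemma-growth1} to upgrade the standing $2$-product hypothesis to a $\mathfrak k$-product bound on $\partial[0,r]^{\mathfrak k}$, and close via the maximum principle --- is the correct general scheme, and your fallback plan (iterate through intermediate $l$-product estimates from $l=2$ up to $l=\mathfrak k$, shrinking the cube at each step) is precisely what the paper does. Your one-shot primary route is undeveloped, however, and the gap is not merely ``combinatorial bookkeeping'': you explicitly state the interior determinant estimate as an unresolved difficulty and then offer the iterated approach as a hedge, which is a tacit acknowledgment that the argument is not closed.

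The structural obstruction you have not confronted is that a plain product $\pm A\,x_1\cdots x_{\mathfrak k}$ has identically vanishing diagonal Hessian in the $x'$ block, so it contributes nothing of one sign to the leading term of $\det\mathbb M_{H_\pm}$; all the determinant work must be done by the auxiliary part of the barrier. Your auxiliary concave products $(\prod_{i\in S}x_i)^\theta$ with $\theta<1$ do carry the right sign in the $x_a$-diagonal (they are the $\mathfrak k$-index analogue of the $(x_ax_b)^{3/4}$ terms in Lemma \ref{lem-k-quar-low}), but you do not exhibit a choice of exponents and weights for which the resulting diagonal gain dominates both the off-diagonal losses from $Ax_1\cdots x_{\mathfrak k}$ and the residual $O(\prod x_a)$ error supplied by Lemma \ref{lem-pre}. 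The paper addresses this by a \emph{two-stage} barrier within each induction step $l\to l+1$: a first, log-corrected barrier of the form $A\,\frac{r^{1-\epsilon}}{r^l}\sum_{|S|=l+1}\prod_{i\in S}x_i\,(-\sum_{j\in S}\ln x_j)+C_l\sum_p x_p^2\sum_{|S|=l}\prod_{i\in S}x_i$ that produces the needed negative $x_a$-diagonal contribution $-A\frac{r^{1-\epsilon}}{r^l}\sum_{|S|=l,\,a\notin S}\prod_{i\in S}x_i$ (Step~1, yielding a logarithmic estimate), followed by a second barrier with polynomial correction $1-B\sum_j x_{i_j}$ that removes the log (Step~2). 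That log-correction device is the piece of machinery missing from your proposal; without something that plays its role, the determinant comparison does not obviously close, even in the single-step setting, let alone across a one-shot jump from $l=2$ to $l=\mathfrak k$.

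There is one further small mismatch worth noting: you couple the auxiliary concave products to $|x''|^2$ (as in Lemma \ref{lem-k-quar-low}), whereas for the general $(n-\mathfrak k)$-face the paper uses the $x_p^2$ and $x_p^4$ couplings in the auxiliary term. The purpose of those couplings is not to generate the interior determinant gain but to secure the boundary comparison on $|x''|=2$ from the previously established $l$-index bound; your description conflates the two roles of the auxiliary terms.
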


\begin{proof}
 In the proof, 
 \begin{itemize}
 	\item[$\bullet$] subscripts $a, b, c$ denote taking derivatives only with respect to the first $ \mathfrak k$ variables $x_1, \cdots ,x_{ \mathfrak k}$,
 	\item[$\bullet$]subscripts $p, q$ denote taking derivatives only with respect to the last $n- \mathfrak k$ variables $x_{ \mathfrak k+1}, \cdots ,x_n$.
 	\end{itemize}
We will prove for any integer $l$ with $2\leq l\leq  \mathfrak k$, there exists $r_l\approx 1$,
\begin{equation}\label{refine-growth3}
|I|\le C_l \sum_{1\leq i_1< \cdots <i_l\leq  \mathfrak k} x_{i_1} \cdots x_{i_l},
\end{equation}
in $[0,r_{l}]^{ \mathfrak k}\times[-\frac{5}{2}+\frac{l}{2 \mathfrak k},\frac{5}{2}-\frac{l}{2 \mathfrak k}]^{n- \mathfrak k}$ for some $r_l\leq r_*$.

Suppose the result holds for some $l$,  $2\leq l\leq  \mathfrak k-1$.  We always assume that $r\leq r_*$.
By lemma \ref{lemma-growth1}, for any $\epsilon\in(0,\frac{1}{10})$, we have
\begin{equation}
|I|\lesssim A_{\epsilon} \frac{r^{1-\epsilon}}{r^{ \mathfrak k-1}}x_{1} \cdots x_{ \mathfrak k},
\end{equation}
on $\big(\partial[0,r]^{ \mathfrak k}\big)\times[-\frac{5}{2}+\frac{4l-4}{ 8\mathfrak k}, \frac{5}{2}-\frac{4l-4}{8 \mathfrak k}]^{n- \mathfrak k}$.
Hence, for any integer $l$ with $2\leq l\leq  \mathfrak k-1$,
\begin{equation}\label{bdy-comp1}
|I|\lesssim A_{\epsilon} \frac{r^{1-\epsilon}}{r^{ \mathfrak k-1}}\sum_{1\leq i_1< \cdots <i_{l+1}\leq  \mathfrak k}x_{i_1} \cdots x_{i_{l+1}}r^{ \mathfrak k-l-1}\lesssim A_{\epsilon} \frac{r^{1-\epsilon}}{r^{l}}\sum_{1\leq i_1< \cdots <i_{l+1}\leq  \mathfrak k}x_{i_1} \cdots x_{i_{l+1}},
\end{equation}
on $\big(\partial[0,r]^{ \mathfrak k}\big)\times[-\frac{5}{2}+\frac{4l-4}{ 8\mathfrak k}, \frac{5}{2}-\frac{4l-4}{8 \mathfrak k}]^{n- \mathfrak k}$.

Step 1: The following estimate holds:
\begin{equation}\label{refine-growth4}|I|\leq C_{l+1,1}\sum_{1\leq i_1< \cdots <i_{l+1}\leq\mathfrak  k}x_{i_1} \cdots x_{i_{l+1}}(-\sum_{j=1}^{l+1}\ln x_{i_{j}}),
\end{equation}
in $[0,r_{l}']^{\mathfrak k}\times[-5/2+\frac{2l-1}{4\mathfrak k},5/2-\frac{2l-1}{4\mathfrak k}]^{n-\mathfrak k}$.
After an affine transformation, it suffices to prove \eqref{refine-growth4} at $(x',0'')$.
Denote \begin{equation*}
m=\sum\limits_{i=1}^{\mathfrak k }x_i\ln  x_i+F.
\end{equation*}
 By Lemma \ref{lem-pre}, the following estimate holds 
\begin{equation}\label{mp-h}|x_1 \cdots x_{\mathfrak k}\det(m_{ij})_{n\times n}-h(x)|\leq C_{*}x_{1} \cdots x_{\mathfrak k}\end{equation}
 in $[0,1]^{\mathfrak k}\times[-3, 3]^{n-\mathfrak k}$.
Set \begin{align*} I^{+}=&A\frac{r^{1-\epsilon}}{r^{l}}\sum_{1\leq i_1< \cdots <i_{l+1}\leq \mathfrak k}x_{i_1} \cdots x_{i_{l+1}}(-\sum_{j=1}^{l+1}\ln x_{i_{j}})\\
+& C_{l}\sum_{p=\mathfrak k+1}^nx_{p}^2\sum_{1\leq i_1< \cdots <i_l\leq \mathfrak k}x_{i_1} \cdots x_{i_l},\end{align*}
where $C_l$ is given by \eqref{refine-growth3},  $A\geq \max\{A_{\epsilon}, 1\}$ is some constant to be determined, and $A$ is independent of $r$.

By  direct computation, we have

\begin{equation*}I^{+}_{aa}=-\frac{1}{x_a}A\frac{r^{1-\epsilon}}{r^{l}}\sum_{1\leq i_1< \cdots <i_{l}\leq  \mathfrak k, i_j\neq a}x_{i_1} \cdots x_{i_{l}}. \end{equation*}
\begin{align*} I^{+}_{ab}=&A\frac{r^{1-\epsilon}}{r^{l}}\sum_{1\leq i_1< \cdots <i_{l-1}\leq \mathfrak  k, i_j\neq a, b}x_{i_1} \cdots x_{i_{l-1}}(-\sum_{j=1}^{l-1}\ln x_{i_{j}})\\
+&A\frac{r^{1-\epsilon}}{r^{l}}\sum_{1\leq i_1< \cdots <i_{l-1}\leq  \mathfrak k, i_j \neq a, b}x_{i_1} \cdots x_{i_{l-1}}(-\ln x_a-\ln x_b -2)\\
+& C_{l}\sum_{p= \mathfrak k+1}^n x_{p}^2\sum_{1\leq i_1< \cdots <i_{l-2}\leq  \mathfrak k,i_j \neq a, b}x_{i_1} \cdots x_{i_{l-2}},\end{align*}
for $a\neq b$.
\begin{equation*}I^{+}_{pq}=2C_l\delta_{pq}\sum_{1\leq i_1< \cdots <i_{l}\leq \mathfrak  k}x_{i_1} \cdots x_{i_{l}}. \end{equation*}
\begin{equation*}I^{+}_{ap}=2C_lx_p\sum_{1\leq i_1< \cdots <i_{l-1}\leq \mathfrak  k,i_j\neq a}x_{i_1} \cdots x_{i_{l-1}}. \end{equation*}

We will prove $I\leq I^{+}$ in $[0,r]^{ \mathfrak k}\times[-1,1]^{n- \mathfrak k}$ by maximum principle.
Denote
\begin{equation*}
m^+=m+I^+.
\end{equation*}
It is easy to check that
$\sqrt{x_ax_b}I^{+}_{ab}, \sqrt{x_a}I^{+}_{ap}, I^{+}_{pq}$ tends to $0$ uniformly as $r$ tends to $0$.
Recall that we use the notation $\mathbb M_{ m^+}$ to denote the matrix
\begin{equation}\label{matrix-m+}
 \begin{pmatrix}
     x_1m^+_{11}&    \cdots &\sqrt{x_1x_{ \mathfrak k}}m^+_{1\mathfrak k} &\sqrt{x_1}m^+_{1( \mathfrak k+1)}&\cdots & \sqrt{x_1}m^+_{1n}\\[3pt]
      \cdots &\cdots &\cdots &\cdots &\cdots  &\cdots \\[3pt]
   \sqrt{x_1x_{ \mathfrak k}}m^+_{ \mathfrak k1}& \cdots&  x_{ \mathfrak k} m^+_{ \mathfrak k \mathfrak k} &\sqrt{x_{ \mathfrak k}} m^+_{ \mathfrak k( \mathfrak k+1)}  &\cdots & \sqrt{x_{ \mathfrak k}} m^+_{\mathfrak kn}\\[3pt]
  \sqrt{x_1}m^+_{( \mathfrak k+1)1} & \cdots& \sqrt{x_{ \mathfrak k}} m^+_{( \mathfrak k+1) \mathfrak k}& m^+_{( \mathfrak k+1)( \mathfrak k+1)} &\cdots & m^+_{( \mathfrak k+1)n}\\[3pt]
    \cdots &\cdots &\cdots &\cdots &\cdots  &\cdots \\[3pt]
 \sqrt{x_1}m^+_{n1}   & \cdots &\sqrt{x_{ \mathfrak k}} m^+_{n \mathfrak k}&m^+_{n( \mathfrak k+1)} &\cdots& m^+_{nn}
 \end{pmatrix}
\end{equation}
For convenience, we write $\mathbb M_{ m^+}=(\widetilde{m}_{ij}^+)_{n\times n}$ in the following of this subsection.
Similarly, we also write $\mathbb M_{I^+}=(\widetilde{I}_{ij}^+)_{n\times n}$, $\mathbb M_{m}=(\widetilde{m}_{ij})_{n\times n}$.
Note that
\begin{align*}
&x_1 \cdots x_{ \mathfrak k}m^+_{11} \cdots m^+_{ \mathfrak k \mathfrak k}-x_1 \cdots x_{ \mathfrak k}m_{11} \cdots m_{ \mathfrak k \mathfrak k}\\
=&-A\frac{r^{1-\epsilon}}{r^{l}}\sum_{1\leq i_1< \cdots <i_{l}\leq  \mathfrak k}x_{i_1} \cdots x_{i_{l}}+o\big(  A\frac{r^{1-\epsilon}}{r^{l}}     \sum_{1\leq i_1< \cdots <i_{l}\leq  \mathfrak k}x_{i_1} \cdots x_{i_{l}}\big)\\
\leq &-(A-\delta(r))\frac{r^{1-\epsilon}}{r^{l}}\sum_{1\leq i_1< \cdots <i_{l}\leq  \mathfrak k}x_{i_1} \cdots x_{i_{l}},
\end{align*}
and
\begin{align*}
&\det(m^+_{pq})_{(n- \mathfrak k)\times(n- \mathfrak k)}-\det(m_{pq})_{(n- \mathfrak k)\times(n- \mathfrak k)}\\
=&\det(m^+_{pq})_{(n- \mathfrak k)\times(n- \mathfrak k)}-\det(F_{pq})_{(n- \mathfrak k)\times(n- \mathfrak k)}\\
=&\sum_{p= \mathfrak k+1}^{n}A_{pp}2C_{l}\sum_{1\leq i_1< \cdots <i_{l}\leq  \mathfrak k}x_{i_1} \cdots x_{i_{l}}+o\big(\sum_{1\leq i_1< \cdots <i_{l}\leq \mathfrak k}x_{i_1} \cdots x_{i_{l}}\big)\\
\leq & \delta(r) \frac{r^{1-\epsilon}}{r^{l}}\sum_{1\leq i_1< \cdots <i_{l}\leq  \mathfrak k}x_{i_1} \cdots x_{i_{l}},
\end{align*}
where $A_{pp}$ is the cofactor of $F_{pp}$ for the matrix $(F_{pq})_{(n-\mathfrak k)\times(n-\mathfrak k)}$, and $0<\delta(r)\rightarrow0$ as $r\rightarrow0$.
By taking $A$ large, we get
\begin{align}\label{ineq-k-1}\begin{split}
&\widetilde{m}^+_{11} \cdots \widetilde{m}^+_{ \mathfrak k \mathfrak k}\det(\widetilde{m}^+_{pq})_{(n- \mathfrak k)\times(n- \mathfrak k)}-\widetilde{m}_{11} \cdots \widetilde{m}_{ \mathfrak k \mathfrak k}\det(\widetilde{m}_{pq})_{(n- \mathfrak k)\times(n- \mathfrak k)}\\
=&x_1 \cdots x_{ \mathfrak k}m^+_{11} \cdots m^+_{ \mathfrak k \mathfrak k}\det(m^+_{pq})_{(n- \mathfrak k)\times(n- \mathfrak k)}-x_1 \cdots x_{ \mathfrak k}m_{11} \cdots m_{ \mathfrak k \mathfrak k}\det(m_{pq})_{(n- \mathfrak k)\times(n- \mathfrak k)}\\
\leq &-\frac{A}{2}\det(F_{pq})_{(n- \mathfrak k)\times(n- \mathfrak k)}\frac{r^{1-\epsilon}}{r^{l}}\sum_{1\leq i_1< \cdots <i_{l}\leq  \mathfrak k}x_{i_1} \cdots x_{i_{l}},
\end{split}\end{align}
when $r$ is small.

Now we consider the terms in the expansion of $\det(\widetilde{m}_{ij}^+)_{n\times n}=x_1 \cdots x_{ \mathfrak k}\det(m^+_{ij})_{n\times n}$ that are  neither contained in the expansion of
$\widetilde{m}^+_{11} \cdots \widetilde{m}^+_{ \mathfrak k \mathfrak k}\det(m^+_{pq})_{(n- \mathfrak k)\times(n- \mathfrak k)}$ nor in the expansion of $x_1 \cdots x_{ \mathfrak k}\det(m_{ij})_{n\times n}$.
Then, each of these product term is a product of $\widetilde{I}^+_{ij}$ and $\widetilde{m}_{ij}$ and must contain $\widetilde{I}^+_{ij}$. We will prove these product terms are relatively small compared to $A\frac{r^{1-\epsilon}}{r^{l}}\sum\limits_{1\leq i_1< \cdots <i_{l}\leq  \mathfrak k}x_{i_1} \cdots x_{i_{l}}$ when $r$ is small.

We discuss four cases:

Case 1.1: $(i,j)=(a,a)$.

Without loss of generality, we assume $(i,j)=(1,1)$. This product term contains $$\widetilde{I}^+_{11}=x_1I^+_{11}=-A\frac{r^{1-\epsilon}}{r^{l}}\sum_{1\leq i_1< \cdots <i_{l}\leq  \mathfrak k, i_j\neq a}x_{i_1} \cdots x_{i_{l}}.$$Then, in this product term, there must be one of the following terms:
$$\widetilde{m}_{ab}, \widetilde{I}_{ab}^+$$
where $a\ne b$. The above terms tend to 0 as $r$ tends to 0.

Therefore, this product term is
$$o\big(A\frac{r^{1-\epsilon}}{r^{l}}\sum_{1\leq i_1< \cdots <i_{l}\leq  \mathfrak k}x_{i_1} \cdots x_{i_{l}}\big),\quad  \text{as}\quad r\rightarrow 0.$$
\smallskip

Case 1.2: $(i,j)=(a,b)$, $a\neq b$.
Without loss of generality, we assume $(a,b)=(1,2)$.

 Recall that
\begin{align*}\widetilde{ I}^{+}_{12}=\sqrt{x_1x_2}I^{+}_{12}=& \sqrt{x_1x_2}C_{l}\sum_{p= \mathfrak k+1}^n x_{p}^2\sum_{1\leq i_1< \cdots <i_{l-2}\leq \mathfrak k,i_j \neq 1, 2}x_{i_1} \cdots x_{i_{l-2}}\\
+&\sqrt{x_1x_2}A\frac{r^{1-\epsilon}}{r^{l}}\sum_{1\leq i_1< \cdots <i_{l-1}\leq  \mathfrak k, i_j\neq 1, 2}x_{i_1} \cdots x_{i_{l-1}}(-\sum_{j=1}^{l-1}\ln x_{i_{j}})\\
+&\sqrt{x_1x_2}A\frac{r^{1-\epsilon}}{r^{l}}\sum_{1\leq i_1< \cdots <i_{l-1}\leq \mathfrak  k, i_j \neq 1, 2}x_{i_1} \cdots x_{i_{l-1}}(-\ln x_1-\ln x_2 -2).
\end{align*}
We first deal with
$$\sqrt{x_1x_2}A\frac{r^{1-\epsilon}}{r^{l}}\sum\limits_{1\leq i_1< \cdots <i_{l-1}\leq  \mathfrak k, i_j\neq 1, 2}x_{i_1} \cdots x_{i_{l-1}}(-\sum\limits_{j=1}^{l-1}\ln x_{i_{j}}).$$
Note that in this case, there must be one of the following terms in the product term:
$$\widetilde{m}_{21}, \widetilde{I}_{21}^+,$$
or a product involving one term from $\widetilde{m}_{2i}, \widetilde{I}_{2i}^+$ and one term from $\widetilde{m}_{j1}, \widetilde{I}_{j1}^+$.
We denote this term by $Q$. Then,
$$Q=o\big(A^2r^{-3\epsilon}\sqrt{x_1x_2}(-\ln x_1-\ln x_2 )^2\big),\quad \text{as}\quad r\rightarrow 0.$$
Then, \begin{align*} & |Q\sqrt{x_1x_2}A\frac{r^{1-\epsilon}}{r^{l}}\sum_{1\leq i_1< \cdots <i_{l-1}\leq  \mathfrak k, i_j\neq 1, 2}x_{i_1} \cdots x_{i_{l-1}}(-\sum_{j=1}^{l-1}\ln x_{i_{j}})|\\
\leq & | A^2r^{-3\epsilon}\sqrt{x_1x_2}(-\ln x_1-\ln x_2 )^2\times \sqrt{x_1x_2}A\frac{r^{1-\epsilon}}{r^{l}}\sum_{1\leq i_1< \cdots <i_{l-1}\leq  \mathfrak k, i_j\neq 1, 2}x_{i_1} \cdots x_{i_{l-1}}(-\sum_{j=1}^{l-1}\ln x_{i_{j}}) |\\
\ll& |A\frac{r^{1-\epsilon}}{r^{l}}\sum_{1\leq i_1< \cdots <i_{l-1}\leq \mathfrak  k, i_j\neq 1, 2}x_{i_1} \cdots  \widehat{x_{i_j}}  \cdots x_{i_{l-1}} x_1^{\frac{2}{3}}x_2^{\frac{2}{3}}x_{i_j}^{\frac{2}{3}}|\\
\leq& |A\frac{r^{1-\epsilon}}{r^{l}}\sum_{1\leq i_1< \cdots <i_{l-1}\leq  \mathfrak k, i_j\neq 1, 2}x_{i_1} \cdots \widehat{x_{i_j}} \cdots x_{i_{l-1}} \frac{x_1x_2+x_1x_{i_j}+x_2x_{i_j}}{3}|\\
\leq & A\frac{r^{1-\epsilon}}{r^{l}}\sum_{1\leq i_1< \cdots <i_{l}\leq  \mathfrak k}x_{i_1} \cdots x_{i_{l}},
\end{align*}
when $r$ is sufficiently small, where
\begin{equation*}
x_{i_1} \cdots \widehat{x_{i_j}} \cdots x_{i_{l-1}}=
\begin{cases}
x_{i_2} \cdots x_{i_{l-1}},
& \text{if }  i_j=i_1,\\
x_{i_1} \cdots x_{i_{j-1}}x_{i_{j+1}} \cdots x_{i_{l-1}},
& \text{if } i_1<i_j<i_{l-1},\\
x_{i_1} \cdots x_{i_{l-2}},
& \text{if } i_j=i_{l-1}.\\
\end{cases}
\end{equation*}
The desired result follows since the remainder in the product term is bounded.

 Similarly, we can deal with
 $$\sqrt{x_1x_2}A\frac{r^{1-\epsilon}}{r^{l}}\sum\limits_{1\leq i_1< \cdots <i_{l-1}\leq  \mathfrak k, i_j \neq 1, 2}x_{i_1} \cdots x_{i_{l-1}}(-\ln x_1-\ln x_2 -2).$$

  It remains to deal with
  $$\sqrt{x_1x_2}C_{l}\displaystyle\sum_{p= \mathfrak k+1}^nx_{p}^2\sum_{1\leq i_1< \cdots <i_{l-2}\leq  \mathfrak k,i_j \neq 1, 2}x_{i_1} \cdots x_{i_{l-2}}.$$
   In this case, we don't need to consider the log term in $Q$; otherwise, we return to  the known situations in Case 1.2. Hence

\begin{equation*} |Q \sqrt{x_1x_2}C_{l}\sum_{p= \mathfrak k+1}^nx_{p}^2\sum_{1\leq i_1< \cdots <i_{l-2}\leq  \mathfrak k,i_j\ne 1,2}x_{i_1} \cdots x_{i_{l-2}} |
\ll A\frac{r^{1-\epsilon}}{r^{l}}\sum_{1\leq i_1< \cdots <i_{l}\leq  \mathfrak k}x_{i_1} \cdots x_{i_{l}},
\end{equation*}
when $r$ is small enough.
This completes the proof for Case 1.2.

\smallskip
Case 1.3: $(i,j)=(p,q)$.

Since the remainder in the product term is bounded,  the desired result follows when $r$ is small.

\smallskip
Case 1.4: $(i,j)=(a,p)$.

In this case,  the product term must contain one of the following terms:
\begin{equation*}\widetilde{m}_{i a}, \widetilde{I}_{ia}^+, \quad \quad i\neq a.\end{equation*}

We denote this term by $Q$. If $Q\neq\widetilde{I}_{b a}^+$ for any $b\in\{1, \cdots , \mathfrak k\}$, then
\begin{align*}|Q\widetilde{I}_{ap} |\leq & C \sqrt{x_a}C_{l}\sqrt{x_a}\sum_{1\leq i_1< \cdots <i_{l-1}\leq  \mathfrak k, i_j\neq a}x_{i_1} \cdots x_{i_{l-1}}\\
=& o\big(A\frac{r^{1-\epsilon}}{r^{l}}\sum_{1\leq i_1< \cdots <i_{l}\leq  \mathfrak k}x_{i_1} \cdots x_{i_{l}}\big).
\end{align*}
Then, the desired results follow since the remainder in the product term is bounded.

If $Q=\widetilde{I}_{b a}^+$ for some $b\in\{1, \cdots , \mathfrak k\}$, then this is the case handled in Case 1.2.

In summary, when $r$ is small, we have
\begin{align*}
&\det(\widetilde{m}^+_{ij})_{n\times n}- \det(\widetilde{m}_{ij})_{n\times n}\\
\leq &\widetilde{m}^+_{11} \cdots \widetilde{m}^+_{ \mathfrak k \mathfrak k}\det(\widetilde{m}^+_{pq})_{(n- \mathfrak k)\times(n- \mathfrak k)}-x_1 \cdots x_{ \mathfrak k}m_{11} \cdots m_{ \mathfrak k \mathfrak k}\det(m_{pq})_{(n- \mathfrak k)\times(n- \mathfrak k)}\\
&+\frac{A}{4}\det(F_{pq})_{(n- \mathfrak k)\times(n- \mathfrak k)}\frac{r^{1-\epsilon}}{r^{l}}\sum_{1\leq i_1< \cdots <i_{l}\leq  \mathfrak k}x_{i_1} \cdots x_{i_{l}}\\
\leq &-\frac{A}{4}\det(F_{pq})_{(n- \mathfrak k)\times(n- \mathfrak k)}\frac{r^{1-\epsilon}}{r^{l}}\sum_{1\leq i_1< \cdots <i_{l}\leq \mathfrak k}x_{i_1} \cdots x_{i_{l}}\\
\leq &-C_*x_1 \cdots x_{ \mathfrak k}\\
\leq & h- \det(\widetilde{m}_{ij})_{n\times n}
\end{align*}
in $[0,r]^{ \mathfrak k }\times[-1,1]^{n- \mathfrak k}$ , where we used \eqref{mp-h} and \eqref{ineq-k-1}.

We also have $I\leq I^+$ on $\partial\big([0,r]^{ \mathfrak k}\times[-1,1]^{n- \mathfrak k}\big)$.

Then by the standard maximum principle, $I\leq I^+$ in $[0,r]^{ \mathfrak  k}\times[-1,1]^{n- \mathfrak k}$. At the origin, this is \eqref{refine-growth4} for $C_{l+1,1}=A\frac{r^{1-\epsilon}}{r^{l}}$.

Set \begin{align*} I^{-}=&-A\frac{r^{1-\epsilon}}{r^{l}}\sum_{1\leq i_1< \cdots <i_{l+1}\leq \mathfrak  k}x_{i_1} \cdots x_{i_{l+1}}(-\sum_{j=1}^{l+1}\ln x_{i_{j}})\\
-& C_{l}\sum_{p= \mathfrak k+1}^nx_p^2\sum_{1\leq i_1< \cdots <i_l\leq  \mathfrak k}x_{i_1} \cdots x_{i_l},\end{align*}
We can derive the estimate $I\geq I^-$ in $[0,r]^{ \mathfrak k}\times[-1,1]^{n- \mathfrak k}$ similarly.

Therefore, we have proved Step 1.

\smallskip

Step 2: Then we derive the following estimate
\begin{equation}\label{refine-growth5}
|I|\leq C_{l+1}\sum_{1\leq i_1< \cdots <i_{l+1}\leq  \mathfrak k}x_{i_1} \cdots x_{i_{l+1}}
\end{equation}
in $[0,r_{l}']^{ \mathfrak k}\times[-5/2+\frac{l}{2 \mathfrak k},5/2-\frac{l}{2 \mathfrak k}]^{n- \mathfrak k}$.
By an affine transform, it suffices to prove \eqref{refine-growth5} at $(x',0'')$.

Set \begin{align*} I'^{+}=&C_{l+1,1}\sum_{p= \mathfrak k+1}^nx_{p}^4\sum_{1\leq i_1< \cdots <i_{l+1}\leq \mathfrak  k}x_{i_1} \cdots x_{i_{l+1}}\left(-\sum_{j=1}^{l+1}\ln x_{i_{j}}\right)\\
+& A\frac{r^{1-\epsilon}}{r^{l}}\sum_{1\leq i_1< \cdots <i_{l+1}\leq  \mathfrak k}x_{i_1} \cdots x_{i_{l+1}}\left(1-B\sum_{j=1}^{l+1} x_{i_{j}}\right),\end{align*}
where $A\geq \max\{2A_{\epsilon}, 1\}$ is some constant to be determined and $A$ is independent of $r$, $B=\frac{1}{100n^{2 \mathfrak k+2}r}$. Note $B$ is large when $r$ is small.

By a direct computation, we have

 \begin{align*} I'^{+}_{aa}=&-2AB\frac{r^{1-\epsilon}}{r^{l}}\sum_{1\leq i_1< \cdots <i_{l}\leq  \mathfrak k, i_j\neq a}x_{i_1} \cdots x_{i_{l}}\\
 &-\frac{1}{x_a}C_{l+1,1}\sum_{p= \mathfrak k+1}^nx_{p}^4\sum_{1\leq i_1< \cdots <i_{l}\leq  \mathfrak k, i_j\neq a}x_{i_1} \cdots x_{i_{l}},
 \end{align*}
\begin{align*} I'^{+}_{ab}=&C_{l+1,1}\sum_{p= \mathfrak k+1}^nx_{p}^4\sum_{1\leq i_1< \cdots <i_{l-1}\leq  \mathfrak k, i_j\neq a, b}x_{i_1} \cdots x_{i_{l-1}}(-\sum_{j=1}^{l-1}\ln x_{i_{j}})\\
+&C_{l+1,1}\sum_{p= \mathfrak k+1}^nx_{p}^4\sum_{1\leq i_1< \cdots <i_{l-1}\leq \mathfrak  k, i_j \neq a, b}x_{i_1} \cdots x_{i_{l-1}}(-\ln x_a-\ln x_b -2)\\
+&A\frac{r^{1-\epsilon}}{r^{l}}\sum_{1\leq i_1< \cdots <i_{l-1}\leq  \mathfrak k, i_j \neq a, b}x_{i_1} \cdots x_{i_{l-1}}(1-B\sum_{j=1}^{l-1} x_{i_{j}})\\
-& AB\frac{r^{1-\epsilon}}{r^{l}}\sum_{1\leq i_1< \cdots <i_{l-1}\leq  \mathfrak k,i_j \neq a, b}x_{i_1} \cdots x_{i_{l-1}}(2x_a+2x_b),\end{align*}
for $a\neq b$.
\begin{equation*}I'^{+}_{pq}=12C_{l+1,1}\delta_{pq}x_{p}^2\sum_{1\leq i_1< \cdots <i_{l+1}\leq  \mathfrak k}x_{i_1} \cdots x_{i_{l+1}}(-\sum_{j=1}^{l+1}\ln x_{i_{j}}). \end{equation*}

\begin{equation*}I'^{+}_{ap}=4C_{l+1,1}x_{p}^3\sum_{1\leq i_1< \cdots <i_{l}\leq  \mathfrak k,i_j\neq a}x_{i_1} \cdots x_{i_{l}}(-\sum_{j=1}^l\ln x_{i_j}-\ln x_a-1). \end{equation*}
Denote
\begin{equation*}
m'^+=m+I'^+,\quad(\widetilde{m}_{ij}'^+)_{n\times n}= \mathbb M_{m'^+},\quad (\widetilde{I}_{ij}'^+)_{n\times n}=\mathbb M_{I'^+}.
\end{equation*}
Arguing similarly as in Step 1, we can derive:
\begin{align}\label{ineq-k-2}\begin{split}
&\widetilde{m}'^+_{11} \cdots \widetilde{m}'^+_{ \mathfrak k \mathfrak k}\det(\widetilde{m}'^+_{pq})_{(n- \mathfrak k)\times(n- \mathfrak k)}-\widetilde{m}_{11} \cdots \widetilde{m}_{ \mathfrak k \mathfrak k}\det(\widetilde{m}_{pq})_{(n- \mathfrak k)\times(n- \mathfrak k)}\\
=&x_1 \cdots x_{ \mathfrak k}m'^+_{11} \cdots m'^+_{ \mathfrak k \mathfrak k}\det(m'^+_{pq})_{(n- \mathfrak k)\times(n- \mathfrak k)}-x_1 \cdots x_{ \mathfrak k}m_{11} \cdots m_{ \mathfrak k \mathfrak k}\det(m_{pq})_{(n- \mathfrak k)\times(n- \mathfrak k)}\\
\leq &-(1-\delta(r))\det(F_{pq})_{(n- \mathfrak k)\times(n- \mathfrak k)}\bigg[2AB\frac{r^{1-\epsilon}}{r^{l}}\sum_{1\leq i_1< \cdots <i_{l+1}\leq \mathfrak k}x_{i_1} \cdots x_{i_{l+1}}\\
 &+C_{l+1,1}\sum_{p= \mathfrak k+1}^nx_{p}^4\sum_{1\leq i_1< \cdots <i_{l}\leq \mathfrak  k}x_{i_1} \cdots x_{i_{l}}\bigg]\\
 &+12(1+\delta(r))C_{l+1,1}\sum_{p= \mathfrak k+1}^{n}A_{pp}x_{p}^2\sum_{1\leq i_1< \cdots <i_{l+1}\leq  \mathfrak k}x_{i_1} \cdots x_{i_{l+1}}(-\sum_{j=1}^{l+1}\ln x_{i_{j}})\\
\leq &-\frac{\det(F_{pq})_{(n- \mathfrak k)\times(n- \mathfrak k)}}{2}\bigg[AB\frac{r^{1-\epsilon}}{r^{l}}\sum_{1\leq i_1< \cdots <i_{l+1}\leq \mathfrak  k }x_{i_1} \cdots x_{i_{l+1}}\\
&+C_{l+1,1}\sum_{p= \mathfrak k+1}^nx_{p}^4\sum_{1\leq i_1< \cdots <i_{l}\leq  \mathfrak k}x_{i_1} \cdots x_{i_{l}}\bigg],
\end{split}\end{align}
when $r$ is small, where $A_{pp}$ is the cofactor of $F_{pp}$ of the matrix $(F_{pq})_{(n- \mathfrak k)\times(n- \mathfrak k)}$, and $\delta(r)\rightarrow0$ as $r\rightarrow0$.
Define $H$ as follows:
\begin{align*}
&-\frac{\det(F_{pq})_{(n- \mathfrak k)\times(n- \mathfrak k)}}{2}\bigg[AB\frac{r^{1-\epsilon}}{r^{l}}\sum_{1\leq i_1< \cdots <i_{l+1}\leq \mathfrak k}x_{i_1} \cdots x_{i_{l+1}}\\
&+C_{l+1,1}\sum_{p= \mathfrak k+1}^nx_{p}^4\sum_{1\leq i_1< \cdots <i_{l}\leq \mathfrak  k, i_j\neq a}x_{i_1} \cdots x_{i_{l}}\bigg].
\end{align*}
 Next, we consider the terms in the expansion of $\det(\widetilde{m}_{ij}'^+)_{n\times n}$ which are neither contained in the expansion of
$\widetilde{m}'^+_{11} \cdots \widetilde{m}'^+_{ \mathfrak k \mathfrak k}\det(m'^+_{pq})_{(n- \mathfrak k)\times(n- \mathfrak k)}$ nor  in the expansion of $x_1 \cdots x_{ \mathfrak k}\det(m_{ij})_{n\times n}$.
Then, each of these product terms is a product of $\widetilde{I}'^+_{ij}$ and $\widetilde{m}'_{ij}$, and must contain $\widetilde{I}'^+_{ij}$. We will prove these product terms are relatively small compared to $-H$ when $r$ is  sufficiently small.

As in Step 1, we discuss four cases:

Case 2.1: $(i,j)=(a,a)$.

Then, this product term is $o(-H)$ which follows from a similar argument as in Case 1.1.

\smallskip

Case 2.2: $(i,j)=(a,p)$.

\smallskip
Case 2.2.1: We first consider the term
$$P=4C_{l+1,1}x_p^3\sqrt{x_a}\sum_{1\le i_1<\cdots<i_l\le \mathfrak k,i_j\ne a}x_{i_1}\cdots x_{i_l}(-\ln x_a-1).$$
Then,
\begin{align*}|P|\leq &8C_{l+1,1}x_{p}^3\sqrt{x_a}|\ln x_a|\sum_{1\leq i_1< \cdots <i_{l}\leq  \mathfrak k,i_j\neq a}x_{i_1} \cdots x_{i_{l}}\\
\le & \delta(r)8C_{l+1,1}x_{p}^3x_a^{\frac{1}{4}}\sum_{1\leq i_1< \cdots <i_{l}\leq  \mathfrak k,i_j\neq a}x_{i_1} \cdots x_{i_{l}}\\
\leq &  \delta(r)C_{l+1,1}[x_{p}^{4}+x_a]\sum_{1\leq i_1< \cdots <i_{l}\leq  \mathfrak k,i_j\neq a}x_{i_1} \cdots x_{i_{l}}\\
= & o(-H).
 \end{align*}
Here, $\delta(r)=\sup_{t\in (0,r]}t^{\frac 14}|\ln t|$. Since the remainder in the product term are bounded, for small $r$, we complete the proof for Case 2.2.1.

\smallskip
Case 2.2.2: It remains to deal with the term
$$\widetilde P=4C_{l+1,1}x_p^3\sqrt{x_a}\sum_{1\le i_1<\cdots<i_l\le \mathfrak k,i_j\ne a}x_{i_1}\cdots x_{i_l}(-\ln x_{i_1}).$$
Denote the remaining term in the product by $\widetilde Q$.
Then the product $\widetilde Q$ must have $\widetilde I'^+_{ai}$ or $\widetilde m_{ai}$ as a factor.
\par If $\widetilde m_{ai}$ is in the product term, then  $|\widetilde Q|=O(\sqrt x_a)$ and
$$|\widetilde Q\cdot \widetilde P|=o( {\color{blue}-} H)$$
can be derived by the same argument as Case 2.2.1.
\par If $\widetilde I'^+_{aq}$ for some $q=\mathfrak k+1,\cdots,n$ is in the product term, then $\widetilde Q=o(x_q^3)$.
Hence,
\begin{equation*}
\begin{split}
|\widetilde Q\cdot \widetilde P|\le &\sum_{i\ne a} o(x_p^3x_q^3\sqrt{x_a x_i}\sum_{1\le i_1<\cdots i_{l-1}\le \mathfrak k,i_j\ne i,a}x_{i_1}\cdots x_{i_{l-1}})
\\ \le &o(\sum_{r\ge \mathfrak k+1} x_r^4\sum_{1\le i_1<\cdots i_{l}\le \mathfrak k,i_j\ne i}x_{i_1}\cdots x_{i_{l}})=o(-H).
\end{split}
\end{equation*}
\par  If $\widetilde I'^+_{ab}$ for some $b=1,\cdots,\mathfrak k$ is in the product term. By Case 2.4.1, we can just assume there is no log term in the product $\widetilde Q$. In this case, $|\widetilde Q|=o(\sqrt{x_a})$. Then
$$|\widetilde Q\cdot \widetilde P|=o(-H)$$
can be derived by the same argument as Case 2.2.1.
\smallskip

Case 2.3: $(i,j)=(p,q)$.

\begin{align*}|\widetilde{I}'^{+}_{pq}|\leq & 12C_{l+1,1}x_{p}^2\sum_{1\leq i_1< \cdots <i_{l+1}\leq  \mathfrak k  }x_{i_1} \cdots x_{i_{l+1}}(-\sum_{j=1}^{l+1}\ln x_{i_{j}})\\
= & \delta(r)8C_{l+1,1}x_{p}^2x_{i_j}^{\frac{1}{2}}\sum_{1\leq i_1< \cdots <i_{l+1}\leq  \mathfrak k}x_{i_1} \cdots \widehat{x_{i_j}} \cdots x_{i_{l+1}}\\
= & o(-H).
 \end{align*}
Hence, this product term is $o(-H)$ since the remainder in the product term is bounded.
\smallskip

Case 2.4: $(i,j)=(a,b)$, $a\neq b$.

\smallskip
Case 2.4.1:
If the product contains
\begin{align}\label{I-abpart3}\begin{split}
 &\sqrt{x_ax_b}C_{l+1,1}\sum_{p= \mathfrak k+1}^nx_{p}^4\sum_{1\leq i_1< \cdots <i_{l-1}\leq  \mathfrak k, i_j\neq a, b}x_{i_1} \cdots x_{i_{l-1}}(-\sum_{j=1}^{l-1}\ln x_{i_{j}})\\
+&\sqrt{x_ax_b}C_{l+1,1}\sum_{p= \mathfrak k+1}^nx_{p}^4\sum_{1\leq i_1< \cdots <i_{l-1}\leq  \mathfrak k, i_j \neq a, b}x_{i_1} \cdots x_{i_{l-1}}(-\ln x_a-\ln x_b -2), \end{split}\end{align}
then this product is $o(-H)$ by a similar argument as we deal with the log term in Step 1 Case 1.2.

\smallskip
Case 2.4.2:
We now handle the term
\begin{align}\label{I-abpart2}
&\sqrt{x_ax_b}A\frac{r^{1-\epsilon}}{r^{l}}\sum_{1\leq i_1< \cdots <i_{l-1}\leq  \mathfrak k, i_j \neq a, b}x_{i_1} \cdots x_{i_{l-1}}(1-B\sum_{j=1}^{l-1} x_{i_{j}})\\
+&\sqrt{x_ax_b} AB\frac{r^{1-\epsilon}}{r^{l}}\sum_{1\leq i_1< \cdots <i_{l-1}\leq  \mathfrak k,i_j \neq a, b}x_{i_1} \cdots x_{i_{l-1}}(2x_a+2x_b).\label{I-abpart1}
\end{align}
By the choice of $B=\frac{1}{100 n^{2 \mathfrak k+2}r}$, one knows $\eqref{I-abpart2}\gg |\eqref{I-abpart1}|$. Hence it is enough to consider \eqref{I-abpart2}.

As discussed in Step 1 Case 1.2, there must be one of the following terms in the product term:
$$\widetilde{m}_{ba}, \widetilde{I}'^+_{ba},$$
or a product involving a term from $\widetilde{m}_{b\alpha}, \widetilde{I}'^+_{b\alpha}$ and a term from $\widetilde{m}_{\beta a}, \widetilde{I}'^+_{\beta a}$.
We denote the above term by $Q$.

If there is no log term in $Q$, then $|Q|=O(\sqrt{x_ax_b}Ar^{-\epsilon})$. Hence, \eqref{I-abpart2} multiplied  by $Q$ is small compared to $$ |Q\cdot \eqref{I-abpart2}|\ll AB\frac{r^{1-\epsilon}}{r^{l}}\sum_{1\leq _1< \cdots <i_{l+1}\leq  \mathfrak k}x_{i_1} \cdots x_{i_{l+1}}.$$

If there is a log term in $Q$, then  it returns to the discussion about  \eqref{I-abpart3}, Case 2.2 or Case 2.3.

This ends the proof for Case 2.4.

In summary, when $r$ is small, we have
\begin{align*}
&\det(\widetilde{m}'^+_{ij})_{n\times n}-\det(\widetilde{m}_{ij})_{n\times n}\\
\leq &\widetilde{m}'^+_{11} \cdots \widetilde{m}'^+_{ \mathfrak k \mathfrak k}\det(\widetilde{m}'^+_{pq})_{(n- \mathfrak k)\times(n- \mathfrak k)}-\widetilde{m}_{11} \cdots \widetilde{m}_{ \mathfrak k \mathfrak k}\det(\widetilde{m}_{pq})_{(n- \mathfrak k)\times(n- \mathfrak k)}\\&+\frac{\det(F_{pq})_{(n- \mathfrak k)\times(n- \mathfrak k)}}{4}\bigg[AB\frac{r^{1-\epsilon}}{r^{l}}\sum_{1\leq l_1< \cdots <i_{l+1}\leq \mathfrak k}x_{i_1} \cdots x_{i_{l+1}}\\&+C_{l+1,1}\sum_{p= \mathfrak k+1}^nx_{p}^4\sum_{1\leq i_1< \cdots <i_{l}\leq  \mathfrak k, i_j\neq a}x_{i_1} \cdots x_{i_{l}}\bigg]\\
\leq &-C_*x_1 \cdots x_{ \mathfrak k}\\
\leq & h-\det(\widetilde{m}_{ij})_{n\times n}
\end{align*}
in $[0,r]^{ \mathfrak k}\times[-1,1]^{n- \mathfrak k}$, where we used \eqref{mp-h} and \eqref{ineq-k-2}.
We also have 
$$I\leq I'^+,\quad \text{on} \quad\partial\big([0,r]^{ \mathfrak k}\times[-1,1]^{n- \mathfrak k}\big).$$
Then, by the standard maximum principle, 
$$I\leq I'^+,\quad \text{in} \quad [0,r]^{ \mathfrak k}\times[-1,1]^{n- \mathfrak k}.$$
At $(x',0'')$, this is \eqref{refine-growth5} for $C_{l+1}=A\frac{r^{1-\epsilon}}{r^{l}}$.

Set \begin{align*} I'^{-}=&-C_{l+1,1}\sum_{p= \mathfrak k+1}^nx_{p}^4\sum_{1\leq i_1< \cdots <i_{l+1}\leq  \mathfrak k}x_{i_1} \cdots x_{i_{l+1}}(-\sum_{j=1}^{l+1}\ln x_{i_{j}})\\
& -A\frac{r^{1-\epsilon}}{r^{l}}\sum_{1\leq i_1< \cdots <i_{l+1}\leq  \mathfrak k}x_{i_1} \cdots x_{i_{l+1}}(1-B\sum_{j=1}^{l+1} x_{i_{j}}).
\end{align*}
We can derive the estimate $I\geq I'^-$ in $[0,r]^{ \mathfrak k}\times[-1,1]^{n- \mathfrak k}$ similarly.

This implies the desired result for $l+1$ and proves Lemma \ref{lemma-growth2}.
\end{proof}

\smallskip

Now let us return to the discussion about the regularity of $I$.

\begin{lemma}\label{lemma-I12k-1}
There  holds that
\begin{equation*}
\|D^\beta_{x'}D^\gamma_{x''}I\|_{C^{\frac 14}([0,\frac{\mathfrak k^*}{4}]^{ \mathfrak k}\times[-3/2,3/2]^{n- \mathfrak k})}\lesssim 1,\quad |\beta|+\frac{|\gamma|}{2}\le \mathfrak  k-\frac{1}{2}.
\end{equation*}
\end{lemma}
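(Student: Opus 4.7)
The plan is to combine the sharp quadratic growth estimate $|I|\lesssim x_1\cdots x_{\mathfrak k}$ from Lemma \ref{lemma-growth2} with the inductive hypothesis (Remark \ref{ind-asum-k}) that Theorem \ref{thmlocal-k} already holds in dimensions $1,\ldots,\mathfrak k-1$, plus an interpolation in blow-up coordinates, and finally to convert the resulting growth bounds for $D^\beta_{x'}D^\gamma_{x''}I$ into $C^{1/4}$ bounds via Lemma \ref{lemma-Growth-Regularity} and Lemma \ref{lemma-Growth-Regularity2}. This is the direct analogue of the argument used in Lemma \ref{lemma-cd2-growth1} and Corollary \ref{cor4.1} for $\mathfrak k=2$.

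First I would fix $\bar x''\in[-3/2,3/2]^{n-\mathfrak k}$ and $\bar x'\in[0,r_{\mathfrak k}^*/4]^{\mathfrak k}\setminus\{0'\}$, and for $\lambda\in(0,r_{\mathfrak k}^*/4]$ introduce the rescaling
\begin{equation*}
u_\lambda(y)=\frac{u(\lambda y',\bar x''+\sqrt\lambda y'')-\sum_{a=1}^{\mathfrak k}(\lambda\ln\lambda)y_a}{\lambda},\qquad I_\lambda(y)=\frac{I(\lambda y',\bar x''+\sqrt\lambda y'')}{\lambda},
\end{equation*}
so that $u_\lambda$ solves an equation of the same type \eqref{locpro2-1} with data $h_\lambda(y)=h(\lambda y',\bar x''+\sqrt\lambda y'')$ and boundary value $F_\lambda$, uniformly in the relevant quantities. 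By Lemma \ref{lemma-growth2}, $|I_\lambda(y)|\lesssim \lambda^{\mathfrak k-1}y_1\cdots y_{\mathfrak k}$ on $[0,2]^{\mathfrak k}\times[-2,2]^{n-\mathfrak k}$. On each coordinate face $\{y_a=0\}\cap\{y_b\in[1/2,2]\text{ for some }b\ne a\}$, the reduced equation is a Monge--Amp\`ere--Guillemin problem in dimension $n-1$ with codimension $\mathfrak k-1$, so the inductive hypothesis applies and gives, as in the derivation of \eqref{840}, uniform $C^l$ bounds on $I_\lambda$ in a neighborhood of the union of these faces. A strict convexity argument analogous to Lemma \ref{lemc1-1} (and used again in Lemma \ref{lemma-growth1}) then extends these bounds to any compact subset of $[0,3/2]^{\mathfrak k}\times[-3/2,3/2]^{n-\mathfrak k}$ that is bounded away from $\{y_1=\cdots=y_{\mathfrak k}=0\}$, and in particular on any annular region where some $y_a\in[1/2,2]$.

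Next, combining the growth bound $|I_\lambda|\lesssim\lambda^{\mathfrak k-1}$ on the annular region with the uniform $C^l$ bound there, Lemma \ref{lemma-interpolation} yields for every $\epsilon>0$ and every multi-index $\alpha$
\begin{equation*}
|D^\alpha I_\lambda(y)|\lesssim_{L(|\alpha|,\epsilon,n,\mathfrak k)} \lambda^{\mathfrak k-1-\epsilon},\qquad y\in\bigl([0,3/2]^{\mathfrak k}\setminus[0,1/2)^{\mathfrak k}\bigr)\times[-3/2,3/2]^{n-\mathfrak k}.
\end{equation*}
Unscaling via $D^\beta_{x'}D^\gamma_{x''}I(x)=\lambda^{-|\beta|-|\gamma|/2+1}D^\beta_{y'}D^\gamma_{y''}I_\lambda(y)$ with $\lambda=\max_a x_a$, $y'=x'/\lambda$, $y''=0$, this translates into the pointwise growth estimate
\begin{equation*}
|D^\beta_{x'}D^\gamma_{x''}I(x)|\lesssim_{L(|\beta|+|\gamma|,\epsilon,n,\mathfrak k)} |x'|^{\mathfrak k-\epsilon-|\beta|-|\gamma|/2},
\qquad x\in\bigl([0,r_{\mathfrak k}^*/4]^{\mathfrak k}\setminus\{0\}\bigr)\times[-3/2,3/2]^{n-\mathfrak k}.
\end{equation*}
For $|\beta|+|\gamma|/2\le\mathfrak k-1/2$ and $\epsilon=1/4$, the exponent is at least $1/4>0$, so $D^\beta_{x'}D^\gamma_{x''}I$ vanishes on $\{x_1=\cdots=x_{\mathfrak k}=0\}$ with the required power rate.

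Finally, I would feed these anisotropic growth rates into the abstract growth-to-regularity statements Lemma \ref{lemma-Growth-Regularity} and Lemma \ref{lemma-Growth-Regularity2}; the latter convert precisely this kind of polynomial decay in $|x'|$ of $D^\beta_{x'}D^\gamma_{x''}I$ into Hölder continuity on the whole cube, yielding the stated $C^{1/4}$ bound on $[0,r_{\mathfrak k}^*/4]^{\mathfrak k}\times[-3/2,3/2]^{n-\mathfrak k}$. The main obstacle is verifying that the inductive hypothesis really applies to the rescaled problem uniformly in $\lambda$ and $\bar x''$: one has to check that all the quantities controlling the universal constants (the convexity of the boundary trace $F_\lambda|_{y_a=0}$, the modulus of continuity $\omega_{\mathfrak k-1,u_\lambda}$, and the $C^{\tilde\sigma}$ norm of $h_\lambda$) remain uniformly bounded. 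This is the analogue of the discussion preceding \eqref{840} and follows from \eqref{locpro2-3}--\eqref{locpro2-4} together with the smoothness of $F$, provided $r_{\mathfrak k}^*$ is chosen small enough; the remaining steps are then routine.
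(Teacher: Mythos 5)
Your argument follows the same route as the paper's own proof: rescale by $\lambda=|\bar x'|$, use the quadratic growth $|I_\lambda|\lesssim\lambda^{\mathfrak k-1}y_1\cdots y_{\mathfrak k}$ from Lemma~\ref{lemma-growth2}, invoke the inductive hypothesis on the codimension-$(\mathfrak k-1)$ problem together with the strict-convexity/interior estimate on the annulus $[0,3/2]^{\mathfrak k}\setminus[0,1/2)^{\mathfrak k}$, interpolate via Lemma~\ref{lemma-interpolation} to get $|D^\alpha I_\lambda|\lesssim\lambda^{\mathfrak k-5/4}$, unscale to get $|D^\beta_{x'}D^\gamma_{x''}I|\lesssim|x'|^{\mathfrak k-1/4-|\beta|-|\gamma|/2}$, and feed this into Lemma~\ref{lemma-Growth-Regularity}. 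Your choice $\epsilon=1/4$ and your closing discussion of uniformity in $\lambda$ and $\bar x''$ correspond directly to the paper's invocation of ``a similar argument as in Lemma~\ref{lemma-growth1}'', so the proposal is correct and matches the paper's approach.
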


\begin{proof}

By Lemma \ref{lemma-growth2}, the following estimate holds:
 \begin{equation*}
 |I|\leq C x_{1} \cdots x_{ \mathfrak k}, \quad \text{in} \quad [0,  r_{\mathfrak k}^*]^{ \mathfrak k}\times[-2,2]^{n- \mathfrak k}.
 \end{equation*}
Take $\lambda\in (0,\frac{r_{ \mathfrak k}^*}{4}]$.
For any $x''\in [-3/2,3/2]^{n- \mathfrak k}$,
define
\begin{equation*}
u_\lambda(y) \triangleq \frac{u(\lambda y' ,\bar x''+\sqrt{\lambda}y'' )-\sum\limits_{i=1}^{ \mathfrak k}(\lambda \ln  \lambda)y_i}{\lambda},
\end{equation*}
\begin{equation*}
F_{\lambda}(y)\triangleq \frac{F(\lambda y' ,\bar x''+\sqrt{\lambda}y'' )}{\lambda},
\end{equation*} and
\begin{equation*}
I_{\lambda}(y)\triangleq \frac{I(\lambda y' ,\bar x''+\sqrt{\lambda}y'' )}{\lambda}.
\end{equation*}
After subtracting an affine function, we may assume
\begin{equation*}
F\big(0',\bar x''\big)=|D F\big(0',\bar x''\big)|=0.
\end{equation*}
Then,
$$|I_{\lambda}|\lesssim  \lambda^{ \mathfrak k-1}y_1y_2 \cdots y_{ \mathfrak k},\quad \text{in}\quad [0,2]^{ \mathfrak k}\times[-2,2]^{n- \mathfrak k}.$$
Similar argument as in Lemma \ref{lemma-growth1} yields
$$|D^{\beta}_{y'} D^{\gamma}_{y''}I_{\lambda}|\lesssim_{|\beta|+|\gamma|} 1,\quad \text{in}\quad [0,\frac{3}{2}]^{ \mathfrak k-1}\times[\frac{3}{4},\frac{3}{2}]\times[-1,1]^{n- \mathfrak k}.$$
Repeating the above argument, in fact, one gets
\begin{equation*}\label{estimate-k}
|D^{\beta}_{y'} D^{\gamma}_{y''}I_{\lambda}|\lesssim_{|\beta|+|\gamma|}  1,\quad \text{in}\quad ([0,\frac{3}{2}]^{ \mathfrak k}\setminus [0,\frac{3}{4}]^{ \mathfrak k})\times[-1,1]^{n- \mathfrak k}.
\end{equation*}
Then, by Lemma \ref{lemma-interpolation},  there holds
$$|D^{\beta}_{y'} D^{\gamma}_{y''} I_{\lambda}|\lesssim_{|\beta|+|\gamma|}  \lambda^{ \mathfrak k-\frac 54},\quad \text{in}\quad ([0,\frac{3}{2}]^{ \mathfrak k}\setminus [0,\frac{3}{4}]^{ \mathfrak k})\times[-1,1]^{n- \mathfrak k}.$$
In the original coordinates, this implies
\begin{equation}\label{estimate-k-growth}
|D^{\beta}_{x'} D^{\gamma}_{x''} I|\lesssim_{|\beta|+|\gamma|}   |x'|^{ \mathfrak k-\frac 14-|\beta|-\frac{|\gamma|}{2}}.
\end{equation}

Then, the desired result follows from Lemma \ref{lemma-Growth-Regularity}.

\end{proof}

\begin{remark}\label{I12k-regul}

$\forall |\beta|+\frac{|\gamma|}{2}<  \mathfrak k$, the asymptotic estimate {\color{blue}\eqref{estimate-k-growth}} implies that
 $$\sqrt{x_a}D_{x_p}D^{\beta}_{x'}D^{\gamma}_{x''}I,\sqrt{x_a}x_bD_{x_p}D_xD^{\beta}_{x'}D^{\gamma}_{x''}I \in C^{\frac{1}{4}}([0,\frac{r_{ \mathfrak k}^*}{4}]^{ \mathfrak k }\times[-2,2]^{n- \mathfrak k}),$$
  and $$x_{a}D_xD^{\beta}_{x'}D^{\gamma}_{x''}I, x_aD_{x_p}D_{x_q}D^{\beta}_{x'}D^{\gamma}_{x''}I, x_{a}x_bD^2_xD^{\beta}_{x'}D^{\gamma}_{x''}I \in C^{\frac{1}{4}}([0,\frac{r_{ \mathfrak k}^*}{4}]^{ \mathfrak k}\times[-2,2]^{n- \mathfrak k}),$$
   where $a,b=1, \cdots , \mathfrak k$, $p,q= \mathfrak k+1, \cdots ,n$.
\end{remark}

\begin{lemma}\label{lemma-I12k}
There exists $r_{ \mathfrak k*}\approx 1$ such that
\begin{equation*}\label{eq-I12k}
\|I_{12 \cdots  \mathfrak k}\|_{C^{1/2}([0,2r_{ \mathfrak k_*}]^{ \mathfrak k}\times[-5/4,5/4]^{n- \mathfrak k})}\lesssim 1.
\end{equation*}
\end{lemma}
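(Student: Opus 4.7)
The strategy is a direct generalization of the proof of Lemma \ref{lemma-I12}, treating all $\mathfrak k$ boundary directions symmetrically rather than just two. Since $F$ is smooth, it suffices to show that $V = v_{12\cdots \mathfrak k}$ lies in $C^{1/2}$ on the desired cube. The plan proceeds in four stages.

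First, I will derive the PDE satisfied by $V$. Write the Monge--Amp\`ere equation in the form $\det \mathbb{M}_u = h$ where $\mathbb{M}_u$ is the matrix in \eqref{matrix-I12} (generalized to the $\mathfrak k$-dimensional case with entries $\sqrt{x_ax_b}\,v_{ab} + \delta_{ab}$ in the $(a,b)$ block, $\sqrt{x_a}\,v_{ap}$ in the mixed block, and $v_{pq}$ in the purely tangential block). The coefficients of $\mathbb{M}_u$ are H\"older continuous up to the boundary by Lemma \ref{lemma-I12k-1} and Remark \ref{I12k-regul}. Differentiating $\det \mathbb{M}_u = h$ successively with respect to $x_1, x_2, \ldots, x_{\mathfrak k}$ and carefully tracking each term, one finds that $V$ solves
\begin{equation*}
A\sum_{a=1}^{\mathfrak k}\left( x_a V_{aa} + V_a \right) + \sum_{a=1}^{\mathfrak k}\sum_{p=\mathfrak k+1}^{n} \sqrt{x_a}\,\tilde a_{ap} V_{ap} + \sum_{p,q=\mathfrak k+1}^{n} a_{pq} V_{pq} + \sum_{p=\mathfrak k+1}^{n} B_p V_p = G,
\end{equation*}
where $A$ is (essentially) the determinant of the pure tangential block and the remainder $G$ consists of lower-order terms in which each differentiation of order higher than $V$ is multiplied by at least one factor of $x_a$. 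Using the asymptotic estimates of Lemma \ref{lemma-I12k-1} together with Remark \ref{I12k-regul}, one checks that all coefficients are H\"older continuous and that $G$ satisfies $|G| \lesssim |x'|^{-\varepsilon}$ for any small $\varepsilon > 0$ (in particular $G \in L^p$ locally for every $p < +\infty$).

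Second, I apply the parabolic-type coordinate change $x_a = r_a^2/4$ for $a = 1, \ldots, \mathfrak k$, which converts each operator $x_a V_{aa} + V_a$ into the radial Laplacian $\tfrac14 \big(V_{r_a r_a} + \tfrac{V_{r_a}}{r_a}\big)$. Then, as in Lemma \ref{lemma-I12}, I set $r_a = \sqrt{y_{2a-1}^2 + y_{2a}^2}$ for each $a$ and $y_{2\mathfrak k + p - \mathfrak k} = x_p$ for $p > \mathfrak k$. The operator then becomes a genuinely uniformly elliptic operator on an open neighborhood of the origin in $\mathbb{R}^{2\mathfrak k} \times \mathbb{R}^{n-\mathfrak k}$, with H\"older coefficients, off the singular locus $\{y_1 = y_2 = \cdots = y_{2\mathfrak k} = 0\}$ (which has codimension $2\mathfrak k \geq 4$ in the new space).

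Third, multiplying by a cutoff that is $1$ on a smaller ball, we obtain an equation $L w = \widehat G$ on a ball $B^{n+\mathfrak k}_{r'}(0)$, and then solve the corresponding Dirichlet problem with the same boundary data via $W^{2,p}$ theory to produce a candidate $W \in W^{2,p}_{loc}$. A standard comparison using the barriers $\pm \varepsilon|y|^{2-2\mathfrak k}$ (or, when $2\mathfrak k \geq n+\mathfrak k$, any negative power stronger than the growth of $V$, which is bounded) identifies $W$ with $w$ across the codimension-$2\mathfrak k$ singular set, thanks to the maximum principle and $2\mathfrak k \geq 4$. Sobolev embedding then yields $V \in C^{1/2}$ in the $x$-variables on a neighborhood of the origin; a standard covering argument finishes the lemma.

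The main technical obstacle will be controlling the remainder $G$ in the derived equation. Unlike the $\mathfrak k = 2$ case where only two mixed derivatives are taken, now $\mathfrak k$ differentiations generate a combinatorial explosion of cross terms. The correct organizational principle, already present implicitly in the proof of Lemma \ref{lemma-I12}, is: a term of the expansion of $\det \mathbb M_u$ only contributes a top-order term in $V$ when each differentiation $\partial_{x_a}$ hits a factor of the form $x_a m_{aa} + 1 = \mathbb M_{u,aa}$ (producing the principal operator $A \sum_a (x_a V_{aa} + V_a)$) or two differentiations $\partial_{x_a}, \partial_{x_b}$ together hit a single factor (producing the $\sqrt{x_a x_b}\, v_{ab}$ cross-terms which end up on the left or, after multiplying by the explicit $\sqrt{x_a x_b}$, contribute harmlessly to $G$). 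All other distributions of derivatives produce terms with at least one unabsorbed factor of $x_a$, so they yield only lower-order contributions controlled by the asymptotic estimate \eqref{estimate-k-growth}. Verifying this case-by-case is tedious but parallel to the analysis carried out in Lemma \ref{lemma-growth2}, and once done, the transformation and elliptic arguments go through verbatim.
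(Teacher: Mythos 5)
Your proposal follows essentially the same route as the paper's proof: derive the same equation for $V = v_{12\cdots\mathfrak k}$ with principal part $A\sum_a(x_aV_{aa}+V_a)$ and H\"older coefficients supplied by Lemma \ref{lemma-I12k-1} and Remark \ref{I12k-regul}, desingularize via $x_a = (y_{2a-1}^2+y_{2a}^2)/4$ into a uniformly elliptic equation on a ball in $\mathbb R^{n+\mathfrak k}$ with a codimension-$2\mathfrak k$ singular set, solve the cut-off Dirichlet problem by $W^{2,p}$ theory, identify the solution with $V$ using the removable-singularity barriers $\pm\varepsilon|y'|^{2-2\mathfrak k}$, and recover $C^{1/2}$ in $x$ by Sobolev embedding. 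Your organizing principle for controlling the remainder $G$ also matches the paper's bookkeeping of where the differentiations $\partial_{x_1},\dots,\partial_{x_{\mathfrak k}}$ can land in the expansion of $\det \mathbb M_u$.
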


\begin{proof}
The proof proceeds similarly to that in Lemma \ref{lemma-I12}. We will prove
$$\|v_{12 \cdots  \mathfrak k}\|_{C^{1/2}([0,2r_{ \mathfrak k*}]^{ \mathfrak k}\times[-3/2,3/2]^{n- \mathfrak k})}\lesssim 1 $$ for some constant $r_{ \mathfrak k*}\in(0,\frac{r_{ \mathfrak k}^*}{8})$.

Set $r_{**}=\frac{r_{ \mathfrak k}^*}{4}$.
For any $x\in([0,r_{**}]^{ \mathfrak k}\setminus\{0\}^{ \mathfrak k})\times[-3/2,3/2]^{n- \mathfrak k}$,
define the matrix $U_{ij}$ as follows:
\begin{equation*}\label{matrix-I12k}
  \begin{pmatrix}
      1+x_1v_{11}&    \cdots &\sqrt{x_1x_{ \mathfrak k}}v_{1 \mathfrak k} &\sqrt{x_1}v_{1( \mathfrak k+1)}&\cdots & \sqrt{x_1}v_{1n}\\[3pt]
      \cdots &\cdots &\cdots &\cdots &\cdots  &\cdots \\[3pt]
   \sqrt{x_1x_{ \mathfrak k}}v_{ \mathfrak k1}& \cdots&  1+x_{ \mathfrak k} v_{ \mathfrak k \mathfrak k} &\sqrt{x_{ \mathfrak k}} v_{ \mathfrak k( \mathfrak k+1)}  &\cdots & \sqrt{x_{ \mathfrak k}} v_{( \mathfrak k+1)n}\\[3pt]
  \sqrt{x_1}v_{( \mathfrak k+1)1} & \cdots& \sqrt{x_{ \mathfrak k}} v_{( \mathfrak k+1) \mathfrak k}& v_{( \mathfrak k+1)( \mathfrak k+1)} &\cdots & v_{( \mathfrak k+1)n}\\[3pt]
    \cdots &\cdots &\cdots &\cdots &\cdots  &\cdots \\[3pt]
 \sqrt{x_1}v_{n1}   & \cdots &\sqrt{x_{ \mathfrak k}} v_{n \mathfrak k}& v_{n( \mathfrak k+1)} &\cdots& v_{nn}.
 \end{pmatrix}
\end{equation*}
Then, $U_{ij}$ solves
\begin{equation*}\label{MAI12k}
\det U_{ij}=h(x).
\end{equation*}
By Lemma \ref{lemma-I12k-1} and Remark \ref{I12k-regul}, there holds
\begin{equation*}
\|U_{ij}\|_{C^{\frac 14}([0,r_{ **}]^{ \mathfrak k}\times[-3/2,3/2]^{n- \mathfrak k})}\lesssim 1,\quad i,j=1,\cdots,n.
\end{equation*}
Similar to $v_{12}$ in  Lemma \ref{lemma-I12}, $V=v_{12 \cdots  \mathfrak k}$ solves an equation of the form:
\begin{equation*}\label{eqV12k-1}
A\sum_{a=1}^{ \mathfrak k}(x_aV_{aa}+V_{a})+\sum_{a=1}^{ \mathfrak k}\sum_{p= \mathfrak k+1}^{n}x_a\frac{a_{ap}}{\sqrt{x_a}}V_{ap}+\sum_{p,q= \mathfrak k+1}^{n}a_{pq}V_{pq}+\sum_{p= \mathfrak k+1}^{n}B_pV_{p}=h_{12 \cdots  \mathfrak k}+g:=G.
\end{equation*}
where
\begin{equation}\label{MI12k}
\begin{split}
A=&\det \begin{pmatrix}
v_{( \mathfrak k+1)( \mathfrak k+1)} & \cdots & v_{( \mathfrak k+1)n}\\[3pt]
  \cdots &\cdots &\cdots  \\[3pt]
 v_{n( \mathfrak k+1)} & \cdots& v_{nn}
 \end{pmatrix}\\
 =&\det \begin{pmatrix}
F_{( \mathfrak k+1)( \mathfrak k+1)}+I_{( \mathfrak k+1)( \mathfrak k+1)} & \cdots & F_{( \mathfrak k+1)n}+I_{( \mathfrak k+1)n}\\[3pt]
  \cdots &\cdots &\cdots  \\[3pt]
 F_{n( \mathfrak k+1)}+I_{n( \mathfrak k+1)} & \cdots& F_{nn}+I_{nn},
 \end{pmatrix}
 \end{split}
\end{equation}
and
 $$a_{ap}=(U^*)^{ap}=\sqrt {x_a} b_{ap},\quad a_{pq}=(U^*)^{pq},\quad a=1, \cdots , \mathfrak k,\quad p,q= \mathfrak k+1, \cdots ,n.$$
For fixed $x$, $B_p$ is a combination of polynomials in $v_{pq}$, $x_bv_{bb}+1$, and$v_{ap}$. The function $g$ is a combination of polynomials in $x_ax_bD^{\alpha}v_{ab}$, $x_aD^{\alpha}_xv_{b}$, and $D^{\alpha}_xv$, where $a,b\in\{1,2, \cdots , \mathfrak k\}$, $p,q\in\{ \mathfrak k+1,..,n\}$,  and $\alpha$ is a multi-index with $|\alpha|\leq  \mathfrak k$.
Also, we have
\begin{equation*}
\begin{split}
&\|a_{pq}\|_{C^{\frac 14}([0,r_{ **}]^{ \mathfrak k}\times[-3/2,3/2]^{n- \mathfrak k})}+\|\frac{a_{ap}}{\sqrt{x_a}}\|_{C^{\frac 14}([0,r_{ **}]^{ \mathfrak k}\times[-3/2,3/2]^{n- \mathfrak k})}\\
 +& \|A\|_{C^{\frac 14}([0,r_{**}]^{ \mathfrak k}\times[-3/2,3/2]^{n- \mathfrak k})}+\|B_p\|_{C^{\frac 14}([0,r_{**}]^{ \mathfrak k}\times[-3/2,3/2]^{n- \mathfrak k})}\lesssim 1,
\end{split}
\end{equation*}
and  $|g|\lesssim  |x'|^{-\frac{1}{16(n+ \mathfrak k)}} $.

Let $$x_a=\frac{y_{2a-1}^2+y_{2a}^2}{4},\quad a=1, \cdots , \mathfrak k,\quad y_{p}=x_{p- \mathfrak k}, \quad p=2 \mathfrak k+1, \cdots ,n+ \mathfrak k.$$
Write
$$y=(y',y''),\quad y'=(y_1, \cdots ,y_{2 \mathfrak k}),\quad y''=(y_{2 \mathfrak k+1}, \cdots ,y_{n+ \mathfrak k}).$$
Similarly to the proof of Lemma \ref{lemma-I12}, $\overline{V}(y)=v_{12 \cdots  \mathfrak k}(x)$ solves
\begin{align*}\label{eq-v12k-y}\begin{split}
L\overline{V}:=&\sum_{a=1}^{2 \mathfrak k}\partial_{y_ay_a}\overline{V}+\sum_{a=1}^{ \mathfrak k}\sum_{p=2 \mathfrak k+1}^{n+ \mathfrak k}\frac{\overline{b}_{ap}}{\overline{A}}\bigg(\frac{y_{2a-1}}{2}\overline{V}_{(2a-1)p}+\frac{y_{2a}}{2}\overline{V}_{(2a)p}\bigg)\\
&+\sum_{p,q=2 \mathfrak k+1}^{n+ \mathfrak k}\frac{\overline{a}_{pq}}{\overline{A}}\overline{V}_{pq}+\sum_{p=2 \mathfrak k+1}^{n+ \mathfrak k}\frac{\overline{B}_p}{\overline{A}}\overline{V}_{p}\\
&=\overline{G},
\end{split}\end{align*}
in $\bigg(\big(\underbrace{B_{2\sqrt{r_{**}}}^{2}(0)\times  \cdots \times B_{2\sqrt{r_{**}}}^{2}(0)}_{ \mathfrak k}\big)\setminus\{0\}^{2 \mathfrak k}\bigg)\times [-3/2,3/2]^{n- \mathfrak k}$
with $|\overline G| \lesssim  |y'|^{-\frac{1}{8(n+ \mathfrak k)}}$. Then,
there exists some constant $r_*' \leq2\sqrt{r_{**}}$, the above equation is a uniformly elliptic equation in
$(B_{r_*'}^{2 \mathfrak k}(0)\setminus\{0\}^{2 \mathfrak k})\times [-3/2, 3/2]^{n- \mathfrak k}\subseteq \bigg(\big(\underbrace{B_{2\sqrt{r_{**}}}^{2}(0)\times  \cdots \times B_{2\sqrt{r_{**}}}^{2}(0)}_{ \mathfrak k}\big)\setminus\{0\}^{2 \mathfrak k}\bigg)\times [-3/2,3/2]^{n- \mathfrak k}$, and
$\|\frac{\overline{a}_{pq}}{\overline{A}}\|_{C^{\frac 14}},$ $\|\frac{y_a}{2}\overline{b}_{\left[\frac{a+1}{2}\right]p}\|_{C^\frac{1}{4}}\lesssim 1$.

Let $\eta(y)=\eta(|y''|)$ be the cut-off function such that
\begin{equation*}
\begin{cases}
\eta\equiv 1,\quad |y''|\le \frac 34r_*'\\
\eta\equiv 0,\quad |y''|\ge r_*'
\end{cases},\quad \mathfrak k<n;\quad \eta\equiv 1,\quad \mathfrak k=n.
\end{equation*}
Then,
 $w=\eta\overline{v}$ solves
\begin{align*}\label{eq-v12keta}\begin{split}
Lw=&\eta\overline{G}+\overline{V}(\sum_{p,q=2 \mathfrak k+1}^{n+ \mathfrak k}\frac{\overline{a}_{pq}}{\overline{A}}\eta_{pq}+\sum_{p=2 \mathfrak k+1}^{n+ \mathfrak k}\frac{\overline{B}_p}{\overline{A}}\eta_{p})\\
&+\sum_{a=1}^{ \mathfrak k}\sum_{p=2 \mathfrak k+1}^{n+ \mathfrak k}\frac{\overline{b}_{ap}}{\overline{A}}(\frac{y_{2a-1}}{2}\overline{V}_{2a-1}\eta_p+\frac{y_{2a}}{2}\overline{V}_{2a}\eta_p)+2\sum_{p,q=2 \mathfrak k+1}^{n+ \mathfrak k}\frac{\overline{a}_{pq}}{\overline{A}}\overline{V}_{p}\eta_q\\
&:=\widehat{G},
\end{split}\end{align*}
in $B_{r_*'}^{n+ \mathfrak k}(0)\setminus \left(\{0\}^{ \mathfrak k}\times \overline{B_{r_*'}^{n- \mathfrak k}(0)}\right)$.
Then, noting that 
\begin{equation*}
|\overline{V}_{i}|\lesssim |y'|^{-\frac 32},\quad i=1,\cdots, n+ \mathfrak k,\quad D\eta|_{B_{\frac 34r_*'}^{n+ \mathfrak k}(0)}=0,
\end{equation*}
we conclude that
\begin{equation*}
\|\widehat{G}\|_{L^\mathfrak k(B_{r_*'}^{n+ \mathfrak k}(0)))}+\|\widehat{G}\|_{L^{4(n+\mathfrak k)}(B_{\frac 34r_*'}^{n+ \mathfrak k}(0)))}\lesssim 1.
\end{equation*}
Let $W$ be the solution of the following Dirichlet problem.
 \begin{align}
\label{eq-Wk}LW &= \widehat{G} \quad\text{in }B_{r_*'}^{n+ \mathfrak k}(0),\\
\label{eqbdy-Wk}W&=w \quad\text{on }\partial B_{r_*'}^{n+ \mathfrak k}(0).
\end{align}
\eqref{eq-Wk}-\eqref{eqbdy-Wk} admits a unique solution $W\in W^{2, \mathfrak k}(B_{r_*'}^{n+ \mathfrak k}(0))\bigcap W^{2,4(n+\mathfrak k)}_{loc}(B_{\frac 34r_*'}^{n+ \mathfrak k}(0))$. By Sobolev embedding theorem,  we have $$\|W\|_{C^{1,\frac 34}(\overline{B_{\frac{r_*'}{2}}^{n+ \mathfrak k}(0))}}\lesssim 1.$$

For any $\epsilon>0$, by the maximum principle, we have
\begin{equation}
w+\epsilon|y'|^{2-2 \mathfrak k}\geq W \geq w-\epsilon|y'|^{2-2 \mathfrak k}\quad\text{in } B_{r_*'}^{n+ \mathfrak k}(0).
\end{equation}
 One gets $w=W$ by taking $\varepsilon\rightarrow 0$.
In particular,
 \begin{equation}\label{v12kW}
v_{12 \cdots  \mathfrak k}=W \in C^{1,\frac{3}{4}} (\overline{B_{\frac{r_*'}{2}}^{n+ \mathfrak k}(0)}). \end{equation}
  In the original coordinates, it implies
\begin{equation*}
\|v_{12 \cdots  \mathfrak k}\|_{C^{\frac{1}{2}}([0,\frac{r_*'^{2}}{8n}]^{ \mathfrak  k}\times [-\frac{r_*'}{\sqrt{2n}}, \frac{r_*'}{\sqrt{2n}}]^{n- \mathfrak k}}\lesssim 1.
\end{equation*}

Let $r_{ \mathfrak k*}=\frac{r_*'^{2}}{16n}$. By translating the coordinates, we conclude the proof of the present lemma.
\end{proof}

\begin{remark}\label{remark-I12k-regul}
 \eqref{v12kW} in Lemma \ref{lemma-I12k} also implies that
 $$\sqrt{x_a}D_{x_a}I_{12 \cdots  \mathfrak k},\		D_{x_p}I_{12 \cdots  \mathfrak k}\in C^{\frac{1}{4}}([0,2r_{k_*}]^{ \mathfrak k}\times[-5/4,5/4]^{n- \mathfrak k}),\quad a=1, \cdots , \mathfrak k,\	p= \mathfrak k+1, \cdots ,n.$$
\end{remark}

\begin{theorem}\label{thm-I-Ck}
It holds that
\begin{equation*}\label{IkC2}
I\in C^{ \mathfrak k,\frac{1}{8}}([0, \frac{3}{2}r_{ \mathfrak k*}]^{ \mathfrak k}\times[-\frac{9}{8}, \frac{9}{8}]^{n- \mathfrak k}),
\end{equation*}
Moreover, for any multi-index $\beta,\gamma$,
\begin{equation*}
D_{x'}^{\beta}D_{x''}^{\gamma}I, \sqrt{x_a}D_{x_p}D_{x'}^{\beta}D_{x''}^{\gamma}I\in C^{\frac{1}{8}}([0,\frac{3}{2} r_{ \mathfrak k*}]^{ \mathfrak k}\times[-\frac{9}{8}, \frac{9}{8}]^{n- \mathfrak k}),\quad |\beta|+\frac{|\gamma|}{2}=  \mathfrak k,
\end{equation*}
\begin{equation*}
 x_aD_{x'}^{\beta}D_{x''}^{\gamma}I,\sqrt{x_a}x_bD_{x_p}D_{x'}^\beta D_{x''}^\gamma I\in C^{\frac{1}{8}}([0, \frac{3}{2} r_{ \mathfrak k*}]^{ \mathfrak k}\times[-\frac{9}{8}, \frac{9}{8}]^{n- \mathfrak k}),  \quad     \mathfrak k< |\beta|+\frac{|\gamma|}{2}\leq  \mathfrak k+1,\end{equation*}
 \begin{equation*}
 x_ax_bD_{x'}^{\beta}D_{x''}^{\gamma}I\in C^{\frac{1}{8}}([0, \frac{3}{2} r_{ \mathfrak k*}]^{ \mathfrak k}\times[-\frac{9}{8}, \frac{9}{8}]^{n- \mathfrak k}), \quad    \mathfrak k+1<|\beta|+\frac{|\gamma|}{2}\leq  \mathfrak k+2,\end{equation*}
 where $a,b=1, \cdots , \mathfrak k$, $p= \mathfrak k+1, \cdots ,n$.
\end{theorem}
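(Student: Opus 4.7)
The plan is to extend the $\mathfrak k=2$ argument of Theorem~\ref{thm-I-C2} to general $\mathfrak k\geq 3$, using Lemma~\ref{lemma-I12k} (and Remark~\ref{remark-I12k-regul}) in place of Lemma~\ref{lemma-I12} as the starting point. The key observation is that once we control $I_{12\cdots\mathfrak k}$ in $C^{1/2}$, every multi-index derivative $D^{\beta}_{x'}D^{\gamma}_{x''}I$ with $\beta\geq (1,\dots,1,0,\dots,0)$ is obtained by differentiating $I_{12\cdots\mathfrak k}$ directly, so those derivatives automatically inherit the appropriate regularity. For the remaining multi-indices (those $\beta$ with some $\beta_a=0$), the strategy is to construct a polynomial approximant and then prove an asymptotic decay estimate through a scaling argument, finally converting the growth into Hölder regularity via Lemmas~\ref{lemma-Growth-Regularity} and \ref{lemma-Growth-Regularity2}.

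More concretely, first I would represent $I$ near $\{x'=0\}$ by iterated integration
\begin{equation*}
I(x',x'')=x_1\cdots x_{\mathfrak k}\int_{[0,1]^{\mathfrak k}}I_{12\cdots\mathfrak k}(s_1x_1,\dots,s_{\mathfrak k}x_{\mathfrak k},x'')\,ds_1\cdots ds_{\mathfrak k}.
\end{equation*}
Taylor expanding $I_{12\cdots\mathfrak k}$ in its arguments up to order governed by the Hölder estimate of Lemma~\ref{lemma-I12k} produces a canonical polynomial $P_{\mathfrak k+1}(x)$, vanishing to order $\mathfrak k$ along $\{x'=0\}$, such that
\begin{equation*}
|I(x)-P_{\mathfrak k+1}(x)|\lesssim \prod_{a=1}^{\mathfrak k}x_a\cdot\bigl(|x'|^{1/2}+|x''|^{1/2}\bigr).
\end{equation*}
I would then introduce the rescaling $I_{\lambda}(y)=\lambda^{-1}I(\lambda y',\sqrt{\lambda}\,y'')$ for $\lambda\in(0,\tfrac32 r_{\mathfrak k*}]$, and similarly rescale $P_{\mathfrak k+1}$, reducing matters to a uniform estimate on $\widetilde I_{\lambda}:=I_{\lambda}-\lambda^{-1}P_{\mathfrak k+1}(\lambda y',\sqrt\lambda y'')$.

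On any chart $[\tfrac12,1]\times[0,1]^{\mathfrak k-1}\times[-1,1]^{n-\mathfrak k}$ (and its permutations), the equation satisfied by $u_{\lambda}=v_{\lambda}+\sum x_a\ln x_a$ reduces to a codimension-$(\mathfrak k-1)$ Guillemin problem, so the inductive hypothesis yields derivative bounds $|D^{\alpha}_y I_{\lambda}|\lesssim_{|\alpha|}1$ on this chart. Combined with the decay $|\widetilde I_{\lambda}|\lesssim \lambda^{\mathfrak k+1/4-1}$ and the interpolation inequality (Lemma~\ref{lemma-interpolation}), this produces
\begin{equation*}
|D^{\alpha}_y \widetilde I_{\lambda}|\lesssim_{|\alpha|}\lambda^{\,\mathfrak k+1/8}\quad\text{on}\quad \bigl([0,1]^{\mathfrak k}\setminus[0,\tfrac12)^{\mathfrak k}\bigr)\times[-1,1]^{n-\mathfrak k},
\end{equation*}
by taking the worst of the $\mathfrak k$ charts covering $[0,1]^{\mathfrak k}\setminus[0,\tfrac12)^{\mathfrak k}$. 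Choosing $\lambda=\max_a x_a$ and unwinding the rescaling translates this into the asymptotic bound
\begin{equation*}
|D^{\beta}_{x'}D^{\gamma}_{x''}(I-P_{\mathfrak k+1})|\lesssim_{|\beta|+|\gamma|}|x'|^{\mathfrak k+1/8-|\beta|-|\gamma|/2}
\end{equation*}
for all multi-indices with $|\beta|+|\gamma|/2\geq \mathfrak k+1$ (and also for $|\beta|+|\gamma|/2=\mathfrak k$ with $\beta\ngeq(1,\dots,1,0,\dots,0)$). Applying Lemmas~\ref{lemma-Growth-Regularity} and \ref{lemma-Growth-Regularity2} converts each such growth estimate into the precisely weighted $C^{1/8}$ bound claimed in the statement; in particular, pasting these pointwise derivative bounds with the tangential estimates coming directly from Lemma~\ref{lemma-I12k} yields $I\in C^{\mathfrak k,1/8}$.

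The main obstacle will be bookkeeping. Unlike the $\mathfrak k=2$ case, the rescaling $I_{\lambda}(y)=\lambda^{-1}I(\lambda y',\sqrt\lambda y'')$ must now be analyzed over $\mathfrak k$ coordinate slabs simultaneously, and on each chart the Guillemin equation reduces to a codimension-$(\mathfrak k-1)$ boundary problem to which the inductive hypothesis applies. Making sure the uniform convexity used in the induction (analogue of Lemma~\ref{lemc1-1}) survives the rescaling uniformly in $\lambda$ and $\bar x''$ is delicate, but this has already been carried out in Lemma~\ref{lemma-growth1} for the asymptotic growth estimates, and the same machinery applies verbatim here. The second delicate point is to verify that the polynomial $P_{\mathfrak k+1}$ produced by Taylor expansion genuinely captures all derivatives $D^{\beta}_{x'}D^{\gamma}_{x''}I_{12\cdots\mathfrak k}(0)$ with $|\beta|+|\gamma|/2\leq \mathfrak k-1$, so that $\widetilde I_{\lambda}$ really does decay at order $\lambda^{\mathfrak k+1/4-1}$; this is a routine but combinatorially heavy verification mirroring the Taylor step in the proof of Theorem~\ref{thm-I-C2}.
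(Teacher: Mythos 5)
Your approach is essentially the same as the paper's: use the iterated-integral representation $I=x_1\cdots x_{\mathfrak k}\int I_{12\cdots\mathfrak k}(s_1x_1,\dots)\,ds$, subtract a polynomial, rescale via $I_\lambda(y)=\lambda^{-1}I(\lambda y',\sqrt\lambda y'')$, invoke the codimension-$(\mathfrak k-1)$ inductive derivative bounds on the $\mathfrak k$ slabs $\{y_a\geq\tfrac12\}$, interpolate via Lemma~\ref{lemma-interpolation}, and then convert the resulting growth bound into weighted $C^{1/8}$ regularity via Lemmas~\ref{lemma-Growth-Regularity} and~\ref{lemma-Growth-Regularity2}.

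Two small corrections are worth recording. First, your ``key observation'' that every derivative with $\beta\geq(1,\ldots,1,0,\ldots,0)$ ``automatically inherits'' regularity from the $C^{1/2}$ control of $I_{12\cdots\mathfrak k}$ is not correct: $C^{1/2}$ bounds on $I_{12\cdots\mathfrak k}$ give you nothing directly about $D_{x_a}I_{12\cdots\mathfrak k}$. In fact the paper handles only the single multi-index $\beta+\gamma=(1,\ldots,1,0,\ldots,0)$ by citing Lemma~\ref{lemma-I12k}, and applies the growth/scaling argument \emph{uniformly to every other multi-index} with $|\beta|+|\gamma|/2\geq\mathfrak k$ — not just those with some $\beta_a=0$. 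Your final growth claim also leaves out the half-integer range $\mathfrak k<|\beta|+|\gamma|/2<\mathfrak k+1$ for $\beta\geq(1,\ldots,1)$, which must be included precisely because the ``automatic inheritance'' shortcut is unavailable. Second, since the Hölder exponent for $I_{12\cdots\mathfrak k}$ obtained in Lemma~\ref{lemma-I12k} is only $1/2$, the admissible Taylor expansion is of order zero, so the subtracted polynomial has to be exactly $I_{12\cdots\mathfrak k}(0)\,x_1\cdots x_{\mathfrak k}$ — not an order-$(\mathfrak k+1)$ polynomial $P_{\mathfrak k+1}$; a genuine $P_{l+1}$ construction only appears in the higher-order induction of Theorem~\ref{theorem-I-k-smooth}. (You also have an off-by-one typo in the intermediate exponent: interpolation yields $\lambda^{\mathfrak k-1+1/8}$, which, after unwinding the factor $\lambda^{|\beta|+|\gamma|/2-1}$ from the rescaling, gives the correct $\lambda^{\mathfrak k+1/8}$ bound for $\lambda^{|\beta|+|\gamma|/2}|D^\beta D^\gamma(I-P)|$.) None of these points change the architecture of the argument, which matches the paper's proof.
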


\begin{proof}
The case  $\beta+\gamma=(\underbrace{1,1, \cdots ,1}_{ \mathfrak k},0, \cdots ,0)$ is already obtained in Lemma \ref{lemma-I12k}
Similar to the proof of Theorem \ref{thm-I-C2}, it is sufficient to show that  $\forall |\beta|+\frac{|\gamma|}{2}\geq  \mathfrak k$ and $\beta+\gamma\neq (\underbrace{1,1, \cdots ,1}_{ \mathfrak k},0, \cdots ,0)$, there holds
\begin{equation}\label{I-growth-k}
| D_{x'}^{\beta}D_{x''}^{\gamma}I(x)| \lesssim_{|\beta|+|\gamma|} |x'|^{\frac{1}{8}+ \mathfrak k-|\beta|-\frac{|\gamma|}{2}},
 \end{equation}
 in $\big([0,\frac{3}{2}r_{ \mathfrak k*}]^{ \mathfrak k}\setminus\{0\}^{ \mathfrak k}\big)\times[-\frac{9}{8}, \frac{9}{8}]^{n- \mathfrak k}$.
Then, by Lemma \ref{lemma-Growth-Regularity} and Lemma \ref{lemma-Growth-Regularity2}, we have the desired results.

Without loss of generality, assume $\bar x''=0$.
 Lemma \ref{lemma-I12k} implies
\begin{align*}
I(y',y'')=& y_1y_2 \cdots y_{ \mathfrak k}\int_0^1 \cdots \int_0^1 I_{12 \cdots  \mathfrak k}(s_1y_1,s_2y_2, \cdots ,s_{ \mathfrak k}y_{ \mathfrak k},y'')ds_1ds_2 \cdots ds_{ \mathfrak k}\\
=& y_1y_2 \cdots y_{ \mathfrak k}\int_0^1 \cdots \int_0^1\big[ I_{12 \cdots  \mathfrak k}(0)+O(|y'|^{\frac 12}) +O(|y''|^{\frac 12})\big]ds_1ds_2 \cdots ds_{ \mathfrak k}\\
=& y_1y_2 \cdots y_{ \mathfrak k}I_{12 \cdots  \mathfrak k}(0)+O( |y'|^{ \mathfrak k+\frac{1}{2}}) +O(|y'|^{ \mathfrak k}|y''|^{\frac{1}{2}}).
\end{align*}
 Take $\lambda\in (0,   \frac{3}{2}r_{ \mathfrak k*} ]$.
Set
$$I_{\lambda}(y',y'')=\frac{I(\lambda y',\sqrt{\lambda} y'')}{\lambda},\quad \forall y \in  [0,1]^{ \mathfrak k}\times [-1,1]^{n- \mathfrak k}.$$
By Taylor's expansion, we have
\begin{equation*}
 |I_{\lambda}-\lambda^{ \mathfrak k-1} I_{12 \cdots  \mathfrak k}(0)y_1y_2 \cdots y_{ \mathfrak k}|\leq C \lambda^{ \mathfrak k-1+\frac{1}{4}}.
\end{equation*}
Let $$\widetilde{I}(y)=I_{\lambda}(y',y'')-\lambda^{ \mathfrak k-1} I_{12 \cdots  \mathfrak k}(0)y_1y_2 \cdots y_{ \mathfrak k}.$$
Then by \eqref{estimate-k}, there holds
\begin{equation*}
|D^{\alpha}_{y} \widetilde{I_{\lambda}}|\lesssim_{|\alpha|} 1,\quad \text{in}\quad [1/2,1]\times [0,1]^{ \mathfrak k-1} \times [-1,1]^{n- \mathfrak k}.
\end{equation*}
By Lemma \ref{lemma-interpolation},  we have
\begin{equation*}
|D^{\alpha}_{y} \widetilde{I_{\lambda}}|\lesssim_{|\alpha|} \lambda^{ \mathfrak k-1+\frac{1}{8}},\quad \text{in}\quad  [1/2,1]\times [0,1]^{ \mathfrak k-1} \times [-1,1]^{n- \mathfrak k}.
\end{equation*}
Similarly, we have
\begin{equation*}
|D^{\alpha}_{y} \widetilde{I_{\lambda}}|\lesssim_{|\alpha|} \lambda^{ \mathfrak k-1+\frac{1}{8}},\quad \text{in}\quad \big([0,1]^{ \mathfrak k}\setminus[0,\frac{1}{2})^{ \mathfrak k} \big)\times [-1,1]^{n- \mathfrak k}.
\end{equation*}
Therefore,
\begin{equation*}
\lambda^{|\beta|+\frac{|\gamma|}{2}}|D_{x'}^{\beta}D_{x''}^\gamma\big[I(y)-I_{12 \cdots  \mathfrak k}(0)y_1y_2 \cdots y_{ \mathfrak k}\big]|\lesssim_{|\beta|+|\gamma|} \lambda^{ \mathfrak k+\frac{1}{8}}
\end{equation*}
in $\big([0,\lambda]^{ \mathfrak k}\setminus[0,\frac{1}{2}\lambda)^{ \mathfrak k} \big)\times [-\sqrt{\lambda}, \sqrt{\lambda}]^{n- \mathfrak k}$.
In the original coordinates, this implies \eqref{I-growth-k}
\end{proof}
We now prove the main theorem in this section.
\begin{theorem}\label{theorem-I-k-smooth}
There holds
\begin{equation}\label{I-k smooth}
\|I\|_{C^l([0, r_{ \mathfrak k*}]^{ \mathfrak k}\times[-1,1]^{n- \mathfrak k})}\lesssim_{l} 1,\quad l=0,1,\cdots.
\end{equation}
\end{theorem}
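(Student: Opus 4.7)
The plan is to prove Theorem \ref{theorem-I-k-smooth} by induction on $l$, following the strategy of Theorem \ref{thm-I-2-Smooth} adapted to the higher codimension. The base step $l \le \mathfrak k$ is already contained in Theorem \ref{thm-I-Ck}, so the work lies in setting up a robust inductive scheme. Fix decreasing constants $a_l \searrow 1$ and a Hölder exponent $\delta_l > 0$, and formulate the induction hypothesis as: for $|\beta| + |\gamma|/2 \le l$ one has $D^{\beta}_{x'} D^{\gamma}_{x''} I,\; \sqrt{x_a}\,D_{x_p} D^{\beta}_{x'} D^{\gamma}_{x''} I \in C^{\delta_l}([0, a_l r_{\mathfrak k*}]^{\mathfrak k} \times [-a_l, a_l]^{n-\mathfrak k})$, with the appropriately weighted versions for $|\beta| + |\gamma|/2 = l+1, l+2$ (carrying one or two powers of $x_a$ to compensate for derivatives hitting $\ln x_a$).

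For the inductive step, pick $\beta = (\beta_1, \ldots, \beta_{\mathfrak k})$ with each $\beta_a \ge 1$ and $|\beta| + |\gamma|/2 = l - 1/2$ (or $l$, as in Steps 1--2 of the $\mathfrak k = 2$ proof), and consider $\widetilde V = D^{\beta}_{x'} D^{\gamma}_{x''} v$. Differentiating the Monge--Ampère equation $\det(U_{ij}) = h$ (with $U$ the weighted Hessian from the proof of Lemma \ref{lemma-I12k}) and carefully separating the ``top'' cofactor terms from lower-order remainders, I will obtain an equation of the form
\begin{equation*}
A \sum_{a=1}^{\mathfrak k}\bigl(x_a \widetilde V_{aa} + \beta_a \widetilde V_a\bigr) + \sum_{a=1}^{\mathfrak k} \sum_{p=\mathfrak k+1}^{n} x_a \frac{a_{ap}}{\sqrt{x_a}} \widetilde V_{ap} + \sum_{p,q=\mathfrak k+1}^{n} a_{pq} \widetilde V_{pq} + \sum_{p=\mathfrak k+1}^{n} B_p \widetilde V_p = \widetilde G,
\end{equation*}
where $A$, $a_{ap}/\sqrt{x_a}$, $a_{pq}$, $B_p$, and $\widetilde G$ are all $C^{\delta_l}$ on the current domain thanks to the inductive hypothesis. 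Then I apply the multi-dimensional polar substitution $x_a = (y_{2a-1}^2 + y_{2a}^2)/4$ for $a = 1, \ldots, \mathfrak k$ and $y_{2\mathfrak k + p - \mathfrak k} = x_p$ for $p > \mathfrak k$, which converts the degenerate operator into a uniformly elliptic operator $L$ on $B_{\rho}^{2\mathfrak k}(0) \times [-a_l, a_l]^{n-\mathfrak k}$ (the Bessel-type terms $V_a/\sqrt{x_a}$ become smooth Laplacians in the new variables). Standard Schauder estimates then produce $\overline V \in C^{2,\delta_l}$, which on restoring original variables yields $C^{\delta_l/2}$ control of $D\widetilde V$ and the weighted second derivatives on a slightly smaller domain indexed by $a_{l+1}$.

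To complete the inductive step for those multi-indices that are not of the form $(1,1,\ldots,1,0,\ldots,0) \le \beta$ (i.e.\ the ones not reachable by differentiating $I_{12\cdots\mathfrak k}$ tangentially), I will upgrade the growth estimates exactly as in Step 3 of Theorem \ref{thm-I-2-Smooth}: construct a Taylor polynomial $P_{l+1}(x)$ matching $I$ to order $l+1$ along $\{x_1 \cdots x_{\mathfrak k} = 0\}$ using the already-controlled derivatives of $I_{12\cdots\mathfrak k}$, rescale $I_{\lambda}(y) = \lambda^{-1} I(\lambda y', \sqrt{\lambda} y'')$, use the a priori $C^m$ bounds on the scaled solutions away from the axis together with the Taylor remainder $|I_\lambda - \lambda^{-1} P_{l+1}| \lesssim \lambda^{l + \delta_l/4}$, and interpolate via Lemma \ref{lemma-interpolation} to obtain
\begin{equation*}
\lambda^{|\beta| + |\gamma|/2} \bigl|D^{\beta}_{y'} D^{\gamma}_{y''}(I - P_{l+1})\bigr| \lesssim \lambda^{l+1 + \delta_l/8}
\end{equation*}
on the annular region $([0,\lambda]^{\mathfrak k} \setminus [0,\lambda/2)^{\mathfrak k}) \times [-\sqrt{\lambda}, \sqrt{\lambda}]^{n-\mathfrak k}$. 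Applying Lemma \ref{lemma-Growth-Regularity} and Lemma \ref{lemma-Growth-Regularity2} converts these growth bounds into the desired $C^{\delta_l/2}$ estimates up to $\{x_1 \cdots x_{\mathfrak k} = 0\}$.

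The main obstacle I anticipate is the algebraic bookkeeping for the derived equation: expanding $\det(U_{ij})$ after applying $D^{\beta}_{x'} D^{\gamma}_{x''}$ generates many cofactor-type terms, and one must verify that every term on the right-hand side either (i) vanishes to sufficient order in $x'$ to lie in the correct weighted $C^{\delta_l}$ space, or (ii) can be absorbed into the principal part on the left. The $\mathfrak k = 2$ argument organizes this by a clean case analysis ($i_1 = 1, i_2 = 2$ vs.\ mixed vs.\ off-diagonal); for general $\mathfrak k$ the analogous case analysis has more cases, but each case is controlled by the same principle: wherever a derivative of the weighted Hessian entry $U_{ij}$ falls on an $x_a$ factor, it is compensated by the weight $\sqrt{x_a}$ or $x_a x_b$ already present in the inductive hypothesis. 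Once the equation is set up correctly, the coordinate change and the growth/interpolation machinery carry over verbatim from the $\mathfrak k = 2$ proof.
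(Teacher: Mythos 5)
Your proposal tracks the paper's actual proof quite closely: same induction on $l$, same three-tier induction hypothesis with shrinking domains $a_l \searrow 1$ and Hölder exponents $\delta_l$, same two-step Schauder argument followed by a Taylor-polynomial/rescaling/interpolation step, and same use of Lemmas \ref{lemma-Growth-Regularity}, \ref{lemma-Growth-Regularity2}, and \ref{lemma-interpolation}. In outline you have the right proof.

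There is, however, one concrete technical gap that would prevent the Schauder step from closing. You correctly derive that $\widetilde V = D^\beta_{x'}D^\gamma_{x''}v$ satisfies an equation whose principal part contains $\sum_a A\bigl(x_a\widetilde V_{aa}+\beta_a \widetilde V_a\bigr)$, with the coefficient $\beta_a$ coming from Leibniz when $D_{x_a}^{\beta_a}$ hits the weight $x_a$ in $U_{aa}=1+x_av_{aa}$. But then you propose the fixed two-dimensional polar change $x_a=(y_{2a-1}^2+y_{2a}^2)/4$ for every $a$. Writing $r_a^2=y_{2a-1}^2+y_{2a}^2$, one computes $x_a\widetilde V_{x_ax_a}+\beta_a\widetilde V_{x_a}=\partial_{r_ar_a}\overline V+\frac{2\beta_a-1}{r_a}\partial_{r_a}\overline V$, and a two-dimensional Laplacian equals $\partial_{r_ar_a}+\frac{1}{r_a}\partial_{r_a}$. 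These agree only when $\beta_a=1$; for $\beta_a\ge 2$ a singular Bessel remainder $\frac{2\beta_a-2}{r_a}\partial_{r_a}$ survives at $r_a=0$, so the transformed operator is not uniformly elliptic on $B^{2\mathfrak k}_\rho(0)\times[-a_l,a_l]^{n-\mathfrak k}$ and the Schauder estimate you invoke does not apply. The paper resolves exactly this by allocating $2\beta_a$ auxiliary variables to $x_a$, i.e.\ $x_a=\tfrac14\bigl(y_{2\beta_1+\cdots+2\beta_{a-1}+1}^2+\cdots+y_{2\beta_1+\cdots+2\beta_a}^2\bigr)$, so that $x_a\widetilde V_{aa}+\beta_a\widetilde V_a$ becomes a genuine $2\beta_a$-dimensional Laplacian. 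With that single correction — and the corresponding change of the product domain to $B^{2\beta_1}\times\cdots\times B^{2\beta_{\mathfrak k}}\times[-a_l,a_l]^{n-\mathfrak k}$ — your plan becomes the paper's proof.
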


\begin{proof}
Set $$\delta_l=2^{-3l+3 \mathfrak k-3},\	a_{l}=1+2^{2 \mathfrak k-3-2l},\	a_{l}'=1+2^{2 \mathfrak k-4-2l}.$$
 Similar to the proof of Theorem \ref{thm-I-2-Smooth},
for any positive integer $l\geq  \mathfrak k$, we will prove
\begin{equation}\label{I12k-l-1}
D_{x'}^{\beta}D_{x''}^{\gamma}I, \sqrt{x_a}D_{x_p}D_{x'}^{\beta}D_{x''}^{\gamma}I\in C^{\delta_l}([0,a_lr_{ \mathfrak k*}]^{ \mathfrak k}\times[-a_l,a_l]^{n- \mathfrak k})\quad\text{when } |\beta|+\frac{|\gamma|}{2}\leq l,
\end{equation}
\begin{equation}\label{I12k-l-2}
x_a D_{x'}^{\beta}D_{x''}^{\gamma}I, \sqrt{x_a}x_bD_{x_p}D_{x'}^{\beta}D_{x''}^{\gamma}I\in C^{\delta_l}([0,a_lr_{ \mathfrak k*}]^{ \mathfrak k}\times[-a_l,a_l]^{n- \mathfrak k}) , \quad\text{when } |\beta|+\frac{|\gamma|}{2}\leq l+1.\end{equation}
and
\begin{equation}\label{I12k-l-3}
x_a x_bD_{x'}^{\beta}D_{x''}^{\gamma}I\in C^{\delta_l}([0,a_lr_{ \mathfrak k*}]^{ \mathfrak k}\times[-a_l,a_l]^{n- \mathfrak k}), \quad\text{when } |\beta|+\frac{|\gamma|}{2}\leq l+2.\end{equation}
where $a,b=1,2, \cdots , \mathfrak k$, $p= \mathfrak k+1, \cdots ,n$.

 We prove Theorem \eqref{I12k-l-1}-\eqref{I12k-l-3} by induction on $l$. For $l=\mathfrak k$, the result is established by \ref{thm-I-Ck}. Assume the  conclusion holds for some integer $l\geq  \mathfrak k$. We will prove the conclusion also holds for $l+1$.

{\it Step 1.}
 Consider  positive integers $\beta_1, \cdots ,\beta_{ \mathfrak k}\geq1$ with $\beta_1+ \cdots +\beta_{ \mathfrak k}=|\beta|$, and $|\beta|+\frac{|\gamma|}{2}=l-\frac{1}{2}$. When $l= \mathfrak k$ or $\mathfrak k=n$, we do not need this step.
Set $\widetilde{V}=D_{x'}^{\beta}D_{x''}^{\gamma}v$.
Similarly to Theorem \ref{lemma-I12}, $\widetilde{V}$ solves an equation in the form:
\begin{equation}\label{eq-widetilde-Ik}
A\sum_{a=1}^{ \mathfrak k}(x_a\widetilde{V}_{aa}+\widetilde{V}_{a})+\sum_{a=1}^{ \mathfrak k}\sum_{p= \mathfrak k+1}^{n}x_a\frac{a_{ap}}{\sqrt{x_a}}\widetilde{V}_{ap}+\sum_{p,q= \mathfrak k+1}^{n}a_{pq}\widetilde{V}_{pq}+\sum_{p= \mathfrak k+1}^{n}B_p\widetilde{V}_{p}=\widetilde{G}\end{equation}
where $A$, $a_{ap}$, $a_{pq}$ and $B_{p}$ are the same function as in the proof of Lemma \ref{lemma-I12k}, and
$\widetilde{G}$ is smooth with respect to  $x$ and the terms in \eqref{I12k-l-1}-\eqref{I12k-l-3}.
Hence,
$$\|\widetilde{G}\|_{C^{\delta_l}([0,a_lr_{ \mathfrak k*}]^{ \mathfrak k}\times[-a_l,a_l]^{n- \mathfrak k})}\lesssim_{l} 1.$$
 Similarly to Lemma \ref{lemma-I12k}, one can take the following coordinates transformations. Let
\begin{equation*}
x_{1}=\frac{y_1^2+ \cdots +y_{2\beta_1}^2}{4},
\end{equation*}
\begin{equation*}
x_{a}=\frac{y_{2\beta_1+ \cdots 2\beta_{a-1}+1}^2+ \cdots +y_{2\beta_1+ \cdots +2\beta_a}^2}{4},\quad \quad a=2, \cdots , \mathfrak k,
\end{equation*}
and
\begin{equation*}
x_{p}=y_{2\beta_1+ \cdots +2\beta_{ \mathfrak k}+p-k},\quad \quad p= \mathfrak k+1, \cdots ,n.
\end{equation*}
Then, $\overline{V}(y)=\widetilde{V}(x)$ solves
\begin{align}\label{eq-widetilde-Ik-y}\begin{split}
L\overline{V}:=&\sum_{a=1}^{2\beta_1+ \cdots +2\beta_{ \mathfrak k}}\partial_{y_ay_a}\overline{V}\\
&+\sum_{a=1}^{2\beta_1}\sum_{p=2\beta_1+ \cdots +2\beta_{ \mathfrak k}+1}^{2\beta_1+ \cdots +2\beta_{ \mathfrak k}+n- \mathfrak k}\frac{y_a}{2}\frac{\overline{b}_{1p}}{\overline{A}}\overline{V}_{ap}
+\sum_{i=2}^{ \mathfrak k}\sum_{a=2\beta_1+ \cdots +2\beta_{i-1}+1}^{2\beta_1+ \cdots +2\beta_i}\sum_{p=2\beta_1+ \cdots +2\beta_{ \mathfrak k}+1}^{2\beta_1+ \cdots +2\beta_{ \mathfrak k}+n- \mathfrak k}\frac{y_a}{2}\frac{\overline{b}_{ip}}{\overline{A}}\overline{V}_{ap}\\
&+\sum_{p,q=2\beta_1+ \cdots +2\beta_{ \mathfrak k}+1}^{2\beta_1+ \cdots +2\beta_{ \mathfrak k }+n- \mathfrak k}\frac{\overline{a}_{pq}}{\overline{A}}\overline{V}_{pq}+\sum_{p=2\beta_1+ \cdots +2\beta_{ \mathfrak k}+1}^{2\beta_1+ \cdots +2\beta_{ \mathfrak k}+n- \mathfrak k}\frac{\overline{B}_p}{\overline{A}}\overline{V}_{p}\\
&=\overline{G},
\end{split}\end{align}
in   $\bigg( \big(B^{2\beta_1}_{2\sqrt{a_{l}r_{ \mathfrak k*}}}(0)\times  \cdots \times B^{2\beta_{ \mathfrak k}}_{2\sqrt{a_{l}r_{ \mathfrak k*}}}(0)\big)\setminus \{0\}^{2\beta_1+ \cdots +2\beta_{ \mathfrak k}}\bigg)\times[-a_{l},a_{l}]^{n- \mathfrak k}$,
where
$$b_{ap}(y)=\frac{r_a}{2}\frac{a_{ap}}{a\sqrt{x_a}}(x),\quad \widehat{G}\in C^{\delta_l}(B^{2\beta_1}_{2\sqrt{a_{l}r_{ \mathfrak k*}}}(0)\times  \cdots \times B^{2\beta_{ \mathfrak k}}_{2\sqrt{a_{l}r_{ \mathfrak k*}}}(0)\times[-a_{l},a_{l}]^{n- \mathfrak k}).$$
This is a uniformly elliptic equation with $ C^{\delta_l}(B^{2\beta_1}_{2\sqrt{a_{l}r_{ \mathfrak k*}}}(0)\times  \cdots \times B^{2\beta^{ \mathfrak k}}_{2\sqrt{a_{l}r_{ \mathfrak k*}}}(0)\times[-a_{l},a_{l}]^{n- \mathfrak k})$ coefficients.
By standard elliptic regularity results, we get
$$\widetilde{V}\in C^{2,\delta_l}(B^{2\beta_1}_{2\sqrt{a_{l}'r_{ \mathfrak k*}}}(0)\times  \cdots \times B^{2\beta_{ \mathfrak k}}_{2\sqrt{a_{l}'r_{ \mathfrak k*}}}(0)\times[-a_{l}',a_{l}']^{n- \mathfrak k}).$$
 Changing
back to $x$, we get
$$D_{x}\widetilde{V}, \sqrt{x_ax_b}D_{x_a} D_{x_b}\widetilde{V},\sqrt{x_a}D_{x_ax_p}\widetilde{V},D_{x_px_q}\widetilde{V}\in C^{\frac{\delta_l}{2}}([0,a_l'r_{ \mathfrak k*}]^{ \mathfrak k}\times[-a_l',a_l']^{n- \mathfrak k}),$$ where $a,b=1, \cdots , \mathfrak k$ and $p,q= \mathfrak k+1, \cdots ,n$.

{\it Step 2.}
For any  positive integers $\beta_1, \cdots ,\beta_{ \mathfrak k}\geq1$ and $\beta_1+ \cdots +\beta_{ \mathfrak k}=|\beta|$, $|\beta|+\frac{|\gamma|}{2}=l$, let $\widehat{V}=D_{x'}^{\beta}D_{x''}^{\gamma}v$. Repeating the arguments in Step 1, one gets

$$D_{x}\widehat{V}, \sqrt{x_ax_b}D_{x_a} D_{x_b}\widehat{V},\sqrt{x_a}D_{x_ax_p}\widehat{V},D_{x_px_q}\widehat{V}\in C^{\frac{\delta_l}{2}}([0,a_{l+1}r_{ \mathfrak k*}]^{ \mathfrak k}\times[-a_{l+1},a_{l+1}]^{n- \mathfrak k}),$$ where $a,b=1, \cdots , \mathfrak k$ and $p,q= \mathfrak k+1, \cdots ,n$. This implies
$$\|D_{x'}^{\beta}D_{x''}^{\gamma}I_{12 \cdots  \mathfrak k}\|_{C^{\frac{\delta_l}{2}}([0,a_{l+1}r_{ \mathfrak k*}]^{ \mathfrak k}\times[-a_{l+1},a_{l+1}]^{n- \mathfrak k})}\lesssim_{|\beta|+|\gamma|} 1,\quad |\beta|+|\gamma|\leq l+1- \mathfrak k.$$
\smallskip

{\it Step 3.}
We now prove the remaining cases. Without loss of generality, assume $\bar x''=0$.
Let $P_{l+1}$ be a polynomial satisfying
\begin{equation*}
D_{x'}^{\beta}D_{x''}^{\gamma}P_{l+1}(0)=0,\quad\text{when } (\underbrace{1, \cdots ,1}_{ \mathfrak k},\underbrace{0 \cdots ,0}_{n- \mathfrak k})\nleq \beta,
\end{equation*}
\begin{equation*}
D_{x_1}D_{x_2} \cdots D_{x_{ \mathfrak k}}D_{x'}^{\beta}D_{x''}^{\gamma}P_{l+1}(0)=D_{x'}^{\beta}D_{x''}^{\gamma}I_{12 \cdots  \mathfrak k}(0),\quad\text{for any } \beta, \gamma \text{ with }  |\beta|+\frac{|\gamma|}{2}\leq l+1- \mathfrak k,
\end{equation*}
and
\begin{equation*}
D_{x'}^{\beta}D_{x''}^{\gamma}P_{l+1}(0)=0,\quad\text{when } |\beta|+\frac{|\gamma|}{2}>l+1.
\end{equation*}
Near the origin, by Taylor's expansion, we have
\begin{align*}
&I(y',y'')\\
=& y_1y_2 \cdots y_{ \mathfrak k}\int_0^1 \cdots \int_0^1 I_{12 \cdots  \mathfrak k}(s_1y_1, \cdots ,s_{ \mathfrak k}y_{ \mathfrak k},y'')ds_1 \cdots ds_{ \mathfrak k}\\
=&y_1y_2 \cdots y_{ \mathfrak k}\bigg(\sum_{|\beta|\leq l- \mathfrak k+1}\frac{D^{\beta}I_{12 \cdots  \mathfrak k}(0',y'')}{\beta!(\beta_1+1) \cdots (\beta_{ \mathfrak k}+1)}y'^{\beta}+O( |y'|^{l- \mathfrak k+1+\frac{\delta_l}{2}})\bigg)\\
=&y_1y_2 \cdots y_{ \mathfrak k}\bigg(\sum_{|\beta|\leq l- \mathfrak k+1}\sum_{\frac{|\gamma|}{2}\leq l- \mathfrak k+1-|\beta|}\frac{D^{\gamma}D^{\beta}I_{12 \cdots  \mathfrak k}(0)}{\beta!(\beta_1+1) \cdots (\beta_{ \mathfrak k}+1)\gamma!}y'^{\beta}y''^{\gamma}\\
&+O( |y'|^{ |\beta|} |y''|^{2l+2-2 \mathfrak k-2 |\beta|+\frac{\delta_l}{2}} )+O( |y'|^{l- \mathfrak k+1+\frac{\delta_l}{2}})\bigg)\\
=&y_1y_2 \cdots y_{ \mathfrak k}\bigg(\sum_{|\beta|\leq l- \mathfrak k+1}\sum_{\frac{|\gamma|}{2}\leq l- \mathfrak k+1-|\beta|}\frac{D^{\gamma}D^{\beta}P_{l+1,12 \cdots  \mathfrak k}(0)}{\beta!(\beta_1+1) \cdots (\beta_{ \mathfrak k}+1)\gamma!}y'^{\beta}y''^{\gamma}\\
&+O(|y'|^{ |\beta|} |y''|^{2l+2-2 \mathfrak k-2 |\beta|+\frac{\delta_l}{2}} )+O(|y'|^{l- \mathfrak k+1+\frac{\delta_l}{2}})\bigg)\\
=&P_{l+1}(y',y'')+
O( |y'|^{ |\beta|+ \mathfrak k} |y''|^{2l+2-2 \mathfrak k-2 |\beta|+\frac{\delta_l}{2}} )+O( |y'|^{l+1+\frac{\delta_l}{2}}).
\end{align*}
For any multi-indices $\beta,\gamma$ satisfying $ |\beta|+\frac{|\gamma|}{2}\geq l+1$ and
$(\underbrace{1, \cdots ,1}_{ \mathfrak k},\underbrace{0 \cdots ,0}_{n- \mathfrak k})\nleq \beta$ if $ |\beta|+\frac{|\gamma|}{2}=l+1$, we will prove
\begin{equation}\label{I-growth-l-k}
| D_{x'}^{\beta}D_{x''}^{\gamma}I| =| D_{x'}^{\beta}D_{x''}^{\gamma}[I-P_{l+1}(x)]| \lesssim_{L(|\beta|+|\gamma|,l,\mathfrak k, n)} |x'|^{\frac{\delta_l}{8}+l+1-|\beta|-\frac{|\gamma|}{2}},
 \end{equation}
  where $L(|\beta|+|\gamma|,l,\mathfrak k, n)$ is a positive integer depending only on $|\beta|+|\gamma|, l, \mathfrak k$ and $n$.
Then, the desired result can be obtained directly by Lemma \ref{lemma-Growth-Regularity} and Lemma \ref{lemma-Growth-Regularity2}.

 Take  $\lambda\in (0,a_{l+1}r_{ \mathfrak k*}]$.
For any $y \in [0,1]^{ \mathfrak k}\times [-1,1]^{n- \mathfrak k}$,
set $I_{\lambda}(y)=\frac{I(\lambda y',\sqrt{\lambda} y'')}{\lambda}$.
Then, \begin{equation}
|I_{\lambda}(y)- \frac{P_{l+1}(\lambda y',\sqrt{\lambda} y'')}{\lambda}|\leq C \lambda^{l+\frac{\delta_l}{4}}.
\end{equation}
Let $$\widetilde{I_{\lambda}}(y)=I_{\lambda}(y',y'')-\frac{ P_{l+1}(\lambda y',\sqrt{\lambda} y'')}{\lambda}.$$
Following the proof in Theorem \ref{thm-I-Ck}, and using Lemma \ref{lemma-interpolation} along with \eqref{estimate-k}, 
we can obtain   
\begin{equation*}
|D^{\alpha}_{y} \widetilde{I}|\lesssim_{L(|\alpha|,l,\mathfrak k, n)} \lambda^{l+\frac{\delta_l}{8}},\quad \text{in}\quad \big([0,1]^{ \mathfrak k}\setminus[0,\frac{1}{2})^{ \mathfrak k}\big)\times [-1,1]^{n- \mathfrak k}.
\end{equation*}  
In the original coordinates, this implies \eqref{I-growth-l-k}.
This demonstrates that the inductive hypothesis holds for $l+1$, thus completing the proof.
\end{proof}

Now we are ready to prove Theorem \ref{mainthm0}.  
\smallskip \smallskip \\
{\textbf{The proof of Theorem \ref{mainthm0}}:}
We prove it by the method of induction.
When $n=2$, the conclusion can be derived from \cite{Rubin2015} and \cite{Huang2023}.
\par Suppose that the conclusion
holds for some integer $n=m\geq 2$. Let $P$ be a simple convex polytope in $\mathbb R^{m+1}$.  By definition, any $m$-face of $P$ is a simple convex polytope. Let $\Sigma$ be an $m$-face of $P$ such that 
$$\Sigma=\overline{P}\bigcap\{x|l_1(x)=0\} = \overline{P}\bigcap\{x_1=0\}.$$
The vertices of $\Sigma$ are as follows:
 $$\{p_{i_1},\cdots,p_{i_{N'}}\}=\{p_{1},\cdots,p_{N}\}\bigcap\{x|l_1(x)=0\}.$$
By inductive hypothesis, \eqref{intro1}-\eqref{intro1-1} admits a unique convex solution $u'$ with
$u'(p_{i_j})=\alpha_{i_j}$, $j=1,...,N'$. Therefore, the boundary condition of $u$ can be determined by solving the Guillemin boundary value problem on each $m-$face of $P$. Denote the boundary value of $u$ by $\varphi$. Then, $\varphi$ satisfies the assumption in  Theorem 2.1. This implies the existence of an Alexandrov solution $u$ of \eqref{intro1} with boundary value $\varphi$. By Theorem \ref{thm2.2}, Theorem \ref{thm3-803} and Theorem \ref{thmlocal-k}, $u$ satisfies \eqref{intro1}-\eqref{intro1-1}.

This shows the conclusion holds for $n=m+1$. $\hfill\square$

\section{Appendix A-Some calculus lemmas}
In this Appendix, we prove some simple calculus lemmas which are used repeatedly in the present paper.
\begin{lemma}\label{lem-pre}
Suppose $\varphi, \phi\in C^{\mathfrak k}(\overline{(\mathbb R^+)^{\mathfrak k}\times \mathbb R^{n-\mathfrak k}})$
and $\varphi=\phi$ on $\partial((\mathbb R^+)^{\mathfrak k}\times \mathbb R^{n-\mathfrak k})$. Then there holds
\begin{equation}
|\varphi(x)-\phi(x)|\le C\prod_{a=1}^\mathfrak k x_a,\quad \text{in}\quad [0,1]^{\mathfrak k}\times [-1,1]^{n-\mathfrak k}.
\end{equation}
\end{lemma}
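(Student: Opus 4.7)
Set $w = \varphi - \phi$. Then $w \in C^{\mathfrak k}(\overline{(\mathbb R^+)^{\mathfrak k}\times \mathbb R^{n-\mathfrak k}})$ and, by the boundary hypothesis, $w$ vanishes on every hyperplane $\{x_a = 0\}$ for $a \in \{1,\ldots,\mathfrak k\}$. The plan is to extract the factor $x_1 \cdots x_{\mathfrak k}$ from $w$ one variable at a time via the fundamental theorem of calculus, and then to appeal to compactness to bound the remaining continuous factor.

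Concretely, I will prove by finite induction on $m \in \{0,1,\ldots,\mathfrak k\}$ the claim: there exists $r_m \in C^{\mathfrak k - m}(\overline{(\mathbb R^+)^{\mathfrak k}\times \mathbb R^{n-\mathfrak k}})$ such that
\[
w(x) = x_1 \cdots x_m \, r_m(x),
\]
and $r_m$ vanishes on $\{x_a = 0\}$ for every $a \in \{m+1,\ldots,\mathfrak k\}$. The base case $m=0$ is trivial with $r_0 = w$. For the inductive step, since $r_m$ vanishes at $x_{m+1} = 0$ and $r_m$ has at least one continuous derivative (as $\mathfrak k - m \geq 1$), the fundamental theorem of calculus yields
\[
r_m(x) = x_{m+1} \int_0^1 \partial_{m+1} r_m(x_1,\ldots,x_m, s\, x_{m+1}, x_{m+2},\ldots,x_n)\,ds,
\]
so one sets $r_{m+1}$ equal to the integral on the right, which inherits $C^{\mathfrak k - m - 1}$ regularity. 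To verify that $r_{m+1}$ still vanishes on $\{x_a = 0\}$ for $a \in \{m+2,\ldots,\mathfrak k\}$, observe that $r_m$ vanishes identically on such a hyperplane, so its tangential derivative $\partial_{m+1} r_m$ also vanishes there, and hence the defining integral does.

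Taking $m = \mathfrak k$ gives $w(x) = x_1 \cdots x_{\mathfrak k}\, r_{\mathfrak k}(x)$ with $r_{\mathfrak k}$ merely continuous, and then compactness of $[0,1]^{\mathfrak k} \times [-1,1]^{n-\mathfrak k}$ supplies a constant $C$ with $|r_{\mathfrak k}| \le C$ on this set, yielding the claimed estimate. The only delicate point is checking, at each inductive step, that the vanishing on the remaining hyperplanes is preserved by the integral construction; this is the observation in the previous paragraph and is the only place the $C^{\mathfrak k}$ hypothesis is used in full — we need $\mathfrak k$ classical derivatives to be allowed to iterate this argument $\mathfrak k$ times and still end with a continuous (hence locally bounded) remainder.
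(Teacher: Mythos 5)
Your proof is correct and follows essentially the same route as the paper: iterated application of the fundamental theorem of calculus, peeling off one factor $x_a$ at each step by using the vanishing of $\varphi-\phi$ on the face $\{x_a=0\}$, until the remainder is the $\mathfrak k$-th mixed partial averaged over a cube, which is then bounded by compactness. The paper simply writes out the resulting chain of integral identities directly, whereas you package the same computation as a finite induction on the number of extracted factors.
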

\begin{proof}
For $\mathfrak k=1$, it is obvious. We only need to consider $\mathfrak k\ge 2$.
\begin{equation}
\begin{split}
&(\varphi-\phi)(x)\\
=&x_1\int_0^1 (\varphi-\phi)_{x_1}(\lambda_1 x_1,x_2,\cdots,x_n)d\lambda_1\\
=& x_{1}x_2\int_0^1\int_0^1 (\varphi-\phi)_{x_{1}x_2}(\lambda_1 x_1,\lambda_2 x_2,x_3,\cdots,x_n)d\lambda_{1}d\lambda_2\\
=&\prod_{a=1}^\mathfrak k  x_a \int_{[0,1]^\mathfrak k}(\varphi-\phi)_{x_1\cdots x_{\mathfrak k}}(\lambda_1 x_1,\cdots,\lambda_\mathfrak kx_{\mathfrak k},x_{\mathfrak k+1},\cdots, x_n)d\lambda_{1}\cdots d\lambda_\mathfrak k.
\end{split}
\end{equation}
\end{proof}

\begin{lemma}\label{lemma-Growth-Regularity}
Let $f\in C\big([0,1]^{ \mathfrak k}\times[-1,1]^{n- \mathfrak k}\big)\bigcap C^{1}\big([0,1]^{ \mathfrak k}\times[-1,1]^{n- \mathfrak k}\setminus \{0\}^{ \mathfrak k}\times[-1,1]^{n- \mathfrak k}\big)$  and
$x'=(x_1,\cdots,x_{ \mathfrak k})$. Suppose
there exist constants $M >0$ and $\delta\in(0,1]$ such that, for any $x\in[0,1]^{ \mathfrak k}\times[-1,1]^{n- \mathfrak k}$,
$$|f(x)|\leq M |x'|^{\delta},$$ and for any $x\in[0,1]^{ \mathfrak k}\times[-1,1]^{n- \mathfrak k}\setminus \{0\}^{ \mathfrak k}\times[-1,1]^{n- \mathfrak k}$,
$$|\nabla f(x)|\leq M |x'|^{\delta-1}.$$
Then
 $$\|f\|_{C^{\delta}\big([0,1]^{ \mathfrak k}\times[-1,1]^{n- \mathfrak k}\big)}\leq C(n)M.$$
\end{lemma}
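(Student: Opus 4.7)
The plan is to prove the Hölder seminorm estimate $[f]_{C^\delta} \leq C(n)M$; the $L^\infty$ bound $\|f\|_{L^\infty} \leq C(n)M$ is immediate from the hypothesis $|f(x)| \leq M|x'|^\delta$ together with $|x'| \leq \sqrt{\mathfrak k} \leq \sqrt{n}$. Fix $x, y \in [0,1]^{\mathfrak k}\times[-1,1]^{n-\mathfrak k}$, write $r = |x-y|$, and assume without loss of generality $|x'| \leq |y'|$.

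I would split into two cases depending on the distance from $\{x' = 0\}$. In the near-singular case $|y'| \leq 2r$, the triangle inequality gives $|x'| \leq |y'| + r \leq 3r$, so the pointwise bound yields
\begin{equation*}
|f(x) - f(y)| \leq |f(x)| + |f(y)| \leq M(|x'|^\delta + |y'|^\delta) \leq (3^\delta + 2^\delta) M r^\delta,
\end{equation*}
which is the desired estimate directly.

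In the away-from-singular case $|y'| > 2r$, we have $|x'| \geq |y'| - r > r$ as well. I would connect $x$ and $y$ by the straight segment $z(t) = tx + (1-t)y$, $t \in [0,1]$, which remains in the convex domain $[0,1]^\mathfrak k \times [-1,1]^{n-\mathfrak k}$. The key observation is that since $x', y' \in [0,1]^{\mathfrak k}$ have nonnegative coordinates, there is no cancellation in
\begin{equation*}
|z(t)'|^2 = \sum_{a=1}^{\mathfrak k}\bigl(tx_a + (1-t)y_a\bigr)^2 \geq \max\bigl(t^2|x'|^2, (1-t)^2|y'|^2\bigr),
\end{equation*}
so $|z(t)'| \geq \max(t|x'|,(1-t)|y'|) \geq r \max(t, 2(1-t)) \geq \tfrac{2r}{3}$ for all $t \in [0,1]$. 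Hence the segment avoids the singular set uniformly, and the fundamental theorem of calculus combined with the gradient bound gives
\begin{equation*}
|f(x) - f(y)| \leq |x-y| \int_0^1 |\nabla f(z(t))| \, dt \leq Mr \int_0^1 |z(t)'|^{\delta-1}\, dt \leq M r \cdot (2r/3)^{\delta-1} = C M r^\delta,
\end{equation*}
where we used $\delta - 1 \leq 0$ in the last inequality.

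The only real subtlety is ensuring the line segment stays away from the singular stratum $\{x'=0\}$ when $x$ and $y$ themselves are away from it; this is where the nonnegativity of the first $\mathfrak k$ coordinates is essential, since it prevents the convex combination from achieving cancellation in the seminorm $|z(t)'|$. If the domain were replaced by something allowing mixed signs in the singular directions, the segment could pass near the singular set and the integral $\int_0^1 |z(t)'|^{\delta-1}dt$ could diverge for $\delta<1$, so the argument genuinely uses the geometry of the polytope model.
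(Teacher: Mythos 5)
Your proof is correct. The key inequality $|z(t)'|\ge\max(t|x'|,(1-t)|y'|)$, which follows from the nonnegativity of the coordinates $x_1,\dots,x_{\mathfrak k}$ on $[0,1]^{\mathfrak k}$, does indeed prevent the straight segment from dipping back toward the singular stratum $\{x'=0\}$, and the rest is a routine fundamental-theorem-of-calculus estimate together with the cheap bound when $x,y$ are both within $O(r)$ of the singularity.

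Your route differs from the paper's. The paper decomposes the increment into two legs through an intermediate point $(y',x'')$ (or $(x',y'')$): first varying only the $x'$-coordinates with $x''$ frozen, then only the $x''$-coordinates with $x'$ frozen, analyzing ``near'' and ``far'' sub-cases on each leg and finally combining by the triangle inequality. On the first leg the segment has endpoints with $|x'-y'|<\tfrac12|x'|$ so it automatically avoids the singularity; on the second leg $x'$ is fixed so the issue never arises. Your version collapses the two legs into a single straight segment, at the price of having to prove the geometric lemma about nonnegative convex combinations; you correctly flagged this as the only genuinely non-trivial step and noted that it would fail on a domain allowing mixed signs in the singular directions. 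Both arguments are elementary and give an explicit constant; the paper's gives a marginally smaller one ($8M$ vs.\ your roughly $(3^\delta+2^\delta)M$ and $(3/2)^{1-\delta}M$, which are comparable), but yours is arguably cleaner in that it needs no intermediate point and its one case-split is geometrically transparent.
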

\begin{proof}
  \begin{itemize}
    \item[(1).] $x''$ is fixed. Consider two points $x=(x',x'')$ and $y=(y',x'')$. Without loss of generality, assume $|y'|\leq |x'|$. Consider the following two cases.
    \\ (1.1) $|x'-y'|<\frac{1}{2}|x'| $. Then, we have
    \begin{equation*}
    \begin{split}
    |f(x)-f(y)|&\le |\nabla f\big(x'+\theta(y'-x'),x''\big)||x'-y'|\\
    &\leq 2^{1-\delta}M |x'|^{\delta-1}|x'-y'|\\
    &\leq M|x-y|^{\delta},
    \end{split}
    \end{equation*}
    where $\theta\in (0,1).$
    \\ (1.2) $|x'-y'|\geq\frac{1}{2}|x'| $. Then, we have
 \begin{equation*}
    |f(x)-f(y)|\le  |f(x)|+|f(y)| \leq 2M|x'|^{\delta}\leq 2^{1+\delta}M|x-y|^{\delta}.
    \end{equation*}

\item[(2).]  $x'$ is fixed, i.e., $x=(x',x'')$ and $y=(x',y'')$. Consider the following two cases.
\\ (2.1) $|x''-y''|<|x'| $. Then, we have
    \begin{equation*}
    \begin{split}
    |f(x)-f(y)|&\le |\nabla f\big(x',x''+\theta(y''-x'')\big)||x''-y''|\\
    &\leq M |x'|^{\delta-1}|x''-y''|\\
    &\leq M|x-y|^{\delta},
    \end{split}
    \end{equation*}
    where $\theta\in (0,1).$
\\ (2.2)  $|x''-y''|\geq|x'| $. Then, we have
 \begin{equation*}
    |f(x)-f(y)|\le  |f(x)|+|f(y)| \leq 2M|x'|^{\delta}\leq 2M|x-y|^{\delta}.
    \end{equation*}
\item[(3).] For general $x,y$, one gets
\begin{equation*}
|f(x)-f(y)|\le  2^{2+\delta}M|x-y|^{\delta}\le  8M|x-y|^{\delta}.
\end{equation*}
  \end{itemize}
\end{proof}

\begin{lemma}\label{lemma-Growth-Regularity2}
Let $f\in C\big([0,1]^{ \mathfrak k}\times[-1,1]^{n- \mathfrak k}\big)\bigcap C^{1}\big([0,1]^{ \mathfrak k}\times[-1,1]^{n- \mathfrak k}\setminus \{0\}^{ \mathfrak k}\times[-1,1]^{n- \mathfrak k}\big)$  and
$x'=(x_1,\cdots,x_{ \mathfrak k})$. Suppose
there exists constants $M>0$ and  $\delta\in(0,\frac{1}{2}]$ such that, for any $x\in[0,1]^{ \mathfrak k}\times[-1,1]^{n- \mathfrak k}$,
$$|f(x)|\leq M |x'|^{\delta-\frac{1}{2}},$$ and for any $x\in[0,1]^{ \mathfrak k}\times[-1,1]^{n- \mathfrak k}\setminus \{0\}^{ \mathfrak k}\times[-1,1]^{n- \mathfrak k}$,
$$|\nabla f(x)|\leq M |x'|^{\delta-\frac{3}{2}}.$$
Then, for any $a=1,.., \mathfrak k$, the following estimate holds:
$$\|\sqrt{x_a}f\|_{C^{\delta}\big([0,1]^{ \mathfrak k}\times[-1,1]^{n- \mathfrak k}\big)}\leq C(n)M. $$
\end{lemma}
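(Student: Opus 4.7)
The plan is to set $g(x)=\sqrt{x_a}\,f(x)$ and reduce the claim to a variant of Lemma \ref{lemma-Growth-Regularity}. First I would record the trivial pointwise bound
\[
|g(x)| \leq \sqrt{x_a}\,M|x'|^{\delta-1/2} \leq M|x'|^{\delta},
\]
since $\sqrt{x_a}\leq|x'|^{1/2}$; this shows that $g$ already satisfies the $C^{0}$-hypothesis of Lemma \ref{lemma-Growth-Regularity} with exponent $\delta$, and the same cancellation gives $\sqrt{x_a}|\partial_i f|\leq M|x'|^{\delta-1}$ for $i\ne a$.

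Next I would split the estimation of $|g(x)-g(y)|$ into two regimes, as in the proof of Lemma \ref{lemma-Growth-Regularity}. When $x'=y'$ and only $x''$ varies, the argument of that lemma goes through verbatim, since the bound $\sqrt{x_a}|\nabla_{x''}f|\leq M|x'|^{\delta-1}$ is exactly what is needed. The genuinely new case is when $x''$ is fixed and only $x'$ varies: here I would use the algebraic decomposition
\[
g(x)-g(y) = (\sqrt{x_a}-\sqrt{y_a})f(x) + \sqrt{y_a}\bigl(f(x)-f(y)\bigr),
\]
together with the $\frac12$-H\"older continuity of $t\mapsto \sqrt{t}$, so that $|\sqrt{x_a}-\sqrt{y_a}|\leq\sqrt{|x_a-y_a|}\leq\sqrt{|x'-y'|}$. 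In the near sub-case $|x'-y'|\leq|x'|/2$, the first summand is controlled by $M\sqrt{|x'-y'|}\,|x'|^{\delta-1/2}\leq CM|x'-y'|^\delta$, and the second summand is controlled by integrating the gradient bound for $f$ along the segment $[x,y]$, which stays in $\{|z'|\geq|x'|/2\}$, and using $\sqrt{y_a}\leq C|x'|^{1/2}$. In the far sub-case $|x'-y'|\geq|x'|/2$ one bounds $g(x)$ and $g(y)$ separately by $M|x'|^\delta\leq CM|x'-y'|^\delta$. A general pair $x,y$ is treated by inserting the intermediate point $(x',y'')$.

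The restriction $\delta\leq 1/2$ enters exactly at the step where $\sqrt{|x'-y'|}\,|x'|^{\delta-1/2}$ is traded for $|x'-y'|^\delta$, which needs $|x'-y'|^{1/2-\delta}\leq|x'|^{1/2-\delta}$; this is valid in the near regime precisely because $\delta\leq 1/2$. The main obstacle is conceptual rather than computational: one cannot directly apply Lemma \ref{lemma-Growth-Regularity} to $g$ itself, because the derivative
\[
\partial_a g = \tfrac{1}{2\sqrt{x_a}}\,f + \sqrt{x_a}\,\partial_a f
\]
contains the term $f/(2\sqrt{x_a})$, which is only bounded by $M|x'|^{\delta-1/2}/\sqrt{x_a}$ and blows up on $\{x_a=0\}$ faster than the exponent $\delta-1$ of Lemma \ref{lemma-Growth-Regularity} would allow. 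The additive decomposition above circumvents this by pushing all the degeneracy into the H\"older-continuous scalar factor $\sqrt{x_a}-\sqrt{y_a}$, leaving only the controlled factor $\sqrt{y_a}$ multiplying $f(x)-f(y)$.
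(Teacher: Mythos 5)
Your proof is correct and follows essentially the same route as the paper: the decomposition $\sqrt{x_a}f(x)-\sqrt{y_a}f(y)=(\sqrt{x_a}-\sqrt{y_a})f(x)+\sqrt{y_a}\bigl(f(x)-f(y)\bigr)$, the near/far split according to whether $|x'-y'|$ is smaller or larger than $\tfrac12|x'|$, and the use of $\delta\le\tfrac12$ to absorb the $\tfrac12$-H\"older factor $|\sqrt{x_a}-\sqrt{y_a}|$ are exactly what the paper does. The added remark explaining why Lemma \ref{lemma-Growth-Regularity} cannot be applied directly to $\sqrt{x_a}f$ (since $\partial_a(\sqrt{x_a}f)$ blows up faster than $|x'|^{\delta-1}$) is a useful conceptual observation, though not logically required.
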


\begin{proof}
  \begin{itemize}
    \item[(1).]  $x''$ is fixed. Consider two points $x=(x',x'')$ and $y=(y',x'')$. Without loss of generality,  assume $|y'|\leq |x'|$. Consider the following two cases.
    \\ (1.1) $|x'-y'|<\frac{1}{2}|x'| $. Then, we have
    \begin{equation*}
    \begin{split}
    |\sqrt{x_a}f(x)-\sqrt{y_a}f(y)|& \leq\sqrt{y_a} |f(x)-f(y)|+ |\sqrt{x_a}-\sqrt{y_a}||f(x)|\\
    &\le|x'|^{\frac{1}{2}} |\nabla f\big(x'+\theta(y'-x'),x''\big)||x'-y'|+|x_a-y_a|^{\frac{1}{2}}M|x'|^{\delta-\frac{1}{2}}\\
    &\leq 2^{\frac{3}{2}-\delta}M |x'|^{\delta-1}|x'-y'|+2^{\frac{1}{2}-\delta}M|x'-y'|^{\delta}\\
    &\leq 2^{\frac{3}{2}}M|x-y|^{\delta},
\frac{}{}    \end{split}
    \end{equation*}
    where $\theta\in (0,1).$
    \\ (1.2) $|x'-y'|\geq\frac{1}{2}|x'| $. Then, we have
 \begin{equation*}
    | \sqrt{x_a}f(x)-\sqrt{y_a}f(y)|\le  |x'|^{\frac{1}{2}}|f(x)|+|y'|^{\frac{1}{2}}|f(y)| \leq 2M|x'|^{\delta}\leq 2^{1+\delta}M|x-y|^{\delta}.
    \end{equation*}

\item[(2).] $x'$ is fixed, i.e., $x=(x',x'')$ and $y=(x',y'')$. Consider the following two cases.
\\ (2.1) $|x''-y''|<|x'| $. Then, we have
    \begin{equation*}
    \begin{split}
     |\sqrt{x_a}f(x)-\sqrt{x_a}f(y)|&\le  |x'|^{\frac{1}{2}}|\nabla f\big(x',x''+\theta(y''-x'')\big)||x''-y''|\\
    &\leq M |x'|^{\delta-1}|x''-y''|\\
    &\leq M|x-y|^{\delta},
    \end{split}
    \end{equation*}
    where $\theta\in (0,1).$
\\ (2.2)  $|x''-y''|\geq|x'| $. Then, we have
 \begin{equation*}
 |\sqrt{x_a}f(x)-\sqrt{x_a}f(y)|\le  |x'|^{\frac{1}{2}}( |f(x)|+|f(y)|) \leq 2M|x'|^{\delta}\leq 2M|x-y|^{\delta}.
    \end{equation*}
\item[(3).] For general $x,y$, one gets
\begin{equation*}
 |\sqrt{x_a}f(x)-\sqrt{y_a}f(y)|\le 8M|x-y|^{\delta}.
\end{equation*}
  \end{itemize}
\end{proof}

\smallskip
 In the following, we state a simple interpolation inequality. This interpolation inequality can be derived from the Gagliardo-Nirenberg inequality in bounded domains. For convenience, we give an elementary proof here.

\begin{lemma}\label{lemma-interpolation}
Let $k\geq2$, and suppose $f\in C^k([0,1]^n)$ with 
$$\|f\|_{L^{\infty}([0,1]^n)}\leq A,\quad \max_{|\alpha|\leq k}\|D^{\alpha}f\|_{L^{\infty}([0,1]^n)}\leq B,$$
where  $A$ and $B$ are positive constants with $A\leq B$.
Then, for any multi-index $\alpha$ with $|\alpha|\leq k$,
\begin{equation}\label{ineq-inte}\|D^{\alpha}f\|_{L^{\infty}([0,1]^n)}\leq CA^{1-\frac{|\alpha|}{k}}B^{\frac{|\alpha|}{k}},\end{equation}where the constant $C$ depends only on $k$.
\end{lemma}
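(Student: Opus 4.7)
The plan is to use Taylor's theorem together with norm equivalence on the finite-dimensional space of polynomials of degree less than $k$. Given any $x_0\in[0,1]^n$ and any multi-index $\alpha$ with $|\alpha|=j\le k$, the idea is to Taylor-expand $f$ to order $k-1$ around $x_0$ on a small axis-aligned cube $C\subset[0,1]^n$ having $x_0$ as a corner. The coefficients of the Taylor polynomial are precisely $D^\beta f(x_0)/\beta!$; controlling these in terms of the $L^\infty$-norm of the polynomial on $C$ (which in turn is bounded by $\|f\|_\infty$ plus a remainder governed by $B$) will yield the desired estimate. Optimising the size of $C$ then interpolates between $A$ and $B$.

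More concretely, for $x_0\in[0,1]^n$ and $r\in(0,1/2]$ choose $\sigma_i\in\{+1,-1\}$ by $\sigma_i=+1$ iff $x_0^i\le1/2$, so that $C(x_0,r):=x_0+\prod_{i=1}^n[0,\sigma_i r]\subset[0,1]^n$. Taylor's formula reads
\begin{equation*}
f(x)=P_{k-1}(x)+R_k(x),\qquad P_{k-1}(x)=\sum_{|\beta|<k}\frac{D^\beta f(x_0)}{\beta!}(x-x_0)^\beta,
\end{equation*}
with $|R_k(x)|\le C_k B r^k$ on $C(x_0,r)$, hence $\|P_{k-1}\|_{L^\infty(C(x_0,r))}\le A+C_k B r^k$. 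Rescaling via $x=x_0+rDy$ with $D=\operatorname{diag}(\sigma_i)$ converts $P_{k-1}$ into a polynomial $Q(y)$ of degree $<k$ on $[0,1]^n$ whose monomial coefficients are $\sigma^\beta r^{|\beta|}D^\beta f(x_0)/\beta!$. Since the space of polynomials of degree $<k$ on $[0,1]^n$ is finite-dimensional, each coefficient functional is continuous in the $L^\infty$-norm, giving
\begin{equation*}
|D^\beta f(x_0)|\le C_k\,r^{-|\beta|}\bigl(A+B r^k\bigr),\qquad |\beta|<k.
\end{equation*}
The case $|\beta|=k$ is already contained in the hypothesis $\max_{|\alpha|\le k}\|D^\alpha f\|_\infty\le B\le B\cdot(B/A)^{0}$.

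For $|\alpha|=j$, one then sets $r=\min\{(A/B)^{1/k},1/2\}$. If $(A/B)^{1/k}\le1/2$, the two terms $r^{-j}A$ and $r^{k-j}B$ balance, producing directly $|D^\alpha f(x_0)|\le C_k A^{1-j/k}B^{j/k}$. Otherwise $A\ge 2^{-k}B$; taking $r=1/2$ gives $|D^\alpha f(x_0)|\le C_kA$, and the assumption $A\le B$ upgrades this to $C_k A=C_k A^{1-j/k}A^{j/k}\le C_k A^{1-j/k}B^{j/k}$. The only technical point is that $C(x_0,r)$ must remain inside $[0,1]^n$ as $x_0$ approaches the boundary, which is why the corner-orientation $\sigma_i$ is chosen based on which side of $1/2$ the coordinate $x_0^i$ lies; beyond this, the argument is routine Taylor remainder estimation combined with polynomial norm equivalence.
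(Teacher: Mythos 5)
Your proposal is correct, but it takes a genuinely different route from the paper. The paper avoids full multivariate Taylor expansion: it defines $A_l=\max_{|\alpha|\le l}\|D^\alpha f\|_\infty$, uses the one-dimensional second-order Taylor expansion along a single coordinate direction (with step $|h|=\sqrt{A_{m-1}/(4A_{m+1})}\le1/2$) to establish the three-term inequality $A_m\le 5\sqrt{A_{m-1}A_{m+1}}$, and then closes the argument by induction on $k$ via the absorbing inequality $\varepsilon^{m}A_m\le 3(\mu\varepsilon^{m-1}A_{m-1}+\mu^{-1}\varepsilon^{m+1}A_{m+1})$. Your approach instead performs one Taylor expansion of order $k-1$ around $x_0$ on a dilated corner cube $C(x_0,r)\subset[0,1]^n$, estimates the remainder by $C_kBr^k$, rescales to $[0,1]^n$, and reads off coefficient bounds from equivalence of norms on the finite-dimensional space of polynomials of degree $<k$. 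Both are valid; yours is arguably cleaner conceptually but invokes the norm-equivalence fact as a black box, whereas the paper's proof is more elementary (only elementary 1D calculus).

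One genuine discrepancy worth noting: the lemma claims a constant $C$ depending only on $k$, and the paper's proof delivers exactly that, since the $1$D Taylor argument yields $A_m\le 5\sqrt{A_{m-1}A_{m+1}}$ with absolute constant $5$, independent of the dimension $n$. In your argument, the norm-equivalence constant on the space of polynomials of degree $<k$ in $n$ variables on $[0,1]^n$ necessarily depends on the dimension of that space, which grows with $n$; so your constant is $C(n,k)$, not $C(k)$. This does not affect the applications in the paper (where $n$ is fixed), but it weakens the statement as written. If you want to recover dimension-independence along your lines, you would need to Taylor-expand along single coordinate directions one at a time — which essentially reproduces the paper's inductive scheme.
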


\begin{proof}
For any integer $l$ with $0\leq l\leq k$, define
$$A_l=\max_{|\alpha|\leq l}\|D^{\alpha}f\|_{L^{\infty}([0,1]^n)}.$$
Then, $A_l$ is nondecreasing with respect to $l$ and $A_l\leq A_{k}\leq B$. It suffices to prove that for any $\varepsilon\in[0,1]$, there holds
\begin{equation}\label{ineq-inte2}\varepsilon^{l}A_l\leq C(k)\big(A_0+\varepsilon^{k}A_{k}\big)\leq C(k)\big(A+\varepsilon^{k}B\big).\end{equation}
Then, \eqref{ineq-inte} follows by taking $\varepsilon=\sqrt[k]{\frac{A}{B}}$.

We will prove \eqref{ineq-inte2} by induction on  $k$.
Without loss of generality, assume $A_0>0$.
For any $i\in\{1,...,n\}$, and any $x\in[0,1]^n$, take $y=x+he_i\in[0,1]^n$ with $|h|=\sqrt{\frac{A_0}{4A_2}}\in(0,\frac{1}{2}]$. By Taylor's expansion, we have
$$f(y)-f(x)=f_{i}(x)h+\frac{f_{ii}(x+\theta(y-x))}{2}h^2$$
for some $\theta\in(0,1)$.  This implies
$$\sqrt{\frac{A_0}{4A_2}}|f_i(x)|\leq 2A_0+\frac{A_2}{2}\frac{A_0}{4A_2}.$$
Thus,
 \begin{equation}\label{ineq-inte3}
 A_1\leq  5\sqrt{A_0A_2}.
 \end{equation}
This confirms that \eqref{ineq-inte2} holds for $k=2$.

Suppose that \eqref{ineq-inte2} holds for $k=m$, where $m\geq2$. Now, let $k=m+1$. Similar to the proof of \eqref{ineq-inte3},  for any multi-index $\alpha$ with $|\alpha|= m$, we have $$ \|D^{\alpha}f\|_{L^{\infty}([0,1]^n)}\leq  5\sqrt{A_{m-1}A_{m+1}}.$$
Hence,
$$A_m\leq  5\sqrt{A_{m-1}A_{m+1}}.$$
Then, for any $\mu>0$ and any $\varepsilon\in[0,1]$, we have
  \begin{equation}\label{ineq-inte4}\varepsilon^{m}A_m\leq 3(\mu\varepsilon^{m-1}A_{m-1}+\frac{1}{\mu}\varepsilon^{m+1}A_{m+1})\leq 3\big[\mu C(m)\big(A_0+\varepsilon^{m}A_{m}\big)+\frac{1}{\mu} \varepsilon^{m+1}A_{m+1}\big].\end{equation}
Therefore, \eqref{ineq-inte2} holds for $k=m+1$ by taking $3\mu C(m)\leq\frac{1}{2}$ in \eqref{ineq-inte4}.
\end{proof}

\section{Appendix B-Existence of Alexandrov's solution}

In this Appendix, we discuss the Dirichlet problem of Monge-Amp\`ere equation. Consider the following Dirichlet problem.
\begin{equation}\label{app1}
\begin{cases}
\det D^2 u=\mu,&\quad \text{in}\quad \Omega,\\
u=g,&\quad \text{on}\quad \partial\Omega
\end{cases}
\end{equation}
where $\Omega$ is a bounded convex domain and $\mu$ is a non-negative Radon measure with $\mu(\Omega)<+\infty$.

 It is proved that \eqref{app1} admits a unique  Alexandrov solution $u\in C(\overline \Omega)$ provided that $g=0$([Theorem 2.13,\cite{Figalli2017book}]) or $\Omega$ strictly convex and $g\in C(\partial\Omega)$([Theorem 2.14,\cite{Figalli2017book}],[Theorem 1.6.2,\cite{Gutierrez2001book}]) or $g\in C(\overline{\Omega})$ convex in $\Omega$([Theorem 8.2.7,\cite{Han2016book}]). However, none of the above results can be applied directly in our present paper  since the domain $\Omega$ is a polytope and the boundary data $g\in C(\partial \Omega)$.

\begin{definition}
We call $g\in C(\partial\Omega)$ a convex function on $\partial\Omega$ iff for any $x,y,tx+(1-t)y\in \partial\Omega$, $t\in [0,1]$ there holds $g(tx+(1-t)y)\le tg(x)+(1-t)g(y)$.
\end{definition}
Then we have the following theorem.
\begin{theorem}\label{appthm}
Let $\Omega$ be a simple convex polytope in $\mathbb R^n$ and $\mu(\Omega)<+\infty$. Suppose $g\in C(\partial\Omega)$ is a convex function on $\partial\Omega$. Then there exists a unique Alexandrov solution $u\in C(\overline{\Omega})$ solves \eqref{app1}.
\end{theorem}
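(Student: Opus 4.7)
The plan is to reduce Theorem~\ref{appthm} to the case already covered by [Theorem~8.2.7, \cite{Han2016book}], namely continuous convex Dirichlet data on a bounded convex domain. Thus the only substantive content is an extension lemma: from $g\in C(\partial\Omega)$ convex on $\partial\Omega$ (in the sense introduced just before the theorem), produce $G\in C(\overline{\Omega})$ convex on $\overline{\Omega}$ with $G|_{\partial\Omega}=g$. Uniqueness will then be a direct consequence of the comparison principle for Monge--Amp\`ere measures ([Theorem~2.10, \cite{Figalli2017book}]).

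I would take for $G$ the convex envelope
\[
G(x)=\inf\Bigl\{\sum_{i=1}^{n+1}\lambda_i\, g(x_i)\,:\, x_i\in\partial\Omega,\ \lambda_i\ge 0,\ \sum_i\lambda_i=1,\ \sum_i\lambda_i x_i=x\Bigr\},\qquad x\in\overline{\Omega}.
\]
Convexity of $G$ on $\overline{\Omega}$ is immediate by concatenating optimal representations at the endpoints of a segment. The key identity $G=g$ on $\partial\Omega$ uses the polytope structure: given $x_0\in\partial\Omega$ and a representation $x_0=\sum_i\lambda_i x_i$ with $x_i\in\partial\Omega$, every $x_i$ must lie in every supporting hyperplane of $\Omega$ at $x_0$, hence in the minimal closed face $F$ of $\Omega$ containing $x_0$; because every segment inside $F$ lies in $\partial\Omega$, the hypothesis forces $g|_F$ to be a genuine convex function on the convex set $F$, and Jensen's inequality gives $\sum\lambda_i g(x_i)\ge g(x_0)$. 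Combined with the trivial bound $G(x_0)\le g(x_0)$ from the one-point representation this gives $G|_{\partial\Omega}=g$.

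The main technical obstacle is continuity of $G$ up to $\partial\Omega$. In the interior $G$ is automatically continuous, since convex functions are continuous on the interior of their domain. At a boundary point $x_0$ I would exploit that $\Omega$ is a \emph{simple} polytope: locally near $x_0$ the polytope is affinely equivalent to $x_0+C$ for a simplicial cone $C$ spanned by exactly $n$ edge vectors, which allows one to write any nearby interior point as an explicit convex combination of boundary points close to $x_0$, yielding upper semicontinuity $\limsup_{x\to x_0}G(x)\le g(x_0)$ from the continuity of $g$. For the lower bound I would construct, using the one-sided derivatives of $g$ along the $n$ edges meeting at each vertex together with its face-by-face convexity, a supporting affine function $L$ on $\mathbb R^n$ with $L\le g$ on $\partial\Omega$ and $L(x_0)=g(x_0)$; then $G\ge L$ on $\overline{\Omega}$ forces $\liminf_{x\to x_0}G(x)\ge g(x_0)$. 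Once $G\in C(\overline{\Omega})$ is produced, existence follows by quoting [Theorem~8.2.7, \cite{Han2016book}] applied to $G$. The hardest step will be the construction of the supporting affine function $L$ at non-smooth points of $\partial\Omega$ (especially vertices), where the simple-polytope hypothesis is genuinely used to make the linear system determining $L$ from the edge derivatives non-degenerate and compatible across all adjacent faces.
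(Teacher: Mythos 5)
Your plan --- extend $g$ to a convex $G\in C(\overline\Omega)$ via the convex envelope and then quote [Theorem 8.2.7, \cite{Han2016book}] --- is a legitimate repackaging, but it is not materially easier than the paper's Perron argument, and the one concrete mechanism you propose contains an error.

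Upper semicontinuity of $G$ at a boundary point is cheap and does not even use the polytope structure: for $x\to x_0\in\partial\Omega$, write $x=(1-\lambda)y+\lambda z$ with $y,z\in\partial\Omega$, $y\to x_0$, $\lambda\to 0$, and use continuity and boundedness of $g$. Lower semicontinuity is where all the work is, and there your plan fails as stated. You ask for an affine $L$ with $L\le g$ on $\partial\Omega$ and $L(x_0)=g(x_0)$, to be built from the one-sided derivatives of $g$ along the $n$ edges at a vertex. But a continuous convex function on an edge can have one-sided derivative $-\infty$ at the endpoint: take $\Omega=[0,1]^2$, $g(x_1,0)=x_1\ln x_1$, $g(0,x_2)=x_2\ln x_2$, $g=0$ on the other two edges, $x_0=(0,0)$. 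Then the constraint $L(x_1,0)\le x_1\ln x_1$ forces $\partial_1L\le\ln x_1\to-\infty$, so no finite such $L$ exists. What one can always produce --- and this is exactly the content of the paper's Lemma~\ref{lemdb1} --- is, for each $\varepsilon>0$, an affine $l^-_{x_0,\varepsilon}\le g$ on $\partial\Omega$ with $l^-_{x_0,\varepsilon}(x_0)\ge g(x_0)-\varepsilon$. That still gives $\liminf_{x\to x_0}G(x)\ge g(x_0)-\varepsilon$ for every $\varepsilon$, so your argument survives this repair. The paper builds $l^-_{x_0,\varepsilon}$ by taking a supporting affine function of $g$ restricted to the minimal face through $x_0$ (this exists because $x_0$ lies in the relative interior of that face and $g$ is convex on it), then subtracting a large multiple of a supporting linear functional whose zero set meets $\Omega$ exactly in that face, with the multiple calibrated through the uniform continuity of $g$. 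This uses only that faces of a polytope are exposed; it does not use simplicity, and neither should your repaired argument. So your approach can be completed, but the ``hardest step'' you flag and leave unproved is both necessary and essentially Lemma~\ref{lemdb1}, after replacing the exact-touching requirement by the $\varepsilon$-approximate one.
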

We can use Perron's method to prove Theorem \ref{appthm}. In fact, following the line of the proof of [Theorem 2.14,\cite{Figalli2017book}], the important thing is show the Alexandrov solution constructed by Perron's method is continuous up to the boundary.
In order to get the continuity, it is enough to show the existence of the following affine functions.
\begin{lemma}\label{lemdb1}
Let $\Omega$ be a   simple convex polytope in $\mathbb R^n$  and $g$ is a continuous convex function on $\partial\Omega$. Let $x_0$ be a point on $\partial\Omega$. Then $\forall \varepsilon>0$, there exist an affine functions $l_{x_0,\varepsilon}^{-}$ such that
\begin{equation}
\begin{split}
&l_{x_0,\varepsilon}^-(x)\le g(x),\quad x\in \partial\Omega,\\
& g(x_0)-\varepsilon\le l_{x_0,\varepsilon}^-(x_0).
\end{split}
\end{equation}
\end{lemma}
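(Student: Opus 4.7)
\medskip

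\noindent\textbf{Proof proposal for Lemma \ref{lemdb1}.} My plan is to produce $l^{-}_{x_{0},\varepsilon}$ by a Hahn--Banach separation in $\mathbb{R}^{n+1}$, applied to the convex hull of the epigraph of $g$. Define
\begin{equation*}
E=\overline{\mathrm{conv}}\bigl(\{(x,t)\in\overline{\Omega}\times\mathbb{R}\ :\ x\in\partial\Omega,\ t\ge g(x)\}\bigr).
\end{equation*}
Then $E$ is closed (a standard Carath\'eodory-type argument works: the lower bound $t\ge \min_{\partial\Omega} g$ plus the continuity of $g$ reduces to convex hulls of compact sets in finite dimension), convex, and upward-closed in $t$. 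The whole proof reduces to establishing that
\begin{equation*}
(x_{0},\,g(x_{0})-\varepsilon)\notin E.\qquad(\ast)
\end{equation*}

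Once $(\ast)$ is proved, separation yields $(a,b)\in\mathbb{R}^{n+1}\setminus\{0\}$ and $\alpha\in\mathbb{R}$ with $a\cdot x+bt\ge\alpha$ on $E$ and $a\cdot x_{0}+b(g(x_{0})-\varepsilon)<\alpha$. Because $E$ is upward-closed in $t$, one must have $b\ge 0$; the case $b=0$ is ruled out since it would force $a\cdot x\ge\alpha>a\cdot x_{0}$ for every $x\in\overline{\Omega}\supset\{x_{0}\}$. Hence $b>0$, and after normalizing $b=1$ the affine function $l^{-}_{x_{0},\varepsilon}(x):=\alpha-a\cdot x$ satisfies $l^{-}_{x_{0},\varepsilon}(x)\le g(x)$ for every $x\in\partial\Omega$ (insert $t=g(x)$) and $l^{-}_{x_{0},\varepsilon}(x_{0})>g(x_{0})-\varepsilon$, which is exactly the conclusion of the lemma.

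The main obstacle, and the only place where the polytope structure and the convexity hypothesis on $g$ enter, is the verification of $(\ast)$. Suppose for contradiction that $(x_{0},g(x_{0})-\varepsilon)\in E$. Then by Carath\'eodory there are $x_{i}\in\partial\Omega$ and $\lambda_{i}\ge 0$ with $\sum_{i}\lambda_{i}=1$ such that
\begin{equation*}
x_{0}=\sum_{i}\lambda_{i}x_{i},\qquad g(x_{0})-\varepsilon\ge \sum_{i}\lambda_{i}g(x_{i}).
\end{equation*}
Let $F_{0}$ be the unique face of $\overline{\Omega}$ whose relative interior contains $x_{0}$. Iterating the defining property of a face (if $y\in F_{0}$ and $y=\mu z_{1}+(1-\mu)z_{2}$ with $z_{1},z_{2}\in\overline{\Omega}$, $\mu\in(0,1)$, then $z_{1},z_{2}\in F_{0}$) forces every $x_{i}$ with $\lambda_{i}>0$ to lie in $F_{0}$. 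Since $F_{0}$ is convex, every segment joining two points of $F_{0}$ is contained in $F_{0}\subset\partial\Omega$, so the hypothesis on $g$ gives ordinary convexity of $g|_{F_{0}}$. Jensen's inequality applied in $F_{0}$ then yields $g(x_{0})\le \sum_{i}\lambda_{i}g(x_{i})$, which contradicts the strict deficit $\varepsilon>0$ above. This establishes $(\ast)$ and completes the proof.

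The subtle point I expect to handle carefully is precisely this Jensen step: one must confirm that the decomposition $x_{0}=\sum\lambda_{i}x_{i}$ (with $x_{i}\in\partial\Omega$, not merely in $\overline{\Omega}$) really does land inside the minimal face $F_{0}$; the argument uses nothing about simplicity of $\Omega$, only the face structure of the convex polytope, which is classical.
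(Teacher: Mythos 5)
Your proof is correct, and it follows a genuinely different route from the paper. The paper proceeds by an explicit case-by-case construction: after a normalization placing $x_0$ in the relative interior of a face $K$ contained in a coordinate subspace, it glues a supporting affine function for $g|_K$ (which exists by convexity of $g$ on the face) to a steeply sloped linear term $-M x_n$ that renders the boundary inequality on $\partial\Omega\setminus B_\delta(K)$ harmless, with the $-\varepsilon$ shift absorbing the discrepancy on $B_\delta(K)\cap\partial\Omega$. Your argument replaces this hands-on construction with a single Hahn--Banach separation of the point $(x_0,g(x_0)-\varepsilon)$ from the convex set $E=\mathrm{conv}(K)+\{0\}\times[0,\infty)$ (with $K=\{(x,g(x)):x\in\partial\Omega\}$ compact, so $E$ is already closed without taking a further closure and Carath\'eodory applies verbatim); the role of the polytope structure is isolated into one clean fact, namely that the minimal face $F_0$ absorbs every point of any convex decomposition of $x_0$ over $\overline{\Omega}$, after which Jensen for $g|_{F_0}$ finishes the contradiction. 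Your approach has two advantages worth noting: it makes transparent exactly where the convexity-on-the-boundary hypothesis enters (only through $g|_{F_0}$), and it shows the lemma requires no simplicity of the polytope --- only its face structure --- whereas the paper's proof, by reducing to a slab $\partial\Omega\cap\{x_n=0\}=K$ after an affine change, implicitly leans on the product-like local geometry near the face that a simple polytope enjoys. The paper's approach, in turn, is more in keeping with the barrier constructions used throughout and produces an explicit $l^-_{x_0,\varepsilon}$; both are valid.
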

\begin{proof}
Without loss of generality, we assume $x_0=0$.
\par If $x_0$ is a vertex of $\Omega$. Since $\Omega$ is a polytope, without loss of generality, we may assume
$$\Omega\subset \{x_n>0\},\quad \partial \Omega\cap \{x_n=0\}=0.$$
By the continuity of $g$, $\exists \delta>0$, such that
$$|g(x)-g(0)|\le \varepsilon,\quad forall x\in \partial\Omega\cap B_\delta(0).$$
 By the assumption, there exists $\eta>0$ such that
$\partial\Omega\backslash B_{\delta}(0)\subset \{x_n\ge \eta\}$.  Let
\begin{equation*}
l_{0,\varepsilon}^{-}=g(0)- \varepsilon - \frac{\|g-g(0)\|_{L^\infty(\partial\Omega)}}{\eta}x_n.
\end{equation*}
\par If $0\in K=\partial\Omega\cap \{x_l=\cdots=x_n=0\}$ for some $2\le l\le n$ and $0\notin \partial K$(Here $\partial K$ is the relative boundary of $K$). Moreover, $\partial\Omega\cap \{x_n=0\}=K$.  Since $g(x',0'')$ is convex in $K$, $x'=(x_1,\cdots,x_{l-1})$, $x''=(x_l,\cdots,x_n)$. We can find an affine function $l_0(x')=a'\cdot x'+b$ such that $g(0)=l_0(0)$ and $g(x',0)\ge l_0(x')$ for $(x',0)\in K$. Let $l_1(x)=l_0(x')$. By the continuity of $g$ and $l_0$, $\exists \delta>0$, such that $|g(x)-g(y)|+|l_1(x)-l_1(y)|\le \varepsilon$, for any $x,y\in \partial\Omega$ and $|x-y|<\delta$. By the assumption, there exists $\eta>0$ such that
$\partial\Omega\backslash B_{\delta}(K)\subset \{x_n\ge \eta\}$.
Let
\begin{equation*}
l_{0,\varepsilon}^{-}=l_1(x)- \varepsilon - \frac{\|g-l_1\|_{L^\infty(\partial\Omega)}}{\eta}x_n.
\end{equation*}

Then, by direct computation, one knows the present lemma follows directly.
\end{proof}

\end{document}